\newtheorem{remark}[theorem]{ Remark}
\newtheorem{example}[theorem]{\bf Example}
\def\C{{\mathbb C}}
\def\lam{\lambda}
\def\sig{\sigma}
\def\rar{\rightarrow}
\newcommand{\ba}{\begin{array}}
\newcommand{\ea}{\end{array}}
\newcommand{\vone}{\vskip 2ex}
\newcommand*{\Bigs}[1]{\scalebox{1.4}{\ensuremath#1}}
\newcommand\scalemath[2]{\scalebox{#1}{\mbox{\ensuremath{\displaystyle #2}}}}
\newcommand{\be}{\begin{equation}}
\newcommand{\ee}{\end{equation}}
\newcommand{\beano}{\begin{eqnarray*}}
\newcommand{\eeano}{\end{eqnarray*}}
\def \noin{\noindent}
\def \eig{\mathbf{eig}}
\def \nrk{\mathrm{nrank}}
\title{Unified framework for Fiedler-like strong linearizations of polynomial and rational matrices}
\author{Ranjan Kumar Das \thanks{Corresponding author, Department of Mathematics, Josip Juraj Strossmayer University of Osijek, Osijek-31000, Croatia ({\tt ranjankdiitg@gmail.com, rkdas@mathos.hr}). This author's research is supported by the Croatian Science Foundation, grant no. IP-2019-04-6774, Vibration Reduction in Mechanical Systems - VIMS. } \and Harish K. Pillai \thanks{Department of Electrical Engineering, IIT Bombay, Powai, India ({\tt hp@ee.iitb.ac.in})}.   }
\begin{document}


\maketitle


\vone
\begin{abstract} Linearization is a widely used method for solving  polynomial eigenvalue problems (PEPs) and rational eigenvalue problem (REPs)  in which the PEP/REP is transformed to a generalized eigenproblem  and then solve this generalized eigenproblem with
	algorithms available in the literature. Fiedler-like pencils (Fiedler pencils (FPs), generalized Fiedler pencils (GFPs),  Fiedler pencils with repetition (FPRs) and generalized Fiedler pencils with repetition (GFPRs)) are well known classes of strong linearizations of matrix polynomials with each of them having some special properties. GFPs are an intriguing family of linearizations, and GF pencils are the fundamental building blocks of FPRs and GFPRs. As a result, FPRs and GFPRs have distinctive features and they provide structure-preserving linearizations for structured matrix polynomials. But GFPRs do not use the full potential of GF pencils. Indeed, not all the GFPs are FPRs or GFPRs, and vice versa. 
	
	The main aim of this paper is two-fold. First, to build a unified framework for all the  Fiedler-like pencils FPs, GFPs, FPRs and GFPRs.	To that end, we construct a new family of strong linearizations (named as EGFPs) of a matrix polynomial $P(\lam)$ that subsumes all the Fiedler-like linearizations. A salient feature of the EGFPs family is that it allows the construction of structured preserving banded linearizations with low bandwidth for structured (symmetric, Hermitian, palindromic) matrix polynomial. Low bandwidth structured linearizations may be useful for numerical computations. Second, to utilize EGFPs directly to form a family of Rosenbrock strong linearizations of an $n \times n$ rational matrix $G(\lam)$ associated with a realization. We describe the formulas for the construction of low bandwidth linearizations for $P(\lam)$ and $G(\lam)$. We show that the eigenvectors, minimal bases and minimal indices of $P(\lam)$ and $G(\lam)$ can be easily recovered from those of the strong linearizations of $P(\lam)$ and $G(\lam)$, respectively.
\end{abstract}

\begin{keywords} Matrix polynomial,  Rational matrices, System matrix, Fiedler pencil, Linearization, Eigenvalue, Eigenvector, Minimal basis, Minimal indices.
	
\end{keywords}

\begin{AMS}
65F15, 15A57, 15A18, 65F35
\end{AMS}

\section{Introduction} Matrix polynomials and rational matrices arise in many applications, see~\cite{aa2,aak,rafinami1,volkervoss, flpp1, planchard, bai11, voss1, voss2, Tisseur, rosenbrock70, kailath, mmmm1,vardulakis,glr,admz,amw} and the references therein. Linearization is a widely used method for computing the spectral data (poles, zeros, eigenvalues, eigenvectors, minimal bases and minimal indices) of matrix polynomials and rational matrices.

Let $P(\lam)$ be an $n \times n$ matrix polynomial of degree $m\geq 2$ given by
\begin{equation}\label{matrixpoly}
	P(\lam) = \sum_{i=0}^{m} \lam^i  A_i, ~~ A_0, \ldots, A_m \in \mathbb{C}^{n \times n}, ~ A_m \neq 0.
\end{equation}
 Then  an $mn\times mn$ matrix pencil $L(\lam) := \mathcal{X} + \lam \mathcal{Y}$ is  said to be a linearization~\cite{glr, mmmm1} of $P(\lam)$ if there exist $mn\times mn$  unimodular matrix polynomials $U(\lam)$ and $V(\lam)$ (i.e., $\det(U(\lam))$ and $\det(V(\lam))$ are  nonzero constants independent of $\lam$)  such that $ U(\lam) L(\lam) V(\lam) = \diag(I_{(m-1)n},\; P(\lam))$ for $ \lam \in \C.$ Additionally,  if $ rev L(\lam) := \mathcal{Y} + \lam \mathcal{X} $ is a linearization of $ rev P(\lam)$ then   $L(\lam)$ is said to be a strong linearization~\cite{mmmm1} of $P(\lam)$, where $revP(\lam) := \lam^m P( 1/\lam).$ Inspired by the seminal work of Miroslav Fiedler for scalar polynomial \cite{fiedler}, Fiedler pencils (FPs) and generalized Fiedler pencils (GFPs) of a polynomial matrix $P(\lam)$ are studied in \cite{AV, BDD} which are strong linearizations of $P(\lam)$. Matrix polynomial having structure such as symmetric, skew-symmetric, palindromic, even, odd, etc., arise in many applications, and the structure often induces spectral symmetry in the eigenvalues. The spectral symmetry in the eigenvalues has physical significance and hence it is important to consider a linearization that preserves the spectral symmetry, see~\cite{alamvol,amw,mmmm1,mmmm,mms,vw1,vw2,aa2,MFT,bfs2,Teran_DM11,blkstruc,hh,hmmt,bfs1, bdf, rafiran4, tdmeqv, Huang,ksw,bkms2,bkms1, thpL} and the references therein. To overcome the limitation of FPs and GFPs in order to construct structure preserving linearizations, FPRs and GFPRs are contructed in \cite{VAEN} and \cite{Bueno_DFR15}, respectively.  Note that the basic building blocks of FPRs and GFPRs are a very special type of GFPs and so FPRs and GFPRs do not utilize the potential of GFPs in fullest. Indeed, not all  the GFPs are FPRs or GFPRs, and vice versa. For example, consider the GFP $T(\lam) = \lam M^P_{(-4,-3,-1)} - M^P_{(0,2)}$ and the FPR $L(\lam) = (\lam M^P_{-4,-3} - M^P_{(0,1,2)})M^P_1$ of $P(\lam) := \sum_{i=0}^4 \lam^i A_i$ (see Section~\ref{basicdef} for the definition of $M_j^P$). Then $T(\lam)$ is not an FPR and $L(\lam)$ is not a GFP. Further, consider the pencil $F(\lam) : =  (\lam  M^P_{(-4, -1)} - M^P_{(2,3,0)}) M^P_{2} $. Then $F(\lam)$ is neither  a GFP nor an FPR of $P(\lam)$. This motivates us to present a unified framework that subsumes all the Fiedler-like linearizations, i.e., FPs, GFPs, FPRs, and GFPRs of $P(\lam)$. We named this family as Extended generalized Fiedler pencils (EGFPs) of $P(\lam)$. Indeed, EGFPs is a huge class of linearizations that expand the arena in which to look for the structured preserving linearizations for structured $P(\lam)$. Also, it allows us to construct linearizations with special property like banded linearizations with low bandwidth.
 
  We mention that there does not exist any block penta-diagonal (resp., tridiagonal) symmetric GFPR for symmetric $P(\lam)$ having degree $m \geq 8$ (resp., $m \geq 4$) \cite{RanThesis}. But, the family of EGFPs has no such shortcoming and enables us to construct symmetric block penta-diagonal EGFPs for symmetric $P(\lam)$. For $m=8,$ the pencil $$L(\lam)=\left[\begin{array}{cccccccc} -A_{8} & 0 &\lam  A_{8}& 0 & 0 & 0 & 0 & 0\\ 0 & 0 & -I_n & \lam I_n& 0 & 0 & 0 & 0\\\lam A_{8} & -I_n & \lam A_{7}-A_{6} &\lam A_{6} & 0 & 0 & 0 & 0\\ 0 & \lam I_n & \lam A_{6} & \lam A_{5}+A_{4} & -I_n & 0 & 0 & 0\\ 0 & 0 & 0 & -I_n & 0 & \lam I_n & 0 & 0\\ 0 & 0 & 0 & 0 & \lam I_n & \lam A_{3}+A_{2} & A_{1} & -X\\ 0 & 0 & 0 & 0 & 0 & A_{1} & - \lam A_{1} + A_{0} & \lam X \\ 0 & 0 & 0 & 0 & 0 & -X & \lam  X & 0 \end{array}\right]$$
 	is  a block symmetric and block penta-diagonal EGFP, where $X \in \mathbb{C}^{n\times n}$. Moreover, if $X$ is symmetric then   $ L(\lam) $  is symmetric when  $P(\lam)$ is symmetric. We mention that $L(\lam)$ is not a GFPR of $P(\lam)$ \cite{RanThesis}. For palindromic $P(\lam)$, a framework of structured-preserving linearizations having specific block-bandwidth is studied in \cite{rafiran5} by using the EFPRs of $P(\lam)$. Further, in \cite{rafiran5}, it is discussed how the EGFPs overcome the limitation of the family of GFs and FPRs in order to construct palindromic linearizations with specific bandwidth, see \cite{rafiran5} for details. 
 

Linearization of rational matrices is a relatively new concept and has been studied extensively in past few years by following the state-space model for rational matrices pioneered by Rosenbrock~\cite{rosenbrock70}, see~\cite{rafinami1, rafinami3, admz, rafiran4, rafiran1, bai11,RanThesis, rafiran2, fmqv1,fqp3,fmq} and the reference therein. Consider a realization of an $n \times n$ rational matrix $G(\lam)$ of the form $ G(\lambda)=  P(\lambda) + C(\lambda E -A )^{-1} B$, where $A- \lambda E$ is an $r \times r$ pencil with $E$ being nonsingular, $P(\lam) = \sum_{i=0}^{m} \lam^i A_i$ is an $n\times n$ matrix polynomial of degree $m$, $C\in\mathbb{C}^{n\times r}$ and $B\in\mathbb{C}^{r\times n}.$ Fiedler pencils (FPs) and generalized Fiedler pencils (GFPs) and generalized Fiedler pencils with repetitions (GFPRs) have been constructed in \cite{rafinami1, rafinami3, rafiran4} which are shown to be Rosenbrock strong linearizations (see Definition~\ref{stln1}) of $G(\lam)$ in \cite{rafiran4}. Note that, neither GFPs of $G(\lam)$ is a subset of GFPRs, and vice versa. Further, the construction of FPs, GFPs, and GFPRs of $G(\lam)$ is described by defining Fiedler matrices for $G(\lam)$ separately, that is, the Fiedler matrices of  $G(\lam)$ are defined by invoking the Fiedler matrices of $P(\lam)$ in one block, followed by positioning the constant matrices $B,C,A$ and $E$ in the appropriate positions of the other blocks. This force to state a lot of notations and definitions for $G(\lam)$ parallel to the existing notations and definitions for $P(\lam)$.  We address these issues in this paper. More precisely, the main contributions of the present paper are the following.

We build an unified framework (named as EGFPs) that subsumes all the Fiedler-like linearizations FPs, GFPs, FPRs and GFRs for matrix polynomials. We show by some examples that EGFPs is a rich source to look into the structured preserving linearizations for structured (symmetric, skew-symmetric, even, odd, palindromic, etc.,) matrix polynomials. Then we characterize block tridiagonal and block penta-diagonal EGFPs of $P(\lam)$. This characterization also works for FPs, GFPs, FPRs, and GFPRs of $P(\lam)$. By utilizing the EGFPs of $P(\lam)$, we construct a family of Rosenbrock strong linearizations of $G(\lam)$ directly. This construction is simple and it does not need to define Fiedler matrices for $G(\lam)$ separately (as it is done in case of FPs, GFPs and GFPRs in \cite{rafinami1, rafinami3,rafiran4}). Further, we describe how to construct block tridiagonal and block penta-diagonal Rosenbrock strong linearization of $G(\lam)$. Finally, we describe easy recovery of eigenvectors, minimal bases and minimal indices of $P(\lam)$  and $G(\lam)$ from those of the linearizations.


The rest of the paper is organized as follows.  We collect some basic results in Section~\ref{sec2}. In Section~\ref{EGFPsOfP}, we introduce EGFPs of $P(\lam)$, and give a characterization for block tridiagonal and block penta-diagonal EGFPs of $P(\lam)$.  Construction of Rosenbrock strong linearizations for rational matrices from those of EGFPs of matrix polynomial is described in Section~\ref{EGFPofG}. In Section~\ref{recoAll}, we describe the recovery of eigenvectors, minimal bases and minimal indices of $P(\lam)$ and $G(\lam)$ from those of the linearizations. 

\textbf{Notation.} We denote  by $\C[\lam]$  the ring (over $\C$) of scalar polynomials and  by  $\C(\lam)$ the field of rational functions of the form  $p(\lambda)/ q(\lambda),$ where $p(\lambda)$ and $q(\lambda)$ are  polynomials in $\mathbb{C}[\lambda].$  We denote by $\mathbb{C}[\lambda]^{m\times n}$ (resp., $\mathbb{C}(\lambda)^{m\times n}$) the vector space of $m\times n$ matrix polynomials (resp., rational matrices) over $\C$ (resp., over $\C(\lam)$). The spaces $\C[\lam]^m$ and $\C(\lam)^{m},$ respectively, denote $\mathbb{C}[\lambda]^{m\times n}$ and  $\mathbb{C}(\lambda)^{m\times n}$ when $n=1.$ We denote the \textit{j}-th column of the $n\times n $ identity matrix $I_n$ by $e_j$ and the transpose (resp., conjugate transpose) of an $m\times n$  matrix $A$ by $A^T$ (resp., $A^*$). The right and left null spaces of $A$ are given by $ \mathcal{N}_r(A) := \{ x \in \C^n : Ax = 0\}$ and $\mathcal{N}_l(A) := \{ x \in \C^m : x^T A = 0 \},$ respectively. We denote by $A\otimes B$ the Kronecker product of the matrices $A$ and $B.$  

\section{Preliminaries}\label{sec2} 

Let $ G(\lam) \in \C(\lam)^{m\times n}$. The rank of  $G(\lam)$ over the field $\C(\lam)$ is called the \textit{normal rank} of $G(\lam)$ and is denoted by $\nrk(G).$  If $\nrk(G) = n =m$ then $G(\lam)$  is said to be \textit{regular}, otherwise $G(\lam)$ is said to be \textit{singular}. A complex number $\mu \in \C$ is said to be an \textit{eigenvalue} of $G(\lambda)$ if $\rank(G(\mu)) < \nrk(G).$  We denote the set of eigenvalues of $G$ by $\eig(G).$ Let
\begin{equation} \label{SMform}
	D(\lam) := \diag\left( \frac{\phi_{1}(\lam)}{\psi_{1}(\lam)}, \ldots, \frac{\phi_{k}(\lam)}{\psi_{k}(\lam)}, 0_{m-k, n-k}\right)
\end{equation}
be the Smith-McMillan form~\cite{kailath, rosenbrock70} of  $G(\lam),$ where $ k:= \nrk(G)$ and the scalar polynomials $\phi_{i}(\lam)$ and $ \psi_{i}(\lam)$ are monic and pairwise coprime and  that $\phi_{i}(\lam)$ divides $\phi_{i+1}(\lam)$ and $\psi_{i+1}(\lam)$ divides $\psi_{i}(\lam),$ for $i= 1, 2, \ldots, k-1$. Note that, if $G(\lam)$ is a matrix polynomial, then $\psi_{i}(\lam)=1,$ for $i=1,2,\ldots,k$, and $D(\lam)$ is called the Smith normal form of $G(\lam)$. Set $\phi_{G}(\lam) := \prod _{j=1}^{k} \phi_{j}(\lam) \,\,\, \mbox{ and } \,\,\, \psi_{G}(\lam) := \prod _{j=1}^{k} \psi_{j}(\lam).$ Then $ \mu\in \C$ is a pole of $G(\lam) $ if $\psi_G(\mu) =0.$ A complex number $ \mu$ is said to be a {\em zero} of $G(\lam)$ if $ \phi_G(\mu) =0.$ The {\em spectrum} of $G(\lam)$ is given by $ \mathrm{Sp}(G) := \{ \lam \in \C : \phi_G(\lam) = 0\}$ and consists of the finite zeros of $G(\lam).$
Note that $ \eig(G) \subset \mathrm{Sp}(G).$ See~\cite{rafinami1, kailath, admz} for more on eigenvalues and zeros of $G(\lam).$

 Let $ G(\lam) \in \C(\lam)^{n\times n}$. We consider a realization of $G(\lam)$ of the form
 \begin{equation}\label{minrel2_RCh41}
 	G(\lambda)= \sum\nolimits^m_{j=0} A_j \lam^j+C(\lambda E-A)^{-1}B =:   P(\lambda)+C(\lambda E-A)^{-1}B,
 \end{equation}
 where $ \lam E- A$ is an $r\times r$ matrix pencil with $E$ being nonsingular,  $C\in\mathbb{C}^{n\times r}$ and $B\in\mathbb{C}^{r\times n}.$
 The realization (\ref{minrel2_RCh41}) is said to be {\em minimal} if the size of the pencil $\lam E- A$ is the smallest among all the realizations of $G(\lambda),$ see~\cite{kailath}. The matrix polynomial
 \begin{equation} \label{slamsystemmatrix_RCh41}
 	{\mathcal{S}}(\lambda ) : = \left[
 	\begin{array}{c|c}
 		P(\lambda) & C \\
 		\hline
 		B & A-\lambda E
 	\end{array}
 	\right]
 \end{equation} is called the  {\em{system matrix}} (or the Rosenbrock system matrix) of $G(\lam)$ associated with the realization (\ref{minrel2_RCh41}). The system matrix $\mathcal{S}(\lam)$ is said to be {\em irreducible } if the realization (\ref{minrel2_RCh41}) is minimal. The system matrix  $\mathcal{S}(\lam)$ is irreducible  if and only if $\rank\Big(\left[\begin{array}{cc} B & A - \lam E\end{array}\right]\Big) = r = \rank\Big(\left[\begin{array}{cc} C^T & (A - \lam E)^T\end{array}\right]^T\Big), $ see~\cite{kailath, rosenbrock70}. Observe that  $\eig(G) \subset \eig(\mathcal{S})$ and we have  $ \eig(\mathcal{S}) = \mathrm{Sp}(G)$ when $\mathcal{S}(\lam)$ is irreducible, see~\cite{rafinami1, rosenbrock70}.

 An $n\times n$ matrix polynomial  $U(\lam)$ is said to be {\em unimodular} if $ \det(U(\lam))$ is a nonzero constant independent of $\lam.$ A rational matrix $G(\lam)$ is said to be {\em proper } if $ G(\lam) \rightarrow D$ as $ \lam \rar \infty,$ where $D$ is a matrix. An $n\times n$ rational matrix $F(\lam)$ is said to be {\em  biproper} if $F(\lam)$ is proper and $F(\infty)$ is a nonsingular matrix~\cite{vardulakis}.

 \begin{definition}[\cite{rafiran1}] \label{stln1} Let  $\mathbb{L}(\lam)$ be an $(mn+r)\times (mn+r)$ irreducible system matrix of the form
 	\be\label{lbb1} \mathbb{L}(\lam) := \left[
 	\begin{array}{c|c}
 		\mathcal{X}- \lam \mathcal{Y} & \mathcal{C} \\
 		\hline
 		\mathcal{B} &  H - \lam K \\
 	\end{array}
 	\right], \ee where $ H- \lam K$ is an $r\times r$  pencil with $K$ being nonsingular. Then  $\mathbb{L}(\lam)$ is said to be a Rosenbrock strong linearization of $G(\lam)$ if the following                                       conditions hold.
 	
 	\begin{itemize}
 		\item[(a)] There exist $mn\times mn$ unimodular matrix polynomials $U(\lam)$ and $V(\lam)$, and $r\times r$ nonsingular matrices $U_0$ and $V_0$  such that
 		\be
 		\left[
 		\begin{array}{c|c}
 			U(\lam) & 0 \\
 			\hline
 			0 & U_0 \\
 		\end{array}
 		\right] \mathbb{L} (\lam) \left[
 		\begin{array}{c|c}
 			V(\lam) & 0 \\
 			\hline
 			0 & V_0 \\
 		\end{array}
 		\right] = \left[
 		\begin{array}{c|c}
 			I_{(m-1)n} & 0 \\
 			\hline
 			0 &  \mathcal{S}(\lam) \\
 		\end{array}
 		\right]. \nonumber \ee
 		
 		\item[(b)]	There exist $mn\times mn$  biproper rational  matrices $\mathcal{O}_{\ell} (\lam)  $ and $\mathcal{O}_r (\lam) $ such that
 		\begin{equation}
 			\mathcal{O}_{\ell} (\lam)  \, \lam^{-1} \mathbb{G} (\lam)  \, \mathcal{O}_r (\lam) =
 			\left[ \begin{array}{c|c}
 				I_{(m-1)n} & 0 \\
 				\hline
 				0 & \lam^{-m} G(\lam) \\
 			\end{array}
 			\right], \nonumber
 		\end{equation}
 		where $\mathbb{G}(\lam) :=\mathcal{X}- \lam \mathcal{Y} + \mathcal{ C} (\lam K -H)^{-1} \mathcal{ B}  $ is the transfer function of $ \mathbb{L}(\lam)$.
 	\end{itemize}
 	The pencil $\mathbb{L}(\lam)$ is also referred to as a Rosenbrock strong linearization of $\mathcal{S}(\lam).$
 	
 \end{definition}
 
 We refer to \cite{rafiran1}  for more on Rosenbrock strong linearizations of $G(\lam)$ and the relation between the  structural indices of (finite and infinite) zeros and poles of $G(\lam)$ and $\mathbb{L}(\lam).$
 Suffice it to say that the condition (a) ensures~(see, \cite[Theorem~3.4]{rafinami3}) that $ U(\lam) \mathbb{G}(\lam)V(\lam)  = \diag( I_{(m-1)n}, \; G(\lam))$ which in turn ensures that $G(\lam)$ and $\mathbb{G}(\lam)$ have the same finite zeros and poles. The irreducibility of $\mathbb{L}(\lam)$ guarantees that the finite zeros and poles of $\mathbb{G}(\lam)$ are the same as the finite eigenvalues of $\mathbb{L}(\lam)$ and  $H - \lam K,$ respectively; see~\cite{kailath, rafiran1}. On the other hand, the condition (b) ensures that the structural indices of zeros and poles of $G(\lam)$ at infinity can be recovered from the structural indices of eigenvalues and poles of $\mathbb{L}(\lam)$ at infinity (see~\cite{rafiran1}). See~\cite{admz} for an alternative definition of strong linearization of rational matrices and the follow-up papers \cite{fmqv1, fqp3, fmq} for more on linearization. Thus the zeros and poles of $G(\lam)$ including their structural indices can be obtained by solving the eigenvalue problems $\mathbb{L}(\lam) v = 0$ and $(H-\lam K)u=0$; see~\cite{rafinami1,rafinami3,rafiran1,rafiran4}.

\subsection{Index tuples} \label{basicdef}

Index tuples play a very important role in defining and analyzing Fiedler-like pencils. For a ready reference, we now define index tuples and matrix assignments. 

For $k,\ell \in \mathbb{Z},$ we use the notation $(k:\ell):= (k,k+1,\ldots, \ell) ~ \mbox{if}~ k \leq \ell,$ and $(k:\ell):= \emptyset ~\mbox{if}~ k > \ell.$ We define $(\infty:\ell):= \emptyset$ for any  integer $ \ell$.


%


\begin{definition} [Index tuple] A tuple $\sig : =(j_1 , j_2 ,\ldots , j_p) \in \mathbb{Z}^p$ is said to be an \emph{index tuple} containing indices from $\mathbb{Z}.$  We define $- \sigma : = ( -j_1, -j_2, \ldots , -j_p)$, $rev(\sigma):=( j_p , j_{p-1}, \ldots, j_2, j_1)$ and $\sigma + q:= ( j_1 +q ,  j_2 +q, \ldots , j_p +q)$ for  $q \in \mathbb{Z}. $ Further, for any index tuples $\alpha_1 := ( i_1  , \ldots , i_p )$ and $\alpha_2 := ( j_1  , \ldots , j_q)$, we define  $\alpha_1 \cup \alpha_2 := (\alpha_1 , \alpha_2) = ( i_1  , \ldots , i_p , j_1  , \ldots , j_q)$.
\end{definition}

\begin{definition}  Let $\alpha$ and $\beta$ be index tuples. Then $\alpha$ is said to be a \emph{subtuple} of $\beta$ if $\alpha = \beta$ or if $\alpha$ can be obtained from $\beta$ by deleting some indices in $\beta.$
\end{definition}

Consecutions and inversions of an index tuple are easy to derive and it will be used frequently in this paper.

\begin{definition}  [Consecutions and inversions, \cite{rafiran3}] \label{coninvoftuple} Let $\alpha$ be an index tuple containing indices  from  $\{ 0 : m\}$ (resp., $-\{0:m\}$). Suppose that $t \in \alpha$. Then we say that $\alpha$ has $p$ \emph{consecutions} at $t$ if $ ( t , t+1 , \ldots, t+p)$ is a subtuple of $\alpha$ but $ ( t , t+1 , \ldots, t+p, t+p+1)$ is not a subtuple of $\alpha$. We denote the number of consecutions of $\alpha$ at $t$ by $ c_{t}(\alpha)$. If $ t \notin \alpha$ then we define $ c_{t}(\alpha) :=-1.$ We say that $\alpha$ has $q$  \emph{inversions} at $t$ if $ ( t+q , t+q-1 , \ldots, t)$ is a subtuple of $\alpha$ but $ ( t+q+1 , t+q , \ldots, t)$ is not a subtuple of $\alpha$. We denote the number of inversions of $\alpha$ at $t$ by $ i_{t}(\alpha)$. If $ t \notin \alpha$ then we define $ i_{t}(\alpha) :=-1.$
\end{definition}

\begin{example} \label{exampleofnewconse_RCh4} Let $\alpha := ( 1,0,2,1,3,2,4,1,3,2,1)$ be an index tuple containing indices from $\{0:6\}$.  Then $ c_0(\alpha) = 3$ as $(0,1,2,3)$ is a subtuple of $\alpha$ and $(0,1,2,3,4)$ is not a subtuple of $\alpha$. Similarly, $ c_3(\alpha) = 1$ as $(3,4)$ is a subtuple of $\alpha$ and $(3,4,5)$ is not a subtuple of $\alpha$. Further, $ i_0(\alpha) = 1$ as $(1,0)$ is a subtuple of $\alpha$ and $(2,1,0)$ is not a subtuple of $\alpha$. Similarly, observe that $i_1(\alpha) = 3$ and $ i_3(\alpha) = 1$. As $5 \notin \alpha$ we have  $c_5(\alpha) = -1 $ and $i_5(\alpha) =-1.$
\end{example}

For   $X \in \mathbb{C}^{n \times n}$, we consider the elementary matrices  given by~\cite{Bueno_DFR15}
$$ M_{0} (X) := \left[
\begin{array}{@{}cc@{}}
	I_{(m-1)n} &  \\
	& X \\
\end{array}
\right],~M_{i} (X) := \scalemath{.98}{ \left[
	\begin{array}{@{}cccc@{}}
		I_{(m-i-1)n} &  & & \\
		& X & I_{n} & \\
		& I_{n} & 0  & \\
		&   &   & I_{(i-1)n}\\
	\end{array}
	\right]} , \, i=1:m-1, $$
$$ M_{-m}(X) := \left[
\begin{array}{@{}cc@{}}
	X &  \\
	& I_{(m-1)n} \\
\end{array}
\right] \text{ and } M_{-i}(X) := \scalemath{.9}{\left[
	\begin{array}{@{}cccc@{}}
		I_{(m-i-1)n} &  & & \\
		& 0 & I_{n} & \\
		& I_{n} & X  &  \\
		&   &   & I_{(i-1)n}\\
	\end{array}
	\right]}  \, i=1:m-1. $$
For $i=1:m-1$, the matrices $M_i(X)$ and $M_{-i}(X)$ are invertible for any $X$ and  $(M_i(X))^{-1} = M_{-i}(-X).$  On the other hand, the matrices $M_{0} (X)$ and $M_{-m} (X)$ are invertible if and only if $X$ is invertible. Note that $M_{i} (X) M_{j} (Y) = M_{j} (Y) M_{i} (X)$ holds for any matrices $X, Y \in \mathbb{C}^{n \times n}$ if $\scalemath{1.2}{|}|i|-|j|\scalemath{1.2}{|} > 1$.

For the rest of the paper, we assume that $P(\lambda) := \sum_{j=0}^m \lambda^j A_j.$ For $i \in \{-m:m-1\}$, we define \cite{Bueno_DFR15}
$$ M_i^P  : =
\left\{ \begin{array}{ll}
	M_i(-A_i), & \mbox{if}~ i \geq 0\\
	M_i(A_{-i}), & \mbox{if}~ i <  0.
\end{array}
\right.$$
Then  $ M_i^P $  are the Fiedler companion matrices  associated with $P(\lam)$ and are given by \cite{AV,tdm, Bueno_DFR15}
$$ M_{0}^P = \left[
\begin{array}{@{}cc@{}}
	I_{(m-1)n} &  \\
	& -A_0 \\
\end{array}
\right], \, M_{i}^P = \left[
\begin{array}{cccc}
	I_{(m-i-1)n} &  & & \\
	& -A_i & I_{n} & \\
	& I_{n} & 0  & \\
	&   &   & I_{(i-1)n}\\
\end{array}
\right] ~\mbox{for}~ i= 1 : m-1,$$
$$~M_{-m}^P = \left[
\begin{array}{@{}cc@{}}
	A_m &  \\
	& I_{(m-1)n} \\
\end{array}
\right], \,  M^P_{-i}= \left[
\begin{array}{cccc}
	I_{(m-i-1)n} &  & & \\
	& 0 & I_{n} & \\
	& I_{n} & A_i &  \\
	&   &   & I_{(i-1)n}\\
\end{array}
\right] \text{ for }  i= 1 : m-1.$$ Note that $M^P_{0}$ (resp., $M^P_{-m} $ ) is invertible if and only if $A_0$ (resp., $A_m$) is invertible. We define $M^P_{-0} := (M^P_{0})^{-1}$ and $M_{m}^P := (M^P_{-m})^{-1}. $

\begin{definition} [Matrix assignments, \cite{Bueno_DFR15}] Let $\textbf{t} := (t_1 ,t_2, \ldots , t_r)$ be a nonempty index tuple containing indices from $\{ \pm 0, \pm 1, \ldots, \pm m \}$ and $X:=( X_1 ,X_2, \ldots , X_r)$ be a tuple of $n \times n$ matrices. We define
	$ M_{\textbf{t}} (X) : = M_{t_1} (X_1) M_{t_2} (X_2) \cdots M_{t_r} (X_r)$ and say that $X$ is a \emph{matrix assignment} for $\textbf{t}$. Further, we say that the matrix $X_j$ is assigned to the index $t_j$ in $\textbf{t}$. The matrix assignment $X$ for $\textbf{t}$ is said to be nonsingular if the matrices assigned by $X$ to the positions in $\textbf{t}$ occupied by the $ \pm 0$ and $\pm m$ indices are nonsingular. Further, we define $rev(X):=(X_r, \ldots, X_1).$
\end{definition}

For an index tuple $\textbf{t}$,  $M_{\textbf{t}} (X)$ is invertible if $X$ is an invertible matrix assignment. We define $M_{\textbf{t}} (X) : =  I_{mn}$ if $\textbf{t}$ is an empty set. Further, if $X^1 , \ldots , X^s$ are matrix assignments for the index tuples $\textbf{t}_1, \ldots, \textbf{t}_s$, respectively, then we define \cite{Bueno_DFR15}
$$
M_{(\textbf{t}_1 , \ldots, \textbf{t}_s)} (X^1 , \ldots, X^s) := M_{\textbf{t}_1} (X^1) \cdots M_{\textbf{t}_s} (X^s).
$$
We define $M_{\textbf{t}}^P : = M^P_{t_1} M^P_{t_2} \cdots M^P_{t_r}$ if $\textbf{t} := (t_1 , t_2,\ldots, t_r)$ is a nonempty index tuple.



%

\begin{definition}  \cite{Bueno_DFR15} Let $\textbf{t} := (t_1 ,t_2, \ldots , t_r)$ be an index tuple containing indices from $\{ \pm 0, \pm 1, \ldots, \pm m \}$. Then $X:=( X_1 ,X_2, \ldots , X_r)$ is said to be the \emph{trivial matrix assignment} associated with $P(\lam)$ if  $M_{t_j} (X_j) = M_{t_j}^P$ for $ j=1:r$.
\end{definition}

Thus, if $X$ is the trivial matrix assignment for $\textbf{t}$ associated with $P(\lambda)$, then $M_{\textbf{t}} (X) = M_{\textbf{t}}^P$.

With a view to providing a unique representation of equivalent index tuples, we now define SIP, rsf, and csf of an index tuple, which we will use extensively.

\begin{definition} \cite{VAEN, MFT, Bueno_DFR15} Let $\tau:= (j_1, j_2, \ldots , j_q)$ be an index tuple containing indices from $\mathbb{Z}$ and $\sigma := (i_1, i_2, \ldots , i_t)$ be an index tuple containing indices from $\{0: h\}$ for some non-negative integer $h$. Then:
	\begin{enumerate}
		\item[(a)] $j_p$ is said to be a \emph{simple index} of $\tau$ if $j_p \neq j_k$ for $k= 1: q$ and $k \neq p.$ We say that $\tau$ is \emph{simple} if each index $j_p $ is a simple index for $p =1: q$.
		
		\item[(b)] $\sigma$ is said to satisfy the \emph{Successor Infix Property (SIP)} if for every pair of indices $i_a, i_b \in \sigma$ with $1 \leq a < b \leq t$ satisfying $i_a = i_b,$ there exists at least one index $i_c = i_a +1$ such that $a < c <b.$ Set $\alpha := -\sigma$. Then $\alpha$ is said to satisfy the SIP if $\alpha + h $ satisfies the SIP.
		
		\item[(c)]$\sigma$ is said to be in  \emph{column standard form} if
		$$\sigma= ( a_s: b_s, a_{s-1}: b_{s-1}, \ldots , a_2:b_2, a_1 : b_1),$$
		for some $0 \leq b_1 < \cdots < b_{s-1} < b_s \leq h $ and $0 \leq a_j \leq b_j ,$ for all $j =1 , \ldots ,s. $ Set $\beta := -\sigma$. Then $\beta$ is said to be in column standard form if $\beta + h$ is in column standard form. 
		
		\item[(d)]$\sigma$ is said to be in  \emph{row standard form} if
		$$\sigma= ( rev(a_1: b_1), rev(a_{2}: b_{2}), \ldots , rev(a_{s-1}:b_{s-1}), rev(a_s : b_s)),$$
		for some $0 \leq b_1 < \cdots < b_{s-1} < b_s \leq h $ and $0 \leq a_j \leq b_j ,$ for all $j =1 , \ldots ,s. $ Set $\beta := -\sigma$. Then $\beta$ is  said to be in row standard form if $\beta + h$ is in row standard form.
	\end{enumerate}
\end{definition}

Let $\alpha$ be an index containing indices from $\{0:m\}$ (resp., $-\{0:m\}$) such that $\alpha$ satisfies the SIP. Then the positions of the block entries of  $M_{\alpha} (\mathcal{X})$ do not depend upon the particular matrix assignment $\mathcal{X}$, that is, the positions of the block entries of  $M_{\alpha} (\mathcal{X})$ depend only on $\alpha$, see \cite{Bueno_DFR15}.

\begin{definition}\cite{VAEN, Bueno_DFR15} Let $\alpha$ and $\beta$ be two index tuples containing indices from $\{0:m\}$ (resp., $-\{0:m\}$). Then $\alpha$ is said to be \emph{equivalent} to $\beta$ (written as $\alpha \sim \beta$) if $M^P_\alpha = M^P_\beta$.
\end{definition}

It is shown in~\cite{VAEN, MFT, Bueno_DFR15} that an index tuple satisfying the SIP is equivalent to a unique index  tuple in column/row standard form, see~\cite{VAEN, MFT, Bueno_DFR15} for more details.

	\begin{definition}[csf, rsf, \cite{VAEN, MFT}] Let $\sig$ be an index tuple containing indices from $\{0:m\}$ (resp., $-\{0:m\}$) such that $\sig$ satisfies the SIP. Then the unique tuple in column standard form which is equivalent to $\sig$ is called the column standard form of $\sig$ and is denoted by $csf(\sigma).$ Similarly, the unique tuple in row standard form which is equivalent to $\sig$ is called the row standard form of $\sig$ and is denoted by $rsf(\sigma).$
\end{definition}

The following results will be used frequently in this paper. 
 \begin{proposition}\cite{rafiran3, rafiran4, RanThesis} \label{lemeqnconrsfcsf16a18n730_PCh3} Let $\alpha$ be an index tuple containing indices from $ \{ 0:m \}$ such that $ \alpha$ satisfies the SIP. Let $s \in \alpha$ and $ c_s(\alpha)$ be the number of consecutions of $\alpha$ at $s$. Then $\alpha \sim \left( \alpha^L, s, s+1, \ldots , s+c_s(\alpha) , \alpha^R \right)$ for some tuples $\alpha^L$ and $\alpha^R$  such that $s \notin \alpha^L$ and $ s+c_s(\alpha) , s+c_s(\alpha)+1 \notin \alpha^R$. Similarly, let $ t \in \alpha$ and $ i_t(\alpha)$ be  the number of inversions of $ \alpha$ at $ t$. Then 
	$\alpha \sim \left( \alpha^L,  t+i_t(\alpha), \ldots ,t+1, t , \alpha^R \right)$for some tuples $\alpha^L$ and $\alpha^R$  such that $ t\notin \alpha^R$ and $ t+i_t(\alpha) , t+i_t(\alpha)+1 \notin \alpha^L$.
\end{proposition}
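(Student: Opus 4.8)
The plan is to pass to the column standard form, where the assertion becomes transparent, and then to argue that the consecution number is an invariant of the equivalence class so that the length of the exhibited block matches $c_s(\alpha)$. Since $\alpha$ satisfies the SIP, the result quoted above gives $\alpha \sim \gamma := \mathrm{csf}(\alpha) = (a_s:b_s, a_{s-1}:b_{s-1}, \ldots, a_1:b_1)$ with $0 \le b_1 < \cdots < b_s \le m$ and $a_j \le b_j$. Reading $\gamma$ from left to right, the successive runs have strictly decreasing tops $b_s > \cdots > b_1$, and each run $(a_j:b_j)$ is the contiguous increasing block $a_j, a_j+1, \ldots, b_j$. I would treat consecutions first; the inversion statement is entirely analogous with $\mathrm{csf}$ replaced by $\mathrm{rsf}$.

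First I would read off the desired form inside $\gamma$. Let $S := \{\, j : a_j \le s \le b_j \,\}$ be the set of runs containing $s$ and let $J := \max S$. Any consecutive-increasing subtuple $(s, s+1, \ldots)$ starting inside a given run cannot reach a value larger than that run's top, because every run appearing later has a strictly smaller top; hence the longest such subtuple starting at $s$ is the tail $s, s+1, \ldots, b_j$ of a run containing $s$, and maximizing over $S$ gives length $b_J - s$ since the tops increase with $j$. Splitting run $J$ just before its entry $s$, I write $\gamma = (\alpha^L, s, s+1, \ldots, b_J, \alpha^R)$, where $\alpha^L$ collects all runs of index $>J$ together with the initial segment $a_J, \ldots, s-1$ of run $J$, and $\alpha^R$ collects the runs of index $<J$. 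For $j > J$ one has $b_j > b_J \ge s$, so $j \notin S$ forces $a_j > s$; hence no run in $\alpha^L$ contains $s$, and neither does the segment $a_J, \ldots, s-1$, giving $s \notin \alpha^L$. Every entry of $\alpha^R$ is $\le b_{J-1} < b_J$, so neither $b_J$ nor $b_J+1$ lies in $\alpha^R$. Thus $\gamma$ already has the required shape, with the top of the block equal to $b_J$.

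It remains to identify $b_J - s$ with $c_s(\alpha)$, i.e.\ to show that the consecution number is unchanged under $\sim$; this is the real content of the argument and the step I expect to be the main obstacle. Among index tuples drawn from $\{0:m\}$ satisfying the SIP, equivalence is realized by the commutation moves $M^P_i M^P_j = M^P_j M^P_i$ (valid whenever $|i-j|>1$), so it suffices to check that $c_s$ is preserved by a single transposition of two adjacent indices $i, j$ with $|i-j|>1$. I would phrase $c_s(\alpha)$ as the length of a longest path in the directed graph on the positions of $\alpha$ having an edge from a position carrying value $v$ to a later position carrying value $v+1$, with the path required to begin at an occurrence of $s$. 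Two adjacent indices with $|i-j|>1$ are never a pair $(v, v+1)$, so the transposition neither creates nor destroys an edge between the two swapped positions; for edges joining a swapped position to any third position, the relation ``earlier/later'' is unaffected because the two swapped positions are consecutive. Hence every longest path is preserved, $c_s$ is a $\sim$-invariant, and $b_J - s = c_s(\gamma) = c_s(\alpha)$. The delicate point is ensuring that the SIP equivalence class is genuinely exhausted by commutation moves, so that this local check is decisive.

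Finally, the inversion statement follows by the mirror argument. An inversion at $t$ is a longest consecutive-\emph{decreasing} subtuple $(t+q, \ldots, t+1, t)$, which is read off the row standard form $\mathrm{rsf}(\alpha)$ (whose runs are the decreasing blocks $rev(a_j:b_j)$) exactly as above, and the same longest-path argument, now applied to consecutive-decreasing subtuples, shows that $i_t$ is a $\sim$-invariant. This yields $\alpha \sim (\alpha^L, t+i_t(\alpha), \ldots, t+1, t, \alpha^R)$ with $t \notin \alpha^R$ and $t+i_t(\alpha), t+i_t(\alpha)+1 \notin \alpha^L$, completing the proof.
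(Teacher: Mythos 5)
The paper does not actually prove this proposition: it is quoted from \cite{rafiran3, rafiran4, RanThesis} and stated without proof, so there is no in-paper argument to compare yours against. Judged on its own merits, your proof is correct. The reduction to the column standard form is sound: in $\mathrm{csf}(\alpha)$ the runs have strictly decreasing tops as one reads left to right, so an increasing chain $s, s+1, \ldots$ realized as a subtuple can never gain by jumping to a later run, and the longest one starting at an occurrence of $s$ is indeed the tail of the run containing $s$ with the largest top $b_J$; your check that the resulting split $(\alpha^L, s, \ldots, b_J, \alpha^R)$ satisfies $s \notin \alpha^L$ and $b_J, b_J+1 \notin \alpha^R$ is also correct. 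The step you flag as delicate, the $\sim$-invariance of $c_s$, is handled correctly by your longest-chain argument: an adjacent transposition of indices $i,j$ with $|i-j|>1$ can never separate two consecutive values $v, v+1$ of a witnessing subtuple (they would have to occupy the two swapped slots), and the order of a swapped slot relative to any third position is unchanged, so every witness survives in both directions. The one external input you need is that a SIP tuple is carried to its column standard form by commutation moves alone; note this is strictly weaker than your stated worry that the entire equivalence class be exhausted by commutations --- for this proposition you only need the single path from $\alpha$ to $\mathrm{csf}(\alpha)$, and that path is precisely how the csf is constructed in \cite{VAEN, MFT, Bueno_DFR15}, which the present paper already invokes when asserting existence and uniqueness of the csf. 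With that citation in place your argument is complete, and the inversion half does follow by the mirror argument on $\mathrm{rsf}(\alpha)$ exactly as you indicate.
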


\begin{proposition} \label{lemeqnconrsfcsf17a18d10} Let $\alpha$ be an index tuple containing indices from $ -\{ 0:m\}$ such that $ \alpha$ satisfies the SIP. Then we have the following. 
	\begin{itemize}
		\item[(a)] If $-s \in \alpha$ and $ c_{-s} (\alpha) = p$, then $\alpha \sim \big( \alpha^L, -s, -(s-1), \ldots , -(s-p), \alpha^R \big)$ for some index tuples $ \alpha^L$ and $ \alpha^R$ such that $ -s \notin \alpha^L$ and $ -(s-p ) , -(s-p-1) \notin \alpha^R$.
		
		\item[(b)] if $ -t \in \alpha$ and $ i_{-t} (\alpha) =  q $, then
		$\alpha \sim \big(\alpha^L,  -(t-q), \ldots ,-(t-1),-t, \alpha^R \big) $ for some index tuples $ \alpha^L$ and $ \alpha^R$ such that $ -t \notin \alpha^R$ and $ -(t-q ) , -(t-q-1) \notin \alpha^L$.
	\end{itemize}
\end{proposition}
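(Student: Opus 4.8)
The plan is to reduce both statements to Proposition~\ref{lemeqnconrsfcsf16a18n730_PCh3} by means of the shift map $\Psi\colon \alpha \mapsto \alpha + m$, which carries an index tuple with indices from $-\{0:m\}$ to one with indices from $\{0:m\}$. Two features make this the natural tool. First, by the very definition of the SIP for tuples with negative indices, $\alpha$ satisfies the SIP if and only if $\alpha+m$ does. Second, adding a constant to every entry preserves the consecutive-run structure, so $c_{-s}(\alpha) = c_{m-s}(\alpha+m)$ and $i_{-t}(\alpha) = i_{m-t}(\alpha+m)$ for all $s,t$; note that $-s\in\alpha$ forces $s\in\{0:m\}$ and hence $m-s\in\{0:m\}$.

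Granting for the moment that $\sim$ is compatible with $\Psi$ (the content of the bridging lemma below), part~(a) follows at once. Set $\beta := \alpha+m$; then $\beta$ satisfies the SIP and $c_{m-s}(\beta)=p$. Applying the consecution half of Proposition~\ref{lemeqnconrsfcsf16a18n730_PCh3} to $\beta$ at the index $m-s$ yields $\beta \sim (\beta^L,\, m-s,\, m-s+1,\, \ldots,\, m-s+p,\, \beta^R)$ with $m-s\notin\beta^L$ and $m-s+p,\, m-s+p+1 \notin \beta^R$. Subtracting $m$ from every index (i.e.\ applying $\Psi^{-1}$) turns the consolidated run into $-s,\, -(s-1),\, \ldots,\, -(s-p)$ and the side conditions into $-s\notin\alpha^L$, $-(s-p),\,-(s-p-1)\notin\alpha^R$, which is exactly part~(a) with $\alpha^L=\beta^L-m$ and $\alpha^R=\beta^R-m$ (each still lying in $-\{0:m\}$). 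Part~(b) is identical, using the inversion half of Proposition~\ref{lemeqnconrsfcsf16a18n730_PCh3} at the index $m-t$ and then subtracting $m$.

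The step that genuinely requires justification -- and the one I expect to be the main obstacle -- is the compatibility of $\sim$ with $\Psi$: for tuples $\gamma,\delta$ with entries in $\{0:m\}$ we need $M^P_\gamma = M^P_\delta \iff M^P_{\gamma-m}=M^P_{\delta-m}$, so that $\Psi$ intertwines the two equivalence relations. This is not automatic, since $M^P_j$ and $M^P_{j-m}$ are different elementary matrices sitting at different block positions. What saves the argument is that the only relation used to manipulate SIP tuples is the commutation law $M^P_aM^P_b = M^P_bM^P_a$, valid precisely when $||a|-|b||>1$, together with the SIP-consolidation rules; and the governing quantity $||a|-|b||$ is $\Psi$-invariant, since $|{-s}+m| = m-s$ gives $\big||{-s}+m|-|{-s'}+m|\big| = |s-s'| = \big||{-s}|-|{-s'}|\big|$. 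Hence every elementary move in the positive setting has an exact mirror in the negative setting and conversely. I would therefore prove the compatibility by checking it on these generating moves -- commutations and the SIP-reductions underlying the uniqueness of $csf$/$rsf$ recorded above for \emph{both} sign classes -- so that any chain of moves witnessing $M^P_\gamma=M^P_\delta$ transfers verbatim to one witnessing $M^P_{\gamma-m}=M^P_{\delta-m}$. The only delicate points are repeated indices, where bare commutation must be supplemented by SIP-consolidation, and the endpoint indices $\pm0,\pm m$, whose elementary matrices are invertible only under the stated nonsingularity hypotheses; both are handled exactly as in the positive case. (Equivalently, one may bypass the reduction and simply re-run the proof of Proposition~\ref{lemeqnconrsfcsf16a18n730_PCh3} with all indices negated, the $\Psi$-invariance above being precisely what guarantees that argument goes through unchanged.)
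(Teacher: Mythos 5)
Your proof is correct and is essentially the argument the paper leaves implicit: Proposition~\ref{lemeqnconrsfcsf17a18d10} is stated without proof as the negative-index analogue of the cited Proposition~\ref{lemeqnconrsfcsf16a18n730_PCh3}, and since the SIP, consecutions/inversions, and column/row standard forms for tuples from $-\{0:m\}$ are all defined via the shift $\alpha \mapsto \alpha+m$, your reduction through that shift is exactly the intended justification. Your added care in transporting the equivalence $\sim$ across the shift --- observing that the moves witnessing Proposition~\ref{lemeqnconrsfcsf16a18n730_PCh3} are commutations governed by $||a|-|b||>1$, a quantity invariant under the shift --- is sound and supplies the one step the paper takes for granted.
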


\section{Extended generalized Fiedler pencil  of $P(\lam)$} \label{EGFPsOfP}  Fiedler pencils (FPs), generalized Fiedler pencils (GFPs), Fiedler pencils with repetition (FPRs) and generalized Fiedler pencils with repetition (GFPRs) are well known classes of strong linearizations of matrix polynomials \cite{tdm,BDD,VAEN,Bueno_DFR15}. In this section, we construct a new family linearizations (named as EGFPs) of $P(\lam)$ that subsumes FPs, GFPs, FPRs and GFPRs. We show by examples that EGFPs are rich source for construction of structure preserving linearizations for structured $P(\lam)$. Then in Section~\ref{LowbandP}, we characterize block tridiagonal and block penta-diagonal EGFPs of $P(\lam)$.


\begin{definition}[EGFP pf $P(\lam)$] \label{ggfprdef_PCh4} Let $(\sigma,\omega)$ be a permutation of $\{ 0 :m \}$. Set  $\tau:= - \omega.$ Let $\sigma_1$ and $\sigma_2$ be index tuples containing indices from $\sigma \setminus \{ m-1,m\}$ such that $(\sigma_1,\sigma , \sigma_2)$ satisfies the SIP. Similarly, let $\tau_1$ and $\tau_2$ be index tuples containing indices from $\tau \setminus \{ -1,-0\}$ such that $( \tau_1 , \tau ,\tau_2)$ satisfies the SIP. Let $X_1$, $X_2$, $Y_1$ and $Y_2$ be any arbitrary nonsingular matrix assignments for $ \sigma_1$, $\sigma_2$, $\tau_1$, and $\tau_2$, respectively. Then the pencil
	\begin{equation}\label{EGFPDef}
		L(\lambda):= M_{\tau_1} (Y_1) \, M_{\sigma_1} (X_1) \, ( \lambda M^P_{\tau} -  M^P_{\sigma} ) \, M_{\sigma_2} (X_2) \, M_{\tau_2} (Y_2)
	\end{equation}
	is said to be an \emph{extended generalized Fiedler pencil (EGFP)} of $P(\lambda)$.
\end{definition}


\begin{example}  \label{eg_egfpr} Let $P(\lambda) := \sum_{i=0}^5 \lambda^i A_i$. Then $L(\lam) :=  \big ( \lambda M^P_{(-5,-1)} - M^P_{(3,4,2,0)} \big ) M_{3} (X)  $ 
	    $$=\left[\begin{array}{ccccc} \lam A_{5}+A_{4} & -X & -I_n & 0 & 0\\ A_{3} &  \lam X + A_{2} & \lam I_n & -I_n & 0\\ -I_n & \lam I_n & 0 & 0 & 0\\ 0 & -I_n & 0 & 0 & \lam I_n\\ 0 & 0 & 0 & \lam I_n & \lam A_{1}+A_{0} \end{array}\right]$$
	is an EGFP of $P(\lam)$. Note that $L(\lam)$ does not belong to FPs, GFPs, FRs, and GFPRs. Note that if $P(\lam)$ is symmetric (i.e., $A_i=A_i^T$, for $i=0:m$) then $L(\lam)$ is symmetric for $X=-A_3$. Further, we mention that this symmetric pencil can not be generated using the framework given in  \cite{Bueno_DFR15} for symmetric linearizations. $\blacksquare$
\end{example}

It is easy to observe that the family of EGFPs of $P(\lam)$ contains all the Fiedler-like pencils FPs, GFPs, FPRs and GFPRs of $P(\lam)$. Indeed we have the following remark. 
\begin{remark} Let $L(\lam)$ be as given in Definition~\ref{ggfprdef_PCh4}. Then by considering special types of $\sig,\tau, \sig_j$ and $\tau_j$, $j=1,2,$ we have the following:
	\begin{itemize}
		\item[(1)] if  $\sig$ is a permutation of $\{0:m-1\}$, $\tau=(-m)$, $\sig_1=\emptyset=\sig_{2}$ and $\tau_1=\emptyset=\tau_{2}$ then $L(\lam)$ is an FP of $P(\lam)$. 
	\item[(2)] if $\sig_1=\emptyset=\sig_{2}$ and $\tau_1=\emptyset=\tau_{2}$ the  $L(\lam)$ is a GFP of $P(\lam)$. 
	\item[(3)] if $\sig$ is a permutation of $\{0:h\}, ~0\leq h \leq m-1$, $\tau$ is a permutation of $\{-m:-(h+1)\}$ then $L(\lam)$
	is a GFPR of $P(\lam)$.
	\item[(4)] if $\sig$ and $\tau$ are as given in (3), and  $X_i$ and $ Y_i, ~i=1,2,$ are  trivial matrix assignments, then  $L(\lam)$ is an FPR of $P(\lam)$.
	\end{itemize} 
\end{remark}

%

The following result shows that, with some generic nonsingularity conditions, the EGFPs are strong linearization of $P(\lam)$.  
\begin{theorem}\label{singularcond11_PCh4} If  all the matrix assignments $X_j$ and $Y_j$, $j=1,2,$ are nonsingular, then the EGFP $L(\lambda)$ as given in Definition~\ref{ggfprdef_PCh4}
	is a strong linearization of $P(\lambda)$. 
\end{theorem}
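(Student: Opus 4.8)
The plan is to show that $L(\lambda)$ is equivalent (via unimodular transformations) to $\mathrm{diag}(I_{(m-1)n},P(\lambda))$ and that the same holds for the reversal, by exploiting the multiplicative block structure of the defining product together with the fact that the central pencil $\lambda M^P_\tau - M^P_\sigma$ is already known to be a strong linearization. First I would observe that the outer factors $M_{\tau_1}(Y_1)$, $M_{\sigma_1}(X_1)$, $M_{\sigma_2}(X_2)$, $M_{\tau_2}(Y_2)$ are all \emph{constant} invertible matrices under the stated nonsingularity hypothesis: indeed, by the remarks following the definition of the elementary matrices, each $M_i(X)$ and $M_{-i}(X)$ with $1\le |i|\le m-1$ is invertible for \emph{any} $X$, while $M_0(X)$ and $M_{-m}(X)$ are invertible precisely when $X$ is invertible; since $\sigma_1,\sigma_2$ draw indices from $\sigma\setminus\{m-1,m\}$ and $\tau_1,\tau_2$ from $\tau\setminus\{-1,-0\}$, the only potentially singular indices ($0,-m$, and the $\pm m$, $\pm 0$ positions) are exactly those the nonsingular-matrix-assignment condition controls. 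Hence $M_{\tau_1}(Y_1)M_{\sigma_1}(X_1)$ and $M_{\sigma_2}(X_2)M_{\tau_2}(Y_2)$ are invertible constant matrices, i.e.\ (trivially) unimodular matrix polynomials.

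Next I would reduce to the GFP case. The central factor $\lambda M^P_\tau - M^P_\sigma$, where $(\sigma,\omega)$ is a permutation of $\{0:m\}$ and $\tau=-\omega$, is exactly a generalized Fiedler pencil of $P(\lambda)$, which is already established in the literature (\cite{BDD}) to be a strong linearization of $P(\lambda)$. So there exist unimodular $U(\lambda),V(\lambda)$ with $U(\lambda)(\lambda M^P_\tau-M^P_\sigma)V(\lambda)=\mathrm{diag}(I_{(m-1)n},P(\lambda))$. Then
\begin{equation*}
\bigl(U(\lambda)\,[M_{\tau_1}(Y_1)M_{\sigma_1}(X_1)]^{-1}\bigr)\,L(\lambda)\,\bigl([M_{\sigma_2}(X_2)M_{\tau_2}(Y_2)]^{-1}V(\lambda)\bigr)=\mathrm{diag}(I_{(m-1)n},P(\lambda)),
\end{equation*}
and since the bracketed constant factors are invertible, the composite left and right transforming factors are again unimodular. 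This settles the linearization condition. The key observation making this clean is that the outer factors are constant, so multiplying a unimodular polynomial by them preserves unimodularity.

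For strong linearization I must handle $\mathrm{rev}\,L(\lambda)=\mathcal{Y}+\lambda\mathcal{X}$ and show it is a linearization of $\mathrm{rev}\,P(\lambda)$. Here the main obstacle is that $\mathrm{rev}$ does not distribute over the product in a naive way, because the outer factors carry $\lambda$-dependence through the GFP central term only indirectly; I would instead relate $\mathrm{rev}\,L(\lambda)$ to a GFP of $\mathrm{rev}\,P(\lambda)$ using the symmetry of the elementary matrices under negation/reversal of indices and the identity $(M_i(X))^{-1}=M_{-i}(-X)$. Concretely, writing $L(\lambda)=C_\ell(\lambda M^P_\tau-M^P_\sigma)C_r$ with $C_\ell,C_r$ constant, one has $\mathrm{rev}\,L(\lambda)=C_\ell\bigl(\lambda\,\mathrm{rev}(\lambda M^P_\tau-M^P_\sigma)/\lambda\bigr)C_r$; more precisely $\mathrm{rev}\,L(\lambda)=C_\ell(M^P_\tau-\lambda M^P_\sigma)C_r=-C_\ell(\lambda M^P_\sigma-M^P_\tau)C_r$, and the pencil $\lambda M^P_\sigma-M^P_\tau$ (with the roles of $\sigma,\tau$ and hence the exponents reversed) is, up to the standard identification of $M^P_i$ with $M^{\mathrm{rev}P}_{j}$ under $i\mapsto -(m-i)$-type index maps, a GFP of $\mathrm{rev}\,P(\lambda)$. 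Thus the strong-linearization property of the central GFP transfers to $\mathrm{rev}\,L(\lambda)$, and multiplying by the constant invertible $C_\ell,C_r$ preserves it. The delicate bookkeeping is verifying the index correspondence so that $\lambda M^P_\sigma-M^P_\tau$ is genuinely recognized as a GFP of $\mathrm{rev}\,P$; once that is in hand, the result follows from the known strong-linearization property of generalized Fiedler pencils together with the unimodularity-preserving action of the constant outer factors.
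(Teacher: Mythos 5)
Your proposal is correct and follows essentially the same route as the paper: factor $L(\lambda)=C_\ell\,(\lambda M^P_\tau-M^P_\sigma)\,C_r$ with constant invertible outer factors (invertibility being exactly what the nonsingular-matrix-assignment hypothesis guarantees) and invoke the known strong-linearization property of the central GF pencil, since strict equivalence by constant nonsingular matrices preserves both the linearization and the strong-linearization properties. The only remark is that the ``delicate bookkeeping'' you defer in the reversal step is unnecessary: because $C_\ell$ and $C_r$ are constant, $\mathrm{rev}\,L(\lambda)=C_\ell\,\mathrm{rev}\bigl(\lambda M^P_\tau-M^P_\sigma\bigr)\,C_r$, and the definition of strong linearization applied to the GF pencil already asserts that $\mathrm{rev}\bigl(\lambda M^P_\tau-M^P_\sigma\bigr)$ is a linearization of $\mathrm{rev}\,P(\lambda)$, so there is no need to re-identify $\lambda M^P_\sigma-M^P_\tau$ as a GFP of $\mathrm{rev}\,P(\lambda)$ via an index correspondence.
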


\begin{proof} We have   $L(\lambda)=  M_{(\tau_1,\sigma_1)} (Y_1,X_1)\, T(\lambda)\,  M_{(\sigma_2,\tau_2)} (X_2,Y_2)$, where  $ T(\lambda):= \lambda M^P_{\tau} - M^P_{\sigma} $ is a GF pencil of $P(\lambda)$. As $X_j$ and $Y_j$, $j=1,2,$ are nonsingular matrix assignments, $ M_{(\tau_1,\sigma_1)} (Y_1,X_1)$ and $M_{(\sigma_2,\tau_2)} (X_2,Y_2)$ are nonsingular matrices. Since $T(\lambda)$ is a strong linearization of $P(\lambda)$, it follows that $L(\lambda)$ is a strong linearization of $P(\lambda)$.
\end{proof}

\noin {\bf Assumption:} Henceforth, for simplicity and without loss of generality, throughout this paper we assume that the matrix assignments are always~nonsingular.

\begin{remark} It follows from Definition~\ref{ggfprdef_PCh4} that  $ (\tau_1, \sigma_1) \not\sim (\sigma_1, \tau_1)$ and $ (\sigma_2, \tau_2) \not\sim ( \tau_2, \sigma_2)$ in general. So $ M_{\tau_1} (Y_1) M_{\sigma_1}(X_1) \neq M_{\sigma_1} (X_1) M_{\tau_1} (Y_1)$ and $M_{\sigma_2} (X_2) M_{\tau_2}  (Y_2)\neq M_{\tau_2} (Y_2)M_{\sigma_2}(X_2)$. Thus, for 
	$
	L(\lam) :=M_{\tau_1}(Y_1) M_{\sigma_1}(X_1) (\lambda  M^P_{\tau}  -  M^P_{\sigma}) M_{\sigma_2} (X_2)M_{\tau_2} (Y_2)
	$
	and
	$T(\lam):=  M_{\sigma_1} (X_1) $ $M_{\tau_1} (Y_1) (\lambda  M^P_{\tau}  -  M^P_{\sigma}) M_{\tau_2} (Y_2) M_{\sigma_2} (X_2),$
	we have $L(\lam) \neq T(\lam).$
	By interchanging the positions of $M_{\sig_j}(X_j)$ with $M_{\tau_j}(Y_j)$ in $L(\lam)$ we obtain a new family of pencils for $P(\lam)$. 
	
\end{remark}

An EGFP $L(\lam):=\lam L_1-L_0$ is said to be operation if each block of $L_1$ and $L_0$ is either $0,\, \pm I_n, \, A_j, A_j^{-1}, \,j=0:m,$ or any one of the matrices in the matrix assignments $X_j, Y_j$, $j=1,2.$ The following result shows that EGFPs are operation free. We prove Proposition~\ref{OF_EGFPR} in Appendix~\ref{OpeFreeAppen}.

\begin{proposition} \label{OF_EGFPR} Let $L(\lambda):=  M_{\tau_1} (Y_1) M_{\sigma_1} (X_1) (\lambda  M^P_{\tau}  -  M^P_{\sigma}) M_{\sigma_2}  (X_2)M_{\tau_2}  (Y_2) =: \lam L_1 - L_0$ be an EGFP of $P(\lam)$ such that $m-1$ and $m$ simultaneously do not belong to $\sigma$,  and $-1$ and $-0$ simultaneously do not belong to $\tau$. Then $L(\lam)$ is operation free.
\end{proposition}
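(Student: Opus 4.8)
The plan is to work from the factorization used in the proof of Theorem~\ref{singularcond11_PCh4},
\[
L(\lambda)=M_{\tau_1}(Y_1)\,M_{\sigma_1}(X_1)\,T(\lambda)\,M_{\sigma_2}(X_2)\,M_{\tau_2}(Y_2),\qquad T(\lambda):=\lambda M^P_{\tau}-M^P_{\sigma},
\]
and to split the claim according to the coefficients in $L(\lambda)=\lambda L_1-L_0$. Writing $W_\ell:=M_{\tau_1}(Y_1)M_{\sigma_1}(X_1)$ and $W_r:=M_{\sigma_2}(X_2)M_{\tau_2}(Y_2)$, we have $L_1=W_\ell\,M^P_{\tau}\,W_r$ and $L_0=W_\ell\,M^P_{\sigma}\,W_r$. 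It therefore suffices to show that each of these two products is operation free, i.e.\ has every block equal to $0$, $\pm I_n$, $A_j$, $A_j^{-1}$, or one of the matrices in $X_1,X_2,Y_1,Y_2$; treating $L_1$ and $L_0$ separately removes the $\lambda$ from the analysis and reduces everything to the operation-freeness of a single matrix product.

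First I would pin down the exact block structure of the central Fiedler matrices $M^P_{\sigma}$ and $M^P_{\tau}$. Using Propositions~\ref{lemeqnconrsfcsf16a18n730_PCh3} and~\ref{lemeqnconrsfcsf17a18d10} I would replace $\sigma$ by $csf(\sigma)$ and $\tau$ by $rsf(\tau)$, which leaves $M^P_{\sigma}$ and $M^P_{\tau}$ unchanged, and then read off the support and the atom of every nonzero block from the standard form. This is exactly where the hypotheses are used: if both $m-1,m\in\sigma$, then $M^P_{m}=\mathrm{diag}(A_m^{-1},I)$ and $M^P_{m-1}$ act on a common block column, so their product forces a composite block involving $A_m^{-1}$ and $A_{m-1}$ into $M^P_{\sigma}$ (already for $m=2$, $\sigma=(1,2)$ one computes that $M^P_{1}M^P_{2}$ has the block $-A_1A_2^{-1}$); symmetrically, a composite block involving $A_0^{-1}$ and $A_1$ appears in $M^P_{\tau}$ if both $-1,-0\in\tau$. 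Excluding these two configurations is precisely what makes $M^P_{\sigma}$ and $M^P_{\tau}$ themselves operation free, and the standard-form description tells me, block column by block column (resp.\ block row by block row), which columns carry only entries from $\{0,\pm I_n\}$ and which carry a coefficient atom $A_j$ or $A_j^{-1}$.

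Next I would treat left multiplication by $W_\ell$ and right multiplication by $W_r$ as sequences of block row and block column operations. Each elementary factor $M_i(X)$ with $1\le i\le m-1$, acting on the right, replaces block column $m-i$ by $(\text{col }m{-}i)\,X+(\text{col }m{-}i{+}1)$ and block column $m-i+1$ by the old column $m-i$, with the transposed statement on the left. The heart of the argument is a structural lemma: because the indices of $\sigma_1,\sigma_2$ lie in $\sigma\setminus\{m-1,m\}$ and those of $\tau_1,\tau_2$ lie in $\tau\setminus\{-1,-0\}$, and because $(\sigma_1,\sigma,\sigma_2)$ and $(\tau_1,\tau,\tau_2)$ satisfy the SIP, every column (resp.\ row) that is scaled by some $X_j$ or $Y_j$ has all of its blocks in $\{0,\pm I_n\}$ at the moment of the operation. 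Consequently the scaling yields only $0$ or $\pm X_j$ (resp.\ $\pm Y_j$), the companion column/row that is merely added contributes its own atoms unchanged, and inside the single product $W_\ell M^P_{\sigma}W_r$ (resp.\ $W_\ell M^P_{\tau}W_r$) the scaled and the added contributions occupy disjoint block positions; hence no block of $L_0$ (resp.\ $L_1$) ever accumulates a product or a sum of two distinct atoms. To position each elementary factor next to the block it modifies I would use the commutation rule $M_i(X)M_j(Y)=M_j(Y)M_i(X)$ for $\bigl||i|-|j|\bigr|>1$.

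The main obstacle I anticipate is precisely this structural lemma: proving that every scaled column/row is free of coefficient atoms and that the scaled and added contributions never land in the same block of a given coefficient. This is a careful bookkeeping over the standard-form supports, carried out index by index and in the correct multiplication order, in which one must check that each position touched by a multiplier still holds $0$ or $\pm I_n$ when it is touched; here the SIP is used to guarantee that a repeated index is always preceded by its successor index, which resets the relevant block to $I_n$ before it is acted upon again. Once this support-disjointness is in place, operation-freeness of every block of $L_1$ and $L_0$ follows by direct inspection of the row and column operations.
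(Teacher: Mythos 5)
Your skeleton is essentially the paper's: split $L(\lambda)$ into $L_1$ and $L_0$, observe that each is a single product of elementary matrices (the paper writes $L_0=M_{\alpha}(\cdot)$ with $\alpha=(\tau_1,\sigma_1,\sigma,\sigma_2,\tau_2)$ rather than keeping $M^P_\sigma$ as a separate "core", but this is only a difference of bookkeeping), and show that in such a product the coefficient atoms and the identity contributions never collide in one block. Your identification of where the hypotheses enter is also correct: if $m-1,m\in\sigma$ (resp.\ $-1,-0\in\tau$) then $A_{m-1}$ and $A_m^{-1}$ (resp.\ $A_1$ and $A_0^{-1}$) act on overlapping block positions and composite entries appear, exactly as in the paper's Example~\ref{eg_NOF_egfpr}.

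The gap is that the ``structural lemma'' you yourself flag as the main obstacle --- every block column (row) scaled by some $X_j$ or $Y_j$ holds only $0,\pm I_n$ at the moment it is scaled, and the scaled and added contributions occupy disjoint positions --- is the entire mathematical content of the proposition, and you do not prove it; ``careful bookkeeping over the standard-form supports'' names the difficulty rather than resolving it. In particular, $\sigma_1,\sigma_2,\tau_1,\tau_2$ can interleave positive and negative indices acting on overlapping block positions, so the cleanliness of a column after several operations is not visible from $csf(\sigma)$ and $rsf(\tau)$ alone. The paper closes exactly this step by (i) using the commutation rule to reduce to index tuples supported on a single set of the form $\{-h_1:_{-1}-h_2\}\cup\{h_2+1:h_3\}$ (Lemma~\ref{OF_lem}), and (ii) running a block-row recursion there in which the quantitative identities of Proposition~\ref{sp1rowblock} (e.g.\ $(e^T_{m-s}\otimes I_n)M_\alpha(*)=e^T_{m-(s+1+c_{s+1}(\alpha))}\otimes I_n$) together with the strict inequalities $k>\ell$ and $k<\ell$ of Propositions~\ref{lem_dpos} and~\ref{lem_dneg} deliver precisely the support-disjointness you assert. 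Some version of these consecution/inversion estimates is unavoidable: without them, the claim that no block ever accumulates a product or a sum of two distinct atoms remains an assertion, not a proof.
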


We end this section by constructing few examples of structure preserving linearizations for structured (symmetric, even, odd, palindromic, skew-symmetric) $P(\lam)$.

\begin{example} Let $P(\lambda) := \sum_{i=0}^6 \lambda^i A_i$ be symmetric. Then the EGFP $L(\lam)$ given by $L(\lam) :=  \big ( \lambda M^P_{(-6,-3,-2,-4,-0)} - M^P_{(1,5)} \big ) M^P_{-3} =$
	$$ \left[\begin{array}{cccccc} \lam A_{6} + A_{5} & -I_n & 0 & 0 & 0 & 0\\ -I_n& 0 & 0 & \lam I_n & 0 & 0\\
		 0 & 0 & 0 & -I_n & \lam I_n & 0\\ 
		 0 & \lam I_n & - I_n & \lam A_{4}-A_{3} &\lam  A_{3} & 0\\
		  0 & 0 & \lam I_n&\lam A_{3} & \lam A_{2} +A_{1} & -I_n\\ 0 & 0 & 0 & 0 & -I_n & -A^{-1}_{0} \end{array}\right]
	$$ 	is a symmetric strong linearization of $P(\lam)$ if $A_0$ is nonsingular. 
\end{example}

 A matrix polynomial $ P(\lam) := \sum_{j=0}^{m} A_j \lam^j$ is said to be  $T$-even if $P(-\lam)^T =P(\lam)$,  i.e., $A_i^T=(-1)^iA_i$  for $i=0:m$. Similarly, $ P(\lam)$ is said to be  $T$-odd if $P(-\lam)^T =-P(\lam)$,  i.e., $A_i^T=(-1)^{i+1}A_i$  for $i=0:m$.

\begin{example} Let $P(\lambda) := \sum_{i=0}^5 \lambda^i A_i$ be $T$-even. Let $L(\lam) $ be the EGFP of $P(\lam)$ as given in Example~\ref{eg_egfpr}, where $X=-A_3$.  Set $Q= \diag (I_n, -I_n, I_n, -I_n, I_n)$. Then 
	$$QL(\lam)=\left[\begin{array}{ccccc} \lam A_{5}+A_{4} & A_3 & -I_n & 0 & 0\\ 
		-A_{3} &  \lam A_3 - A_{2} & -\lam I_n & I_n & 0\\ 
		-I_n & \lam I_n & 0 & 0 & 0\\ 
		0 & I_n & 0 & 0 & -\lam I_n\\ 0 & 0 & 0 & \lam I_n & \lam A_{1}+A_{0} \end{array}\right]$$
	is a $T$-even strong linearization of $P(\lam)$ but it cannot be generated using the framework of FPs, GFPs, FPRs and GFPRs. 	
\end{example}

\begin{example} Let $P(\lambda) := \sum_{i=0}^5 \lambda^i A_i$ be $T$-odd.  Let $L(\lam)$ be the EGFP of $P(\lam)$ as given in Example~\ref{eg_egfpr}, where $X=-A_3$. Set $Q= \diag (I_n, -I_n, -I_n, I_n, -I_n)$. Then 
	$$QL(\lam)=\left[\begin{array}{ccccc}
		 \lam A_{5}+A_{4} & A_3 & -I_n & 0 & 0\\ 
		 -A_{3} &  \lam A_3 - A_{2} & -\lam I_n & I_n & 0\\ 
		 I_n & -\lam I_n & 0 & 0 & 0\\
		  0 & -I_n & 0 & 0 & \lam I_n\\
		   0 & 0 & 0 & -\lam I_n & -\lam A_{1}-A_{0} \end{array}\right]$$
	is a $T$-odd strong linearization of $P(\lam)$ but it cannot be generated using the framework of FPs, GFPs, FPRs and GFPRs.  $\blacksquare$
\end{example}

The matrix polynomial $ P(\lam) := \sum_{j=0}^{m} A_j \lam^j$ is said to be  skew-symmetric if $P(\lam)^T=-P(\lam)$, i.e., $A^T_i = -A_{i}$ for $i=0,1,\ldots, m$. 
\begin{example} Let $P(\lambda) := \sum_{i=0}^5 \lambda^i A_i$ be skew-symmetric.  Let $L(\lam) :=  \big ( \lambda M^P_{(-5,-1)} - M^P_{(3,4,2,0)} \big ) M^P_{3} $ be the EGFP of $P(\lam)$ as given in Example~\ref{eg_egfpr}. Set $Q= \diag (I_n, I_n, -I_n, -I_n, I_n)$. Then 
	  $$QL(\lam)= \left[\begin{array}{ccccc} 
	  	\lam A_{5}+A_{4} & A_3 & -I_n & 0 & 0\\
	  	 A_{3} &  -\lam A_3 + A_{2} & \lam I_n & -I_n & 0\\
	  	  I_n & -\lam I_n & 0 & 0 & 0\\
	  	   0 & I_n & 0 & 0 & -\lam I_n\\ 0 & 0 & 0 & \lam I_n & \lam A_{1}+A_{0} \end{array}\right]$$
	is a skew-symmetric strong linearization of $P(\lam)$  but it cannot be generated using the framework of FPs, GFPs, FPRs and GFPRs.
\end{example}

The matrix polynomial $ P(\lam) := \sum_{j=0}^{m} A_j \lam^j$ is said to be  $T$-palindromic (resp., $T$-anti-palindromic) if $A^T_i = A_{m-i}$ (resp., $A^T_i = -A_{m-i}$) for $i=0:m$. A subfamily of EGFPs that preserve the $T$-Palindromic and $T$-anti-palindromic structure of $P(\lam)$ is studied in \cite{rafiran5}. The following example is a palindromic linearization of a palindromic $P(\lam)$, see \cite{rafiran5} for banded palindromic linearizations with specific block-bandwidth.  Let 
$P(\lam) :=\sum_{i=0}^7 \lam^i A_i$ be $T$-palindromic. Then the anti-block-penta-diagonal pencil 
{\small  \begin{equation*}  \label{ExPalin} 
		\lam \left[\begin{array}{@{}ccccccc@{}}
			0 & 0 & 0 & 0 & 0 & - I_n & 0\\ 0 & 0 & 0 & 0 & 0 & - A_1 & - A_0\\ 0 & 0 & 0 & I_n & A_3 & 0 & 0\\ 0 & 0 & 0 & 0 & -I_n & 0 & 0\\ 
			I_n & A_6 & A_5 & 0 & 0 & 0 & 0\\ 0 & A_7 & A_6 & 0 & 0 & 0 & 0\\ 0 & 0 & A_7 & 0 & 0 & 0 & 0 \end{array}\right]
		+
		\left[\begin{array}{@{}ccccccc@{}}
			0 & 0 & 0 & 0 & I_n & 0 & 0\\ 0 & 0 & 0 & 0 & A_1 & A_0 & 0\\ 0 & 0 & 0 & 0 & A_2 & A_1 & A_0\\ 
			0 & 0 & I_n & 0 & 0 & 0 & 0\\ 0 & 0 & A_4 & -I_n & 0 & 0 & 0\\ - I_n & - A_6 & 0 & 0 & 0 & 0 & 0\\ 0 & - A_7 & 0 & 0 & 0 & 0 & 0 \end{array}\right] 
\end{equation*}}is a $T$-palindromic strong linearization of $P(\lam)$ but it cannot be generated using the framework of FPs, GFPs, FPRs and GFPRs.


\subsection{Low bandwidth banded EGFPs} \label{LowbandP} Low bandwidth banded linearizations of $P(\lam)$ are important from the point of numerical computations.  As EGFPs of $P(\lam)$ substantially enlarge the arena in which to look for strong linearizations of $P(\lam)$, it is natural to investigate the possibility to construct low bandwidth banded EGFPs. In particular, we are interested to construct block tridiagonal and block penta-diagonal EGFPs of $P(\lam)$.  The following simple observations from the elementary matrices of $P(\lam)$ will be used throughout of the paper.

We make the convention that $e_j  = 0$ for $j \leq 0$ and $j >m$. Let $Z$ be an $ n \times n$ arbitrary matrix. Then for $j=0:m-2$, we have  	
$$(e^T_{m-i} \otimes I_n) M_{i+1} (Z) = (e^T_{m-i} \otimes I_n) \left[
\begin{array}{@{}cccc@{}}
	I_{(m-i-2)n} &  & & \\
	& Z & I_n &\\
	& I_n & 0 &\\
	
	& & & I_{in}                                                               \end{array}
\right] =  e^T_{m-(i+1)} \otimes I_n,$$
and for $i=0:m-1$ and $j \notin \{ i,i+1\}$, we have
$(e^T_{m-i} \otimes I_n) M_{j} (Z) = e^T_{m-i} \otimes I_n.$
This shows that
\begin{equation}\label{eqnreco2_PCh4} 
	(e^T_{m-i}  \otimes I_n) \, M_{j} (Z)=
	\left\{ 
	\begin{array}{ll}
		e^T_{m-(i+1)} \otimes I_n & \mbox{for } j=i+1,~i=0:m-2, \\
		e^T_{m-i} \otimes I_n & \mbox{for } j \notin \{ i,  i+1 \},~i=0:m-1.  \end{array}
	\right.
\end{equation} 
Similarly, we have 
\begin{equation}\label{eqnreco1_PCh4}
	M_{j} (Z) (e_{m-i} \otimes I_n)=
	\left\{ 
	\begin{array}{ll}
		e_{m-(i+1)} \otimes I_n & \mbox{for } j=i+1, ~ i=0:m-2 ,\\
		e_{m-i} \otimes I_n & \mbox{for } j\notin \{ i,  i+1 \},~i=0:m-1.
	\end{array}
	\right. 
\end{equation} 

Further, for $i=1:m-1$, we have
$$(e^T_{m-i} \otimes I_n) M_{-i} (Z) = (e^T_{m-i} \otimes I_n) \left[
\begin{array}{@{}cccc@{}}
	I_{(m-i-1)n} &  & & \\
	& 0 & I_n &\\
	& I_n & Z &\\
	& & & I_{(i-1)n}                                                               \end{array}
\right] =    e^T_{m-(i-1)} \otimes I_n,$$
and for $i=0:m-1, ~j \notin \{i,i+1\}, $ we have  $(e^T_{m-i} \otimes I_n) M_{-j} (Z)= e^T_{m-i} \otimes I_n.$ Thus

\begin{equation}\label{eqnrecogfprp0329sept172046n_PCh2}
	(e^T_{m-i} \otimes I_n) M_{-j} (Z) =
	\left\{
	\begin{array}{ll}
		e^T_{m-(i-1)} \otimes I_n & \mbox{for}~j = i~ \mbox{and}~ i=1:m-1, \\
		e^T_{m-i} \otimes I_n &    \mbox{for} ~j \notin \{i,i+1\},~ j=0:m-1,
	\end{array}
	\right.
\end{equation}
Similarly, we have
\begin{equation}\label{eqnrecogfprgsettingp0329sept172045n_PCh2}
	M_{-j} (Z)(e_{m-i} \otimes I_n) =
	\left\{
	\begin{array}{ll}
		e_{m-(i-1)} \otimes I_n & \mbox{for}~j = i~ \mbox{and}~ i=1:m-1, \\
		e_{m-i} \otimes I_n & \mbox{for}~j \notin \{i,i+1\},~ i=0:m-1.

	\end{array}
	\right.
\end{equation}

We need the following propositions to characterize the bock penta-diagonal EGFPs of $P(\lam)$. Since the proofs are lengthy and involved, we provide the proofs of Propositions~\ref{prop1:Bpenta}, \ref{Prop:2:Blkpenta}, \ref{not_blkpentaPve} and \ref{not_blkpentaNve} in Appendix~\ref{appendixBB} to make this paper simpler to read.

\begin{proposition} \label{prop1:Bpenta} Let $\alpha$ be an index tuple containing indices from $\{0:m-1\}$ such that $\alpha$ satisfies the SIP. Suppose that $c_k(\alpha) \leq 1$ and  $i_k(\alpha) \leq 1$ for any index $1 \leq k \leq m-1$. Let $\mathcal{X}$ be a matrix assignment for $\alpha$. Then   $M_\alpha (\mathcal{X})$ is a block penta-diagonal matrix. Equivalently, we have
	\begin{equation} \label{aim:to:prove}
		(e^T_{m-k} \otimes I_n) M_\alpha (\mathcal{X}) = \sum_{j= -2}^{2} (e^T_{m-(k-j)} \otimes X_j) \text{ for } k = 0:m-1,
	\end{equation}
	where $X_j = 0$ or $X_j$ belongs to the matrix assignment $\mathcal{X}$.    
\end{proposition}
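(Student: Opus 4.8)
The plan is to show directly that the block row $m-k$ of $M_\alpha(\mathcal X)$ has nonzero blocks only in block columns $m-k-2,\dots,m-k+2$, and dually that each block column is supported in five consecutive block rows; together these give block penta-diagonality, which is exactly \ref{aim:to:prove}. The basic tool is the action of a single factor on a block-row unit vector. Besides the two cases in \ref{eqnreco2_PCh4}, I would record the remaining case $j=i$, namely $(e^T_{m-i}\otimes I_n)M_i(Z)=e^T_{m-i}\otimes Z+e^T_{m-(i-1)}\otimes I_n$ for $1\le i\le m-1$ (read off the $(m-i)$-th block row of $M_i(Z)$), together with $(e^T_{m}\otimes I_n)M_0(Z)=e^T_m\otimes Z$. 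Writing $\alpha=(j_1,\dots,j_r)$ and $M_\alpha(\mathcal X)=M_{j_1}(X_1)\cdots M_{j_r}(X_r)$, I multiply $(e^T_{m-k}\otimes I_n)$ through the factors from the left and track only the set $S$ of indices $i$ for which block column $m-i$ carries a nonzero block. Each factor $M_j$ updates $S$ by the index map $\phi_j$ sending $i\mapsto\{i+1\}$ if $j=i+1$, $i\mapsto\{i-1,i\}$ if $1\le i=j$, and $i\mapsto\{i\}$ otherwise (so $M_0$ and every factor with $j\notin\{i,i+1\}$ fixes $i$). Since blocks can only combine or cancel, the final support is contained in the iterated image of $\{k\}$.

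The key observation is that $\max S$ is monotone: under $\phi_j$ it is non-decreasing, it increases by exactly one precisely when the applied index equals $(\max S)+1$ (a move of the top element up by one), and no factor can lower it. Hence, scanning $\alpha$ from left to right, $\max S$ performs a greedy match of the consecutive pattern $k+1,k+2,k+3,\dots$, and the number of increments equals the largest $L$ with $(k+1,\dots,k+L)$ a subtuple of $\alpha$. By Definition \ref{coninvoftuple} this is $L=c_{k+1}(\alpha)+1$ when $k+1\in\alpha$ (and $L=0$ otherwise), so $\max S\le k+c_{k+1}(\alpha)+1$. The hypothesis $c_{k+1}(\alpha)\le1$ (valid whenever $1\le k+1\le m-1$; for $k=m-1$ there is no index $m$, so $\max S=k$) then gives $\max S\le k+2$. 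In block terms the nonzero blocks of row $m-k$ lie in columns $m-i\ge m-k-2$, i.e. every nonzero block $(p,q)$ satisfies $q\ge p-2$.

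For the opposite side of the band I would exploit the transpose symmetry $M_j(Z)^T=M_j(Z^T)$ (each $M_j$ is block-symmetric apart from its diagonal block), whence $M_\alpha(\mathcal X)^T=M_{rev(\alpha)}(\mathcal X')$ for a suitable assignment $\mathcal X'$. Thus the block-column structure of $M_\alpha(\mathcal X)$ at column $m-k'$ is the block-row structure of $M_{rev(\alpha)}$ at row $m-k'$, transposed. Applying the previous paragraph to $rev(\alpha)$ and using $c_{k'+1}(rev(\alpha))=i_{k'+1}(\alpha)$ (reversal interchanges consecutions and inversions, and $rev(\alpha)$ inherits the bound $\le1$) bounds the top index of that column support by $k'+i_{k'+1}(\alpha)+1\le k'+2$. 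Translating back, every nonzero block $(p,q)$ also satisfies $p\ge q-2$, i.e. $q\le p+2$. Combining the two bounds gives $|p-q|\le2$, so $M_\alpha(\mathcal X)$ is block penta-diagonal. Carrying the coefficients through the same recursion then shows each of the at most five blocks in row $m-k$ is a single matrix: a move transports a coefficient unchanged while a spawn emits an $I_n$ and an assigned $X_i$, and the SIP together with $c_k,i_k\le1$ (via Proposition \ref{lemeqnconrsfcsf16a18n730_PCh3}) prevents any index from being spawned twice, so no products accumulate; this yields \ref{aim:to:prove} with each $X_j$ equal to $0$, to $\pm I_n$, or to one of the matrices assigned by $\mathcal X$.

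The main obstacle is the asymmetry of the dynamics: a move pushes the top index up while a spawn pushes the bottom index down, so $\min S$ is not monotone and cannot be controlled by the same direct argument as $\max S$. The clean resolution is precisely the transpose/reversal device above, which converts the (uncontrolled) downward spread of the rows into the monotone upward spread of the columns of $M_{rev(\alpha)}$. The only genuine care then needed is the off-by-one bookkeeping relating the longest consecutive subtuple to $c_{k+1}$ respectively $i_{k+1}$, the boundary indices $k=0$ and $k=m-1$, and the inert role of $M_0$, which never spawns because $e_{m+1}=0$.
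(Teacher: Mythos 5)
Your band-width argument is correct but follows a genuinely different route from the paper's. The paper first passes to the row standard form $(\beta,\, rev(a_k:k),\, \gamma)$ of $\alpha$ (this is where the SIP is used), splits into two cases according to whether the subtuple of $\alpha$ on $\{k,k+1\}$ starts with $k+1$ or with $k$, and then computes $(e^T_{m-k}\otimes I_n)M_\alpha(\mathcal X)$ explicitly: the inversion bound forces $a_k\in\{k,k-1\}$ (or $\{k,k-1,k-2\}$ when $k=2$), which controls the lower edge of the band, while Proposition~\ref{sp1rowblock} applied to $\gamma$ gives shifts of at most $1+c_{\ell+1}(\gamma)\le 2$, which controls the upper edge. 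You instead track the support set under the factor dynamics, prove monotonicity of $\max S$ together with the greedy-subsequence identification of the number of increments with $c_{k+1}(\alpha)+1$, and obtain the lower edge by the transpose/reversal duality $M_\alpha(\mathcal X)^T=M_{rev(\alpha)}(\cdot)$ combined with $c_t(rev(\alpha))=i_t(\alpha)$. This is a more conceptual proof of penta-diagonality (and, notably, the band bound itself never uses the SIP), at the price of proving only containment of the support rather than an explicit formula for the row.

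The gap is in your last step. The precise form of \eqref{aim:to:prove} --- each of the five blocks being a single matrix ($0$, $I_n$, or one matrix of the assignment), with no sums or products accumulating --- is asserted but not proved, and "no index is spawned twice" is not the right invariant. When $M_{i+1}(Z)$ is applied to a row simultaneously supported at indices $i$ and $i+1$ with coefficients $C$ and $D$, the new coefficient at index $i+1$ is $C+DZ$, so you must also rule out a move landing on an already-occupied position; and a spawn at index $i$ multiplies the coefficient currently sitting there by the assigned matrix, so you must show that coefficient is $I_n$ (or $0$) at that moment. This is exactly where the SIP enters: for instance $\alpha=(1,1)$ satisfies your consecution/inversion bounds, stays inside the band, and yet produces the block $XY+I_n$. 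The paper's explicit row-standard-form computation, culminating in \eqref{eqn:achieve}, is what actually establishes the single-matrix form; to complete your proof you would need to carry the coefficients through your recursion with the same care, or fall back on the rsf decomposition for this final refinement.
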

We prove Proposition~\ref{prop1:Bpenta} in Appendix~\ref{appendix1}.

Analogous to Proposition~\ref{prop1:Bpenta} we have the following result for index tuple containing negative indices.

\begin{proposition} \label{Prop:2:Blkpenta} Let $\alpha$ be an index tuple containing indices from $\{-m:-1\}$ such that $\alpha$ satisfies the SIP. Suppose that $c_{-k}(\alpha) \leq 1$ and  $i_{-k}(\alpha) \leq 1$ for any index $ -(m-1) \leq -k \leq -1$ (this implies that $c_{-m}(\alpha) \leq 2$ and  $i_{-m}(\alpha) \leq 2$). Let $\mathcal{X}$ be a matrix assignment for $\alpha$. Then $M_\alpha (\mathcal{X})$ is a block penta-diagonal matrix. Equivalently, we have
	\begin{equation}  \label{aim:to:prove2}
		(e^T_{m-k} \otimes I_n) M_\alpha (\mathcal{X}) = \sum_{j= -2}^{2} (e^T_{m-(k-j)} \otimes X_j) \text{ for } k = 0:m-1,
	\end{equation}
	where $X_j = 0$ or $X_j$ belongs to the matrix assignment $\mathcal{X}$.   
\end{proposition}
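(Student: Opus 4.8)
The plan is to prove Proposition~\ref{Prop:2:Blkpenta} by essentially reducing it to the already-established positive-index case, Proposition~\ref{prop1:Bpenta}, via the reversal/negation symmetry encoded in the elementary matrices. The key structural observation is that the banded action formulas~\eqref{eqnrecogfprp0329sept172046n_PCh2} and~\eqref{eqnrecogfprgsettingp0329sept172045n_PCh2} for the negative elementary matrices $M_{-j}(Z)$ mirror the formulas~\eqref{eqnreco2_PCh4} and~\eqref{eqnreco1_PCh4} for the positive ones, except that the row index $m-i$ now \emph{increases} (to $m-(i-1)$) rather than decreasing when $j=i$. This single sign flip in how the standard basis row vector $e^T_{m-i}\otimes I_n$ moves under multiplication is exactly what must be tracked, and it is the source of the reversed inequality on $c_{-m}$ and $i_{-m}$ versus the interior indices.

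First I would set up the induction on the length $r$ of the tuple $\alpha=(t_1,\ldots,t_r)$, with the aim of proving the row formula~\eqref{aim:to:prove2} directly. For fixed $k$, I would compute $(e^T_{m-k}\otimes I_n)M_\alpha(\mathcal X)$ by peeling off factors one at a time from the \emph{left}, using~\eqref{eqnrecogfprp0329sept172046n_PCh2} to see how the row vector $e^T_{m-k}\otimes I_n$ propagates through each $M_{-t_\ell}(X_\ell)$. The crucial bookkeeping is to bound how far the active row index can drift from its starting value $m-k$ across the whole product. Here the SIP hypothesis, together with the consecution/inversion bounds $c_{-k}(\alpha)\le 1$ and $i_{-k}(\alpha)\le 1$ for interior indices, should be invoked through Proposition~\ref{lemeqnconrsfcsf17a18d10}: it lets me reorganize $\alpha$ (up to equivalence $\sim$, which preserves $M_\alpha(\mathcal X)$) so that any repeated appearance of an index $-k$ is separated by a $-(k-1)$, which is precisely the condition preventing the row index from wandering more than two block-positions away. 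The appearance of index $-(k-1)$ shifts the row up by one block, and at most one such shift in each direction (from a single consecution and a single inversion) keeps everything within the band $\{m-(k-2),\ldots,m-(k+2)\}$, giving the five nonzero block-columns in~\eqref{aim:to:prove2}.

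The main obstacle will be the boundary index $-m$, where the weaker bounds $c_{-m}(\alpha)\le 2$ and $i_{-m}(\alpha)\le 2$ are allowed. I would argue that because $e_{m-(m)}\otimes I_n = e_0\otimes I_n = 0$ and $e_{m-(-1)}=e_{m+1}\otimes I_n = 0$ under the stated convention ($e_j=0$ for $j\le 0$ and $j>m$), the row index simply cannot reach the "forbidden" positions, so a consecution or inversion of length two at $-m$ costs only what a length-one event costs at an interior index in terms of band spread. Concretely, the extra unit of consecution/inversion at $-m$ moves the active row into the topmost block-row $e^T_{m-(m-1)}\otimes I_n = e^T_1\otimes I_n$ and then stalls there (no further shift is possible because the next required index would involve a vanishing basis vector), so no sixth block-diagonal is ever populated. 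Verifying that this boundary bookkeeping is consistent with the interior estimate, and that the two estimates glue together to give a uniform band of width five, is the delicate part; everything else is a careful but routine propagation argument mirroring the proof of Proposition~\ref{prop1:Bpenta}. Finally, I would note that the column formula follows by the same argument applied to the right factors using~\eqref{eqnrecogfprgsettingp0329sept172045n_PCh2}, or alternatively by transposition, so that $M_\alpha(\mathcal X)$ is banded on both sides and hence block penta-diagonal.
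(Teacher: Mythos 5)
Your high-level plan---mirror the proof of Proposition~\ref{prop1:Bpenta} using the row action of the negative elementary matrices, and explain the weaker bound at $-m$ by the vanishing conventions $e_0=0$, $e_{m+1}=0$---is the same route the paper takes: its proof consists of recording the full row-action formula (\ref{Nve_Brow}) and deferring to the positive-index argument. The gap is in how you propose to carry that argument out. You plan to peel off factors from the left and ``bound how far the active row index can drift,'' citing (\ref{eqnrecogfprp0329sept172046n_PCh2}) and (\ref{eqnrecogfprgsettingp0329sept172045n_PCh2}); but those formulas only cover the cases $j=i$ and $j\notin\{i,i+1\}$ and omit the case $j=i+1$, where $(e^T_{m-i}\otimes I_n)M_{-(i+1)}(W)=(e^T_{m-i}\otimes W)+(e^T_{m-(i+1)}\otimes I_n)$ splits the row into a \emph{sum} of two terms. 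There is no single ``active row index'': after such a factor you are tracking a linear combination of basis rows, each of which can branch again, and it is precisely these branches that deposit the matrices of $\mathcal{X}$ into the off-diagonal blocks. The paper controls this not by induction on the length but by passing to the standard form of $\alpha$, isolating the one consecutive segment that produces all the branching, computing the resulting sum explicitly (cf.\ (\ref{eqn:rsf1})--(\ref{eqn:achieve}) in the positive case), and then showing via Proposition~\ref{sp1rowblock} that each branch moves at most one further step through the remaining tail. A one-factor-at-a-time induction would need a substantially stronger inductive hypothesis (a window containing all currently active rows, together with information about which indices can still act on them), which you do not formulate.

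Your boundary analysis at $-m$ also misidentifies the mechanism. With $c_{-m}(\alpha)=2$, say $(-m,-(m-1),-(m-2))$ a subtuple of $\alpha$, the row $e^T_1=e^T_{m-(m-1)}$ does not ``stall'' at the top: it is carried to $e^T_1\otimes W$ by $M_{-m}(W)$ (only the companion term $e^T_{m-m}\otimes I_n$ vanishes) and is then shifted twice more, ending at position $3$, i.e.\ at the edge of the band. What actually permits the weaker bound $c_{-m}(\alpha)\le 2$ is that the row which a length-two consecution would push three positions, namely $e^T_{m-m}\otimes I_n$, does not exist. Your conclusion is correct, but the propagation rule as described would compute the rows incorrectly.
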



\begin{proposition} \label{not_blkpentaPve} Let $\alpha$ be an index tuple containing indices from $\{0:m\}$ such that $\alpha$ satisfies the SIP. Suppose that $c_j(\alpha) \geq 2$ or $i_j(\alpha) \geq 2$ for some $1 \leq j \leq m-1$. Let $\mathcal{X}$ be any arbitrary nonsingular matrix assignment for $\alpha$. Then $M_\alpha (\mathcal{X})$ is not a block penta-diagonal matrix.   
\end{proposition}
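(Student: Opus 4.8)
The plan is to prove the contrapositive-flavoured statement directly: I will exhibit a single (structurally) nonzero block of $M_\alpha(\mathcal{X})$ in a position $(r,s)$ with $|r-s|\ge 3$, which by definition rules out block penta-diagonality. I would treat the two hypotheses separately, since the case $i_j(\alpha)\ge 2$ is the mirror image of $c_j(\alpha)\ge 2$ (obtained by swapping the roles of rows and columns and of the formulas (\ref{eqnreco2_PCh4}) and (\ref{eqnreco1_PCh4})); so I describe only the consecution case. Throughout I would use that, since $\alpha$ satisfies the SIP, the block positions of $M_\alpha(\mathcal{X})$ depend only on $\alpha$, so I may freely replace $\alpha$ by any equivalent tuple produced by Proposition~\ref{lemeqnconrsfcsf16a18n730_PCh3}.

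First I would isolate a maximal consecutive run. Put $\ell := j + c_j(\alpha)$, so $(j,j+1,\ldots,\ell)$ is a subtuple and $\ell - j \ge 2$, and let $k$ be the smallest index for which $(k,k+1,\ldots,\ell)$ is still a subtuple. Then $k\le j$ and $c_k(\alpha)\ge \ell - k\ge 2$. Applying the consecution part of Proposition~\ref{lemeqnconrsfcsf16a18n730_PCh3} at $k$ gives $\alpha\sim(\alpha^L, k,k+1,\ldots,k+c_k(\alpha),\alpha^R)$ with $k\notin\alpha^L$ and $k+c_k(\alpha),\,k+c_k(\alpha)+1\notin\alpha^R$, and the minimality of $k$ forces in addition $k-1\notin\alpha^L$ (otherwise $(k-1,k,\ldots,\ell)$ would be a subtuple).

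In the main case $k\ge 1$ I would feed the block row $e^T_{m-(k-1)}\otimes I_n$ through this factorisation. As no index of $\alpha^L$ lies in $\{k-1,k\}$, formula (\ref{eqnreco2_PCh4}) leaves this row fixed under $M_{\alpha^L}$; the run $M_k M_{k+1}\cdots M_{k+c_k(\alpha)}$ then shifts it one block at a time (each step is the first line of (\ref{eqnreco2_PCh4})) down to $e^T_{m-(k+c_k(\alpha))}\otimes I_n$; and since $k+c_k(\alpha),\,k+c_k(\alpha)+1\notin\alpha^R$, it survives $M_{\alpha^R}$ unchanged. Thus $(e^T_{m-(k-1)}\otimes I_n)\,M_\alpha(\mathcal{X}) = e^T_{m-(k+c_k(\alpha))}\otimes I_n$, so the $(m-(k-1),\,m-(k+c_k(\alpha)))$ block equals $I_n$ and lies at distance $c_k(\alpha)+1\ge 3$ from the diagonal; note this conclusion is independent of the chosen assignment.

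The hard part is the boundary case $k=0$, where the row $m-(k-1)$ no longer exists. Here $c_0(\alpha)\ge\ell\ge 3$. If $1\notin\alpha^L$ I would track the bottom row $e^T_m\otimes I_n$ instead: $M_0$ merely right-multiplies it by its (nonsingular, hence nonzero) assignment, and $M_1\cdots M_{c_0(\alpha)}$ shift it down to row $m-c_0(\alpha)$, leaving a nonzero block at distance $c_0(\alpha)\ge 3$. If instead $1\in\alpha^L$, then applying the SIP to the two copies of the index $1$ (one in $\alpha^L$, one in the run) forces $2\in\alpha^L$, then $3\in\alpha^L$, and so on, so that $(1,2,\ldots,c_0(\alpha)+1)$ is a subtuple of $\alpha^L$ and hence $c_1(\alpha)\ge c_0(\alpha)$; re-applying Proposition~\ref{lemeqnconrsfcsf16a18n730_PCh3} at the index $1$ then yields a left block that cannot contain $0$ (otherwise $c_0(\alpha)\ge c_1(\alpha)+1$, a contradiction), so the clean computation of the preceding paragraph goes through for the row $m$ and produces a block at distance $1+c_1(\alpha)\ge 4$. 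The inversion hypothesis $i_j(\alpha)\ge 2$ is settled by the symmetric argument, tracking a suitable block column through (\ref{eqnreco1_PCh4}), with the boundary again occurring at the index $0$ and resolved in the same way.
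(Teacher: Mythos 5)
Your proof is correct and follows essentially the same route as the paper's: isolate a maximal consecutive run via Proposition~\ref{lemeqnconrsfcsf16a18n730_PCh3}, push a suitable block row through the factorization using Proposition~\ref{sp1rowblock} and (\ref{eqnreco2_PCh4}), and exhibit a structurally nonzero block at distance at least $3$ from the diagonal, with the nonsingularity of the matrix assigned to the index $0$ rescuing the boundary case. The only differences are bookkeeping — you anchor at the leftmost extension of the given run (allowing $k=0$) where the paper anchors at the smallest index of maximal consecution count (so $k\ge 1$) and splits on whether $0\in\alpha^L$ — and, like the paper's argument, yours implicitly assumes the run does not reach the index $m$ (where $M_m$ is block diagonal), which is the only situation in which the proposition is actually invoked.
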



For index tuple containing negative indices we have the following result.
\begin{proposition} \label{not_blkpentaNve} Let $\beta$ be an index tuple containing indices from $\{-m:-0\}$ such that $\beta$ satisfies the SIP. Suppose that $c_{-j}(\beta) \geq 2$ or $i_{-j} (\beta) \geq 2$ for some $1 \leq j \leq m-1$. Let $\mathcal{X}$ be a matrix assignment for $\beta$. Then $M_\beta (\mathcal{X})$ would never be a block penta-diagonal matrix.   
\end{proposition}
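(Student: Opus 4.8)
The plan is to deduce this statement from its positive-index counterpart, Proposition~\ref{not_blkpentaPve}, by exploiting the block-reversal symmetry that interchanges the index $-i$ with $m-i$. Let $R$ be the $mn\times mn$ block-reversal permutation matrix, i.e.\ the block matrix whose $(p,q)$ block equals $I_n$ when $p+q=m+1$ and $0$ otherwise; then $R$ is symmetric and $R^2=I_{mn}$. Conjugation by $R$ sends the block row/column index $k$ to $m+1-k$. Inspecting the block forms of the elementary matrices one checks directly that
\begin{equation}
R\, M_{-i}(X)\, R = M_{m-i}(X) \quad \text{for } i=0:m,
\end{equation}
with the conventions $M_{-0}=M_0^{-1}$ and $M_m=M_{-m}^{-1}$ at the boundary indices: for $1\le i\le m-1$ the central block $\left[\begin{smallmatrix} 0 & I_n \\ I_n & X\end{smallmatrix}\right]$ of $M_{-i}(X)$ sitting at block positions $(m-i,m-i+1)$ is carried to the block $\left[\begin{smallmatrix} X & I_n \\ I_n & 0\end{smallmatrix}\right]$ at positions $(i,i+1)$, which is exactly the nontrivial block of $M_{m-i}(X)$. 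Since inserting $R^2=I_{mn}$ between consecutive factors leaves the order of the factors unchanged, this gives $R\,M_\beta(\mathcal{X})\,R = M_{\beta+m}(\mathcal{X})$, where $\alpha:=\beta+m$ is the index tuple obtained from $\beta$ by the translation $\beta+q$ (with $q=m$) of the index-tuple definition, so that $\alpha$ has indices in $\{0:m\}$.

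Next I would transfer the hypotheses from $\beta$ to $\alpha$. By the definition of the SIP for negative tuples, $\beta$ satisfies the SIP precisely when $\beta+m=\alpha$ does. Because translating every index by $+m$ is merely a shift of the integer line, a consecution (resp.\ inversion) pattern based at $-j$ in $\beta$ becomes a consecution (resp.\ inversion) pattern based at $m-j$ in $\alpha$; hence $c_{-j}(\beta)=c_{m-j}(\alpha)$ and $i_{-j}(\beta)=i_{m-j}(\alpha)$. Thus the hypothesis that $c_{-j}(\beta)\ge 2$ or $i_{-j}(\beta)\ge 2$ for some $1\le j\le m-1$ is equivalent to $c_{j'}(\alpha)\ge 2$ or $i_{j'}(\alpha)\ge 2$ for $j':=m-j$, which again satisfies $1\le j'\le m-1$. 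Finally, $\mathcal{X}$ is a nonsingular matrix assignment for $\alpha$ as well (by our standing nonsingularity assumption), so all the hypotheses of Proposition~\ref{not_blkpentaPve} hold for $\alpha$ and $\mathcal{X}$, and that proposition yields that $M_{\beta+m}(\mathcal{X})=M_\alpha(\mathcal{X})$ is not block penta-diagonal.

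To conclude, I would note that conjugation by $R$ preserves block penta-diagonality: it sends the $(p,q)$ block to the $(m+1-p,\,m+1-q)$ block, and $|p-q|=|(m+1-p)-(m+1-q)|$, so $RAR$ is block penta-diagonal if and only if $A$ is. Since $M_\beta(\mathcal{X})=R\,M_{\beta+m}(\mathcal{X})\,R$ and the right-hand factor is not block penta-diagonal, $M_\beta(\mathcal{X})$ is not block penta-diagonal either, which is the assertion.

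I expect the main obstacle to be purely bookkeeping rather than conceptual: establishing the reversal identity cleanly, especially at the two boundary indices $-0$ and $-m$, where the inverses $M_{-0}=M_0^{-1}$ and $M_m=M_{-m}^{-1}$ enter. There one uses $R A^{-1} R=(RAR)^{-1}$ together with the directly verified relation $R\,M_0(X)\,R=M_{-m}(X)$, so that $R\,M_{-0}(X)\,R=M_m(X)$ and $R\,M_{-m}(X)\,R=M_0(X)$, consistent with the index dictionary $-i\mapsto m-i$. Once this identity and the compatibility of consecutions/inversions with the $+m$ shift are in place, the result follows from Proposition~\ref{not_blkpentaPve} with no further computation, which is precisely the advantage of the reversal approach over reproving the negative case from scratch.
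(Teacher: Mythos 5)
Your proof is correct, but it takes a genuinely different route from the paper. The paper's own argument for Proposition~\ref{not_blkpentaNve} is simply a rerun of the direct proof of Proposition~\ref{not_blkpentaPve} with the negative-index machinery substituted throughout: the row formula (\ref{Nve_Brow}) in place of (\ref{Pve_Brow}), Proposition~\ref{lemeqnconrsfcsf17a18d10} in place of Proposition~\ref{lemeqnconrsfcsf16a18n730_PCh3}, and Proposition~\ref{ms_start} in place of Proposition~\ref{sp1rowblock} (the appendix literally states ``the proof is similar''). You instead reduce the negative case to the positive one via conjugation by the block-reversal permutation $R$, using the identity $R\,M_{-i}(X)\,R = M_{m-i}(X)$, the fact that conjugation by $R$ maps block $(p,q)$ to $(m+1-p,m+1-q)$ and hence preserves block bandwidth, and the observation that the $+m$ shift carries the SIP and the consecution/inversion counts of $\beta$ at $-j$ to those of $\beta+m$ at $m-j$ — the latter two facts are essentially built into the paper's definitions, since the SIP and standard forms for negative tuples are \emph{defined} via the shift $\alpha+h$. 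I checked the reversal identity on the elementary matrices (including the boundary cases $-0\mapsto m$ and $-m\mapsto 0$, where the inverse conventions $M_{-0}^P=(M_0^P)^{-1}$ and $M_m^P=(M_{-m}^P)^{-1}$ are handled by $RA^{-1}R=(RAR)^{-1}$), and the hypothesis $1\le j\le m-1$ maps to $1\le m-j\le m-1$, so Proposition~\ref{not_blkpentaPve} applies exactly as stated. What your approach buys is the elimination of a duplicated case analysis — and the same symmetry would also derive Proposition~\ref{Prop:2:Blkpenta} from Proposition~\ref{prop1:Bpenta}; what it costs is the need to establish the reversal identity cleanly at the signed boundary indices, which the paper avoids by never leaving the negative-index world.
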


To characterize block penta-diagonal EGFPs we need the following definition.  

\begin{definition} \label{endpt} Let $\alpha$ and $\beta $ be  sub-permutations of $\{0:m-1\}$ and $\{-m:-1\}$, respectively. Then $k \in \alpha \setminus \{0\}$ is said to be an end index of  $\alpha$ if $ k-1 \notin \alpha$ or $k+1 \notin \alpha$. Similarly, $- t \in \beta \setminus \{ -m\}$ is said to be an end index of  $\beta$ if $ -(t -1) \notin \beta$ or $-(t+1) \notin \beta$. 
\end{definition}

The following theorem characterizes all block penta-diagonal EGFPs of $P(\lam)$. 

\begin{theorem} \label{thm:blk_penta} Let $L(\lambda):=  M_{\tau_1} (Y_1) M_{\sigma_1} (X_1) (\lambda  M^P_{\tau}  -  M^P_{\sigma}) M_{\sigma_2}  (X_2)M_{\tau_2}  (Y_2)$ be an EGFP of $P(\lam)$. Suppose that $\sig_j$ (resp., $\tau_j$), for $j=1,2$, does not contain the end indices of $\sig$ (resp., $\tau$).
	Then $L(\lam)$ is block penta-diagonal if and only if $ c_t(\sig_1, \sig, \sig_2) \leq 1$, $ i_t(\sig_1, \sig, \sig_2) \leq 1$, $ c_{-t}(\tau_1, \tau, \tau_2) \leq 1$ and $ i_{-t}(\tau_1, \tau, \tau_2) \leq 1$ for any index $1 \leq t \leq m-1$. 
\end{theorem}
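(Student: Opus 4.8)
The plan is to write the pencil as $L(\lambda) = \lambda L_1 - L_0$ with
$$L_1 = M_{\tau_1}(Y_1) M_{\sigma_1}(X_1) M^P_\tau M_{\sigma_2}(X_2) M_{\tau_2}(Y_2), \quad L_0 = M_{\tau_1}(Y_1) M_{\sigma_1}(X_1) M^P_\sigma M_{\sigma_2}(X_2) M_{\tau_2}(Y_2),$$
and to observe that $L(\lambda)$ is block penta-diagonal if and only if both coefficient matrices $L_0$ and $L_1$ are, since the $(i,j)$ block of $L(\lambda)$ is $\lambda (L_1)_{ij} - (L_0)_{ij}$, which vanishes identically exactly when $(L_0)_{ij} = (L_1)_{ij} = 0$. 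The strategy is then to show that $L_0$ is block penta-diagonal iff $M_{(\sigma_1,\sigma,\sigma_2)}(\mathcal{X})$ is, and that $L_1$ is block penta-diagonal iff $M_{(\tau_1,\tau,\tau_2)}(\mathcal{Y})$ is; the theorem then follows by applying Propositions~\ref{prop1:Bpenta} and \ref{not_blkpentaPve} to the positive tuple $(\sigma_1,\sigma,\sigma_2)$ and Propositions~\ref{Prop:2:Blkpenta} and \ref{not_blkpentaNve} to the negative tuple $(\tau_1,\tau,\tau_2)$.

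The first ingredient is a commutativity lemma extracted from the end-index hypothesis. Because $(\sigma,\omega)$ partitions $\{0:m\}$ and $\tau = -\omega$, the absolute values occurring in the positive factors (which lie in $\sigma$) and in the negative factors (which lie in $\omega$) are disjoint. By Definition~\ref{endpt}, the assumption that $\sigma_1,\sigma_2$ contain no end index of $\sigma$ forces every index $k \neq 0$ occurring in $\sigma_1$ or $\sigma_2$ to satisfy $k-1, k+1 \in \sigma$; likewise every $t$ with $-t$ occurring in $\tau_1$ or $\tau_2$ satisfies $t-1, t+1 \in \omega$. Hence for such $k$ and $t$ the gap $|k-t|$ is at least $2$, so $M_{\sigma_i}(X_i)$ and $M_{\tau_j}(Y_j)$ commute by the rule that $M_a(\cdot) M_b(\cdot) = M_b(\cdot) M_a(\cdot)$ whenever $||a|-|b||>1$. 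Using only these commutations, moving $M_{\sigma_1}(X_1)$ left past $M_{\tau_1}(Y_1)$ and $M_{\sigma_2}(X_2)$ right past $M_{\tau_2}(Y_2)$, I can rewrite
$$L_0 = M_{\tau_1}(Y_1)\, M_{(\sigma_1,\sigma,\sigma_2)}(\mathcal{X})\, M_{\tau_2}(Y_2), \qquad L_1 = M_{\sigma_1}(X_1)\, M_{(\tau_1,\tau,\tau_2)}(\mathcal{Y})\, M_{\sigma_2}(X_2).$$
This regrouping never moves a $\sigma$-factor past $M^P_\tau$ (nor a $\tau$-factor past $M^P_\sigma$), so the genuinely non-commuting boundary pairs, such as $M_0$ and $M^P_{-1}$, are never exchanged.

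Next I would prove a band-preservation lemma: multiplying the block penta-diagonal matrix $M_{(\sigma_1,\sigma,\sigma_2)}(\mathcal{X})$ on either side by the opposite-sign factors $M_{\tau_1}(Y_1), M_{\tau_2}(Y_2)$ keeps it block penta-diagonal, and conversely. The point is that if $-t$ occurs in $\tau_1$ or $\tau_2$ then $t-1, t, t+1 \notin \sigma$, so by the shift identities \eqref{eqnreco2_PCh4} and \eqref{eqnreco1_PCh4} the block rows and columns $m-t$ and $m-t+1$ of $M_{(\sigma_1,\sigma,\sigma_2)}(\mathcal{X})$ are trivial (equal to $e^T_{m-t}\otimes I_n$ and $e_{m-t}\otimes I_n$, etc.); left or right multiplication by $M_{-t}(Y)$, as described by \eqref{eqnrecogfprp0329sept172046n_PCh2} and \eqref{eqnrecogfprgsettingp0329sept172045n_PCh2}, then merely permutes and combines these clean blocks within the band. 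The boundary factors carrying index $0$ or $-m$ act as pure block-row/column scalings and trivially preserve any band. Since the $M_{\tau_j}(Y_j)$ are invertible, applying the same argument to the inverses gives the converse, so $L_0$ is block penta-diagonal iff $M_{(\sigma_1,\sigma,\sigma_2)}(\mathcal{X})$ is; the symmetric argument with the roles of $\sigma$ and $\tau$ interchanged handles $L_1$ and $M_{(\tau_1,\tau,\tau_2)}(\mathcal{Y})$.

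Finally, I would assemble the equivalences. By Proposition~\ref{prop1:Bpenta}, $M_{(\sigma_1,\sigma,\sigma_2)}(\mathcal{X})$ is block penta-diagonal when $c_t \leq 1$ and $i_t \leq 1$ for all $1 \leq t \leq m-1$, and by the contrapositive of Proposition~\ref{not_blkpentaPve} it fails to be when some consecution or inversion is $\geq 2$; Propositions~\ref{Prop:2:Blkpenta} and \ref{not_blkpentaNve} give the analogous dichotomy for $M_{(\tau_1,\tau,\tau_2)}(\mathcal{Y})$. Combining these with the two lemmas shows that $L_0$ is block penta-diagonal iff the consecution/inversion conditions on $(\sigma_1,\sigma,\sigma_2)$ hold and $L_1$ is block penta-diagonal iff those on $(\tau_1,\tau,\tau_2)$ hold; since $L$ is block penta-diagonal iff both $L_0$ and $L_1$ are, the stated characterization follows. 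I expect the main obstacle to be the band-preservation lemma when several opposite-sign factors are applied in succession and their active two-block windows overlap; there the clean-block property must be tracked through the partial products, which is precisely where the SIP of $(\tau_1,\tau,\tau_2)$ and the index-gap bounds supplied by the end-index hypothesis do the real work.
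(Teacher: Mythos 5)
Your overall skeleton (split $L=\lambda L_1-L_0$, reduce to the two coefficient matrices, use the end-index hypothesis to commute the opposite-sign factors, and finish with Propositions~\ref{prop1:Bpenta}--\ref{not_blkpentaNve}) matches the paper, but the pivotal intermediate claim is false: it is \emph{not} true that $L_0$ is block penta-diagonal if and only if $M_{(\sigma_1,\sigma,\sigma_2)}(\mathcal{X})$ is, nor that $L_1$ is block penta-diagonal if and only if $M_{(\tau_1,\tau,\tau_2)}(\mathcal{Y})$ is. Take $m=6$, $\sigma=(0)$, $\tau=(-6,-5,-4,-3,-2,-1)$, $\sigma_1=\sigma_2=\tau_1=\emptyset$ and $\tau_2=(-5,-4,-3)$. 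One checks that $(\tau_1,\tau,\tau_2)$ satisfies the SIP and that $\tau_2$ contains no end index of $\tau$, so this is an admissible EGFP under the theorem's hypotheses. Here $M_{(\sigma_1,\sigma,\sigma_2)}(\mathcal{X})=M^P_0=\operatorname{diag}(I_{5n},-A_0)$ is block diagonal, hence certainly block penta-diagonal, yet
$(e_1^T\otimes I_n)L_0=(e_1^T\otimes I_n)M^P_0\,M_{(-5,-4,-3)}(Y_2)=(e_1^T\otimes I_n)M_{(-5,-4,-3)}(Y_2)=e_4^T\otimes I_n$
by repeated application of (\ref{eqnrecogfprp0329sept172046n_PCh2}), which places $I_n$ in block position $(1,4)$, three blocks off the diagonal. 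So $L_0$ fails to be block penta-diagonal for a reason that lives entirely in the \emph{negative} factors, and your proposed band-preservation lemma (in either direction of the ``iff'') cannot be proved because it is not true. The same example shows that your final assembly, which attributes the penta-diagonality of $L_0$ solely to the conditions on $(\sigma_1,\sigma,\sigma_2)$, would certify $L_0$ as penta-diagonal when it is not.

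The correct coupling, which is what the paper's proof implements, is row-by-row rather than factor-by-factor: after using the end-index hypothesis to write $L_0=M_{\sigma_1}(X_1)M^P_\sigma M_{\sigma_2}(X_2)\,M_{\tau_1}(Y_1)M_{\tau_2}(Y_2)$, block row $m-k$ of $L_0$ is governed by the positive tuple $(\sigma_1,\sigma,\sigma_2)$ when $k\in\sigma$ (the $\tau$-factors act trivially on that row), but by the \emph{repeated negative factors} $(\tau_1,\tau_2)$ when $k$ and $k+1$ both lie in $-\tau$ (then the $\sigma$-factors act trivially). Thus the penta-diagonality of $L_0$ genuinely depends on both tuples, and Proposition~\ref{Prop:2:Blkpenta} must be invoked for $(\tau_1,\tau_2)$ inside the analysis of $L_0$ itself (and symmetrically $(\sigma_1,\sigma_2)$ inside $L_1$); the hypothesis $c_{-t}(\tau_1,\tau,\tau_2)\le 1$, $i_{-t}(\tau_1,\tau,\tau_2)\le 1$ is what bounds the consecutions and inversions of the subtuple $(\tau_1,\tau_2)$. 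Your reduction of $L(\lambda)$ to $L_0$ and $L_1$, the commutation argument from the end-index hypothesis, and the appeal to the four propositions are all sound and are exactly the paper's ingredients; what is missing is replacing the false ``$L_0\leftrightarrow$ positive tuple, $L_1\leftrightarrow$ negative tuple'' factorization by this case analysis on which of $\sigma$ or $-\tau$ the row index $k$ belongs to.
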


\begin{proof} ($\Longrightarrow$) The forward implication follows from Proposition~\ref{not_blkpentaPve} and Proposition~\ref{not_blkpentaNve}.

	($\Longleftarrow$) 	 We have  $L(\lam) =: \lam L_1 - L_0$, where $ L_1 : =  M_{\tau_1} (Y_1) M_{\sigma_1} (X_1)  M^P_{\tau}  M_{\sigma_2}  (X_2)M_{\tau_2}  (Y_2) $ and  $ L_0 : =  M_{\tau_1} (Y_1) M_{\sigma_1} (X_1) M^P_{\sigma} M_{\sigma_2}  (X_2)M_{\tau_2}  (Y_2)$. Note that $L(\lam)$ is a block penta-diagonal pencil if and only if  both $L_0$ and $L_1$ are block penta-diagonal matrices.  We only prove that $L_0$ is a block penta-diagonal matrix. Similarly, $L_1$ is block penta-diagonal.

	Since $\tau_1$ and $\tau_2$ do not contain the end indices of $\tau$, it follows that $||j| - |k|| \geq 2$ for $ j \in (\tau_1,\tau_2) $ and $k \in (\sig_1, \sig, \sig_2) $. Hence  $\tau_j $ commutes with $ (\sig_1, \sig, \sig_2)$, $j=1,2$, and hence $L_0=  M_{\tau_1} (Y_1) M_{\tau_2}  (Y_2) M_{\sigma_1} (X_1) M^P_{\sigma} $ $ M_{\sigma_2}  (X_2) =   M_{\sigma_1} (X_1) M^P_{\sigma}$ $ M_{\sigma_2}  (X_2)   M_{\tau_1} (Y_1) M_{\tau_2}  (Y_2)$.

	Suppose that $ m \in \sig$. Recall that, as $\sig_1$ and $\sig_2$ contain indices from $\{0:m-2\}$, there are no repetition of $m-1$ and $ m$ in the indices of $(\sig_1, \sig, \sig_2)$. Hence $ (\sig_1, \sig, \sig_2) \sim (\beta ,m)$ (resp., $(\sig_1, \sig, \sig_2) \sim (m,\beta)$) if $m-1 \in \sig $ and $ \sig$ has a consecution (resp., inversion) at $m-1$, where $\beta : = (\sig_1, \sig, \sig_2) \setminus \{m\} $. Further, since $ M_m (W) = \diag (W^{-1}, I_{(m-1)n})$ (for any nonsingular matrix $W \in \mathbb{C}^{n \times n}$) is a block diagonal matrix, without loss of generality we assume that $m \notin \sig$, i.e., $ m \notin (\sig_1,\sig , \sig_2)$.

	Now we prove that $L_0$ is a block penta-diagonal matrix.  Note that to prove $L_0$ is a block penta-diagonal matrix it is enough to show that   
	\begin{equation} \label{thm:aim:to:prove}
		(e^T_{m-k} \otimes I_n) L_0 = \sum_{j= -2}^{2} (e^T_{m-(k-j)} \otimes Z_j) \text{ for } k = 0:m-1,
	\end{equation}
	where $Z_j $ is any one of the matrices $ 0, \pm I_n, \pm A_0, \ldots , \pm A_m $ or the matrices in  $X_1,X_2,Y_1$ and $Y_2$.  Since $(\sig, -\tau)$ is a permutation of $\{0:m\}$, we have either $k \in \sig$ or $k \in -\tau$.  
	
	Case-I: Suppose that $k \in \sig$. Then $ -k \notin \tau$. This implies that $-k,-(k+1) \notin (\tau_1,\tau_2)$ since $(\tau_1,\tau,\tau_2)$ satisfies the SIP. Hence by (\ref{eqnrecogfprp0329sept172046n_PCh2}), we have $(e^T_{m-k} \otimes I_n) M_{\tau_1} (Y_1) M_{\tau_2}  (Y_2) = e^T_{m-k} \otimes I_n. $ This implies that $(e^T_{m-k} \otimes I_n) L_0 = (e^T_{m-k} \otimes I_n)  M_{\sigma_1} (X_1) M^P_{\sigma} M_{\sigma_2}  (X_2) . $ Now (\ref{thm:aim:to:prove}) follows from Proposition~\ref{prop1:Bpenta} by considering $\alpha : = (\sig_1, \sig, \sig_2)$ and $\mathcal{X} : = (X_1, P, X_2)$, where $P$ denotes the trivial matrix assignment for $\sig.$

	Case-II: Suppose that $k \in -\tau$. Then there are two cases.
	
	(a) Suppose that $k+1 \in \sig$ (i.e., $-k$ is an end index of $\tau$).  Since $\tau_j$, $j=1,2,$ does not contain the end indices of $\tau$, we have $k,k+1 \notin -(\tau_1,\tau_2)$.  Now by following the similar arguments as given in Case-I we have (\ref{thm:aim:to:prove}).
	
	(b) Suppose that $-(k+1) \in \tau$. Then, since $(\sig,-\tau)$ is a permutation of $\{0:m\}$, we have $k,k+1 \notin (\sig_1,\sig,\sig_2)$. Hence by (\ref{eqnreco2_PCh4}), we have $(e^T_{m-k} \otimes I_n) M_{\sigma_1} (X_1) M^P_{\sigma} M_{\sigma_2}  (X_2) = e^T_{m-k} \otimes I_n$.  Consequently, we have
	$(e^T_{m-k} \otimes I_n)  L_0  = (e^T_{m-k} \otimes I_n)  M_{\tau_1} (Y_1) M_{\tau_2}  (Y_2) . $ Now (\ref{thm:aim:to:prove}) follows from Proposition~\ref{Prop:2:Blkpenta} by considering $\beta: = (\tau_1,\tau_2)$ and $\mathcal{X} : = (Y_1,Y_2)$. Hence $L_0$ is a block penta-diagonal matrix. This completes the proof. 
\end{proof}

For constructing block tridiagonal EGFPs of $P(\lam)$ we need the following results. 
\begin{proposition} \label{not_blktri} Let $\alpha$ be an index tuple containing indices from $\{0:m\}$ such that $\alpha$ satisfies the SIP. Suppose that $c_j(\alpha) \geq 1$ or $i_j(\alpha) \geq 1$ for some $1 \leq j \leq m-1$.  Let $\mathcal{X}$ be any arbitrary nonsingular matrix assignment for $\alpha$. Then $M_\alpha (\mathcal{X})$ is not a block-tridiagonal matrix. 
	
	Similarly, let $\beta$ be an index tuple containing indices from $\{-m:-0\}$ such that $\beta$ satisfies the SIP. Suppose that $c_{-j}(\beta) \geq 1$ or $i_{-j} (\beta) \geq 1$ for some $1 \leq j \leq m-1$.  Let $\mathcal{Y}$ be any arbitrary nonsingular matrix assignment for $\beta$. Then $M_\beta (\mathcal{Y})$ is not a block-tridiagonal matrix.   
\end{proposition}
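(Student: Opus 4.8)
The plan is to produce, for \emph{every} nonsingular matrix assignment, a single block of $M_\alpha(\mathcal{X})$ sitting at block-distance at least $2$ from the main block diagonal, which already rules out block tridiagonality. I would treat the positive consecution case in full and obtain the other three cases by symmetry. Choose $j$ to be the \emph{smallest} index in $\{1:m-1\}$ with $c_j(\alpha)\geq 1$ and set $p:=c_j(\alpha)\geq 1$. I will work in the regime in which the ascending run $(j:j+p)$ lies inside $\{1:m-1\}$; this is exactly the situation one is left with after the reduction $m\notin\alpha$ used in the proof of Theorem~\ref{thm:blk_penta}, where the block-diagonal factor $M_m(\cdot)$ (which acts only on the first block) is commuted to the boundary and removed. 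By Proposition~\ref{lemeqnconrsfcsf16a18n730_PCh3} we may write $\alpha\sim(\alpha^L,\,j,j+1,\ldots,j+p,\,\alpha^R)$ with $j\notin\alpha^L$ and $j+p,\,j+p+1\notin\alpha^R$.

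Next I would observe that minimality of $j$ forces $j-1\notin\alpha^L$ as well: if $j\geq 2$ and $j-1\in\alpha^L$, then $(j-1,j,\ldots,j+p)$ is a subtuple, so $c_{j-1}(\alpha)\geq 1$ with $1\leq j-1\leq m-2$, contradicting the choice of $j$. The only remaining possibility is the boundary value $j=1$, where $j-1=0$ may appear in $\alpha^L$; this is harmless because $M_0(\cdot)$ acts only on block-row $m$ and, the index $0$ being a $\pm 0$ position, its assigned matrix is nonsingular.

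The heart of the argument is the evaluation of the single block of $M_\alpha(\mathcal{X})$ in block-row $m-(j-1)$ and block-column $m-(j+p)$, namely $(e^T_{m-(j-1)}\otimes I_n)\,M_\alpha(\mathcal{X})\,(e_{m-(j+p)}\otimes I_n)$. Factoring $M_\alpha(\mathcal{X})=M_{\alpha^L}(\cdot)\,M_{(j:j+p)}(\cdot)\,M_{\alpha^R}(\cdot)$, I would simplify each factor. Since $j+p,\,j+p+1\notin\alpha^R$, repeated use of (\ref{eqnreco1_PCh4}) gives $M_{\alpha^R}(\cdot)(e_{m-(j+p)}\otimes I_n)=e_{m-(j+p)}\otimes I_n$; since $j-1,\,j\notin\alpha^L$, repeated use of (\ref{eqnreco2_PCh4}) gives $(e^T_{m-(j-1)}\otimes I_n)M_{\alpha^L}(\cdot)=e^T_{m-(j-1)}\otimes I_n$ (in the boundary case $j=1$ the $I_n$ is replaced by a nonsingular matrix, which changes nothing below). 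Finally, pushing the surviving row through the run by applying (\ref{eqnreco2_PCh4}) successively at $j,j+1,\ldots,j+p$ shifts the row index by one at each step, so $(e^T_{m-(j-1)}\otimes I_n)M_{(j:j+p)}(\cdot)=e^T_{m-(j+p)}\otimes I_n$, and the block in question equals $I_n$ (or a nonsingular matrix). This is a nonzero block at block-distance $p+1\geq 2$ from the diagonal, whence $M_\alpha(\mathcal{X})$ is not block tridiagonal. The inversion case is the mirror image: if $i_j(\alpha)\geq 1$ with $j$ chosen analogously, Proposition~\ref{lemeqnconrsfcsf16a18n730_PCh3} supplies a descending run $(j+q,\ldots,j+1,j)$, and the same bookkeeping—now through the ``spread'' branch of (\ref{eqnreco2_PCh4})/(\ref{eqnreco1_PCh4})—produces a nonzero (identity) block at block-distance $q+1\geq 2$. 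For an index tuple $\beta$ with indices from $\{-m:-0\}$ one argues identically after reducing to $-0\notin\beta$, replacing Proposition~\ref{lemeqnconrsfcsf16a18n730_PCh3} by Proposition~\ref{lemeqnconrsfcsf17a18d10} and the recursions (\ref{eqnreco2_PCh4})/(\ref{eqnreco1_PCh4}) by (\ref{eqnrecogfprp0329sept172046n_PCh2})/(\ref{eqnrecogfprgsettingp0329sept172045n_PCh2}).

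The main obstacle I anticipate is controlling the behaviour at the extreme indices $0$ and $m$ (respectively $-0$ and $-m$): a consecution that \emph{terminates} at $m$ does not spread—for instance $M_{(m-1,m)}$ is already block tridiagonal—which is precisely why one needs both the reduction $m\notin\alpha$ (exactly as in Theorem~\ref{thm:blk_penta}) and the minimality choice of $j$, so that the full run stays in the interior $\{1:m-1\}$ and the clean-shift branch of (\ref{eqnreco2_PCh4}) applies $p+1$ times. The second delicate point is that the off-diagonal block must be tracked as an \emph{identity} block (or one forced to be nonsingular by the $\pm 0,\pm m$ assignment), rather than an arbitrary assigned matrix which a degenerate assignment could annihilate; this is what makes the conclusion hold uniformly over all nonsingular assignments and is consistent with the fact that the positions of the block entries of $M_\alpha(\mathcal{X})$ depend only on $\alpha$.
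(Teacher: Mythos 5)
Your argument is correct and is essentially the paper's own: the paper proves this proposition by declaring it ``similar to'' Propositions~\ref{not_blkpentaPve} and~\ref{not_blkpentaNve}, whose proofs do exactly what you do --- isolate the minimal index starting an ascending (or descending) run via Proposition~\ref{lemeqnconrsfcsf16a18n730_PCh3}, use minimality to guarantee $j-1\notin\alpha^L$, push the block row $m-(j-1)$ through the run with (\ref{eqnreco2_PCh4}) to land an identity (or, when $0\in\alpha^L$, a nonsingular $X^0$) block $c_j(\alpha)+1\geq 2$ positions off the diagonal. Your explicit restriction to runs contained in $\{1:m-1\}$ matches the paper's implicit one (its key tool, Proposition~\ref{sp1rowblock}, is only stated for indices from $\{0:m-1\}$), and your observation that a run terminating at $m$, e.g.\ $M_{(m-1,m)}$, does not spread is a genuine caveat about the statement itself that the paper does not address.
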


\begin{proof} The proof is similar to those of Proposition~\ref{not_blkpentaPve} and Proposition~\ref{not_blkpentaNve}.
\end{proof}

The following theorem characterizes all block tridiagonal EGFPs of $P(\lam)$.  Note that, for an index tuple $\alpha$ containing nonnegative indices and any index $ t $, we have $ c_t (\alpha) = 0$ and $ i_t (\alpha) = 0$ imply that $ t \in \alpha$ but $ t-1 , t+1 \notin \alpha$. Similar thing holds for index tuple containing negative indices.

\begin{theorem} \label{thm:blk_tri} Let $L(\lambda):=  M_{\tau_1} (Y_1) M_{\sigma_1} (X_1) (\lambda  M^P_{\tau}  -  M^P_{\sigma}) M_{\sigma_2}  (X_2)M_{\tau_2}  (Y_2)$ be an EGFP of $P(\lam)$. Then $L(\lam)$  is block tridiagonal if and only if $ c_t(\sig_1, \sig, \sig_2) = 0 $, $ i_t(\sig_1, \sig, \sig_2) =0$,  $ c_{-t}(\tau_1, \tau, \tau_2) = 0 $ and $ i_{-t}(\tau_1, \tau, \tau_2) =0$ for any index $1 \leq t \leq m-1$. 
\end{theorem}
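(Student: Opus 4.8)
The plan is to prove the two directions separately, using that $L(\lam)=\lam L_1-L_0$ is block tridiagonal exactly when both $L_0:=M_{\tau_1}(Y_1)M_{\sig_1}(X_1)M^P_\sig M_{\sig_2}(X_2)M_{\tau_2}(Y_2)$ and $L_1:=M_{\tau_1}(Y_1)M_{\sig_1}(X_1)M^P_\tau M_{\sig_2}(X_2)M_{\tau_2}(Y_2)$ are. Throughout I read the vanishing conditions of the statement as the inequalities $c_t,i_t\le 0$ for $1\le t\le m-1$ (an index absent from the tuple having consecution and inversion equal to $-1$). For the forward implication I would argue by contraposition as in the forward part of Theorem~\ref{thm:blk_penta}: if some $t$ with $1\le t\le m-1$ had $c_t(\sig_1,\sig,\sig_2)\ge 1$ or $i_t(\sig_1,\sig,\sig_2)\ge 1$ (or the analogous statement for $(\tau_1,\tau,\tau_2)$), then Proposition~\ref{not_blktri} produces a nonzero block of the relevant factor at block-distance at least two from the diagonal, which is not cancelled by the remaining elementary factors, so $L_0$ or $L_1$ is not tridiagonal.

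The substance is the converse, and I would begin by showing that the hypotheses force the repetition tuples to be harmless: \emph{every index of $\sig_1$ or $\sig_2$ equals $0$, and every index of $\tau_1$ or $\tau_2$ equals $-m$.} Indeed, if $t$ is an index of $\sig_j$ then $t\in\sig\setminus\{m-1,m\}$, so $t$ occurs at least twice in $(\sig_1,\sig,\sig_2)$; the SIP then forces an occurrence of $t+1$ strictly between two occurrences of $t$, so $c_t(\sig_1,\sig,\sig_2)\ge 1$. As $0\le t\le m-2$, the only value compatible with $c_t\le 0$ is $t=0$. The negative side is entirely symmetric, with $-m$ playing the role of $0$. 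Since $0$ and $-m$ are by definition excluded from being end indices (Definition~\ref{endpt}), it follows that $\sig_j$ (resp. $\tau_j$) contains no end index of $\sig$ (resp. $\tau$); that is, the extra hypothesis of Theorem~\ref{thm:blk_penta} holds \emph{automatically} here, which is precisely why the present statement can omit it.

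Granting this, I would run the converse argument of Theorem~\ref{thm:blk_penta} almost verbatim, the only change being that the sharper bounds $c_t=i_t=0$ ($1\le t\le m-1$) replace Propositions~\ref{prop1:Bpenta} and \ref{Prop:2:Blkpenta} by their tridiagonal analogues, in which the sum over $j\in\{-2,\dots,2\}$ collapses to a sum over $j\in\{-1,0,1\}$ and which are obtained by the same appendix computations under the tighter hypothesis. Concretely, the end-index condition lets $M_{\tau_1}(Y_1)M_{\tau_2}(Y_2)$ commute past $M_{\sig_1}(X_1)M^P_\sig M_{\sig_2}(X_2)$; after absorbing the block-diagonal corner factors associated with the indices $0$ and $m$ (and $-0,-m$), one evaluates $(e^T_{m-k}\otimes I_n)L_0$ for each $k=0:m-1$ through the case split $k\in\sig$ versus $k\in-\tau$, using (\ref{eqnreco2_PCh4})--(\ref{eqnrecogfprgsettingp0329sept172045n_PCh2}) and the tridiagonal analogues to conclude that every nonzero block lies within block-distance one of the diagonal. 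The identical computation for $L_1$ completes the proof.

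The step I expect to be the main obstacle is the passage from \emph{penta-diagonal} to genuine \emph{tridiagonal} structure: establishing the sharpened forms of Propositions~\ref{prop1:Bpenta} and \ref{Prop:2:Blkpenta}, and verifying that the exceptional indices $0$ and $-m$ allowed in the repetition tuples act only within the extreme corner block and hence never widen the band beyond the first off-diagonal. Once this bookkeeping is secured, the remaining case analysis is a routine specialization of the proof of Theorem~\ref{thm:blk_penta}.
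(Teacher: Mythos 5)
Your proposal is correct and follows essentially the same route as the paper: the forward direction via Proposition~\ref{not_blktri}, and for the converse the same key observation that the SIP together with $c_t=i_t=0$ for $1\le t\le m-1$ forces every index of $\sigma_1,\sigma_2$ to be $0$ and every index of $\tau_1,\tau_2$ to be $-m$. The only difference is in the finish: where you anticipate needing sharpened tridiagonal analogues of Propositions~\ref{prop1:Bpenta} and~\ref{Prop:2:Blkpenta} (the step you flag as the main obstacle), the paper sidesteps this entirely by noting that $M_{\sigma_j}(X_j)$ and $M_{\tau_j}(Y_j)$ are then block \emph{diagonal}, so $L_0$ (resp.\ $L_1$) is block tridiagonal iff $M^P_\sigma$ (resp.\ $M^P_\tau$) is, which is immediate because distinct indices $j,k\in\sigma$ with $1\le j,k\le m-1$ satisfy $|j-k|\ge 2$ and hence the active $2\times 2$ blocks of the Fiedler factors do not overlap.
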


\begin{proof} ($\Longrightarrow$) The forward implication follows from Proposition~\ref{not_blktri}.

	($\Longleftarrow$) Define $\mathcal{A} := M_{\tau_1} (Y_1) M_{\sig_1} (X_1)  $ and $\mathcal{B} : =  M_{\sigma_2}  (X_2)M_{\tau_2}  (Y_2)$. Then we have $L(\lam) =: \lam L_1 - L_0$, where $ L_1 : = \mathcal{A} M^P_{\tau} \mathcal{B}  $ and  $ L_0 : =  \mathcal{A} M^P_{\sigma} \mathcal{B}$. Note that $L(\lam)$ is a block-tridiagonal pencil if and only if both $L_0$ and $L_1$ are block-tridiagonal matrices.  We only proof that $L_0$ is a block tridiagonal matrix. Similarly, $L_1$ is block tridiagonal. 
	
	It is given that, for any index $1 \leq t \leq m-1$, if $t \in \sig$ then we have $ c_t(\sig_1, \sig, \sig_2) = 0 $ and $ i_t(\sig_1, \sig, \sig_2) =0$. This implies that, for $ 1 \leq j \leq m-2$, if $ j \in \sig$ then $j+1 \notin \sig$. Moreover, we have $\sig_j =(0)$ or $\sig_j = \emptyset$, $j=1,2.$ Since $M_{0}(Z) = \diag (I_{(m-1)n}, Z) $, we have $M_{\sig_j}  (X_j) = \diag (I_{(m-1)n}, X_j)$ or $M_{\sig_j}  (X_j) = I_{mn}$ for $j =1,2.$ Further, given that  $ c_{-p}(\tau_1, \tau, \tau_2) = 0 $ and $ i_{-p}(\tau_1, \tau, \tau_2) =0$ for any index $-(m-1) \leq -p  \leq -1 $ and $-p \in \tau$. This implies that $\tau_j = (-m)$ or $\tau_j = \emptyset$, $j=1,2.$ Since  $M_{-m}(Z) = \diag (Z, I_{(m-1)n})$, we have $M_{\tau_j}  (Y_j) =  \diag (Y_j, I_{(m-1)n})$ or $M_{\tau_j}  (Y_j) =  I_{mn}$, $j=1,2.$ Consequently, $\mathcal{A}$ and $\mathcal{B}$ are block diagonal matrices. Hence $L_0$ is block tridiagonal if and only if $ M^P_\sig $ is block tridiagonal. Since, $|j-k| \geq 2$ for distinct $ j,k \in \sig$ with $ 1 \leq j, k \leq m-1$, it is clear from the Fiedler matrices $M^P_{\ell}$, $\ell = 0:m$, that $M^P_{\sig
	}$ is tridiagonal. This completes the proof. 
\end{proof}

\begin{remark} Since EGFPs subsumes FPs, GFPs, FPRs and GFPRs, Theorem~\ref{thm:blk_penta} and Theorem~\ref{thm:blk_tri}, respectfully, provides a characterization for block penta-diagonal and block tridiagonal FPs, GFPs, FPRs and GFPRs. 
\end{remark}

\section{Rosenbrock strong linearizations of rational matrices}\label{EGFPofG} 
In this section, we construct a family of Rosenbrock strong linearizations of a rational matrix $G(\lam)$ directly from the EGFPs of $P(\lam)$ as given in Section~\ref{EGFPsOfP}. We show that this family subsumes all the Fiedler-like linearizations constructed in \cite{rafinami1, rafinami3, rafiran4}. Throughout of this section, we consider the rational matrix $G(\lambda)= P(\lambda)+C(\lambda E-A)^{-1}B$ and its associated system matrix $\mathcal{S}(\lam)$ as given in (\ref{minrel2_RCh41}) and (\ref{slamsystemmatrix_RCh41}), respectfully.  

\begin{definition}[EGFP of $G(\lam)$] \label{EGFofG} Let $L(\lam)$ be an EGFP of $P(\lam)$ as given in (\ref{EGFPDef}) such that $0 \in \sig$ and $-m\in \tau$. Then the pencil
\begin{equation}\label{DefEGFP}
	\mathbb{L}(\lam) := 
	\left[
	\begin{array}{@{}c|c@{}}
		L(\lam) &  e_{m-i_0 ( \sigma_1, \sigma)} \otimes C \\[.1em] \hline \\[-1em]
		e^T_{m-c_0(\sigma, \sigma_2)} \otimes B  & A-\lam E
	\end{array}
	\right]
\end{equation} is said to be an EGFP of $G(\lam)$. We also refer to $\mathbb{L}(\lam)$ as an EGFP of $\mathcal{S}(\lambda)$.
\end{definition}

\begin{example} \label{exaEGFPG}Let  $G(\lambda) := \sum_{i=0}^5 \lambda^i A_i + C(\lambda E- A)^{-1}B.$ Let $L(\lam) :=  \big ( \lambda M^P_{(-5,-3)} - M^P_{(4,1,2,0)} \big ) M^P_{1} $ be the EGFP of $P(\lam)$ associated with $\sig=(4,1,2,0)$, $\tau = (-5,-3)$, $\sig_{1}=(1)$, $\sig_{2}=\emptyset$, and $\tau_1=\emptyset=\tau_2$. Note that $c_0(\sig,\sig_2)=1$ and $i_0(\sig_1,\sig)=1$. Then $$\mathbb{L}(\lam) = \left[
	\begin{array}{@{}ccccc|c@{}}
		\lam A_{5}+A_{4} & -I_n & 0 & 0 & 0&0\\ 
		-I_n&  0 & \lam I_n & 0& 0&0\\
		 0 & \lam I_n & \lam A_3+A_2 & A_1 & -I_n&0\\
		  0 & 0 & A_1 & -\lam A_1+A_0 & \lam I_n& C\\
		   0 & 0 & -I_n & \lam I_n & 0 &0\\ \hline
		   0&0&0&B&0&A-\lam E
	\end{array}\right]$$ is an EGFP of $G(\lam)$. 
\end{example}

Note that if $G(\lam)$ is symmetric (i.e., $G(\lam)^T=G(\lam)$) then there exists a minimal symmetric realization of $G(\lam)$ of the form $G(\lam) = P(\lam) + B^T(\lam E -A)^{-1}B$, where $P(\lam)$ is symmetric (i.e., $A_i^T=A_i, ~i=0:m$), and  $A$ and $E$ are symmetric matrices with $E$ being nonsingular \cite{fmq}. The system matrix 
	$\mathcal{S} (\lam) : =
	\left[ \begin{array}{@{}c|c@{}} P(\lam) &  B^T\\ \hline  B &   A -  \lam E \end{array} \right]
$ is obviously symmetric and irreducible. Then $\mathbb{L}(\lam)$ as given in Example~\ref{exaEGFPG} is a symmetric EGFP of $G(\lam)$ and it follows from Theorem~\ref{EGFPofGLin} that $\mathbb{L}(\lam)$ is a Rosenbrock strong linearization of $G(\lam)$.

From Definition~\ref{EGFofG}, it is clear that the family of EGFPs of $G(\lam)$ contains all the Fiedler-like pencils (FPs, GFPs, FPRs, GFRs) of $G(\lam)$ constructed in \cite{rafinami1, rafinami3, rafiran4}. Indeed, if the pencil $L(\lam)$ in Definition~\ref{EGFofG} is an FP (respectively, GFP, FPR and GFPR) of $P(\lam)$, then $\mathbb{L}(\lam)$ is an FP (respectively, GFP, FPR and GFPR) of $G(\lam)$.

Next we show that EGFPs of $G(\lam)$ are Rosenbrock strong linearizations of $G(\lam)$. To that end, we need the following result which is a corollary of \cite[Lemma~3.10]{rafiran3}.


\begin{lemma} \label{gfprptogArbitraryCoeff_PCh3} Let $0 \leq h \leq m-1,$ and let $\sigma$ be a permutation of $\{ 0 :h \}$. Let $\sigma_1$ and $\sigma_2$ be index tuples containing indices from $\{0 :h-1 \}$ such that $(\sigma_1,\sigma , \sigma_2)$ satisfies the SIP. Then $(e_{m-c_0(\sigma)}^{T}\otimes I_n) M_{\sigma_{2}} (Y)= e_{m-c_0(\sigma, \sigma_2)}^{T}\otimes I_n $
	and $M_{\sigma_{1}} (X) $ $ (e_{m-i_0(\sigma)}\otimes I_n) = e_{m-i_0(\sigma_1, \sigma)}\otimes I_n$ for any arbitrary matrix assignments $X$ and $Y$ for  $\sigma_1$ and $\sigma_2 $, respectively. 
\end{lemma}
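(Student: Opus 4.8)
The plan is to prove the two identities separately; they are mirror images of one another, so I would focus on the first and obtain the second by the obvious transpose/reversal symmetry. For the first identity, write $\sigma_2 = (s_1, \ldots, s_p)$ with $s_1, \ldots, s_p \in \{0:h-1\}$, so that $M_{\sigma_2}(Y) = M_{s_1}(Y_1) \cdots M_{s_p}(Y_p)$, and push the row vector $e^T_{m-c_0(\sigma)} \otimes I_n$ through this product one factor at a time using the reduction rule (\ref{eqnreco2_PCh4}). The central bookkeeping device is the running consecution count $f(k) := c_0(\sigma, s_1, \ldots, s_k)$, with $f(0) = c_0(\sigma)$ and $f(p) = c_0(\sigma, \sigma_2)$; I would prove by induction on $k$ that $(e^T_{m-c_0(\sigma)} \otimes I_n) M_{s_1}(Y_1) \cdots M_{s_k}(Y_k) = e^T_{m-f(k)} \otimes I_n$, which at $k=p$ is exactly the claim.

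For the inductive step I would combine two elementary facts. First, an update rule for the consecution count: appending a single index $s$ to a tuple $A$ with $c_0(A) = t$ leaves $c_0$ unchanged when $s \neq t+1$ and raises it to $t+1$ when $s = t+1$; this is a short argument about which ascending runs $(0, 1, \ldots, \cdot)$ can be realized as subtuples before and after the append. Second, given that the current vector is $e^T_{m-f(k-1)} \otimes I_n$, the rule (\ref{eqnreco2_PCh4}) sends it to $e^T_{m-(f(k-1)+1)} \otimes I_n$ when $s_k = f(k-1)+1$ and leaves it unchanged when $s_k \notin \{f(k-1), f(k-1)+1\}$; together with the update rule, these two cases yield $e^T_{m-f(k)} \otimes I_n$. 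Here the arbitrariness of the matrix assignment $Y$ causes no trouble, because in both of these cases of (\ref{eqnreco2_PCh4}) the assigned matrix does not appear in the output.

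The case $s_k = f(k-1)$ is the one not covered by (\ref{eqnreco2_PCh4})---there the output would pick up the assigned block---so the real work is to rule it out, and this is exactly where the SIP is used. Suppose $s_k = t := f(k-1) = c_0(\sigma, s_1, \ldots, s_{k-1})$. Then $(0, 1, \ldots, t)$ occurs as a subtuple of $(\sigma, s_1, \ldots, s_{k-1})$, so there is an occurrence of $t$ strictly to the left of the occurrence $s_k = t$. Since $(\sigma, \sigma_2)$ is a contiguous suffix of $(\sigma_1, \sigma, \sigma_2)$ it inherits the SIP, so between these two equal indices $t$ there must sit an index $t+1$; this $t+1$ lies before $s_k$ and after the top of the ascending run, giving $(0, 1, \ldots, t, t+1)$ as a subtuple of $(\sigma, s_1, \ldots, s_{k-1})$ and contradicting $c_0(\sigma, s_1, \ldots, s_{k-1}) = t$. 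Hence $s_k \neq f(k-1)$, which closes the induction.

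The second identity is proved the same way with right-to-left bookkeeping: writing $\sigma_1 = (u_1, \ldots, u_q)$, I would push the column vector $e_{m-i_0(\sigma)} \otimes I_n$ leftward through $M_{u_q}(X_q), \ldots, M_{u_1}(X_1)$ using (\ref{eqnreco1_PCh4}), replacing consecutions by inversions and the running count by $i_0(u_l, \ldots, u_q, \sigma)$. The only change in the obstruction argument is that the relevant contiguous subtuple $(u_l, \ldots, u_q, \sigma)$ is an infix rather than a suffix of $(\sigma_1, \sigma, \sigma_2)$; since the SIP is preserved under both prefixes and suffixes it is preserved under infixes, so the same ``two equal indices $t$ force an intervening $t+1$'' contradiction applies verbatim. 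I expect the SIP combinatorics of the third paragraph to be the only genuine obstacle; everything else is a routine propagation through (\ref{eqnreco2_PCh4}) and (\ref{eqnreco1_PCh4}). Finally, since this is precisely the content of \cite[Lemma~3.10]{rafiran3} specialized to the present setting, the conclusion could alternatively be quoted directly from there.
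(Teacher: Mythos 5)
Your argument is correct, but note that the paper does not actually prove this lemma: it is stated as a corollary of \cite[Lemma~3.10]{rafiran3} and quoted without proof, so your closing remark that one could simply cite that reference is exactly what the authors do. Your self-contained induction is sound: the update rule for $c_0$ under appending a single index, the two benign cases of (\ref{eqnreco2_PCh4}), and the SIP argument excluding the case $s_k=f(k-1)$ (two equal occurrences of $t$ forcing an intervening $t+1$, which would contradict $c_0(\sigma,s_1,\ldots,s_{k-1})=t$) all check out, including the observation that the SIP passes to contiguous infixes and that $t\geq 0$ because $0\in\sigma$. It is worth comparing your route with the technique the paper uses for the analogous statements it \emph{does} prove (Proposition~\ref{sp1rowblock} and Lemma~\ref{blk_rows_n_col}): there the authors first invoke Proposition~\ref{lemeqnconrsfcsf16a18n730_PCh3} to replace the tuple by an equivalent one in which the ascending run $(s+1,\ldots,s+1+c_{s+1}(\alpha))$ appears contiguously, and then apply (\ref{eqnreco2_PCh4}) to that whole block at once; you instead process one factor at a time with a running consecution count. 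The paper's normal-form approach hides the combinatorics inside the equivalence lemma, while yours makes the bookkeeping explicit at the cost of having to prove the append rule for $c_0$ and to rule out the $j=i$ case by hand; both are valid, and yours has the mild advantage of not needing the csf/rsf equivalence machinery.
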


Analogs to Lemma~\ref{gfprptogArbitraryCoeff_PCh3}, we have following result for EGFPs of $P(\lam)$ which will be used in proving that EGFPs of $G(\lam)$ are Rosenbrock strong linearizations of $G(\lam)$. 
\begin{lemma} \label{blk_rows_n_col}
	Let $ L(\lambda):= M_{\tau_1} (Y_1) \, M_{\sigma_1} (X_1) \, (\lambda M^P_{\tau} -  M^P_{\sigma})  \, M_{\sigma_2} (X_2) \,M_{\tau_2} (Y_2)$ be an EGFP of $P(\lambda)$ as given in (\ref{EGFPDef}).  Suppose that $0 \in \sigma$ and $-m \in \tau$. Then
	\begin{align}
	(e_{m-c_0(\sigma)}^{T}\otimes I_n) \, M_{\sigma_{2}} (X_2) \,  M_{\tau_{2}}  (Y_2)= e_{m-c_0(\sigma,\sigma_2)}^{T}\otimes I_n, \label{lem1eq1}\\
	\text{~and~}  M_{\tau_{1}} (Y_1)\, M_{\sigma_{1}} (X_1) \, (e_{m-i_0(\sigma)}\otimes I_n)= e_{m-i_0(\sigma_1, \sigma)}\otimes I_n.\label{lem1eq2}
	\end{align}

\end{lemma}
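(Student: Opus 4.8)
The plan is to reduce the two identities to the already-proved Lemma~\ref{gfprptogArbitraryCoeff_PCh3} by peeling off the factors $M_{\tau_1}(Y_1)$ and $M_{\tau_2}(Y_2)$, which involve only negative indices and therefore leave the relevant block rows/columns untouched. Concretely, to establish \eqref{lem1eq1} I first observe that $c_0(\sigma)$ equals the number of consecutions of $\sigma$ at $0$ inside the \emph{sub-permutation} $\sigma$ of $\{0:h\}$ for some $h$; since $0\in\sigma$ this quantity is well defined, and $m-c_0(\sigma)$ is a legitimate block-row index. The key structural fact is that $\tau_2$ contains only indices from $\tau\setminus\{-1,-0\}\subseteq\{-m:-2\}$, and by \eqref{eqnreco1_PCh4}--\eqref{eqnrecogfprgsettingp0329sept172046n_PCh2} (the right-multiplication rules) a factor $M_{-j}(Z)$ with $j\neq i,i+1$ fixes the row $e^T_{m-i}\otimes I_n$. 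The plan is to verify that, after applying $M_{\sigma_2}(X_2)$, the resulting row index $m-c_0(\sigma,\sigma_2)$ is unaffected by $M_{\tau_2}(Y_2)$, so that
\[
(e_{m-c_0(\sigma,\sigma_2)}^T\otimes I_n)\,M_{\tau_2}(Y_2)=e_{m-c_0(\sigma,\sigma_2)}^T\otimes I_n.
\]

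For the first factor I would apply Lemma~\ref{gfprptogArbitraryCoeff_PCh3} verbatim to get $(e_{m-c_0(\sigma)}^T\otimes I_n)\,M_{\sigma_2}(X_2)=e_{m-c_0(\sigma,\sigma_2)}^T\otimes I_n$, since $\sigma_2$ draws its indices from $\{0:h-1\}$ exactly as required there. Stacking these two steps gives \eqref{lem1eq1}. The identity \eqref{lem1eq2} is the mirror image: $M_{\sigma_1}(X_1)$ acting on the right on the column $e_{m-i_0(\sigma)}\otimes I_n$ yields $e_{m-i_0(\sigma_1,\sigma)}\otimes I_n$ by the second half of Lemma~\ref{gfprptogArbitraryCoeff_PCh3}, and then left-multiplication by $M_{\tau_1}(Y_1)$, whose factors $M_{-j}(Y)$ with $j\geq2$ fix that column by \eqref{eqnrecogfprgsettingp0329sept172045n_PCh2}, leaves it unchanged. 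Thus both identities follow by the same two-step peeling argument, once in row form and once in column form.

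The main obstacle I anticipate is the bookkeeping that the negative-index factors genuinely commute past—or fix—the distinguished row/column. One must confirm that the index $c_0(\sigma,\sigma_2)$ (respectively $i_0(\sigma_1,\sigma)$) never coincides with a value $i$ for which some $-j\in(\tau_1,\tau_2)$ satisfies $j\in\{i,i+1\}$; this is where the hypotheses $0\in\sigma$ and $-m\in\tau$, together with $(\sigma_1,\sigma,\sigma_2)$ and $(\tau_1,\tau,\tau_2)$ satisfying the SIP and drawing from disjoint index ranges, must be invoked. Since $(\sigma,-\tau)$ partitions $\{0:m\}$, the column $0$-consecution index of the $\sigma$-block lies in the range governed by $\sigma$, away from the rows moved by the $\tau$-factors, so the interaction terms vanish. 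I would argue this by a short case check on whether the critical index lies in $\sigma$ or in $-\tau$, exactly as in the proof of Theorem~\ref{thm:blk_penta}, rather than by direct computation. Everything else is a routine application of the recovery identities \eqref{eqnreco2_PCh4}--\eqref{eqnrecogfprgsettingp0329sept172045n_PCh2}.
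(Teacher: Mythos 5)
Your overall strategy --- peel off $M_{\sigma_2}(X_2)$ via Lemma~\ref{gfprptogArbitraryCoeff_PCh3} and then show that $M_{\tau_2}(Y_2)$ fixes the resulting block row --- is the same as the paper's, but there is a genuine gap in the first step. You justify the ``verbatim'' application of Lemma~\ref{gfprptogArbitraryCoeff_PCh3} by asserting that $\sigma_2$ draws its indices from $\{0:h-1\}$ exactly as required there. That is false in general: in an EGFP, $\sigma$ need not be a permutation of an initial segment $\{0:h\}$; it is only a sub-permutation of $\{0:m\}$ containing $0$, and $\sigma_2$ takes its indices from $\sigma\setminus\{m-1,m\}$. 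For instance, with $m=7$ and $\sigma=(0,1,4,5)$ one has $h=1$, yet $\sigma_2$ may legitimately contain the index $4\notin\{0:h-1\}$ (the tuple $(\sigma,\sigma_2)=(0,1,4,5,4)$ satisfies the SIP). So the hypotheses of Lemma~\ref{gfprptogArbitraryCoeff_PCh3} are not met and the lemma cannot be invoked directly.

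The paper closes this gap by taking $h$ to be the integer with $0,1,\ldots,h\in\sigma$ and $h+1\notin\sigma$, splitting $\sigma$ and $\sigma_2$ into subtuples $\widehat{\sigma},\widehat{\sigma}_2$ with indices in $\{0:h\}$ and $\widehat{\widehat{\sigma}},\widehat{\widehat{\sigma}}_2$ with indices in $\{h+2:m\}$ (these commute, since any two indices from the two ranges differ by more than one), verifying that $c_0(\sigma,\sigma_2)=c_0(\widehat{\sigma},\widehat{\sigma}_2)\le h$, using (\ref{eqnreco2_PCh4}) to show the high-index factors fix the row $e^T_{m-c_0(\widehat{\sigma})}\otimes I_n$, and only then applying Lemma~\ref{gfprptogArbitraryCoeff_PCh3} to $\widehat{\sigma},\widehat{\sigma}_2$. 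Your treatment of the $\tau_2$-factor is essentially right (the SIP forces $\tau_2\subseteq\{-m:-(h+2)\}$ while the surviving row index is at most $h$, so (\ref{eqnrecogfprp0329sept172046n_PCh2}) applies), and the mirror argument for (\ref{lem1eq2}) has the identical problem with $\sigma_1$. The missing idea is therefore the low/high decomposition of the \emph{positive} tuples, not the bookkeeping for the negative ones that you flagged as the main obstacle.
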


\begin{proof} Given that $0 \in \sig$ and $-m \in \tau$ (i.e., $m \notin \sig$). Let $h$ be the integer such that $0,1,\ldots,h \in \sig$ and $h+1 \notin \sig$. Then $c_0(\sig) \leq h $ and $ i_0(\sig)  \leq h$. Further, we have $h , h+1 \notin \sig_1 \cup \sig_2$ as  $h+1 \notin \sig$ and $(\sig_1, \sig, \sig_2)$ satisfies the SIP.

	Let $\widehat{\sig} $ and $\widehat{\sig}_j$, $j=1,2$, respectively, be the subtuples of $\sig$ and $\sig_j$ with indices $\{0:h\}$. Similarly, let $\widehat{\widehat{\sig}} $ and $\widehat{\widehat{\sig}}_j$, $j=1,2$, respectively,  be the subtuples of $\sig$ and $\sig_j$ with indices $\{h+2:m\}$. Then $\widehat{\sig} $ and $\widehat{\widehat{\sig}} $ commutes since for any indices $k \in\widehat{\sig} $ and $ \ell \in \widehat{\widehat{\sig}} $ we have $|k - \ell| >1$. Thus $\sig \sim (\widehat{\sig}, \widehat{\widehat{\sig}}) \sim ( \widehat{\widehat{\sig}},\widehat{\sig})$. Similarly, $\sig_j \sim  (\widehat{\sig}_j, \widehat{\widehat{\sig}}_j) \sim ( \widehat{\widehat{\sig}}_j, \widehat{\sig}_j)$ for $j =1,2.$ Further, $(\widehat{\sig} ,\widehat{\widehat{\sig}}_j) \sim (\widehat{\widehat{\sig}}_j,\widehat{\sig})$ for $j =1,2.$ Since $h+1 \notin \sig$, we have  $i_0(\sig) = i_0(\widehat{\sig})$ and $c_0(\sig) = c_0(\widehat{\sig})$. Further, $c_0(\sig, \sig_2)  =  c_0(\widehat{\widehat{\sig}},\widehat{\sig}, \widehat{\widehat{\sig}}_2,\widehat{\sig}_2)=c_0(\widehat{\widehat{\sig}}, \widehat{\widehat{\sig}}_2, \widehat{\sig}, \widehat{\sig}_2)= c_0(\widehat{\sig}, \widehat{\sig}_2)$, where  the last equality holds as $0 \notin \widehat{\widehat{\sig}} \cup \widehat{\widehat{\sig}}_2 $. Similarly, $i_0(\sig_1, \sig) =  i_0(\widehat{\sig}_1, \widehat{\sig}).$ Note that $X_2 $ and $Y_2$ are arbitrary matrix assignments. We by denote $(*)$ any arbitrary matrix assignment. Then we have
	\begin{align*}
		&    (e^T_{m-c_0(\widehat{\sigma})}\otimes I_n)   \, M_{\widehat{\widehat{\sigma}}_{2}} (*) M_{\widehat{\sigma}_{2}} (*) M_{\tau_{2}} (*) \\
		& = (e^T_{m-c_0(\widehat{\sigma})}\otimes I_n) \, M_{\widehat{\sigma}_{2}} (*) M_{\tau_{2}} (*)  \text{ by } (\ref{eqnreco2_PCh4}) \text{ since }  \left\{\begin{array}{l}
			c_0(\widehat{\sig}) = c_0(\sig) \leq h \text{ and }  \widehat{\widehat{\sigma}}_{2}\\  \text{contains indices from } \{h+2:m\} \end{array} \right. \\ 
		& = (e^T_{m-c_0(\widehat{\sigma}, \widehat{\sig}_2)} \otimes I_n ) M_{\tau_{2}} (*) ~\text{by Lemma~}\ref{gfprptogArbitraryCoeff_PCh3}\\
		&=  e^T_{m-c_0(\widehat{\sigma}, \widehat{\sig}_2)} \otimes I_n \text{ by (\ref{eqnrecogfprp0329sept172046n_PCh2})~since}  \left\{\begin{array}{l}
			c_0(\widehat{\sigma}, \widehat{\sig}_2) \leq h \text{ and }  \tau_{2}  \text{ contains indices } \\\text{from } \{-m:-(h+2)\} \end{array} \right..
	\end{align*}
	Thus  $(e_{m-c_0(\sigma)}^{T}\otimes I_n) \,  M_{\sigma_{2}} (X_2) M_{\tau_{2}} (Y_2)= e^T_{m-c_0(\sigma, \sig_2)} \otimes I_n$ which prove (\ref{lem1eq1}). Similar proof holds for  (\ref{lem1eq2}). Indeed, we have 
	\begin{align*}
		&   M_{\tau_{1}} (*)M_{\widehat{\sigma}_{1}} (*) M_{\widehat{\widehat{\sigma}}_{1}} (*) \, (e_{m-i_0(\widehat{\sigma})}\otimes I_n)\\
		& = M_{\tau_{1}} (*) M_{\widehat{\sigma}_{1}} (*) \, (e_{m-i_0(\widehat{\sigma})}\otimes I_n) \text{ by } (\ref{eqnreco1_PCh4}) \text{ since }  \left\{\begin{array}{l}
			i_0(\widehat{\sig}) = i_0(\sig) \leq h \text{ and } \widehat{\widehat{\sigma}}_{1} \\
			\text{contains indices from } \{h+2:m\}
		\end{array} \right. \\ 
		& =M_{\tau_{1}} (*) (e_{m-i_0(\widehat{\sigma}_1, \widehat{\sig})}\otimes I_n)   \text{ by Lemma } \ref{gfprptogArbitraryCoeff_PCh3}\\
			&  = e_{m-i_0(\widehat{\sigma}_1, \widehat{\sig})}\otimes I_n  \text{ by } (\ref{eqnrecogfprgsettingp0329sept172045n_PCh2}) \text{ since }  \left\{\begin{array}{l}
				i_0(\widehat{\sigma}_1, \widehat{\sig}) \leq h \text{ and } \tau_{1} \text{ contains indices} \\ \text{ from } \{-m:-(h+2)\}
			\end{array} \right..
	\end{align*}
	Thus  $M_{\tau_{1}} (Y_1) M_{\sigma_{1}}(X_1) \, (e_{m-i_0(\sigma)}\otimes I_n)= e_{m-i_0(\sigma_1, \sig)}\otimes I_n$. This completes the proof.
\end{proof}

The following proposition is a restatement of \cite[Theorem~3.6 and Theorem~3.9]{rafiran4} which shows that the Fiedler pencils are Rosenbrock strong linearization of $G(\lam)$. 

\begin{proposition} \label{prop:gfprstlin} Let $T(\lam) := \lam M^P_{-m} -M^P_{\alpha} $ be the Fiedler pencil of $P(\lam)$ associated with a permutation $\alpha$ of $\{0:m-1\}$. Then $\mathbb{T}(\lam) :=\left[
	\begin{array}{@{}c|c@{}}
		T(\lam) &  e_{m-i_0 (\alpha)} \otimes C \\[.1em] \hline \\[-1em]
		e^T_{m-c_0(\alpha)} \otimes B  & A-\lam E
	\end{array}
	\right]$ 
is the Fiedler pencil of $G(\lam)$ associated with  $\alpha$. Moreover, $\mathbb{T}(\lam)$ is a Rosenbrock strong linearization of $G(\lam)$. Further, a pencil $\mathbb{L} (\lam) $ given by $\mathbb{L} (\lam) : = \diag (\mathcal{X}, X_0)   \mathbb{T}(\lam)  \diag (\mathcal{Y}, Y_0)  $, where $\mathcal{X}, \mathcal{Y} \in \mathbb{C}^{mn \times mn}$ and  $X_0, Y_0 \in \mathbb{C}^{r \times r}$ are nonsingular matrices, is a  Rosenbrock strong linearization of $G(\lam)$.
\end{proposition}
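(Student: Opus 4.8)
The plan is to treat the three assertions separately. The first two—that $\mathbb{T}(\lam)$ is the Fiedler pencil of $G(\lam)$ associated with $\alpha$ and that it is a Rosenbrock strong linearization—are exactly the content of the cited \cite[Theorem~3.6 and Theorem~3.9]{rafiran4}, so I would simply invoke them and concentrate all the work on the third assertion, namely the invariance of the Rosenbrock strong linearization property under pre- and post-multiplication by the block-diagonal nonsingular matrices $\diag(\mathcal{X},X_0)$ and $\diag(\mathcal{Y},Y_0)$.

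First I would record the block form of $\mathbb{L}(\lam)$. Multiplying out $\mathbb{L}(\lam) = \diag(\mathcal{X},X_0)\,\mathbb{T}(\lam)\,\diag(\mathcal{Y},Y_0)$ gives
\[ \mathbb{L}(\lam) = \left[\begin{array}{cc} \mathcal{X}\,T(\lam)\,\mathcal{Y} & \mathcal{X}(e_{m-i_0(\alpha)}\otimes C)Y_0 \\ X_0(e^T_{m-c_0(\alpha)}\otimes B)\mathcal{Y} & X_0(A-\lam E)Y_0 \end{array}\right], \]
so its $(1,1)$ block is the pencil $\mathcal{X}T(\lam)\mathcal{Y}$, its off-diagonal blocks are constant, and its $(2,2)$ block is the pencil $H-\lam K$ with $H:=X_0 A Y_0$ and $K:=X_0 E Y_0$; since $E$, $X_0$, $Y_0$ are nonsingular, so is $K$. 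Thus $\mathbb{L}(\lam)$ has the shape required in (\ref{lbb1}). To check irreducibility I would write the bottom block row as $X_0\,[\,e^T_{m-c_0(\alpha)}\otimes B \ \ A-\lam E\,]\,\diag(\mathcal{Y},Y_0)$; since $X_0$ and $\diag(\mathcal{Y},Y_0)$ are nonsingular, its rank equals that of $[\,e^T_{m-c_0(\alpha)}\otimes B \ \ A-\lam E\,]$, which is $r$ for every $\lam$ because $\mathbb{T}(\lam)$ is irreducible; the left rank condition is handled symmetrically, so $\mathbb{L}(\lam)$ is an irreducible system matrix.

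For condition~(a) of Definition~\ref{stln1}, I would take the unimodular $U(\lam),V(\lam)$ and nonsingular $U_0,V_0$ that reduce $\mathbb{T}(\lam)$ to $\diag(I_{(m-1)n},\mathcal{S}(\lam))$, and substitute $\mathbb{T}(\lam)=\diag(\mathcal{X},X_0)^{-1}\mathbb{L}(\lam)\diag(\mathcal{Y},Y_0)^{-1}$ into that identity. This yields
\[ \diag(U(\lam)\mathcal{X}^{-1},\,U_0 X_0^{-1})\,\mathbb{L}(\lam)\,\diag(\mathcal{Y}^{-1}V(\lam),\,Y_0^{-1}V_0)=\diag(I_{(m-1)n},\,\mathcal{S}(\lam)). \]
Since $U(\lam)\mathcal{X}^{-1}$ and $\mathcal{Y}^{-1}V(\lam)$ are products of a unimodular matrix polynomial with a constant nonsingular matrix, they remain unimodular, while $U_0 X_0^{-1}$ and $Y_0^{-1}V_0$ remain nonsingular $r\times r$ matrices, so (a) holds for $\mathbb{L}(\lam)$.

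The crux is condition~(b), which hinges on relating the transfer functions of $\mathbb{L}(\lam)$ and $\mathbb{T}(\lam)$. Using $(\lam K-H)^{-1}=Y_0^{-1}(\lam E-A)^{-1}X_0^{-1}$, the constant factors $Y_0,Y_0^{-1}$ and $X_0^{-1},X_0$ cancel in the Schur-complement term of the transfer function, giving the clean identity $\mathbb{G}_{\mathbb{L}}(\lam)=\mathcal{X}\,\mathbb{G}_{\mathbb{T}}(\lam)\,\mathcal{Y}$, where $\mathbb{G}_{\mathbb{T}}$ is the transfer function of $\mathbb{T}(\lam)$. I expect this cancellation to be the main point to get right. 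Then, taking the biproper $\mathcal{O}_\ell(\lam),\mathcal{O}_r(\lam)$ supplied by condition~(b) for $\mathbb{T}(\lam)$, I would verify that $\mathcal{O}_\ell(\lam)\mathcal{X}^{-1}$ and $\mathcal{Y}^{-1}\mathcal{O}_r(\lam)$ are again biproper—the product of a biproper rational matrix with a constant nonsingular matrix is proper and has nonsingular value at infinity—and that they carry $\lam^{-1}\mathbb{G}_{\mathbb{L}}(\lam)$ to $\diag(I_{(m-1)n},\lam^{-m}G(\lam))$. This establishes (b), and with (a) and irreducibility in hand, $\mathbb{L}(\lam)$ is a Rosenbrock strong linearization of $G(\lam)$.
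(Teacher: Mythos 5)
Your proposal is correct. Note, however, that the paper supplies no proof of this proposition at all: it is explicitly introduced as a restatement of \cite[Theorem~3.6 and Theorem~3.9]{rafiran4}, and all three assertions (including the invariance under multiplication by $\diag(\mathcal{X},X_0)$ and $\diag(\mathcal{Y},Y_0)$) are delegated to that reference. What you have done is cite the external results for the first two assertions and then give a self-contained verification of the third, and that verification is sound: the block form of $\mathbb{L}(\lam)$ and the nonsingularity of $K=X_0EY_0$ are right; irreducibility passes through because rank is preserved under multiplication by the nonsingular factors; condition~(a) of Definition~\ref{stln1} survives because a unimodular matrix polynomial times a constant nonsingular matrix is unimodular; and the key transfer-function identity $\mathbb{G}_{\mathbb{L}}(\lam)=\mathcal{X}\,\mathbb{G}_{\mathbb{T}}(\lam)\,\mathcal{Y}$ is exactly the cancellation $(\lam K-H)^{-1}=Y_0^{-1}(\lam E-A)^{-1}X_0^{-1}$ that makes the $X_0,Y_0$ factors disappear from the Schur-complement term, after which condition~(b) follows since biproperness is preserved under constant nonsingular factors. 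The only thing your write-up buys beyond the paper is explicitness; the only thing it costs is that you must still trust the citation for the base case $\mathbb{T}(\lam)$, which is unavoidable here.
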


%

We are now ready to prove that EGFPs of $G(\lam)$ are Rosenbrock strong linearizations of $G(\lam)$.

\begin{theorem} \label{EGFPofGLin}Let $\mathbb{L}(\lam)$ be an EGFP of $G(\lam)$ given as in (\ref{DefEGFP}). Then $\mathbb{L}(\lam)$ is a Rosenbrock strong linearization of $G(\lam)$.
\end{theorem}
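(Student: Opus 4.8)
The plan is to reduce the EGFP $\mathbb{L}(\lam)$ of $G(\lam)$ to the Fiedler-pencil case covered by Proposition~\ref{prop:gfprstlin}, by factoring out the elementary matrices $M_{\tau_1}(Y_1), M_{\sigma_1}(X_1), M_{\sigma_2}(X_2), M_{\tau_2}(Y_2)$ as block-diagonal conjugators of the form $\diag(\mathcal{X},I_r)$ and $\diag(\mathcal{Y},I_r)$. The key observation is that these elementary factors act trivially on the coupling blocks once Lemma~\ref{blk_rows_n_col} is invoked: the rightward factors $M_{\sigma_2}(X_2)M_{\tau_2}(Y_2)$ fix the block row carrying $B$, and the leftward factors $M_{\tau_1}(Y_1)M_{\sigma_1}(X_1)$ fix the block column carrying $C$. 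Concretely, I would first write $L(\lam) = M_{(\tau_1,\sigma_1)}(Y_1,X_1)\,T(\lam)\,M_{(\sigma_2,\tau_2)}(X_2,Y_2)$, where $T(\lam):=\lam M^P_\tau - M^P_\sigma$ is a GF pencil, as in the proof of Theorem~\ref{singularcond11_PCh4}.

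The heart of the argument is a block computation. Set $\mathcal{A}:=M_{\tau_1}(Y_1)M_{\sigma_1}(X_1)$ and $\mathcal{B}:=M_{\sigma_2}(X_2)M_{\tau_2}(Y_2)$, both nonsingular. I would then verify the matrix identity
\begin{equation*}
\left[
\begin{array}{@{}c|c@{}}
\mathcal{A} & 0 \\ \hline
0 & I_r
\end{array}
\right]
\left[
\begin{array}{@{}c|c@{}}
T(\lam) &  e_{m-i_0(\sigma)} \otimes C \\ \hline
e^T_{m-c_0(\sigma)} \otimes B  & A-\lam E
\end{array}
\right]
\left[
\begin{array}{@{}c|c@{}}
\mathcal{B} & 0 \\ \hline
0 & I_r
\end{array}
\right]
=
\left[
\begin{array}{@{}c|c@{}}
L(\lam) &  e_{m-i_0(\sigma_1,\sigma)} \otimes C \\ \hline
e^T_{m-c_0(\sigma,\sigma_2)} \otimes B  & A-\lam E
\end{array}
\right]
= \mathbb{L}(\lam).
\end{equation*}
The $(1,1)$ block is immediate since $\mathcal{A}\,T(\lam)\,\mathcal{B}=L(\lam)$. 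For the off-diagonal blocks, the top-right entry becomes $\mathcal{A}\,(e_{m-i_0(\sigma)}\otimes C)=(\mathcal{A}(e_{m-i_0(\sigma)}\otimes I_n))(I_n\otimes C)=e_{m-i_0(\sigma_1,\sigma)}\otimes C$ by \eqref{lem1eq2}, and likewise the bottom-left entry becomes $(e^T_{m-c_0(\sigma)}\otimes B)\,\mathcal{B}=e^T_{m-c_0(\sigma,\sigma_2)}\otimes B$ by \eqref{lem1eq1}; here I use the standard Kronecker mixed-product rule together with the fact that $\mathcal{A}$ and $\mathcal{B}$ act only on the polynomial coordinates. Thus $\mathbb{L}(\lam)=\diag(\mathcal{A},I_r)\,\mathbb{T}(\lam)\,\diag(\mathcal{B},I_r)$, where $\mathbb{T}(\lam)$ is exactly the Fiedler-type coupling of the GF pencil $T(\lam)$ with $B,C,A,E$.

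It then remains only to invoke Proposition~\ref{prop:gfprstlin}. That proposition is stated for a genuine Fiedler pencil $T(\lam)=\lam M^P_{-m}-M^P_\alpha$, whereas here $T(\lam)$ is a general GF pencil $\lam M^P_\tau - M^P_\sigma$; I would either appeal to the more general statement for GFPs (which the preliminaries indicate are likewise Rosenbrock strong linearizations of $G(\lam)$ via \cite{rafiran4}) or, safely, conjugate $T(\lam)$ further by the fixed block-permutation/elementary matrices that carry a GF pencil to a Fiedler pencil, absorbing those into $\mathcal{A}$ and $\mathcal{B}$ without disturbing the coupling rows/columns (again by Lemma~\ref{blk_rows_n_col}). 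Since $\mathcal{A}$ and $\mathcal{B}$ are nonsingular, the final clause of Proposition~\ref{prop:gfprstlin}, with $\mathcal{X}:=\mathcal{A}$, $\mathcal{Y}:=\mathcal{B}$, $X_0:=I_r$, $Y_0:=I_r$, immediately yields that $\mathbb{L}(\lam)$ is a Rosenbrock strong linearization of $G(\lam)$.

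The main obstacle I anticipate is the off-diagonal bookkeeping: one must be careful that the elementary factors $\mathcal{A},\mathcal{B}$ genuinely leave the coupling blocks of the form $e_{m-i}\otimes C$ and $e^T_{m-c}\otimes B$ transforming exactly as Lemma~\ref{blk_rows_n_col} predicts, and that the indices $i_0(\sigma_1,\sigma)$ and $c_0(\sigma,\sigma_2)$ appearing in Definition~\ref{EGFofG} match the shifted positions produced by \eqref{lem1eq1} and \eqref{lem1eq2}. The hypotheses $0\in\sigma$ and $-m\in\tau$ in Definition~\ref{EGFofG} are precisely what make Lemma~\ref{blk_rows_n_col} applicable, so I would flag at the outset that these ensure $m\notin\sigma$ and the relevant consecution/inversion counts are well defined. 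Once the conjugation identity is established, everything else is a direct citation.
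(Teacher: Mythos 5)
Your overall strategy is the paper's: write $L(\lam)=\mathcal{A}\,T(\lam)\,\mathcal{B}$, realize $\mathbb{L}(\lam)$ as $\diag(\cdot,I_r)\,\mathbb{T}(\lam)\,\diag(\cdot,I_r)$ for a known Rosenbrock strong linearization $\mathbb{T}(\lam)$, track the coupling blocks with Lemma~\ref{blk_rows_n_col}, and finish with the last clause of Proposition~\ref{prop:gfprstlin}. The first stage of your argument --- that $M_{\tau_1}(Y_1)M_{\sigma_1}(X_1)$ and $M_{\sigma_2}(X_2)M_{\tau_2}(Y_2)$ carry $e_{m-i_0(\sigma)}\otimes C$ to $e_{m-i_0(\sigma_1,\sigma)}\otimes C$ and $e^T_{m-c_0(\sigma)}\otimes B$ to $e^T_{m-c_0(\sigma,\sigma_2)}\otimes B$ --- is exactly Lemma~\ref{blk_rows_n_col} and is fine.

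The gap is in your last paragraph, where you pass from the GF pencil $T(\lam)=\lam M^P_\tau-M^P_\sigma$ to a genuine Fiedler pencil. Writing $\tau=(-\beta,-m,-\gamma)$ and $\alpha:=(rev(\beta),\sigma,rev(\gamma))$, one has $T(\lam)=M^P_{-\beta}\,(\lam M^P_{-m}-M^P_\alpha)\,M^P_{-\gamma}$, and the Fiedler system $\mathbb{T}(\lam)$ of Proposition~\ref{prop:gfprstlin} has its coupling blocks in positions $m-i_0(\alpha)$ and $m-c_0(\alpha)$, \emph{not} $m-i_0(\sigma)$ and $m-c_0(\sigma)$. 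These genuinely differ: if $c_0(\sigma)=k$ and $-(k+1)\in\tau$ appears in $\gamma$, then $c_0(\alpha)=k+1+c_{k+1}(rev(\gamma))>c_0(\sigma)$. So the extra conjugating factors $M^P_{-\beta}$ and $M^P_{-\gamma}$ do \emph{not} leave the coupling rows and columns undisturbed; one must prove the identities
$(e^T_{m-c_0(\alpha)}\otimes I_n)\,M^P_{-\gamma}=e^T_{m-c_0(\sigma)}\otimes I_n$ and
$M^P_{-\beta}\,(e_{m-i_0(\alpha)}\otimes I_n)=e_{m-i_0(\sigma)}\otimes I_n$,
and these are not instances of Lemma~\ref{blk_rows_n_col} (which only treats $M_{\sigma_2}(X_2)M_{\tau_2}(Y_2)$ and $M_{\tau_1}(Y_1)M_{\sigma_1}(X_1)$). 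The paper proves them by a separate two-case argument on whether $c_0(\sigma)+1$ lies in $rev(\gamma)$, using Proposition~\ref{lemeqnconrsfcsf17a18d10} and repeated application of (\ref{eqnrecogfprp0329sept172046n_PCh2}); this is the step your proposal waves away with ``without disturbing the coupling rows/columns.'' Your alternative escape route --- citing that GFPs of $G(\lam)$ are already Rosenbrock strong linearizations per \cite{rafiran4} --- would close the gap only after checking that the GFP of $G(\lam)$ defined there has its coupling blocks precisely at $m-i_0(\sigma)$ and $m-c_0(\sigma)$, which you do not verify.
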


\begin{proof} Let $(\sig,\omega)$ be the permutation of $\{0:m\}$ where $0\in \sig$ and $m\in \omega$. We have $\tau = (-\beta, -m, -\gamma )$ for some permutations $\beta$ and $\gamma$. Define $\alpha := (rev(\beta), \sig,rev(\gamma))$. Then $\alpha$ is a permutation of $\{0:m-1\}$ and $T(\lam) := \lam M^P_{-m} -M^P_{\alpha} $ is a Fiedler linearization of $P(\lam)$. This implies that 
$\mathbb{T}(\lam) :=\left[
	\begin{array}{@{}c|c@{}}
		T(\lam) &  e_{m-i_0 (\alpha)} \otimes C \\[.1em] \hline \\[-1em]
		e^T_{m-c_0(\alpha)} \otimes B  & A-\lam E
	\end{array}
\right]$ is a Rosenbrock strong linearization of  $G(\lam) $ follows from Proposition~\ref{prop:gfprstlin}. Note that  $\mathcal{A} := M_{(\tau_1,\sig_1)}(Y_1, X_1)  M^P_{-\beta}$ and  $\mathcal{B} :=
  M^P_{-\gamma} M_{(\sig_2,\tau_2)}(X_2, Y_2)$ are nonsingular matrices.  Hence by Proposition~\ref{prop:gfprstlin}, we only need to show that 
$\diag(\mathcal{A},I_r) \mathbb{T}(\lam) \diag(\mathcal{B},I_r) =\mathbb{L}(\lam).$  Since $\diag(\mathcal{A},I_r) \mathbb{T}(\lam) \diag(\mathcal{B},I_r)=$
\begin{align*}
	&\left[
	\begin{array}{@{}c|c@{}}
		M_{(\tau_1,\sig_1)}(Y_1, X_1)  M^P_{-\beta} & \\[.1em] \hline \\[-1em]  & I_r
	\end{array}
	\right] \mathbb{T}(\lam)
	\left[
	\begin{array}{@{}c|c@{}}
	M^P_{-\gamma} M_{(\sig_2,\tau_2)}(X_2, Y_2)	 & \\[.1em] \hline \\[-1em]  & I_r
	\end{array}
	\right]\\
	&= \left[
	\begin{array}{@{}c|c@{}}
		T(\lam)  & 	M_{(\tau_1,\sig_1)}(Y_1, X_1)  M^P_{-\beta} (e_{m-i_0 (\alpha)} \otimes C)\\[.1em] \hline \\[-1em] (e^T_{m-c_0 (\alpha)} \otimes B)	M^P_{-\gamma} M_{(\sig_2,\tau_2)}(X_2, Y_2) & A-\lam E
	\end{array}
	\right],
\end{align*}
the remaining we need to prove is that $(e^T_{m-c_0 (\alpha)} \otimes I_n) M^P_{-\gamma} M_{(\sig_2,\tau_2)}(X_2, Y_2)=e^T_{m-c_0 (\sigma, \sigma_{2})} \otimes I_n$ and $	M_{(\tau_1,\sig_1)}(Y_1, X_1)  M^P_{-\beta}  (e_{m-i_0 (\alpha)} \otimes I_n) = e_{m-i_0 (\sig_{1},\sigma)} \otimes I_n$. From Lemma~\ref{blk_rows_n_col} we have 
 $(e^T_{m-c_0 (\sigma)} \otimes I_n)  M_{(\sig_2,\tau_2)}(X_2, Y_2)=e^T_{m-c_0 (\sigma, \sigma_{2})} \otimes I_n $ and $	M_{(\tau_1,\sig_1)}(Y_1, X_1)  (e_{m-i_0 (\sigma)} \otimes I_n) = e_{m-i_0 (\sigma_1,\sig_2)} \otimes I_n$. Hence the only thing left to show is that $(e^T_{m-c_0 (\alpha)} \otimes I_n) M^P_{-\gamma} = e^T_{m-c_0 (\sigma)} \otimes I_n $ and $M^P_{-\beta}  (e_{m-i_0 (\alpha)} \otimes I_n) = e_{m-i_0 (\sigma)} \otimes I_n$. We proceed as follows.

We have $0 \neq \beta$ (since $0\in \sig$). This implies $c_0(\alpha) = c_0(rev(\beta), \sig, rev(\gamma)) = c_0(\sig, rev(\gamma)$. Let $c_0(\sig)=k$. Now we have two cases. Case-I: suppose that $k+1 \in rev(\gamma)$, and Case-II: suppose that $k+1 \notin rev(\gamma)$. 

Case-I: Let $k+1 \in rev(\gamma)$ and $c_{k+1}(rev(\gamma)) =\ell$. Then by the definition of consecutions, we have $k+1, k+2, \ldots, k+1+\ell$ is a subtuple of $rev(\gamma)$ and $k+1, k+2, \ldots, k+1+\ell,k+\ell+2 $ is not a subtuple of $rev(\gamma)$. This implies that $rev(\gamma) \sim (\delta, k+1, k+2, \ldots, k+1+\ell)$ and hence $M^P_{rev(\gamma)}=M_{(\delta, k+1, k+2, \ldots, k+1+\ell)}^P $, where $\delta$ is the subpermutation of $rev(\gamma)$ such that $k+1,k+2, \ldots, k+1+\ell \notin \delta.$ Consequently, we have $c_0(\alpha)= k+1+\ell$. Now $(e^T_{m-c_0 (\alpha)} \otimes I_n) M^P_{-\gamma}= (e^T_{m- (k+1+\ell)} \otimes I_n) M^P_{-\gamma}=$
\begin{align*}
	&~~~~ (e^T_{m- (k+1+\ell)} \otimes I_n)  M^P_{(-(k+1+\ell), -(k+\ell), \ldots,  -(k+2),-(k+1))} M^P_{-rev(\delta)}\\
	&=(e^T_{m- k} \otimes I_n) M^P_{-rev(\delta)} ~\text{by applying (\ref{eqnrecogfprp0329sept172046n_PCh2}) repetatively} \\
	&= e^T_{m- k} \otimes I_n ~ \text{follows from (\ref{eqnrecogfprp0329sept172046n_PCh2}) since~} -k, -(k+1) \notin -rev(\delta)\\
	& =  e^T_{m- c_0(\sig)} \otimes I_n.
\end{align*}

%
	
Case-II: Suppose that $k+1 \notin rev(\gamma)$. Then it follows from the definition of consecutions that $c_0(\alpha) = c_0(\sig)=k$. Consequently,  $-k, -(k+1) \notin -\gamma$ since $k\in \sig$. So by (\ref{eqnrecogfprp0329sept172046n_PCh2}) we have $(e^T_{m-c_0 (\alpha)} \otimes I_n) M^P_{-\gamma} = (e^T_{m-k} \otimes I_n) M^P_{-\gamma} = e^T_{m-k} \otimes I_n =e^T_{m- c_0(\sig)} \otimes I_n.$ 

A similar proof holds for $M^P_{-\beta}  (e_{m-i_0 (\alpha)} \otimes I_n) = e_{m-i_0 (\sigma)} \otimes I_n$ by using inversions instead of consecutions and (\ref{eqnrecogfprgsettingp0329sept172045n_PCh2}). This completes the proof. 
\end{proof}



We end this section by characterizing EGFPs of $G(\lam)$  having block tridiagonal and block penta-diagonal form. We define the block-tridiagonal and block penta-diagonal system matrices as follows.

\begin{definition}
	Let $\mathcal{A}(\lam) :=\left[
	\begin{array}{c|c}
		\mathcal{X}- \lam \mathcal{Y} & \mathcal{C} \\
		\hline
		\mathcal{B} &  H - \lam K \\
	\end{array}
	\right] $ be an $(mn+r)\times (mn+r)$ matrix pencil, where $\mathcal{X}=(X_{ij})$ and $\mathcal{Y} = (Y_{ij})$ are $m \times m$ block matrices with $X_{ij}, Y_{ij} \in \mathbb{C}^{n\times n}$, $\mathcal{C} \in \mathbb{C}^{mn \times r}, \mathcal{B} \in \mathbb{C}^{r \times mn}$ and $ H, K \in \mathbb{C}^{r \times r}$. Then $\mathcal{A} (\lam)$ is said to be  block-tridiagonal if   $\mathcal{X}$ and $ \mathcal{Y}$ are block tridiagonal matrices and $(e^T_k \otimes I_n)\mathcal{C} =0$ and $\mathcal{B}(e_k \otimes I_n) =0$ for $k=1:m-1.$ Similarly, $\mathcal{A} (\lam)$ is said to be  block penta-diagonal if  $\mathcal{X}$ and $ \mathcal{Y}$ are block penta-diagonal matrices and $(e^T_k \otimes I_n)\mathcal{C} =0$ and $\mathcal{B}(e_k \otimes I_n) =0$ for $k=1:m-2.$ 
\end{definition}

%
%

Observe that the EGFP $\mathbb{L}(\lam)$ given in Example~\ref{exaEGFPG} is a block penta-diagonal linearization of $G(\lam)$. The following results which follow immediately from Theorem~\ref{thm:blk_tri} and Theorem~\ref{thm:blk_penta}, respectively, characterize bock tridiagonal and block penta-diagonal EGFPs of $G(\lam)$  

\begin{corollary} Let $\mathbb{L}(\lambda)$ be an EGFP of $G(\lam)$ as given in Definition~\ref{EGFofG}. Then $\mathbb{L}(\lambda)$  is block tridiagonal  if and only if $ c_t(\sig_1, \sig, \sig_2) = 0 $, $ i_t(\sig_1, \sig, \sig_2) =0$,  $ c_{-t}(\tau_1, \tau, \tau_2) = 0 $ and $ i_{-t}(\tau_1, \tau, \tau_2) =0$ for any index $1 \leq t \leq m-1$.
\end{corollary}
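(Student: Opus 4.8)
The plan is to reduce the statement to Theorem~\ref{thm:blk_tri} and then treat separately the two ``connector'' blocks that couple the polynomial part of $\mathbb{L}(\lam)$ to the pencil $A-\lam E$. Writing $\mathbb{L}(\lam) =: \mathcal{X}-\lam\mathcal{Y}$ in the block form of Definition~\ref{EGFofG}, with column connector $\mathcal{C} = e_{m-i_0(\sigma_1,\sigma)}\otimes C$ and row connector $\mathcal{B} = e^T_{m-c_0(\sigma,\sigma_2)}\otimes B$, the definition of a block-tridiagonal system matrix requires two things: (i) the $m\times m$ block part coming from $L(\lam)=\lam L_1 - L_0$ is block tridiagonal, and (ii) $(e^T_k\otimes I_n)\mathcal{C}=0$ and $\mathcal{B}(e_k\otimes I_n)=0$ for $k=1:m-1$. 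Since $L(\lam)$ sits in the top-left corner, condition (i) is exactly the assertion that $L(\lam)$ is a block tridiagonal EGFP of $P(\lam)$.

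For the forward implication I would simply extract (i) from the assumption that $\mathbb{L}(\lam)$ is block tridiagonal and invoke Theorem~\ref{thm:blk_tri}, which delivers the four consecution/inversion conditions at once; nothing further is needed, since the conditions in the corollary are verbatim those of that theorem.

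For the converse, Theorem~\ref{thm:blk_tri} again gives that $L(\lam)$ is block tridiagonal, so (i) holds, and it remains to establish (ii). Because $\mathcal{C}$ and $\mathcal{B}$ each carry a single nonzero block, sitting in block row $m-i_0(\sigma_1,\sigma)$ and block column $m-c_0(\sigma,\sigma_2)$ respectively, condition (ii) reduces to showing $i_0(\sigma_1,\sigma)=0$ and $c_0(\sigma,\sigma_2)=0$, i.e.\ that these blocks fall in the last block row and column $m$. Here I would use $0\in\sigma$ (imposed in Definition~\ref{EGFofG}): if the index $1$ belonged to $(\sigma_1,\sigma,\sigma_2)$, then the hypotheses $c_1=i_1=0$ would force $0\notin(\sigma_1,\sigma,\sigma_2)$, contradicting $0\in\sigma$. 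Hence $1\notin(\sigma_1,\sigma,\sigma_2)$, so neither $(1,0)$ nor $(0,1)$ is a subtuple, giving $i_0(\sigma_1,\sigma)=0$ and $c_0(\sigma,\sigma_2)=0$. Consequently $\mathcal{C}=e_m\otimes C$ and $\mathcal{B}=e^T_m\otimes B$, whence $(e^T_k\otimes I_n)(e_m\otimes C)=(e^T_k e_m)\otimes C=0$ and symmetrically $\mathcal{B}(e_k\otimes I_n)=0$ for $k=1:m-1$, which is (ii).

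The only point that goes beyond quoting Theorem~\ref{thm:blk_tri} — and thus the step I would flag as the main obstacle, although it is a short combinatorial observation — is this deduction that $i_0(\sigma_1,\sigma)=0$ and $c_0(\sigma,\sigma_2)=0$. The subtlety is that Theorem~\ref{thm:blk_tri} only constrains consecutions and inversions at indices $t\geq 1$, so the behaviour at $t=0$ governing the placement of the connector blocks must be inferred indirectly from the standing requirement $0\in\sigma$ together with the constraint at $t=1$.
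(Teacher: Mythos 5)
Your reduction is the right one, and you are in fact more careful than the paper here: the paper offers no proof of this corollary at all, asserting only that it ``follows immediately'' from Theorem~\ref{thm:blk_tri}, without ever addressing the connector blocks $\mathcal{C}=e_{m-i_0(\sigma_1,\sigma)}\otimes C$ and $\mathcal{B}=e^T_{m-c_0(\sigma,\sigma_2)}\otimes B$ that you correctly isolate as condition (ii). Your forward implication is fine, and so is your reduction of (ii) to the two identities $i_0(\sigma_1,\sigma)=0$ and $c_0(\sigma,\sigma_2)=0$.

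The gap sits exactly at the step you flagged. The implication ``$1\in(\sigma_1,\sigma,\sigma_2)$ together with $c_1=i_1=0$ forces $0\notin(\sigma_1,\sigma,\sigma_2)$'' is false: by Definition~\ref{coninvoftuple}, $c_1(\alpha)=0$ and $i_1(\alpha)=0$ say only that $1\in\alpha$ and that neither $(1,2)$ nor $(2,1)$ is a subtuple of $\alpha$; they constrain the relative position of $1$ and $2$, not of $0$ and $1$. (The remark preceding Theorem~\ref{thm:blk_tri}, which asserts $t-1\notin\alpha$, is itself only justified when the hypothesis is also imposed at $t-1$, and the hypothesis of the corollary stops at $t=1$; the position of $0$ relative to $1$ is governed by $c_0$ and $i_0$, which are never constrained.) Concretely, take $m=3$, $\sigma=(0,1)$, $\tau=(-2,-3)$ and $\sigma_1=\sigma_2=\tau_1=\tau_2=\emptyset$. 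This is a valid EGFP of $G(\lambda)$ ($0\in\sigma$, $-m\in\tau$), and every hypothesis of the corollary holds in the sense used in the proof of Theorem~\ref{thm:blk_tri} ($c_1(\sigma)=i_1(\sigma)=0$, $c_{-2}(\tau)=i_{-2}(\tau)=0$, all remaining values equal $-1$), so $L(\lambda)$ is indeed block tridiagonal; yet $c_0(\sigma,\sigma_2)=1$, so $\mathcal{B}=e^T_{m-1}\otimes B$ and $\mathcal{B}(e_{m-1}\otimes I_n)=B\neq 0$, violating the paper's definition of a block-tridiagonal system matrix. So the missing fact is not merely unproved by your argument --- it genuinely does not follow from the stated hypotheses, and the statement needs the additional assumption $1\notin\sigma$ (equivalently $-1\in\tau$), which is what actually delivers $c_0(\sigma,\sigma_2)=i_0(\sigma_1,\sigma)=0$ and hence condition (ii).
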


\begin{corollary}  Let $\mathbb{L}(\lambda)$ be an EGFP of $G(\lam)$ as given in Definition~\ref{EGFofG}. Suppose that $\sig_j$ (resp., $\tau_j$), for $j=1,2$, does not contain the end indices of $\sig$ (resp., $\tau$).
	Then $\mathbb{L}(\lambda)$ is block penta-diagonal if and only if $ c_t(\sig_1, \sig, \sig_2) \leq 1$, $ i_t(\sig_1, \sig, \sig_2) \leq 1$, $ c_{-t}(\tau_1, \tau, \tau_2) \leq 1$ and $ i_{-t}(\tau_1, \tau, \tau_2) \leq 1$ for any index $1 \leq t \leq m-1$. 
\end{corollary}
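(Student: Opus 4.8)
The plan is to read block penta-diagonality of $\mathbb{L}(\lam)$ off its block pattern and to reduce the $(1,1)$-block to Theorem~\ref{thm:blk_penta}. Recall from Definition~\ref{EGFofG} and~(\ref{DefEGFP}) that $\mathbb{L}(\lam)$ has the $2\times 2$ block form with $(1,1)$-block $L(\lam)=:\mathcal{X}-\lam\mathcal{Y}$, coupling blocks $\mathcal{C}=e_{m-i_0(\sig_1,\sig)}\otimes C$ and $\mathcal{B}=e^T_{m-c_0(\sig,\sig_2)}\otimes B$, and trailing pencil $A-\lam E$. By the definition of a block penta-diagonal system matrix, $\mathbb{L}(\lam)$ is block penta-diagonal if and only if (i) $\mathcal{X}$ and $\mathcal{Y}$ are block penta-diagonal, and (ii) $\mathcal{C}$ and $\mathcal{B}$ are supported only in block rows and block columns $m-1$ and $m$.

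Condition (i) is precisely the statement that the EGFP $L(\lam)$ of $P(\lam)$ is block penta-diagonal. Since the corollary inherits the hypothesis that $\sig_j$ (resp.\ $\tau_j$) avoids the end indices of $\sig$ (resp.\ $\tau$), Theorem~\ref{thm:blk_penta} applies verbatim and shows that (i) holds if and only if $c_t(\sig_1,\sig,\sig_2)\le 1$, $i_t(\sig_1,\sig,\sig_2)\le 1$, $c_{-t}(\tau_1,\tau,\tau_2)\le 1$ and $i_{-t}(\tau_1,\tau,\tau_2)\le 1$ for all $1\le t\le m-1$. This already gives the forward implication of the corollary for free: if $\mathbb{L}(\lam)$ is block penta-diagonal then so is its $(1,1)$-block $L(\lam)$, whence the listed inequalities hold by Proposition~\ref{not_blkpentaPve} and Proposition~\ref{not_blkpentaNve}.

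For condition (ii), note that $\mathcal{C}$ (resp.\ $\mathcal{B}$) is a single nonzero block $C$ (resp.\ $B$) sitting in block row $m-i_0(\sig_1,\sig)$ (resp.\ block column $m-c_0(\sig,\sig_2)$); these are exactly the positions produced in Definition~\ref{EGFofG} and justified through Lemma~\ref{blk_rows_n_col}. Hence (ii) is equivalent to the two scalar inequalities $i_0(\sig_1,\sig)\le 1$ and $c_0(\sig,\sig_2)\le 1$, and the remaining task is to show that, under the hypotheses, these two inequalities match the consecution/inversion conditions. I would do this by relating $c_0(\sig,\sig_2)$ (resp.\ $i_0(\sig_1,\sig)$) to consecutions (resp.\ inversions) of the full tuple $(\sig_1,\sig,\sig_2)$ at the low indices, using $0\in\sig$ and the SIP exactly as in Lemma~\ref{gfprptogArbitraryCoeff_PCh3}.

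The step I expect to be the genuine obstacle is this control of the coupling positions. A maximal increasing run $(0,1,\dots,p)$ forces $c_0=p$ but only $c_1=p-1$, so the inequalities for indices $t\ge 1$ bound $c_0(\sig,\sig_2)\le 2$ and $i_0(\sig_1,\sig)\le 2$ rather than $\le 1$; to pin $\mathcal{C},\mathcal{B}$ into block rows/columns $m-1,m$ one must rule out the borderline value $2$ (e.g.\ the data $\sig=(0,1,2)$, $\tau=(-(m-1),-m)$, $\sig_j=\tau_j=\emptyset$ makes $L(\lam)$ penta-diagonal yet places $B$ in block column $m-2$). I therefore expect the clean argument to require the index $t=0$ to enter the hypothesis as well -- equivalently, to demand $c_0(\sig,\sig_2)\le 1$ and $i_0(\sig_1,\sig)\le 1$ -- after which (ii) holds and, combined with (i) via Theorem~\ref{thm:blk_penta}, yields the stated equivalence. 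The companion block-tridiagonal corollary follows by the identical reduction to Theorem~\ref{thm:blk_tri}, the only change being that the coupling blocks must now be forced into block row/column $m$ alone.
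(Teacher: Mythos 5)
Your decomposition---block penta-diagonality of $\mathbb{L}(\lam)$ equals block penta-diagonality of the $(1,1)$-block $L(\lam)$ plus confinement of the coupling blocks to block rows/columns $m-1$ and $m$---is exactly the reduction the paper intends: the paper offers no separate proof, asserting only that the corollary ``follows immediately'' from Theorem~\ref{thm:blk_penta}, and it never addresses the coupling blocks. So on part (i) you and the paper coincide. Where you go further is part (ii), and your concern there is justified: by the paper's own definition, a block penta-diagonal system matrix must satisfy $(e_k^T\otimes I_n)\mathcal{C}=0$ and $\mathcal{B}(e_k\otimes I_n)=0$ for $k=1:m-2$, which for $\mathcal{C}=e_{m-i_0(\sig_1,\sig)}\otimes C$ and $\mathcal{B}=e^T_{m-c_0(\sig,\sig_2)}\otimes B$ is precisely $i_0(\sig_1,\sig)\le 1$ and $c_0(\sig,\sig_2)\le 1$. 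The stated hypotheses quantify only over $1\le t\le m-1$ and therefore bound these two quantities by $2$, not by $1$.

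Your counterexample checks out. Take $m=4$, $\sig=(0,1,2)$, $\tau=(-3,-4)$, $\sig_j=\tau_j=\emptyset$: then $c_1(\sig)=1$ and every other consecution/inversion at $1\le t\le 3$ (for both tuples) is at most $1$, so $L(\lam)=\lam M^P_{(-3,-4)}-M^P_{(0,1,2)}$ is block penta-diagonal by Theorem~\ref{thm:blk_penta} (direct computation puts the only distant block, $-A_0$, in position $(4,2)$ of $M^P_{(0,1,2)}$, i.e.\ on the second subdiagonal); yet $c_0(\sig,\sig_2)=2$ places $B$ in block column $m-2=2$, so $\mathbb{L}(\lam)$ fails the definition. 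Hence the ``if'' direction of the corollary is false as printed, and the defect lies in the statement (and in the paper's one-line justification), not in your argument. Your proposed repair---adjoin the two $t=0$ conditions $c_0(\sig,\sig_2)\le 1$ and $i_0(\sig_1,\sig)\le 1$, after which condition (ii) holds and the equivalence follows from Theorem~\ref{thm:blk_penta} exactly as you describe---is the correct one, and the analogous caveat (forcing $c_0(\sig,\sig_2)=0$ and $i_0(\sig_1,\sig)=0$) applies to the block tridiagonal corollary as well.
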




\section{Recovery of minimal bases and minimal indices}\label{recoAll}

This section is devoted in describing the recovery of eigenvectors, minimal bases and minimal indices of $P(\lambda)$ and $G(\lam)$ from those of the linearizations constructed in Section~\ref{EGFPsOfP} and Section~\ref{EGFPofG}, respectfully.

When $G(\lam)$ is singular, the {\em{right null space}} $\mathcal{N}_r(G) $ and the {\em{left null space}} $\mathcal{N}_l(G)$ of $G(\lam)$ are given by
\beano \mathcal{N}_r(G) &:=& \lbrace x(\lambda)\in\mathbb{C}(\lambda)^n :G(\lambda)x(\lambda) = 0 \rbrace   \subset \C(\lam)^n, \\
\mathcal{N}_l(G) &:=&\lbrace y(\lambda)\in\mathbb{C}(\lambda)^m:y(\lambda)^T G(\lambda) = 0\rbrace \subset \C(\lam)^m.\eeano
Let $ \mathcal{B} := \big( x_1(\lam), \ldots, x_p(\lam)\big)$ be a polynomial basis~\cite{kailath, forney}  of $\mathcal{N}_r(G)$ ordered so that $\deg(x_1) \leq \cdots \leq \deg(x_p),$ where $x_1(\lam), \ldots, x_p(\lam)$ are vector polynomials, that is, are elements of $\C[\lam]^n.$ Then $\mathrm{Ord}(\mathcal{B}) := \deg(x_1) + \cdots + \deg(x_p)$ is called the \textit{order} of the basis $\mathcal{B}.$  A basis $\mathcal{B}$ is said to be a minimal polynomial basis~\cite{kailath} of $\mathcal{N}_r(G)$ if $\mathcal{E}$ is any polynomial basis of $\mathcal{N}_r(G)$ then $ \mathrm{Ord}(\mathcal{E}) \geq \mathrm{Ord}(\mathcal{B}).$ A minimal polynomial basis $ \mathcal{B} := \big( x_1(\lam), \ldots, x_p(\lam)\big)$ of $\mathcal{N}_r(G)$ with $\deg(x_1) \leq \cdots \leq \deg(x_p)$ is called a {\em right minimal basis} of $G(\lam)$ and $ \deg(x_1) \leq \cdots \leq \deg(x_p)$ are called the {\em{right minimal indices}} of $G(\lam).$ A {\em{left minimal basis}} and the {\em{left minimal indices}} of $G(\lam)$ are defined similarly. See~\cite{kailath, forney} for further details.

We say that a $k\times p$ matrix polynomial $ Z(\lam)$ is a minimal basis if the columns of $Z(\lam)$ form a minimal basis of the subspace of $\C(\lam)^k$ spanned (over the field $\C(\lam)$) by the columns of $Z(\lam).$ 

We need the following result which is a restatement of \cite[Theorem~4.1 and Theorem~4.2]{BDD} describes the recovery of minimal bases and minimal indices of $P(\lambda)$ from those of the GF pencils.

\begin{theorem} \cite{BDD}\label{recoGF} Let $(\sig, \omega)$ be a permutation of $\{0:m\}$ such that $0\in \sig$ and $m\in \omega$. Set $\tau :=-\omega$. Let $T(\lambda) := \lambda M^P_{\tau} - M^P_{\sigma}$ be the GF pencil of $P(\lam)$ associated with $ (\sig, \tau)$.  Let $ \tau$ be given by $ \tau :=( \tau_l, -m, \tau_r)$. Set $ \alpha:= (- rev(\tau_l), \sigma, -rev(\tau_r))$. Let $c(\alpha) $ and $ i(\alpha)$, respectively,  be the total number of consecutions and inversions of the permutation $\alpha$ of $\{0:m-1\}$. Consider $F^{\scalebox{.4}{GF}
	}(P)  = e^T_{m-c_0(\sigma)} \otimes I_n$ and $K^{\scalebox{.4}{GF}}(P)  = e^T_{m-i_0(\sigma)} \otimes I_n$. Let $Z(\lam)$ be an $mn\times p$ matrix polynomial. Then we have the following.

  If $Z(\lam)$ is a right (resp., left) minimal basis of $T(\lambda)$ then $[F^{\scalebox{.4}{GF}
 }(P) Z(\lam)]$ (resp., $[K^{\scalebox{.4}{GF}
}(P) Z(\lam)]$) is a right (resp., left) minimal basis of $P(\lambda).$ Further, if  $\varepsilon_1 \leq \cdots \leq\varepsilon_p$ are the right (resp., left) minimal indices of $T(\lambda)$ then  $\varepsilon_1 -i(\alpha) \leq \cdots \leq \varepsilon_p - i(\alpha) $ (resp., $\varepsilon_1 -c(\alpha) \leq \cdots \leq \varepsilon_p - c(\alpha) $) are the right (resp., left)  minimal indices of  $P(\lambda).$  
\end{theorem}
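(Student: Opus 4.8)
The plan is to reduce the generalized Fiedler pencil $T(\lambda)$ to an ordinary Fiedler pencil by a \emph{constant} strict equivalence and then transport the known recovery result for Fiedler pencils. First I would exploit the splitting $\tau=(\tau_l,-m,\tau_r)$ to write $M^P_\tau = M^P_{\tau_l}\,M^P_{-m}\,M^P_{\tau_r}$, where $M^P_{\tau_l}$ and $M^P_{\tau_r}$ are products of the invertible factors $M^P_{-i}$, $i\in\{1:m-1\}$, and hence are constant nonsingular matrices (note that $-0\notin\tau$ because $0\in\sigma$, so no singular factor $M^P_{-0}$ is pulled out). Using $(M^P_{\tau_l})^{-1}=M^P_{-rev(\tau_l)}$ and $(M^P_{\tau_r})^{-1}=M^P_{-rev(\tau_r)}$, a direct computation gives
\[
(M^P_{\tau_l})^{-1}\,T(\lambda)\,(M^P_{\tau_r})^{-1}
= \lambda M^P_{-m} - M^P_{(-rev(\tau_l),\,\sigma,\,-rev(\tau_r))}
= \lambda M^P_{-m} - M^P_{\alpha} =: F_\alpha(\lambda),
\]
the Fiedler pencil associated with the permutation $\alpha$ of $\{0:m-1\}$. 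Thus $T(\lambda)$ and $F_\alpha(\lambda)$ are strictly equivalent through constant nonsingular matrices.

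Next I would transport minimal bases and indices across this equivalence. Since $M^P_{\tau_r}$ and $M^P_{\tau_l}$ are constant and nonsingular, $z\mapsto M^P_{\tau_r}z$ is a degree-preserving isomorphism $\mathcal{N}_r(T)\to\mathcal{N}_r(F_\alpha)$ and $z\mapsto (M^P_{\tau_l})^{T}z$ is one $\mathcal{N}_l(T)\to\mathcal{N}_l(F_\alpha)$ (both are verified by a one-line substitution using the displayed factorization). In particular $T$ and $F_\alpha$ share the same right and left minimal indices, and $Z(\lambda)$ is a right (resp.\ left) minimal basis of $T$ if and only if $M^P_{\tau_r}Z(\lambda)$ (resp.\ $(M^P_{\tau_l})^{T}Z(\lambda)$) is one of $F_\alpha$. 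I would then invoke the recovery theorem for Fiedler pencils \cite{tdm}: the row selection $e^T_{m-c_0(\alpha)}\otimes I_n$ (resp.\ $e^T_{m-i_0(\alpha)}\otimes I_n$) sends a right (resp.\ left) minimal basis of $F_\alpha$ to one of $P(\lambda)$, lowering the right minimal indices by $i(\alpha)$ and the left ones by $c(\alpha)$.

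Finally I would identify the composite recovery maps on $\mathcal{N}_r(T)$ and $\mathcal{N}_l(T)$ with the stated $F^{\mathrm{GF}}(P)$ and $K^{\mathrm{GF}}(P)$. For the right case this is exactly the identity $(e^T_{m-c_0(\alpha)}\otimes I_n)\,M^P_{\tau_r}=e^T_{m-c_0(\sigma)}\otimes I_n$, and for the left case its transpose $M^P_{\tau_l}\,(e_{m-i_0(\alpha)}\otimes I_n)=e_{m-i_0(\sigma)}\otimes I_n$; both are precisely the block row/column identities already established in the proof of Theorem~\ref{EGFPofGLin} (with $\tau_r=-\gamma$ and $\tau_l=-\beta$). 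Hence $F^{\mathrm{GF}}(P)Z=(e^T_{m-c_0(\alpha)}\otimes I_n)(M^P_{\tau_r}Z)$ and $K^{\mathrm{GF}}(P)Z=(e^T_{m-i_0(\alpha)}\otimes I_n)((M^P_{\tau_l})^{T}Z)$, so the GF recovery coincides with the Fiedler recovery applied to the transported basis, and the index drops $i(\alpha)$, $c(\alpha)$ are inherited unchanged from $F_\alpha$.

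The main obstacle is the Fiedler-pencil recovery underlying the middle step: the facts that the row-selection map carries a minimal basis to a minimal basis and that the minimal indices fall by exactly $i(\alpha)$ (right) and $c(\alpha)$ (left). Proving this from scratch requires an explicit polynomial generator of $\mathcal{N}_r(F_\alpha)$ assembled from Horner shifts of $P$, whose selected block equals $I_n$ and whose degree exceeds $\deg x(\lambda)$ by precisely $i(\alpha)$; showing that this map is onto and degree-rigid is the delicate point. Once that ingredient is granted, the constant-equivalence reduction and the two bookkeeping identities above make the GF statement a short corollary, the only routine care being the verification that $-0\notin\tau$ so that each pulled-out factor $M^P_{\tau_l},M^P_{\tau_r}$ is genuinely invertible.
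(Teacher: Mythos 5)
The paper does not prove this statement at all: Theorem~\ref{recoGF} is imported verbatim as ``a restatement of \cite[Theorem~4.1 and Theorem~4.2]{BDD}'' and is used as a black box, so there is no in-paper proof to compare against. Judged on its own terms, your derivation is correct and is the natural way to obtain the GF result from the plain Fiedler-pencil one. The factorization $M^P_\tau=M^P_{\tau_l}M^P_{-m}M^P_{\tau_r}$, the inversion identities $(M^P_{-i})^{-1}=M^P_i$ for $i=1{:}m-1$ (valid here because $0\in\sigma$ forces $-0\notin\tau$, and $-m$ is kept inside), and the resulting constant strict equivalence $T(\lambda)=M^P_{\tau_l}\,F_\alpha(\lambda)\,M^P_{\tau_r}$ with $F_\alpha(\lambda)=\lambda M^P_{-m}-M^P_\alpha$ all check out, as does the transport of minimal bases/indices under constant nonsingular equivalence. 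The two bookkeeping identities $(e^T_{m-c_0(\alpha)}\otimes I_n)M^P_{\tau_r}=e^T_{m-c_0(\sigma)}\otimes I_n$ and $M^P_{\tau_l}(e_{m-i_0(\alpha)}\otimes I_n)=e_{m-i_0(\sigma)}\otimes I_n$ are exactly the computations carried out (independently of Theorem~\ref{recoGF}, so no circularity) in the proof of Theorem~\ref{EGFPofGLin} with $\tau_l=-\beta$, $\tau_r=-\gamma$; your appeal to them is legitimate. This strict-equivalence bootstrap is also precisely the mechanism the paper itself uses in Theorems~\ref{recoverymbdgwmiofOFGFPRPGFpencil26j18_PCh4} and \ref{EGFPofGLin} to pass from GF/FP to EGFP, so your argument is consonant with the paper's methodology.

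The one substantive caveat is the one you flag yourself: the entire analytic content sits in the Fiedler-pencil recovery theorem of \cite{tdm} (minimal bases recovered by the row $e^T_{m-c_0(\alpha)}\otimes I_n$, resp.\ $e^T_{m-i_0(\alpha)}\otimes I_n$, with index shifts $i(\alpha)$, resp.\ $c(\alpha)$), which you invoke but do not prove. Since the theorem under discussion is itself a cited result, treating that ingredient as available is reasonable; with it granted, your reduction is complete and correct.
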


The following result describe the recovery of minimal bases and minimal indices of a singular $P(\lam)$ from those of the EGFPs of $P(\lam).$
			
\begin{theorem} \label{recoverymbdgwmiofOFGFPRPGFpencil26j18_PCh4} Let $ L(\lambda):=   M_{\tau_1} (Y_1)  \, M_{\sigma_1} (X_1) \, (\lambda M^P_{\tau} -  M^P_{\sigma}) \, M_{\sigma_2} (X_2)  \, M_{\tau_2} (Y_2)  $ be an EGFP of $P(\lambda)$ as given in (\ref{EGFPDef}). Suppose that $P(\lambda)$ is singular.  Let $ \tau$ be given by $ \tau :=( \tau_l, -m, \tau_r)$. Let $c_{L}:=c(- rev(\tau_l), \sigma, -rev(\tau_r))$ and $i_{L}:= i(- rev(\tau_l), \sigma, -rev(\tau_r))$, respectively, be the  total number of consecutions and inversions of the permutation $(- rev(\tau_l), \sigma, -rev(\tau_r))$ of $\{ 0:m-1\}$.  Let $Z(\lam)$ be an $mn\times p$ matrix polynomial. Then we have the following.
	
	\begin{enumerate}
		\item[(a)]  {\bf Right and left minimal bases.} If $Z(\lam)$ is a right (resp., left) minimal basis of $L(\lambda)$ then $\big[ (e^T_{m-c_0(\sigma,\sigma_2)} \otimes I_n ) Z(\lam) \big]$ (resp., $\big[(e^T_{m-i_0(\sigma_1,\sigma)} \otimes I_n ) Z(\lam) \big]$) is a right (resp., left) minimal basis of $ P(\lambda)$.
		
		\item[(b)] If $\varepsilon_1\leq \cdots \leq\varepsilon_p$ are the right (resp., left) minimal indices of $L(\lambda)$ then $\varepsilon_1 -i_{L} \leq  \cdots \leq \varepsilon_p - i_{L}$ (resp., $\varepsilon_1 -c_{L} \leq \cdots \leq \varepsilon_p -c_{L}$) are the right (resp., left) minimal indices of  $P(\lambda).$ 
	\end{enumerate}
\end{theorem}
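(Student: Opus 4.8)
The plan is to reduce everything to the already-established recovery result for generalized Fiedler pencils, Theorem~\ref{recoGF}, by stripping off the constant invertible factors that distinguish an EGFP from a bare GF pencil. I would write $L(\lambda) = \mathcal{A}\, T(\lambda)\, \mathcal{B}$, where $\mathcal{A} := M_{\tau_1}(Y_1) M_{\sigma_1}(X_1)$ and $\mathcal{B} := M_{\sigma_2}(X_2) M_{\tau_2}(Y_2)$ are $\lambda$-independent and, by the standing nonsingularity assumption on the matrix assignments, invertible, while $T(\lambda) := \lambda M^P_{\tau} - M^P_{\sigma}$ is the GF pencil of $P(\lambda)$ associated with $(\sigma,\tau)$. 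The only general principle I need is the elementary and standard fact that pre- and post-multiplication of a singular matrix polynomial by constant invertible matrices preserves its minimal indices and transforms its minimal bases by those same matrices: degrees of vector polynomials and the minimality property are invariant under multiplication by a constant invertible matrix, and the two null spaces are related by the bijection induced by that matrix.

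For the right side, $L(\lambda) Z(\lambda) = 0$ is equivalent, after multiplying on the left by $\mathcal{A}^{-1}$, to $T(\lambda)\,(\mathcal{B} Z(\lambda)) = 0$. Hence if $Z(\lambda)$ is a right minimal basis of $L(\lambda)$ then $\mathcal{B} Z(\lambda)$ is a right minimal basis of $T(\lambda)$, with identical minimal indices $\varepsilon_1 \le \cdots \le \varepsilon_p$. Applying Theorem~\ref{recoGF} to $T(\lambda)$, the matrix $(e^T_{m-c_0(\sigma)} \otimes I_n)\,\mathcal{B} Z(\lambda)$ is a right minimal basis of $P(\lambda)$, and the right minimal indices of $P(\lambda)$ are $\varepsilon_i - i(\alpha)$ with $\alpha := (-rev(\tau_l), \sigma, -rev(\tau_r))$, that is $\varepsilon_i - i_L$. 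It then remains only to collapse the recovery node: by (\ref{lem1eq1}) in Lemma~\ref{blk_rows_n_col} we have $(e^T_{m-c_0(\sigma)} \otimes I_n)\,\mathcal{B} = (e^T_{m-c_0(\sigma)} \otimes I_n) M_{\sigma_2}(X_2) M_{\tau_2}(Y_2) = e^T_{m-c_0(\sigma,\sigma_2)} \otimes I_n$, which yields exactly $(e^T_{m-c_0(\sigma,\sigma_2)} \otimes I_n) Z(\lambda)$ and proves the right-hand assertions in (a) and (b).

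The left side is symmetric. From $Z(\lambda)^T L(\lambda) = 0$, multiplying on the right by $\mathcal{B}^{-1}$ gives $Z(\lambda)^T \mathcal{A}\, T(\lambda) = 0$, i.e. $(\mathcal{A}^T Z(\lambda))^T T(\lambda) = 0$, so $\mathcal{A}^T Z(\lambda)$ is a left minimal basis of $T(\lambda)$ with the same minimal indices as $Z(\lambda)$. Theorem~\ref{recoGF} then produces the left minimal basis $(e^T_{m-i_0(\sigma)} \otimes I_n)\,\mathcal{A}^T Z(\lambda)$ of $P(\lambda)$ together with the left minimal indices $\varepsilon_i - c(\alpha) = \varepsilon_i - c_L$. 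Transposing (\ref{lem1eq2}), namely $M_{\tau_1}(Y_1) M_{\sigma_1}(X_1)\,(e_{m-i_0(\sigma)} \otimes I_n) = e_{m-i_0(\sigma_1,\sigma)} \otimes I_n$, gives $(e^T_{m-i_0(\sigma)} \otimes I_n)\,\mathcal{A}^T = e^T_{m-i_0(\sigma_1,\sigma)} \otimes I_n$, so the recovered left minimal basis is $(e^T_{m-i_0(\sigma_1,\sigma)} \otimes I_n) Z(\lambda)$, completing (a) and (b).

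The computations are routine once the factorization is in place; the only points requiring care are the transpose bookkeeping in the left case and confirming that the recovery nodes $e^T_{m-c_0(\sigma)}$ and $e^T_{m-i_0(\sigma)}$ used in Theorem~\ref{recoGF} collapse under $\mathcal{B}$ and $\mathcal{A}^T$ to the claimed nodes $e^T_{m-c_0(\sigma,\sigma_2)}$ and $e^T_{m-i_0(\sigma_1,\sigma)}$. This collapse is precisely the content of Lemma~\ref{blk_rows_n_col}, whose hypotheses $0 \in \sigma$ and $-m \in \tau$ are in force here ($-m \in \tau$ from the decomposition $\tau = (\tau_l, -m, \tau_r)$, and $0 \in \sigma$ as in the GF recovery setup). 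Since $\alpha = (-rev(\tau_l), \sigma, -rev(\tau_r))$ is by definition the permutation of $\{0:m-1\}$ whose total consecutions and inversions are $c_L$ and $i_L$, the index shifts match verbatim and no further work is needed.
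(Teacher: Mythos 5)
Your proposal is correct and follows essentially the same route as the paper's own proof: the factorization $L(\lambda)=\mathcal{A}\,T(\lambda)\,\mathcal{B}$, the isomorphisms induced by the constant invertible factors, Theorem~\ref{recoGF} applied to the GF pencil $T(\lambda)$, the collapse of the recovery nodes via Lemma~\ref{blk_rows_n_col}, and strict equivalence for the minimal indices. No gaps.
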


			\begin{proof} We have $L(\lam) = \mathcal{A} \, T(\lambda) \mathcal{B}$, where $\mathcal{A} : = M_{\tau_1} (Y_1) \, M_{\sigma_1} (X_1)$, $\mathcal{B}:= \,M_{\sigma_2} (X_2) \, M_{\tau_2} (Y_2) $, and $ T(\lambda) := \lambda M^P_{\tau} -  M^P_{\sigma}$ is a GF pencil of $P(\lambda)$ associated with the permutation $\omega:=(\sigma,-\tau)$ of $\{0:m\}$ such that $0 \in \sig$ and $-m \in \tau$. Since $X_j$ and $Y_j$, $j=1,2$, are nonsingular matrix assignments, $ \mathcal{A} $ and $ \mathcal{B}$ are nonsingular matrices. Thus, the map $ \mathcal{B } : \mathcal{N}_r(L) \rightarrow \mathcal{N}_r(T),$ $ x(\lambda) \mapsto \mathcal{B } x(\lambda) ,$ is an isomorphism and maps a minimal basis of $\mathcal{N}_r(L) $ to a minimal basis of $\mathcal{N}_r(T)$. On the other hand, by Theorem~\ref{recoGF},
				$ F^{\scalebox{.4}{GF}}(P) : \mathcal{N}_r(T) \rightarrow  \mathcal{N}_r(P), ~y(\lambda) \mapsto  (e^T_{m-c_0(\sigma)} \otimes I_n) y(\lambda)$, is an isomorphism and maps a minimal basis of $\mathcal{N}_r(T)$ to a minimal basis of $\mathcal{N}_r(P)$. Consequently, we have $F^{\scalebox{.4}{GF}}(P) \mathcal{B }:\mathcal{N}_r(L) \rightarrow \mathcal{N}_r(P),  ~z(\lambda) \mapsto  (e^T_{m-c_0(\sigma)} \otimes I_n) \mathcal{B} z(\lambda)$, is an isomorphism and maps a minimal basis of $\mathcal{N}_r(L)$ to a minimal basis of $\mathcal{N}_r(P)$. By Lemma~\ref{blk_rows_n_col}, we have
				$$(e^T_{m-c_0(\sigma)} \otimes I_n) \mathcal{B} = (e^T_{m-c_0(\sigma)} \otimes I_n)  \, M_{\sigma_2} (X_2) M_{\tau_2} (Y_2) = e^T_{m-c_0(\sigma, \sigma_2)} \otimes I_n $$ which proves the recovery of right minimal bases. 
				
				Next, we prove the recovery of left minimal bases. Note that $ \mathcal{A}^T:\mathcal{N}_l(L) \rightarrow \mathcal{N}_l(T),$ $x(\lambda) \mapsto  \mathcal{A}^T x(\lambda), $ is a linear isomorphism and maps a minimal basis of $\mathcal{N}_l(L)$ to a minimal basis of $\mathcal{N}_l(T)$. By Theorem~\ref{recoGF}, $ K^{\scalebox{.4}{GF}}(P) :\mathcal{N}_l(T) \rightarrow \mathcal{N}_l(P), ~ y (\lam) \mapsto(e^T_{m-i_0(\sigma)} \otimes I_n) y(\lam),$ is a linear isomorphism and maps a minimal basis of $\mathcal{N}_l(T)$ to a minimal basis of $\mathcal{N}_l(P)$. Consequently, we have $K^{\scalebox{.4}{GF}} (P)  \,  \mathcal{A}^T  : \mathcal{N}_l(L) \rightarrow \mathcal{N}_l(P), ~ z (\lam) \mapsto(e^T_{m-i_0(\sigma)} \otimes I_n) \mathcal{A}^T z(\lam)$, is an isomorphism and maps a minimal basis $\mathcal{N}_l(L)$ to a minimal basis $\mathcal{N}_l(P)$. By Lemma~\ref{blk_rows_n_col}, we have $(e^T_{m-i_0(\sigma)} \otimes I_n) \mathcal{A}^T = (e^T_{m-i_0(\sigma)} \otimes I_n) \Bigs ( M_{\tau_1} (Y_1) M_{\sigma_1} (X_1) \Bigs)^T =  \Bigs (  M_{\tau_1} (Y_1) M_{\sigma_1} (X_1) (e_{m-i_0(\sigma)} \otimes I_n) \Bigs  )^T = e^T_{m-i_0(\sigma_1, \sigma)} \otimes I_n$ which proves the recovery of left minimal bases.

				As $L(\lambda)$ is strictly equivalent to $T(\lambda)$, the left (resp., right) minimal indices of $L(\lambda)$ and $T(\lambda)$ are the same. Hence the desired results for minimal indices follow from Theorem~\ref{recoGF}.
			\end{proof}

			\subsection{Recovery of eigenvectors} In this section we  describe the recovery of  eigenvectors of $P(\lambda)$ corresponding to an eigenvalue $\mu \in \mathbb{C}$  from those of the EGFPs of $P(\lam).$ The next result is a restatement of \cite[Theorems~3.2]{BDD} which describes the recovery of eigenvectors of $P(\lam)$ from those of the GF pencils.  We present these results for ready reference.
	
	\begin{theorem} \cite{BDD} \label{reco_GFP_PCh1} Suppose that $P(\lambda)$ is regular and $\mu \in \mathbb{C}$ is an eigenvalue of $P(\lambda)$. Let $T(\lambda) := \lambda M^P_{\tau} - M^P_{\sigma}$ be a GF pencil of $P(\lam)$ associated with a permutation $ ({\sigma}, {\omega})$ of $\{0: m\}$, where $\tau:= - \omega.$ Define
		$$ F^{\scalebox{.4}{GF}
		}(P) : =\left\{ \begin{array}{ll}
			e^T_{m-c_0(\sigma)} \otimes I_n  & \mbox{if }~  0 \in \sig~ \text{and}~ c_0(\sigma) < m \\
			
			A^{-1}_m(e^T_{1} \otimes I_n )  & \mbox{if }~ 0 \in \sig ~\text{and}~ c_0(\sigma) = m  \\
			
			e^T_{m-s} \otimes I_n & \mbox{if } \left\{ \begin{array}{l}
				0 \in \omega, \,  i_0(\omega)+1 \in \sig \text{~and~}  \\
				s:= i_0(\omega) + c_{i_0(\omega)+1}(\sigma) +1 < m \end{array} \right. \\ 
			
			A^{-1}_m ( e^T_1 \otimes I_n ) & \mbox{if } \left\{ \begin{array}{l}
				0 \in \omega, \,  i_0(\omega)+1 \in \sig \text{~and~}  \\
				i_0(\omega) + c_{i_0(\omega)+1}(\sigma) +1 = m \end{array} \right. \\   
			
			e^T_{m-i_0(\omega)} \otimes I_n  &  \mbox{if } \begin{array}{l}
				0 \in \omega , \, i_0(\omega) <m ~\mbox{and}~ i_0(\omega)+1 \notin \sigma \end{array}  \\ 
			
			A^{-1}_0 ( e^T_{m} \otimes I_n )  & 
			\mbox{if }~ 0 \in \omega ~ \text{and}~i_0(\omega) =m  \\ 
		\end{array}
		\right.$$ 
		and
		$$ K^{\scalebox{.4}{GF}
		}(P) : =\left\{ \begin{array}{ll}
			e^T_{m-i_0(\sigma)} \otimes I_n  & \mbox{if }~0 \in \sig ~\text{and}~ i_0(\sigma) < m   \\
			
			A^{-T}_m(e^T_{1} \otimes I_n )  & \mbox{if }~0 \in \sig ~\text{and}~i_0(\sigma) = m   \\
			
			e^T_{m-s} \otimes I_n &  \mbox{if } \left\{ \begin{array}{l}
				0 \in \omega, ~ c_0(\omega)+1 \in \sig~\text{and}\\
				s:= c_0(\omega)+i_{c_0(\omega)+1}(\sigma) +1 <m \end{array} \right. \\ 
			
			A^{-T}_m ( e^T_1 \otimes I_n ) & \mbox{if } \left\{ \begin{array}{l}
				0 \in \omega, ~ c_0(\omega)+1 \in \sig~\text{and}\\ c_0(\omega)+i_{c_0(\omega)+1}(\sigma) +1 =m \end{array} \right. \\

			e^T_{m-c_0( \omega)} \otimes I_n  & \mbox{if }  \begin{array}{l}
				0 \in \omega, ~ c_0( \omega)<m~\text{and}~ c_0( \omega)+1 \notin \sigma\end{array} \\ 
			
			A^{-T}_0 ( e^T_{m} \otimes I_n )  & 
			\mbox{if }~ 0 \in \omega ~\text{and}~c_0( \omega) =m.\\ 
		\end{array}
		\right.$$ 
		Define the maps $ F^{\scalebox{.4}{GF}
		}(P) : \mathcal{N}_r(T) \rightarrow \mathcal{N}_r(P),~ x \mapsto F^{\scalebox{.4}{GF}
		}(P) x ,$ and $ K^{\scalebox{.4}{GF}
		}(P) : \mathcal{N}_l(T) \rightarrow \mathcal{N}_l(P),~ y \mapsto K^{\scalebox{.4}{GF}
		}(P) y $. Let Z be an $mn\times p$ matrix such that $\rank(Z)=p$. Then we have the following. 
	
	 If $Z$ is a basis of $\mathcal{N}_r( T(\mu))$ (resp., $\mathcal{N}_l( T(\mu))$), then $\big[F^{\scalebox{.4}{GF}
		}(P) Z \big]$ (resp., $\big[K^{\scalebox{.4}{GF}
	}(P) Z \big]$) is a basis of $\mathcal{N}_r(P(\mu))$ (resp., $\mathcal{N}_l(P(\mu))$).



		
	\end{theorem}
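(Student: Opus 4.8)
The plan is to combine the strong-linearization property of the GF pencil $T(\lambda)$ with an explicit description of its eigenvectors at the finite eigenvalue $\mu$. Since $T(\lambda)=\lambda M^P_\tau-M^P_\sigma$ is a strong linearization of the regular matrix polynomial $P(\lambda)$, there exist unimodular $U(\lambda)$ and $V(\lambda)$ with $U(\lambda)T(\lambda)V(\lambda)=\diag(I_{(m-1)n},P(\lambda))$. As unimodular matrices are invertible at every point of $\C$, evaluating at $\mu$ and restricting to the trailing block shows that $V(\mu)$ induces a linear isomorphism $\mathcal{N}_r(P(\mu))\to\mathcal{N}_r(T(\mu))$ and $U(\mu)^T$ induces one $\mathcal{N}_l(P(\mu))\to\mathcal{N}_l(T(\mu))$; in particular $\dim\mathcal{N}_r(T(\mu))=\dim\mathcal{N}_r(P(\mu))$ and $\dim\mathcal{N}_l(T(\mu))=\dim\mathcal{N}_l(P(\mu))$. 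Consequently it suffices to show that the explicit constant map $F^{\scalebox{.4}{GF}}(P)$ sends $\mathcal{N}_r(T(\mu))$ \emph{onto} $\mathcal{N}_r(P(\mu))$, and similarly that $K^{\scalebox{.4}{GF}}(P)$ sends $\mathcal{N}_l(T(\mu))$ onto $\mathcal{N}_l(P(\mu))$: a surjective linear map between spaces of the same finite dimension is an isomorphism, and an isomorphism carries any basis to a basis, which is exactly the assertion.

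The content is therefore to identify this map explicitly. The Fiedler matrices $M^P_i$ act on block vectors by the elementary shift-and-fold rules recorded in (\ref{eqnreco2_PCh4})--(\ref{eqnrecogfprgsettingp0329sept172045n_PCh2}). Writing a right eigenvector $z$ of $T(\mu)$ via $M^P_\sigma z=\mu\,M^P_\tau z$ and iterating these rules along $\sigma$ and $\tau$ forces the blocks of $z$ to be monomials $\mu^{j}w$ in a single vector $w$ satisfying $P(\mu)w=0$, the exponent $j$ attached to each block being dictated by the consecutions and inversions of $(\sigma,\omega)$ at the indices lying between $0$ and that block. The block row extracted by $F^{\scalebox{.4}{GF}}(P)$ is precisely the one carrying $w$ itself, so $F^{\scalebox{.4}{GF}}(P)$ maps $\mathcal{N}_r(T(\mu))$ into $\mathcal{N}_r(P(\mu))$ and hits every $w$; the two boundary cases, in which the distinguished block instead carries $A_m w$ (respectively $A_0 w$), are exactly those compensated by the factors $A_m^{-1}$ (respectively $A_0^{-1}$) in the definition. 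Combined with the equality of dimensions from the first paragraph, this makes $F^{\scalebox{.4}{GF}}(P)$ an isomorphism $\mathcal{N}_r(T(\mu))\to\mathcal{N}_r(P(\mu))$, hence a basis-to-basis map.

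For the left statement I would transpose. Because $(M^P_i)^T=M^{P^T}_i$ for every admissible index $i$, where $P^T(\lambda):=\sum_i\lambda^i A_i^T$, one has $T(\lambda)^T=\lambda M^{P^T}_{rev(\tau)}-M^{P^T}_{rev(\sigma)}$, which is again a GF pencil, now of $P(\lambda)^T$ and associated with the reversed permutation $(rev(\sigma),rev(\omega))$. Reversal interchanges consecutions and inversions at $0$, so applying the right-eigenvector result already established to $P(\lambda)^T$ and then transposing back turns $F^{\scalebox{.4}{GF}}(P^T)$ into exactly $K^{\scalebox{.4}{GF}}(P)$ --- the six cases and the $A_m^{-T}$, $A_0^{-T}$ factors of $K^{\scalebox{.4}{GF}}(P)$ being the mirror images of those of $F^{\scalebox{.4}{GF}}(P)$. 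This yields the left recovery map and completes the proof.

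The main obstacle is the combinatorial bookkeeping hidden in the second paragraph: pinning down, in each branch of the split $0\in\sigma$ versus $0\in\omega$ and according to the values of $c_0,i_0$ and of the consecution or inversion at the neighbouring index, exactly which block row of the eigenvector equals $w$ rather than $A_m w$ or $A_0 w$, and checking that the shift rules (\ref{eqnreco2_PCh4})--(\ref{eqnrecogfprgsettingp0329sept172045n_PCh2}) propagate the monomial pattern consistently up to that row. This case analysis is precisely what the six-way definitions of $F^{\scalebox{.4}{GF}}(P)$ and $K^{\scalebox{.4}{GF}}(P)$ encode, and it is where essentially all of the work lies; everything else is the soft linear-algebra packaging of the first and third paragraphs.
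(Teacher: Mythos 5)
The first thing to say is that the paper contains no proof of Theorem~\ref{reco_GFP_PCh1} to compare yours against: the result is imported verbatim from \cite[Theorem~3.2]{BDD} ``for ready reference,'' so your proposal has to be judged on its own. Your scaffolding is sound and is indeed the route taken in \cite{BDD}: the strong-linearization identity $U(\lam)T(\lam)V(\lam)=\diag(I_{(m-1)n},P(\lam))$ evaluated at $\mu$ gives $\dim\mathcal{N}_r(T(\mu))=\dim\mathcal{N}_r(P(\mu))$ (and likewise on the left), reducing everything to surjectivity of the constant extraction map; and the transposition argument $(M^P_\alpha)^T=M^{P^T}_{rev(\alpha)}$, which swaps $c_0$ and $i_0$ and turns $F^{\scalebox{.4}{GF}}(P^T)$ into $K^{\scalebox{.4}{GF}}(P)$, is a clean and correct way to get the left half from the right half.

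The gap is in the one structural claim your second paragraph actually makes. It is \emph{not} true that iterating the shift rules forces every block of $z\in\mathcal{N}_r(T(\mu))$ to be a monomial $\mu^j w$: already for $m=2$, $\sig=(0,1)$, $\tau=(-2)$ one gets $T(\lam)=\left[\begin{smallmatrix}\lam A_2+A_1 & -I_n\\ A_0 & \lam I_n\end{smallmatrix}\right]$, whose right null vectors at $\mu$ are $z=(w,\,(\mu A_2+A_1)w)$ with $P(\mu)w=0$; the second block is a Horner shift, not a monomial. In general the blocks of a GF-pencil eigenvector are Horner-like polynomials in $\mu$ applied to $w$, and the content of the theorem is precisely that \emph{one} distinguished block degenerates to $w$ itself (or to $A_m w$, $-A_0 w$ in the two boundary cases), its position being given by the six-way case split on $c_0$, $i_0$ and the consecution/inversion at the neighbouring index. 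That identification — which you correctly flag as ``where essentially all of the work lies'' — is exactly the part you have not carried out, and the monomial description you substitute for it would, if taken literally, lead you to the wrong block in examples like the one above. So the proposal is a correct outline with a correct reduction and a correct left-from-right trick, but its only concrete claim about the eigenvector structure is false as stated, and the case analysis that constitutes the proof is deferred rather than done.
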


Let $ L(\lambda) = M_{(\tau_1,\sigma_1)} (Y_1,X_1) (\lambda M^P_{\tau} -  M^P_{\sigma}) M_{(\sigma_2,\tau_2)} (X_2,Y_2)$ be an EGFP of $P(\lambda)$. 
Observe that the basic building blocks $\lambda M^P_{\tau} -  M^P_{\sigma}$ of $L(\lam)$ is a GF pencil of $P(\lam)$. In order to derive eigenvector recovery of $P(\lam)$ from those of $L(\lam)$ we need to compute  $F^{\scalebox{.4}{GF}}(P) \,  M_{(\sigma_2,\tau_2)} (X_2,Y_2)$ and $ M_{(\tau_1,\sigma_1)} (Y_1,X_1) \, (K^{\scalebox{.4}{GF}})^T$ for the various cases of $F^{\scalebox{.4}{GF}}(P)$ and $ K^{\scalebox{.4}{GF}}(P)$ given as in Theorem~\ref{reco_GFP_PCh1}. We only state the outcomes of these computations in the following results for ease reading of the paper as the proofs are not important to the development that follows. We refer to Appendix~\ref{appendix_reco} for the proofs.


%

\begin{lemma} \label{egfpr_right_reco1}
	Let $ L(\lambda):= M_{\tau_1} (Y_1) M_{\sigma_1} (X_1) (\lambda M^P_{\tau} -  M^P_{\sigma}) M_{\sigma_2} (X_2)M_{\tau_2} (Y_2)$ be an EGFP of $P(\lambda)$ as given in (\ref{EGFPDef}). Suppose that $0 \in \sigma$. Then we have the following.
	\begin{enumerate}
		\item [(a)] If  $c_0(\sig) < m$ then  $	(e_{m-c_0(\sigma)}^{T}\otimes I_n) \, M_{\sigma_{2}} (X_2) \,  M_{\tau_{2}}  (Y_2) = e_{m-c_0(\sigma,\sigma_2)}^{T}\otimes I_n $. Similarly, if $i_0(\sig) < m$ then $M_{\tau_{1}} (Y_1)\, M_{\sigma_{1}} (X_1) \, (e_{m-i_0(\sigma)}\otimes I_n) = e_{m-i_0(\sigma_1, \sigma)}\otimes I_n.$
		
		\item [(b)] If $c_0(\sig) =m$ then $ (e_{1}^{T}\otimes I_n) M_{\sigma_{2}} (X_2) M_{\tau_{2}}  (Y_2)=  e_{1}^{T}\otimes I_n,$ and if $i_0(\sig) =m$ then $M_{\tau_{1}} (Y_1) M_{\sigma_{1}} (X_1) (e_{1}\otimes I_n) =e_{1}\otimes I_n.$
	\end{enumerate}
	
\end{lemma}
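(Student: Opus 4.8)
The statement to prove is Lemma~\ref{egfpr_right_reco1}, which computes the effect of the right factor $M_{\sigma_2}(X_2)M_{\tau_2}(Y_2)$ on the specific row vector $e_{m-c_0(\sigma)}^T \otimes I_n$, and symmetrically the effect of the left factor $M_{\tau_1}(Y_1)M_{\sigma_1}(X_1)$ on the column vector $e_{m-i_0(\sigma)}\otimes I_n$. Part~(a) is precisely the conclusion of Lemma~\ref{blk_rows_n_col}, so my first move is to reduce to that lemma: under the hypothesis $0\in\sigma$ together with the standing setup $-m\in\tau$ (which holds since $(\sigma,\omega)$ is a permutation and $m\in\omega$), equations~(\ref{lem1eq1}) and~(\ref{lem1eq2}) give exactly the claimed identities. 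The only subtlety is that Lemma~\ref{blk_rows_n_col} as stated assumes $-m\in\tau$, so I must check that the hypotheses $c_0(\sigma)<m$ and $i_0(\sigma)<m$ in part~(a) are compatible with this; indeed $c_0(\sigma)<m$ simply rules out the degenerate case where $\sigma$ contains the full run $(0,1,\ldots,m-1)$ forcing $m\in\sigma$, so part~(a) is a direct citation.

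For part~(b), the hypothesis changes to $c_0(\sigma)=m$ (respectively $i_0(\sigma)=m$), which by the definition of consecutions means $(0,1,\ldots,m)$ is a subtuple of $\sigma$. Since $\sigma$ draws indices from the permutation side $\{0:m\}$ with no index $m$ allowed in $\sigma_1,\sigma_2$, this forces $\sigma$ to contain every index $0$ through $m$, hence $\tau=\emptyset$ and in particular $\tau_2=\emptyset$, so $M_{\tau_2}(Y_2)=I_{mn}$. The relevant row is now $e_1^T\otimes I_n=e_{m-(m-1)}^T\otimes I_n$. The plan is to apply the recursion~(\ref{eqnreco2_PCh4}): since $\sigma_2$ contains indices only from $\{0:m-2\}$ and the target row index corresponds to position $m-1$, no factor $M_j(X)$ with $j\in\sigma_2$ can act nontrivially on $e_1^T\otimes I_n$. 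Concretely, $e_1^T\otimes I_n = e_{m-(m-1)}^T\otimes I_n$ is fixed by $M_j(X)$ whenever $j\notin\{m-1,m\}$, and every index in $\sigma_2$ lies in $\{0:m-2\}$, so $(e_1^T\otimes I_n)M_{\sigma_2}(X_2)=e_1^T\otimes I_n$, after which the trivial factor $M_{\tau_2}(Y_2)=I$ does nothing. The symmetric claim for $i_0(\sigma)=m$ follows identically from~(\ref{eqnreco1_PCh4}), using that $\sigma_1$ contains indices from $\{0:m-2\}$.

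The main obstacle I anticipate is purely bookkeeping: I must verify carefully that, in the $c_0(\sigma)=m$ case, the row index $m-1$ (i.e.\ $e_1^T$) is genuinely untouched by every elementary factor coming from $\sigma_2$, which requires knowing that $\sigma_2\subseteq\{0:m-2\}$ so that $j=m-1$ never occurs as an index. This is guaranteed by the Definition~\ref{ggfprdef_PCh4} hypothesis that $\sigma_1,\sigma_2$ contain indices from $\sigma\setminus\{m-1,m\}$, so in fact no index $m-1$ can sit in $\sigma_2$ at all, and~(\ref{eqnreco2_PCh4}) applies uniformly. The entire lemma therefore reduces to a citation of Lemma~\ref{blk_rows_n_col} for part~(a) and a short application of the transport formulas~(\ref{eqnreco2_PCh4})--(\ref{eqnreco1_PCh4}) for part~(b); there is no genuinely hard step, only the need to track which index set each subtuple draws from.
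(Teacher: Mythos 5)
Part (b) of your proposal is correct and is essentially the paper's own argument: $c_0(\sigma)=m$ forces $\sigma\supseteq\{0:m\}$, hence $\tau=\emptyset$ and $\tau_1=\tau_2=\emptyset$, and the block row $e_1^T\otimes I_n$ is fixed by every $M_j(X)$ with $j\in\{0:m-2\}$ via (\ref{eqnreco2_PCh4}).

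Part (a), however, has a genuine gap. Your reduction to Lemma~\ref{blk_rows_n_col} rests on the claim that $-m\in\tau$ is part of the standing setup, and your fallback justification --- that $c_0(\sigma)<m$ ``rules out'' $m\in\sigma$ --- is false. Definition~\ref{ggfprdef_PCh4} only requires $(\sigma,\omega)$ to be a permutation of $\{0:m\}$, so $m$ may perfectly well lie in $\sigma$ (this is exactly the situation of a GFP built with $M^P_m=(M^P_{-m})^{-1}$), and $c_0(\sigma)<m$ does not exclude it: for $\sigma=(0,m)$ one has $c_0(\sigma)=0<m$ yet $m\in\sigma$, so $-m\notin\tau$ and Lemma~\ref{blk_rows_n_col} simply does not apply. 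Note that Theorem~\ref{non_OF_EGFPR} invokes the present lemma in the case ``$0\in\sigma$ and $c_0(\sigma)<m$'' with no restriction on where $m$ sits, so this case cannot be discarded. The paper's proof splits explicitly into the two subcases: if $-m\in\tau$ it cites Lemma~\ref{blk_rows_n_col}; if $m\in\sigma$ it sets $\widetilde{\sigma}:=\sigma\setminus\{m\}$ and $\widetilde{\tau}:=(\tau,-m)$, observes that $c_0(\sigma)=c_0(\widetilde{\sigma})$ and $c_0(\sigma,\sigma_2)=c_0(\widetilde{\sigma},\sigma_2)$ (which uses $c_0(\sigma)<m$ and the fact that $m-1,m\notin\sigma_2$), and then applies the earlier lemma to $(\widetilde{\sigma},\widetilde{\tau})$. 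You need to supply this second subcase (and its mirror for $i_0$) to close the argument.
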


\begin{lemma} \label{egfpr_right_reco}
	Let $ L(\lambda):= M_{\tau_1} (Y_1) M_{\sigma_1} (X_1) (\lambda M^P_{\tau} -  M^P_{\sigma}) M_{\sigma_2} (X_2)M_{\tau_2} (Y_2)$ be an EGFP of $P(\lambda)$ as given in (\ref{EGFPDef}). Suppose that $0 \in \omega$ (recall that $\tau = - \omega$). Then we have the following.
	\begin{enumerate}
		\item [(a)] If $i_0(\omega) +1 \in \sig$ and $s:= i_0(\omega) + c_{i_0(\omega)+1}(\sig) +1 < m$ then 
		$$(e^T_{m-s} \otimes I_n) M_{\sigma_{2}} (X_2) M_{\tau_{2}}  (Y_2) = e^T_{m-p} \otimes I_n,  $$
		where $ p:= i_0(\omega) + c_{i_0(\omega)+1}(\sig, \sig_2) +1.$
		
		\item [(b)] If $s =m$ in part (a), then $(e_1^T \otimes I_n)  M_{\sigma_{2}} (X_2) M_{\tau_{2}}  (Y_2) = e_p^T \otimes I_n,$
		where $p := c_{-(m-1)} (\tau_2) +2.$	
		\item [(c)] If $i_0(\omega) <m$ and $i_0(\omega) +1 \notin \sig$ then 
		\begin{eqnarray} \label{eqn:recoev2CH3}
			(e^T_{m-i_0(\omega)} \otimes I_n)  M_{\sigma_{2}} (X_2) M_{\tau_{2}}  (Y_2) = e^T_{m-p} \otimes I_n,
		\end{eqnarray}
		where $p:= i_0(\omega) -c_{-i_0(\omega)}(\tau_2)-1.$
		
		\item [(d)] If $i_0(\omega) =m$ in part (c), then $(e^T_m \otimes I_n) M_{\sigma_{2}} (X_2) M_{\tau_{2}}  (Y_2) =e^T_m \otimes I_n. $
	\end{enumerate}
	
\end{lemma}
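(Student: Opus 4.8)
The plan is to treat each of the four cases by tracking the single block row $e^T_{m-k}\otimes I_n$ as it is multiplied on the right first by the nonnegative-index block $M_{\sigma_2}(X_2)$ and then by the negative-index block $M_{\tau_2}(Y_2)$, using only the elementary identities (\ref{eqnreco2_PCh4}) and (\ref{eqnrecogfprp0329sept172046n_PCh2}) together with the consecution/inversion rearrangements of Proposition~\ref{lemeqnconrsfcsf16a18n730_PCh3} and Proposition~\ref{lemeqnconrsfcsf17a18d10}. Throughout I would write $q:=i_0(\omega)$, so that $0,1,\dots,q\in\omega$. The computation is organised exactly as in the proof of Lemma~\ref{blk_rows_n_col}: first bring the relevant sub-chain of indices into contiguous form (so that the recovery identities apply one index at a time), then check that every remaining factor acts as the identity on the row that has been reached.

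In parts (a) and (b) the starting row sits on the $\sigma$-side. In (a) the index $s=(q+1)+c_{q+1}(\sigma)$ is the top of the consecution of $\sigma$ at $q+1$, so $s\in\sigma$ and $s+1\notin\sigma$; applying Proposition~\ref{lemeqnconrsfcsf16a18n730_PCh3} to $(\sigma,\sigma_2)$ exhibits the chain $q+1,q+2,\dots,p$ with $p=(q+1)+c_{q+1}(\sigma,\sigma_2)$, whose tail $s+1,\dots,p$ is supplied by $\sigma_2$. Feeding these to (\ref{eqnreco2_PCh4}) one at a time moves $e^T_{m-s}\otimes I_n$ down to $e^T_{m-p}\otimes I_n$, while the off-chain indices of $\sigma_2$ leave the row fixed. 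Since $p$ is a value occurring in $(\sigma,\sigma_2)$ we have $p\in\sigma$, hence $-p\notin\tau\supseteq\tau_2$; I would then show $-(p+1)\notin\tau_2$ as well, after which (\ref{eqnrecogfprp0329sept172046n_PCh2}) forces $M_{\tau_2}(Y_2)$ to act as the identity on $e^T_{m-p}\otimes I_n$, giving (a). Part (b) is the boundary case $s=m$: then $m\in\sigma$, so $M_{\sigma_2}(X_2)$ (whose indices lie in $\{0:m-2\}$) fixes $e^T_1\otimes I_n$, and $-m\notin\tau$; the negative consecution chain of $\tau_2$ at $-(m-1)$, extracted by Proposition~\ref{lemeqnconrsfcsf17a18d10}(a), then drives $e^T_1\otimes I_n$ up to $e^T_p\otimes I_n$ with $p=c_{-(m-1)}(\tau_2)+2$ by repeated use of the first line of (\ref{eqnrecogfprp0329sept172046n_PCh2}).

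Parts (c) and (d) are the mirror situation, with the row on the $\tau$-side and the roles of $\sigma_2$ and $\tau_2$ (and of consecutions and inversions) interchanged. Here $q+1\notin\sigma$ and $q\in\omega$, so neither $q$ nor $q+1$ belongs to $\sigma_2$; by (\ref{eqnreco2_PCh4}) the factor $M_{\sigma_2}(X_2)$ fixes $e^T_{m-q}\otimes I_n$. The consecution chain of $\tau_2$ at $-q$, of length $c_{-q}(\tau_2)+1$ and put in contiguous form by Proposition~\ref{lemeqnconrsfcsf17a18d10}(a), then moves the row up to $e^T_{m-p}\otimes I_n$ with $p=q-c_{-q}(\tau_2)-1$, while the remaining factors of $\tau_2$ act trivially; this is (c). Part (d) is the degenerate case $q=m$, where $\sigma=\emptyset=\sigma_2$ and every index of $\tau_2$ is $\le-2$, so both blocks fix $e^T_m\otimes I_n$.

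The hard part will not be the bookkeeping of the chains but the exclusion of the two degenerate outputs of the elementary matrices that (\ref{eqnreco2_PCh4}) and (\ref{eqnrecogfprp0329sept172046n_PCh2}) do not cover, namely $(e^T_{m-i}\otimes I_n)M_i(Z)=e^T_{m-(i-1)}\otimes I_n+e^T_{m-i}\otimes Z$ and $(e^T_{m-i}\otimes I_n)M_{-(i+1)}(Z)=e^T_{m-(i+1)}\otimes I_n+e^T_{m-i}\otimes Z$: a single occurrence of such a factor at the wrong row would destroy the clean one-term formulas. These must be ruled out by the SIP hypothesis on $(\sigma_1,\sigma,\sigma_2)$ and $(\tau_1,\tau,\tau_2)$ via a witness argument. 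For instance, the key step $-(p+1)\notin\tau_2$ in (a) holds because, were $-(p+1)$ to lie in $\tau_2$ in addition to $\tau$, the SIP would require a copy of its successor $-p$ strictly between the two occurrences, whereas $p\in\sigma$ makes $-p$ absent from $\tau$ altogether; the boundary conditions supplied by Propositions~\ref{lemeqnconrsfcsf16a18n730_PCh3} and \ref{lemeqnconrsfcsf17a18d10} handle the analogous exclusions at the two ends of each chain, and the same pattern settles the remaining cases.
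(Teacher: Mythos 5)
Your proposal is correct and follows essentially the same route as the paper: track the single block row through $M_{\sigma_2}(X_2)$ and then $M_{\tau_2}(Y_2)$, move it along the consecution chain extracted via Propositions~\ref{lemeqnconrsfcsf16a18n730_PCh3} and \ref{lemeqnconrsfcsf17a18d10}, and rule out the two-term degenerate outputs of the elementary matrices by SIP-based exclusion of the neighbouring indices. The only organizational difference is that the paper packages the chain-tracking into the auxiliary Propositions~\ref{sp1rowblock} and \ref{ms_start} and splits case (a) explicitly according to whether $s+1\in\sigma_2$, whereas you inline the same computation; the substance is identical.
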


The following result is analogous to Lemma~\ref{egfpr_right_reco}. 

\begin{lemma} \label{egfpr_left_reco}
	Let $ L(\lambda):= M_{\tau_1} (Y_1) M_{\sigma_1} (X_1) (\lambda M^P_{\tau} -  M^P_{\sigma}) M_{\sigma_2} (X_2)M_{\tau_2} (Y_2)$ be an EGFP of $P(\lambda)$ as given in (\ref{EGFPDef}). Suppose that $0 \in \omega$ (recall that $\tau = - \omega$). Then we have the following.

	\begin{enumerate}
		\item [(a)] If $c_0(\omega) +1 \in \sig$ and $s:= c_0(\omega) + i_{c_0(\omega)+1}(\sig) +1 < m$ then 
		$$  M_{\tau_{1}}  (Y_1) M_{\sigma_{1}} (X_1) (e_{m-s} \otimes I_n)= e_{m-p} \otimes I_n,  $$
		where $ p:= c_0(\omega) + i_{c_0(\omega)+1}(\sig_1, \sig) +1.$
		
		\item [(b)] If $s =m$ in part (a), then $   M_{\tau_{1}}  (Y_1)  M_{\sigma_{1}} (X_1) (e_1 \otimes I_n)= e_p \otimes I_n,$
		where $p := i_{-(m-1)} (\tau_1) +2.$
		
		\item [(c)] If $c_0(\omega) <m$ and $c_0(\omega) +1 \notin \sig$ then 
		\begin{equation} \label{eqn:recoevCH3}
			M_{\tau_{1}}  (Y_1) M_{\sigma_{1}} (X_1) (e_{m-c_0(\omega)} \otimes I_n) = e_{m-p} \otimes I_n,
		\end{equation}
		where $p:= c_0(\omega) -i_{-c_0(\omega)}(\tau_1) -1. $
		
		\item [(d)] If $c_0(\omega) =m$ in part (c), then $M_{\tau_{1}}  (Y_1)  M_{\sigma_{1}} (X_1)  (e_m \otimes I_n) =e_m \otimes I_n. $
	\end{enumerate}
	
\end{lemma}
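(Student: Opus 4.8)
The plan is to prove the four identities by direct computation, mirroring the companion right-recovery result Lemma~\ref{egfpr_right_reco}: the two lemmas are mirror images, with the row-action of $M_{\sigma_2}(X_2)M_{\tau_2}(Y_2)$ there replaced by the column-action of $M_{\tau_1}(Y_1)M_{\sigma_1}(X_1)$ here, and with the roles of consecutions and inversions interchanged. Concretely, I would feed the prescribed column $e_{m-s}\otimes I_n$ (with $s$ the relevant index in each case) into the product $M_{\tau_1}(Y_1)M_{\sigma_1}(X_1)$, applying the inner factor $M_{\sigma_1}(X_1)$ first and then $M_{\tau_1}(Y_1)$, and track how the single live block index is shifted using the column-action formulas (\ref{eqnreco1_PCh4}) and (\ref{eqnrecogfprgsettingp0329sept172045n_PCh2}).

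For the representative case (c), set $c:=c_0(\omega)$. Since $0\in\omega$ and $c=c_0(\omega)$, the indices $0,1,\dots,c$ lie in $\omega$, and the hypothesis $c+1\notin\sigma$ forces $c+1\in\omega$; hence neither $c$ nor $c+1$ belongs to $\sigma\supseteq\sigma_1$, so by (\ref{eqnreco1_PCh4}) the factor $M_{\sigma_1}(X_1)$ fixes $e_{m-c}\otimes I_n$. It then remains to evaluate $M_{\tau_1}(Y_1)(e_{m-c}\otimes I_n)$. Writing $q:=i_{-c}(\tau_1)$ and using Proposition~\ref{lemeqnconrsfcsf17a18d10}(b) I would pull the inversion chain of $\tau_1$ at $-c$ to the right end, $\tau_1\sim(\tau_1^L,-(c-q),\dots,-(c-1),-c,\tau_1^R)$ with $-c\notin\tau_1^R$ and $-(c-q),-(c-q-1)\notin\tau_1^L$; applying (\ref{eqnrecogfprgsettingp0329sept172045n_PCh2}) once for each factor $-(c-t)$, $t=0,\dots,q$, advances the index one step at a time and turns $e_{m-c}\otimes I_n$ into $e_{m-(c-q-1)}\otimes I_n=e_{m-p}\otimes I_n$. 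Parts (a), (b) and (d) follow the same template: in (a) it is the inner factor $M_{\sigma_1}(X_1)$ that carries the shift, through the inversion chain of $(\sigma_1,\sigma)$ at $c+1$ (Proposition~\ref{lemeqnconrsfcsf16a18n730_PCh3} together with (\ref{eqnreco1_PCh4})), yielding the extra shift $i_{c+1}(\sigma_1,\sigma)-i_{c+1}(\sigma)$ and hence $p=c+i_{c+1}(\sigma_1,\sigma)+1$, after which $M_{\tau_1}(Y_1)$ acts transparently; while (b) and (d) are the boundary cases $s=m$ and $c_0(\omega)=m$.

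The step I expect to be the main obstacle is verifying transparency at every application of (\ref{eqnreco1_PCh4}) and (\ref{eqnrecogfprgsettingp0329sept172045n_PCh2}): these formulas give a clean one-block shift of $e_{m-i}\otimes I_n$ only when the acting index avoids the value that triggers the mixing case ($j=i+1$ for a negative factor $M_{-j}$, and $j=i$ for a positive factor $M_{j}$), which would otherwise splice one of the matrices $X_1,Y_1$ into the result and destroy the single-live-block form. One must therefore check that, as the live block travels through the product, this critical value is encountered exactly along the intended inversion (resp.\ consecution) chain and nowhere else; in case (c), for instance, beyond $-c\notin\tau_1^R$ one must rule out that the adjacent index $-(c+1)$ acts on the live block while it sits at position $m-c$. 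This is precisely where the SIP hypotheses on $(\sigma_1,\sigma,\sigma_2)$ and $(\tau_1,\tau,\tau_2)$, together with $0\in\omega$ and the placement of $c+1$, are used to guarantee that the factors meant to act as the identity contain none of the forbidden neighbouring indices. The boundary cases (b) and (d) need a short separate argument, since the domain restriction $i=1:m-1$ in the elementary matrices prevents the shift from continuing past the top block, so the index stabilises and only the leading inversion count $i_{-(m-1)}(\tau_1)$ of $\tau_1$ survives.
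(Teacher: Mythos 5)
Your proposal is correct and follows essentially the same route as the paper: the paper itself proves Lemma~\ref{egfpr_left_reco} by declaring it "analogous to the proof of Lemma~\ref{egfpr_right_reco}," i.e., exactly the mirror argument you describe, with the column-action formulas (\ref{eqnreco1_PCh4}), (\ref{eqnrecogfprgsettingp0329sept172045n_PCh2}) and Propositions~\ref{lemeqnconrsfcsf16a18n730_PCh3}, \ref{lemeqnconrsfcsf17a18d10}, \ref{sp1rowblock} and \ref{ms_start} replacing their row-action counterparts, and you correctly locate where the SIP hypotheses rule out the mixing cases. The only cosmetic imprecision is writing $\sigma\supseteq\sigma_1$ (the containment is of index sets, since $\sigma_1$ may repeat indices of $\sigma$), but the inference you draw from it is valid.
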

			
We are now ready to describe the  recovery of eigenvectors of a regular $P(\lam)$ from those of the EGFPs of $P(\lam)$.

\begin{theorem} \label{non_OF_EGFPR} Suppose that $P(\lambda)$ is regular and $\mu \in \mathbb{C}$ is an eigenvalue of $P(\lambda)$. Let $ L(\lambda):= M_{\tau_1} (Y_1) M_{\sigma_1} (X_1) (\lambda M^P_{\tau} -  M^P_{\sigma}) M_{\sigma_2} (X_2)M_{\tau_2} (Y_2)$ be an EGFP of $P(\lambda)$ as given in (\ref{EGFPDef}). (Recall that $\omega = - \tau$).  Let Z be an $mn\times p$ matrix such that $\rank(Z)=p$. Define
	$$ F^{\scalebox{.4}{EGFP}
	}(P) : =\left\{ \begin{array}{ll}
		e^T_{m-c_0(\sigma, \sig_2)} \otimes I_n  & \mbox{if }~  0 \in \sig~ \text{and}~ c_0(\sigma) < m \vspace{.1cm} \\ 
		
		A^{-1}_m(e^T_{1} \otimes I_n )  & \mbox{if }~ 0 \in \sig ~\text{and}~ c_0(\sigma) = m  \vspace{.1cm} \\
		
		e^T_{m-p} \otimes I_n & \mbox{if } \left\{ \begin{array}{l}
			0 \in \omega, \,  i_0(\omega)+1 \in \sig \text{~and~}  \\
			s:= i_0(\omega) + c_{i_0(\omega)+1}(\sigma) +1 < m, \\
			\text{where } p:= i_0(\omega) + c_{i_0(\omega)+1}(\sigma, \sig_2) +1\end{array} \right. \vspace{.1cm}  \\ 
		
		A^{-1}_m ( e^T_p \otimes I_n ) & \mbox{if } \left\{ \begin{array}{l}
			0 \in \omega, \,  i_0(\omega)+1 \in \sig \text{~and~}  \\
			i_0(\omega) + c_{i_0(\omega)+1}(\sigma) +1 = m ,\\
			\text{ where } p := c_{-(m-1)} (\tau_2) +2\end{array} \right. \vspace{.1cm}  \\   
		
		e^T_{m-p} \otimes I_n  &  \mbox{if } \left\{ \begin{array}{l}
			0 \in \omega , \, i_0(\omega) <m ~\mbox{and}~ i_0(\omega)+1 \notin \sigma,\\
			\text{where } p:=  i_0(\omega) - c_{-i_0(\omega)}(\tau_2) -1\end{array} \right. \vspace{.1cm} \\ 
		
		A^{-1}_0 ( e^T_{m} \otimes I_n )  & 
		\mbox{if }~ 0 \in \omega ~ \text{and}~i_0(\omega) =m 
	\end{array}
	\right.$$ 
	and
	$$ K^{\scalebox{.4}{EGFP}
	}(P) : =\left\{ \begin{array}{ll}
		e^T_{m-i_0(\sigma_1,\sig)} \otimes I_n  & \mbox{if }~0 \in \sig ~\text{and}~ i_0(\sigma) < m  \vspace{.1cm}  \\
		
		A^{-T}_m(e^T_{1} \otimes I_n )  & \mbox{if }~0 \in \sig ~\text{and}~i_0(\sigma) = m  \vspace{.1cm}  \\
		
		e^T_{m-p} \otimes I_n &  \mbox{if } \left\{ \begin{array}{l}
			0 \in \omega, ~ c_0(\omega)+1 \in \sig~\text{and}\\
			s:= c_0(\omega)+i_{c_0(\omega)+1}(\sigma) +1 <m, \\
			\text{where } p:= c_0(\omega) + i_{c_0(\omega)+1}(\sigma_1, \sig) +1
		\end{array} \right. \vspace{.1cm} \\ 
		
		A^{-T}_m ( e^T_p \otimes I_n ) & \mbox{if } \left\{ \begin{array}{l}
			0 \in \omega, ~ c_0(\omega)+1 \in \sig~\text{and}\\ c_0(\omega)+i_{c_0(\omega)+1}(\sigma) +1 =m ,\\
			\text{ where } p := i_{-(m-1)} (\tau_1) +2\end{array} \right. \vspace{.1cm}  \\

		e^T_{m-p} \otimes I_n  & \mbox{if }  \left\{ \begin{array}{l}
			0 \in \omega, ~ c_0( \omega)<m~\text{and}~ c_0( \omega)+1 \notin \sigma,\\
			\text{where } p:=  c_0(\omega) - i_{-c_0(\omega)}(\tau_1) -1 \end{array} \right. \vspace{.1cm} \\ 
		
		A^{-T}_0 ( e^T_{m} \otimes I_n )  & 
		\mbox{if }~ 0 \in \omega ~\text{and}~c_0( \omega) =m.
	\end{array}
	\right.$$ 
	Then the maps $ F^{\scalebox{.4}{EGFP}
	}(P) : \mathcal{N}_r(L(\mu)) \rightarrow \mathcal{N}_r(P(\mu)),~ x \mapsto F^{\scalebox{.4}{EGFP}
	}(P) x, $ and $ K^{\scalebox{.4}{EGFP}
	}(P) : \mathcal{N}_l(L(\mu)) \rightarrow \mathcal{N}_l(P(\mu)),~ y \mapsto K^{\scalebox{.4}{EGFP}
	}(P) y ,$ are linear isomorphisms. Thus, if $Z$ is a basis of $\mathcal{N}_r( L(\mu))$ (resp., $\mathcal{N}_l( L(\mu))$), then $\big[F^{\scalebox{.4}{EGFP}
}(P) Z \big]$ (resp., $\big[K^{\scalebox{.4}{EGFP}
}(P) Z \big]$) is a basis of $\mathcal{N}_r(P(\mu))$ (resp., $\mathcal{N}_l(P(\mu))$).

%
%
	
\end{theorem}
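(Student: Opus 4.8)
The plan is to exploit the factorization $L(\lambda) = \mathcal{A}\, T(\lambda)\, \mathcal{B}$, where $\mathcal{A} := M_{\tau_1}(Y_1) M_{\sigma_1}(X_1)$ and $\mathcal{B} := M_{\sigma_2}(X_2) M_{\tau_2}(Y_2)$ are nonsingular matrices and $T(\lambda) := \lambda M^P_\tau - M^P_\sigma$ is a GF pencil of $P(\lambda)$, so as to reduce the statement to the already-established eigenvector recovery for GF pencils in Theorem~\ref{reco_GFP_PCh1}. First I would note that, since $\mathcal{A}$ and $\mathcal{B}$ are nonsingular and independent of $\lambda$, evaluating at $\mu$ gives $L(\mu) = \mathcal{A}\, T(\mu)\, \mathcal{B}$, whence $x \in \mathcal{N}_r(L(\mu))$ if and only if $\mathcal{B} x \in \mathcal{N}_r(T(\mu))$, and $y \in \mathcal{N}_l(L(\mu))$ if and only if $\mathcal{A}^T y \in \mathcal{N}_l(T(\mu))$. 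Thus the maps $\mathcal{B}\colon \mathcal{N}_r(L(\mu)) \to \mathcal{N}_r(T(\mu))$ and $\mathcal{A}^T\colon \mathcal{N}_l(L(\mu)) \to \mathcal{N}_l(T(\mu))$ are linear isomorphisms.

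Composing these with the GF recovery isomorphisms $F^{\scalebox{.4}{GF}}(P)\colon \mathcal{N}_r(T(\mu)) \to \mathcal{N}_r(P(\mu))$ and $K^{\scalebox{.4}{GF}}(P)\colon \mathcal{N}_l(T(\mu)) \to \mathcal{N}_l(P(\mu))$ supplied by Theorem~\ref{reco_GFP_PCh1}, I obtain linear isomorphisms
$$F^{\scalebox{.4}{GF}}(P)\, \mathcal{B}\colon \mathcal{N}_r(L(\mu)) \to \mathcal{N}_r(P(\mu)), \qquad K^{\scalebox{.4}{GF}}(P)\, \mathcal{A}^T\colon \mathcal{N}_l(L(\mu)) \to \mathcal{N}_l(P(\mu)).$$
It therefore suffices to prove the two matrix identities $F^{\scalebox{.4}{EGFP}}(P) = F^{\scalebox{.4}{GF}}(P)\, \mathcal{B}$ and $K^{\scalebox{.4}{EGFP}}(P) = K^{\scalebox{.4}{GF}}(P)\, \mathcal{A}^T$; the isomorphism assertion and the basis conclusion then follow at once, since a composition of isomorphisms sends a basis of $\mathcal{N}_r(L(\mu))$ (resp.\ $\mathcal{N}_l(L(\mu))$) to a basis of $\mathcal{N}_r(P(\mu))$ (resp.\ $\mathcal{N}_l(P(\mu))$).

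The bulk of the work is the verification of these two identities, which I would carry out case by case along the six branches defining $F^{\scalebox{.4}{GF}}(P)$ and the six defining $K^{\scalebox{.4}{GF}}(P)$. Because $(\sigma,\omega)$ is a permutation of $\{0:m\}$, exactly one of $0 \in \sigma$ or $0 \in \omega$ holds, so the branch conditions partition cleanly into those governed by $c_0(\sigma), i_0(\sigma)$ and those governed by $i_0(\omega), c_0(\omega)$. For the right side, each branch of $F^{\scalebox{.4}{GF}}(P)$ is, up to a left factor $A_m^{-1}$ or $A_0^{-1}$, a block row $e^T_{m-\ast} \otimes I_n$; multiplying on the right by $\mathcal{B}$ and invoking Lemma~\ref{egfpr_right_reco1} in the two $\sigma$-branches and Lemma~\ref{egfpr_right_reco} in the four $\omega$-branches produces precisely the shifted block row defining the corresponding branch of $F^{\scalebox{.4}{EGFP}}(P)$ (e.g.\ the shift $c_0(\sigma)\rightsquigarrow c_0(\sigma,\sigma_2)$ in the first branch, and the analogous consecution/inversion bookkeeping with $\tau_2$ in the remaining branches). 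For the left side I would first transpose, writing $\big(K^{\scalebox{.4}{GF}}(P)\, \mathcal{A}^T\big)^T = \mathcal{A}\, \big(K^{\scalebox{.4}{GF}}(P)\big)^T = M_{\tau_1}(Y_1) M_{\sigma_1}(X_1)\, \big(K^{\scalebox{.4}{GF}}(P)\big)^T$, which reduces each branch to a column action $M_{\tau_1}(Y_1) M_{\sigma_1}(X_1)(e_{m-\ast} \otimes I_n)$ computed by Lemma~\ref{egfpr_right_reco1} and Lemma~\ref{egfpr_left_reco}; transposing back yields the branches of $K^{\scalebox{.4}{EGFP}}(P)$.

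The main obstacle is purely bookkeeping: threading the correct lemma into each of the twelve branches, keeping the conventions $e_j = 0$ for $j \le 0$ or $j > m$ consistent throughout, and handling the transposition so that the left-side column lemmas apply verbatim. No genuinely new estimate is required beyond the reduction lemmas already at hand; once the two matrix identities are in place, the statement is an immediate consequence of $F^{\scalebox{.4}{EGFP}}(P)$ and $K^{\scalebox{.4}{EGFP}}(P)$ being compositions of isomorphisms.
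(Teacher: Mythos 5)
Your proposal is correct and follows essentially the same route as the paper: factor $L(\lambda)=\mathcal{A}\,T(\lambda)\,\mathcal{B}$, use nonsingularity of $\mathcal{A},\mathcal{B}$ to get isomorphisms $\mathcal{N}_r(L(\mu))\to\mathcal{N}_r(T(\mu))$ and $\mathcal{N}_l(L(\mu))\to\mathcal{N}_l(T(\mu))$, compose with the GF recovery maps of Theorem~\ref{reco_GFP_PCh1}, and reduce the branch-by-branch bookkeeping to Lemmas~\ref{egfpr_right_reco1}, \ref{egfpr_right_reco} and \ref{egfpr_left_reco}. The paper's proof is exactly this composition-of-isomorphisms argument, citing the same three lemmas for the case analysis.
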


\begin{proof}  We have $L(\lam) = M_{\tau_1} (Y_1) M_{\sigma_1} (X_1) T(\lambda) M_{\sigma_2} (X_2) M_{\tau_2} (Y_2)$, where $ T(\lambda) := \lambda M^P_{\tau} -  M^P_{\sigma}$ is a GF pencil of $P(\lambda)$ associated with the permutation $(\sigma,-\tau)$ of $\{0:m\}$. Since $X_j$ and $Y_j$, $j=1,2,$ are nonsingular matrix assignments, $M_{(\tau_1,\sigma_1) } (Y_1,X_1)$ and $M_{(\sigma_2,\tau_2) } (X_2,Y_2)$ are nonsingular. Hence the map $ M_{(\sigma_2,\tau_2)} (X_2,Y_2): \mathcal{N}_r(L(\mu)) \rightarrow \mathcal{N}_r(T(\mu)),$ $ x \mapsto \big(M_{(\sigma_2,\tau_2)} (X_2,Y_2)\big) x, $ is a linear isomorphism. On the other hand, by Theorem \ref{reco_GFP_PCh1},
	$ F^{\scalebox{.4}{GF}}(P) : \mathcal{N}_r(T(\mu)) \rightarrow  \mathcal{N}_r(P(\mu))$ is a linear isomorphism. Thus the map
	$$\mathcal{N}_r(L(\mu)) \rightarrow \mathcal{N}_r(P(\mu)) ,~ x \mapsto \big( F^{\scalebox{.4}{GF}}(P) M_{(\sigma_2,\tau_2) } (X_2,Y_2)\big) x,
	$$ 
	is a linear isomorphism. Now the desired result for recovery of right eigenvectors follows from Theorem~\ref{reco_GFP_PCh1} and Lemmas~\ref{egfpr_right_reco1} and \ref{egfpr_right_reco}.

	Next we prove the recovery of left eigenvectors. As $(M_{(\sigma_1,\tau_1)} (Y_1,X_1))^T$ is nonsingular, $(M_{(\sigma_1,\tau_1)} (Y_1,X_1) )^T:\mathcal{N}_l(L(\mu)) \rightarrow \mathcal{N}_l(T(\mu)),$ $y \mapsto (M_{(\sigma_1,\tau_1)} (Y_1,X_1))^T y, $ is a linear isomorphism. Also, by Theorem \ref{reco_GFP_PCh1}, $ K^{\scalebox{.4}{GF}}(P) :\mathcal{N}_l(T(\mu)) \longrightarrow \mathcal{N}_l(P(\mu))$ is a linear isomorphism. Thus the map
	$$ \mathcal{N}_l(L(\mu)) \rightarrow \mathcal{N}_l(P(\mu)), \, y  \mapsto \Big ( K^{\scalebox{.4}{GF}} (P) (M_{(\sigma_1,\tau_1)} (Y_1,X_1))^T y\Big ),$$ is a linear isomorphism. Now the desired result for recovery of left eigenvectors follows from Theorem~\ref{reco_GFP_PCh1} and Lemmas~\ref{egfpr_right_reco1} and \ref{egfpr_left_reco}.
\end{proof}

\vspace{.5cm}
We now illustrate eigenvector recovery rule for $P(\lam)$ from those of the EGFPs of $P(\lam)$ by considering a few examples.

\begin{example}  \label{eg_egfpr2} Let $P(\lambda) := \sum_{i=0}^6 \lambda^i A_i$.  Suppose that $P(\lam)$ is regular and $\mu \in \mathbb{C}$ is an eigenvalue of $P(\lam)$. Let $\sigma:=(1,2,5)$, $\sigma_1:=\emptyset, \sigma_2:=(1),\tau:=(-6,-3,-4,-0), \tau_1 :=(-4)$ and $\tau_2:= \emptyset$. Let $X$ and $Y$ be any arbitrary matrix assignments for $\tau_1$ and $\sig_2$, respectively. Then the pencil
	$L(\lam) =  M_{-4} (X) \big ( \lambda M^P_{(-3,-4,-6,-0)} - M^P_{(1,2,5)} \big ) M_{1} (Y) =$ 
	
	$$ \left[
	\begin{array}{cccccc}
		\lam  A_6  + A_5&        -I_n & 0     &  0           &0 &      0\\
		0 & 0 &  -I_n    &    \lam I_n &           0&      0\\
		-I_n&          0 &   \lam I_n - X &       \lam X   &        0    &   0\\
		0 &           \lam I_n &   \lam A_4 & \lam A_3 + A_2  &   - Y  &  -I_n\\
		0 &       0 &    0 &    A_1  & \lam Y - I_n  &  \lam I_n \\
		0 &          0 &   0 &        -I_n  &      -\lam A_0^{-1}  & 0
	\end{array} \right]
	$$
	is an  EGFP of $P(\lam)$.

	Let $u \in \mathcal{N}_r(L) $ and $v \in \mathcal{N}_l(L)  $. Define $ u_i := (e_i^T \otimes I_n) u$ and $ v_i := (e_i^T \otimes I_n) v$,  $i=1:6.$ Note that $0 \in \omega$ and $ i_0(\omega) =0$. Further, $i_0(\omega)+1 = 1 \in \sig$ and $c_{i_0(\omega)+1}(\sig) =1  $, which implies that $s:=i_0(\omega) + c_{i_0(\omega)+1}(\sigma) +1= 2 < 6.$ As $c_{1}(\sigma, \sig_2) =1$, we have  $ p:= i_0(\omega) + c_{i_0(\omega)+1}(\sigma, \sig_2) +1 = 2 . $ Thus by Theorem~\ref{non_OF_EGFPR}, $(e^T_{6-2} \otimes I_n )u = u_4 \in \mathcal{N}_r(P(\mu)).$ To verify the recovery rule, consider  $L(\lambda) u =0.$ This gives
	\begin{eqnarray}
		(\lam  A_6  + A_5)u_1 - u_2 =0 \label{eg2_1}\\
		-u_3 + \lam u_4  =0 \label{eg2_2}\\
		-u_1 +    (\lam I_n - X)u_3 + \lam X u_4  =0\label{eg2_3}\\
		\lam u_2 + \lam A_4 u_3 + ( \lam A_3 +A_2 ) u_4  - Y u_5 -u_6=0\label{eg2_4}\\
		A_1 u_4 + ( \lam Y -I_n ) u_5 + \lam u_6=0 \label{eg2_5}\\
		- u_4 -\lam A_0^{-1} u_5 = 0\label{eg2_6}
	\end{eqnarray}
	From (\ref{eg2_2}) we have $u_3 =\lambda u_4$. Substituting $u_3 =\lambda u_4$ in (\ref{eg2_3}) we have $u_1 = \lam^2 u_4.$ Adding $\lambda$ times (\ref{eg2_4}) with (\ref{eg2_5}) and then substituting $u_3 =\lambda u_4$  we have $\lam^2 u_2 + (\lam^3 A_4 + \lam^2 A_3 + \lam A_2 +A_1  )u_4 -u_5  = 0$ which together with (\ref{eg2_6}) gives
	\begin{eqnarray} 
		\lam^3 u_2 + (\lam^4 A_4 + \lam^3 A_3 + \lam^2 A_2 + \lam A_1  +A_0)u_4  &= &0 .  \label{eg2_8}
	\end{eqnarray} 
	Now substituting  the value of  $\lam^3 u_2 $ from (\ref{eg2_8}) and $u_1 = \lam^2 u_4 $ in (\ref{eg2_1}), we have $P(\lam) u_4 =0. $

	Next, consider  $v \in \mathcal{N}_l(L)  $.  Define $ v_i := (e_i^T \otimes I_n) v$,  $i=1:6.$ 
	Note that  $0 \in \omega$ and $ c_0(\omega) =0$. Further, $c_0(\omega)+1 = 1 \in \sig$ and $i_{1} (\sig) =0  $ which implies that $s:=c_0(\omega) + i_{c_0(\omega)+1}(\sigma) +1= 1 < 6.$ As $i_{1}(\sigma_1, \sig) =0$, we have  $ p:= c_0(\omega) + i_{c_0(\omega)+1}(\sigma_1, \sig) +1 = 1 . $ Hence by Theorem~\ref{non_OF_EGFPR}, we have $(e^T_{6-1} \otimes I_n )v = v_5 \in \mathcal{N}_l(P(\mu))$ which can be easily verified. $\blacksquare$
\end{example}

Next, we consider an EGFP  which is not operation free (as $-1$ and $-0$ simultaneously belong to $\tau$) but the recovery of eigenvector is operation free.

\begin{example}  \label{eg_egfpr2e} Let $P(\lambda) := \sum_{i=0}^5 \lambda^i A_i$. Suppose that $P(\lam)$ is regular and $\mu \in \mathbb{C}$ is an eigenvalue of $P(\lam)$. Let $\sigma:=(4,2,3)$, $\sigma_1:=\emptyset, \sigma_2:=(2),\tau:=(-5,-1,-0)$ and $ \tau_1= \emptyset= \tau_2$. Let $X$ be any arbitrary matrix assignment for $\sig_2$. Then the  EGFP
	$L(\lam) =   \big ( \lambda M^P_{(-5,-1,-0)} - M^P_{(4,2,3)} \big ) M_{2} (X)$ of $P(\lam)$ is given by
	$$L(\lam) =  \left[
	\begin{array}{ccccc}
		\lam A_5  + A_4 &  A_3 &           -X &  -I_n &   0\\
		-I_n &  \lam I_n &             0 &   0 &       0 \\
		0 &  A_2 &  \lam  X - I_n & \lam I_n &      0\\
		0 &  -I_n        &     0 &   0&           -\lam A^{-1}_0\\
		0 &   0 &          \lam I_n   & 0 & - \lam A_1 A^{-1}_0 - I_n
	\end{array} \right].
	$$
	
	%
	%
	
	
	Let $u \in \mathcal{N}_r(L) $ and $v \in \mathcal{N}_l(L)  $. Define $ u_i := (e_i^T \otimes I_n) u$ and $ v_i := (e_i^T \otimes I_n) v$,  $i=1:5.$ Note that $0 \in \omega$ and $ i_0(\omega) =1$. Further, $i_0(\omega)+1 = 2 \in \sig$ and $c_{i_0(\omega)+1}(\sig) =1  $, which imply that $s:=i_0(\omega) + c_{i_0(\omega)+1}(\sigma) +1= 3 < 5.$ As $c_{2}(\sigma, \sig_2) =1$, we have  $ p:= i_0(\omega) + c_{i_0(\omega)+1}(\sigma, \sig_2) +1 = 3. $ Thus by Theorem~\ref{non_OF_EGFPR}, $(e^T_{5-3} \otimes I_n )u = u_2 \in \mathcal{N}_r(P(\mu))$ which can be easily verified.

	Note that $0 \in \omega$ and $ c_0(\omega) =0 < 5$. Further, $c_0(\omega)+1 = 1 \notin \sig$. As $\tau_1 = \emptyset$, we have $i_{-c_0(\omega)} (\tau_1) = -1$. So $p : = c_0(\omega) - i_{-c_0(\omega)} (\tau_1) -1 = 0$. Hence  by Theorem~\ref{non_OF_EGFPR},  $(e^T_{5-0} \otimes I_n )v = v_5 \in \mathcal{N}_l(P(\mu))$ which can be easily verified. $\blacksquare$
\end{example}

The EGFP in the following example is not operation free (as $-1$ and $-0$ simultaneously belong to $\tau$), but we can easily recover the eigenvectors of $P(\lam)$ from those of the EGFP.

\begin{example}  \label{eg_NOF_egfpr} Let $P(\lambda) := \sum_{i=0}^3 \lambda^i A_i$. Suppose that $P(\lam)$ is regular and $\mu \in \mathbb{C}$ is an eigenvalue of $P(\lam)$. Let $\sigma:=(3)$, $\sigma_1:=\emptyset= \sigma_2,\tau:=(-2,-1,-0), \tau_1=\emptyset$ and $\tau_2:=(-2) $. Then the EGFP
	$L(\lam) =  \big ( \lambda M^P_{(-2,-1,-0)} - M^P_{3} \big ) M^P_{-2} $ is given by
	$$L(\lam) = \lam  \left[
	\begin{array}{ccr}
		0&         0 & - A_0^{-1} \\
		0 &   I_n  & -  A_2 A_0^{-1}\\
		I_n &   A_2   &  -  A_1 A_0^{-1} 
	\end{array} \right] - \left[
	\begin{array}{ccc}
		0&         A_3^{-1} & 0 \\
		I_n & A_2  & 0\\
		0 &  0   &   I_n
	\end{array} \right]. 
	$$
	%
	%

	Let $x \in \mathcal{N}_r(L(\mu)) $ and $y \in \mathcal{N}_l(L(\mu))   $. Define $ x_i := (e_i^T \otimes I_n) x$ and $ y_i := (e_i^T \otimes I_n) y$ for $i=1:3.$ Note that $0 \in \omega$ and $ i_0(\omega) =2$. Further, $i_0(\omega)+1 = 3 \in \sig$ and $c_{i_0(\omega)+1}(\sig) =0  $, which implies $s:=i_0(\omega) + c_{i_0(\omega)+1}(\sigma) +1= 3$, i.e., $s=m$. Now, $ c_{-(m-1)}(\tau_2) =c_{-2}(\tau_2) =0  $. Thus by Theorem~\ref{non_OF_EGFPR}, $A_m^{-1} (e_2^T \otimes I_n) x =A_3^{-1} x_2 \in \mathcal{N}_r(P(\mu))$ which can be easily verified.  
	
	For left eigenvector, we have  $c_0(\omega) = 0 <m$, $c_0(\omega) +1 =1 \notin \sig$ and $i_{-c_0(\omega)}(\tau_1) = -1$ (as $\tau_1 = \emptyset$). Thus $p = c_0(\omega) - i_{-c_0(\omega)}(\tau_1) -1 =0$. Thus by Theorem~\ref{non_OF_EGFPR}, $ (e_{m-p}^T \otimes I_n) y = y_3 \in \mathcal{N}_l(P(\mu)) $ which can be easily verified.  $\blacksquare$
\end{example}


			

			%

			\subsection{Eigenvalue at infinity and recovery of eigenvectors} This section is devoted in describing the recovery of eigenvectors of $P(\lam)$ corresponding to an eigenvalue at $\infty$ from the eigenvectors of EGFPs. Recall that $\infty$ is an eigenvalue of $P(\lam) = \sum_{i=0}^m \lam^i A_i$ if and only if $0 $ is an eigenvalue of $rev(P(\lam)) =  \sum_{i=0}^m \lam^i A_{m-i}$, i.e., $0$ is an eigenvalue of $A_m$. Thus $A_m$ is singular, which implies that $M^P_{-m}$ is singular, i.e., $M^P_{m}$ does not exist as $M^P_{m} = (M^P_{-m})^{-1}$.  So $-m$ always belongs to $\tau$, that is, $m \notin \sig$. To derive the recovery formulas, we need the following result which is a restatement of \cite[Theorem 3.4]{BDD} that describes the recovery of eigenvectors of $P(\lam)$ corresponding to an eigenvalue at $\infty$ from those of the GF pencils of $P(\lam).$  
		
		\begin{theorem}\cite{BDD}\label{recoGFinfinityevn_PCh2}   Let $P(\lambda)$ be a regular matrix polynomial. Let $T(\lambda) := \lambda M^P_{-\tau} - M^P_{\sigma}$ be a GF pencil of $P(\lam)$ associated with a permutation $ (\sigma, \tau)$ of $\{0:m\}$, where  $m \in \tau.$ Suppose that $\infty$ is an eigenvalue of $P(\lambda)$. Let $Z$ be an $mn\times k$ matrix such that $\rank(Z)=k$.

			\noin
			~~{\bf Right eigenvectors at $\infty$.}	If $Z$ is a basis of the right eigenspace of $T(\lambda)$ at $\infty$, then we have the following.
			\begin{enumerate} 
				\item [(a)] If $c_{-m} (\tau) < m$ then $\big[ (e^T_{c_{-m} (\tau)+1} \otimes I_n) Z \big]$  is a basis of the right eigenspace of $P(\lambda)$ at $\infty$.
				
				\item [(b)] If $c_{-m} (\tau)  = m$ then $\big[ A_0^{-1} (e^T_{m} \otimes I_n) Z \big]$ is a basis of the right eigenspace of $P(\lambda)$ at $\infty$.
			\end{enumerate}	
			
			\noin
			~~{\bf Left eigenvectors at $\infty$.}	If $Z$ is a basis of the left eigenspace of $T(\lambda)$ at $\infty$, then we have the following.
			\begin{enumerate} 	
				
				\item [(c)] If $i_{-m} (\tau) < m$ then $\big[ (e^T_{i_{-m} (\tau)+1} \otimes I_n)Z \big]$ is a basis of the left eigenspace of $P(\lambda)$ at $\infty$.

				\item [(d)]	If $i_{-m} (\tau)  = m$ then  $\big[ A_0^{-T}(e^T_{m} \otimes I_n)Z\big]$is a basis of the left eigenspace of $P(\lambda)$ at $\infty$.
			\end{enumerate}	
		\end{theorem}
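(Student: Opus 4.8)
The plan is to observe that recovery at $\infty$ is governed entirely by the leading coefficient of the pencil. Writing $T(\lambda)=\lambda M^P_{-\tau}-M^P_\sigma$, the right (resp.\ left) eigenspace of $T$ at $\infty$ equals $\mathcal{N}_r(M^P_{-\tau})$ (resp.\ $\mathcal{N}_l(M^P_{-\tau})$), since $\infty$ is an eigenvalue of a pencil precisely when its leading matrix is singular, with eigenvectors in that matrix's null space. Dually, because $rev\,P(\lambda)=\sum_i\lambda^iA_{m-i}$ has constant term $A_m$, the right (resp.\ left) eigenspace of $P$ at $\infty$ is $\mathcal{N}_r(A_m)$ (resp.\ $\mathcal{N}_l(A_m)$). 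Hence the theorem reduces to exhibiting the stated block extractions as linear isomorphisms $\mathcal{N}_r(M^P_{-\tau})\to\mathcal{N}_r(A_m)$ and $\mathcal{N}_l(M^P_{-\tau})\to\mathcal{N}_l(A_m)$. I would prove the right-hand claims (a)--(b) and obtain (c)--(d) by the transpose argument, with inversions replacing consecutions and $A_0^{-T}$ replacing $A_0^{-1}$.

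First I would isolate the singular factor. Set $p:=c_{-m}(\tau)$, the number of consecutions of $-\tau$ at $-m$. Proposition~\ref{lemeqnconrsfcsf17a18d10}(a) gives $-\tau\sim(\alpha^L,-m,-(m-1),\dots,-(m-p),\alpha^R)$ with $-m\notin\alpha^L$ and $-(m-p),-(m-p-1)\notin\alpha^R$, so $M^P_{-\tau}=M^P_{\alpha^L}\,M^P_{-m}M^P_{-(m-1)}\cdots M^P_{-(m-p)}\,M^P_{\alpha^R}$. Using $(M^P_{-j})^{-1}=M^P_j$ for $1\le j\le m-1$ (a direct consequence of $(M_i(X))^{-1}=M_{-i}(-X)$), in the case $p<m$ every factor other than $M^P_{-m}=\operatorname{diag}(A_m,I_{(m-1)n})$ is invertible. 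Thus for each $s$ with $A_m s=0$ there is a unique $z_s\in\mathcal{N}_r(M^P_{-\tau})$ with $M^P_{-(m-1)}\cdots M^P_{-(m-p)}M^P_{\alpha^R}z_s=e_1\otimes s$, namely $z_s=(M^P_{\alpha^R})^{-1}M^P_{m-p}M^P_{m-p+1}\cdots M^P_{m-1}(e_1\otimes s)$, and $s\mapsto z_s$ is a linear isomorphism onto $\mathcal{N}_r(M^P_{-\tau})$.

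It then remains to show $(e^T_{p+1}\otimes I_n)z_s=s$, which is pure block bookkeeping via the propagation identities. Because $\alpha^R$ avoids the indices $-(m-p)$ and $-(m-p-1)$, its inverse involves no $M^P_{m-p}$ or $M^P_{m-p-1}$, so by (\ref{eqnreco2_PCh4}) the row $e^T_{p+1}\otimes I_n=e^T_{m-(m-p-1)}\otimes I_n$ passes through $(M^P_{\alpha^R})^{-1}$ unchanged; then the consecutive block $M^P_{m-p}\cdots M^P_{m-1}$ steps the active index down one unit at a time (each factor meeting the case $j=i+1$ of (\ref{eqnreco2_PCh4})), landing on $e^T_1\otimes I_n$, and $(e^T_1\otimes I_n)(e_1\otimes s)=s$. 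This proves (a). For (b) one has $p=m$, forcing $\sigma=\emptyset$, $\tau=\{0:m\}$ and $\alpha^L=\alpha^R=\emptyset$; now the run reaches $-0$, $A_0$ must be invertible so that $M^P_{-0}$ exists, and $z_s=M^P_0M^P_1\cdots M^P_{m-1}(e_1\otimes s)$. Here the leading factor $M^P_0=\operatorname{diag}(I_{(m-1)n},-A_0)$ meets the excluded case $j=i$ of (\ref{eqnreco2_PCh4}), producing $(e^T_m\otimes I_n)M^P_0=e^T_m\otimes(-A_0)$; the remaining factors again step the index from $m$ down to $1$, giving $(e^T_m\otimes I_n)z_s=-A_0 s$, whence $A_0^{-1}(e^T_m\otimes I_n)z_s=-s$ and the extraction is once more an isomorphism onto $\mathcal{N}_r(A_m)$.

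The left statements (c)--(d) follow by the same scheme applied to $\mathcal{N}_l(M^P_{-\tau})=\mathcal{N}_r\big((M^P_{-\tau})^T\big)$: transposition preserves the index pattern of each Fiedler factor while reversing the order, so the role of consecutions at $-m$ is taken over by $q:=i_{-m}(\tau)$ through Proposition~\ref{lemeqnconrsfcsf17a18d10}(b), and one uses the right-multiplication rules (\ref{eqnreco1_PCh4}) and (\ref{eqnrecogfprgsettingp0329sept172045n_PCh2}) in place of the left-multiplication ones. I expect the only genuinely delicate points to be the boundary cases $p=m$ (resp.\ $q=m$), where the appearance of $M^P_{-0}$ both forces the invertibility of $A_0$ and manufactures the $-A_0$ (resp.\ $-A_0^{\,T}$) that the operator $A_0^{-1}$ (resp.\ $A_0^{-T}$) is designed to cancel, together with the routine but error-prone check that the auxiliary factors coming from $\alpha^L,\alpha^R$ are index-disjoint from the active block and hence never perturb it.
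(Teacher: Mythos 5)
The paper does not actually prove this statement: it is imported verbatim as \cite[Theorem~3.4]{BDD}, so there is no in-paper argument to measure yours against. Your proof is nevertheless correct and self-contained, and the route is the natural one. The reduction of both eigenspaces at $\infty$ to null spaces of leading coefficients, $\mathcal{N}_r(M^P_{-\tau})$ for $T$ and $\mathcal{N}_r(A_m)$ for $P$, is exactly the reduction this paper itself uses later in the proof of Theorem~\ref{inf_reco_egfpr}. Your central step --- factoring out the consecution run $(-m,-(m-1),\ldots,-(m-p))$ via Proposition~\ref{lemeqnconrsfcsf17a18d10}, noting that $M^P_{-m}=\diag(A_m,I_{(m-1)n})$ is the only non-invertible factor so that $\mathcal{N}_r(M^P_{-\tau})$ is parametrized by $z_s=(M^P_{\alpha^R})^{-1}M^P_{m-p}\cdots M^P_{m-1}(e_1\otimes s)$ with $A_m s=0$, and then checking $(e^T_{p+1}\otimes I_n)z_s=s$ by the row-propagation identities (\ref{eqnreco2_PCh4}) --- is sound, including the boundary case $p=m$, where the extra factor $M^P_0$ manufactures the $-A_0$ that $A_0^{-1}$ cancels (the residual sign $-s$ is harmless for a basis statement). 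Two points you should make explicit rather than leave implicit: (i) when $0\in\tau$ the factor $M^P_{-0}=(M^P_0)^{-1}$ occurs inside $\alpha^L$ or $\alpha^R$, and its existence (equivalently, invertibility of $A_0$) is part of the hypothesis that the GF pencil is defined --- this is what makes $M^P_{\alpha^L}$ and $M^P_{\alpha^R}$ invertible in case (a); and (ii) the left-eigenvector cases, which you only sketch, do go through because $(M^P_{-j})^T=M_{-j}(A_j^T)$, so transposition reverses the tuple and converts consecutions of $-\tau$ at $-m$ into inversions, after which your right-eigenvector computation applies verbatim to $(M^P_{-\tau})^T$.
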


The following result follows from the proof of \cite[Theorem~3.21 (see (3.9) and (3.14))]{rafiran3}.
	

\begin{lemma}\label{inf_reco_brbc} Let $0 \leq h \leq m-1,$ and let $\tau$ be a permutation of $\{ -m :-(h+1) \}$. Let $\tau_1$ and $\tau_2$ be index tuples containing indices from $\tau$ such that $(\tau_1,\tau , \tau_2)$ satisfies the SIP. Then  $(e^T_{c_{-m}(\tau) +1} \otimes I_n) M_{\tau_2} (Y) = e^T_{c_{-m}(\tau, \tau_2)+1} \otimes I_n $ and  
	$M_{\tau_1}(X) (e_{i_{-m} (\tau)+1} \otimes I_n) =  e_{i_{-m} (\tau_1,\tau)+1} \otimes I_n $ for any arbitrary matrix assignments $X$ and $Y$ for  $\tau_1$ and $\tau_2 $, respectively.
\end{lemma}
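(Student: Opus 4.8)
The plan is to prove both identities directly from the action formulas (\ref{eqnrecogfprp0329sept172046n_PCh2}) and (\ref{eqnrecogfprgsettingp0329sept172045n_PCh2}) by propagating a single block row (resp.\ block column) vector through the factors of $M_{\tau_2}(Y)$ (resp.\ $M_{\tau_1}(X)$) one index at a time. First I would record two routine preliminaries. Since $(\tau_1,\tau,\tau_2)$ satisfies the SIP, so do its suffix $(\tau,\tau_2)$ and its prefix $(\tau_1,\tau)$ (and, in particular, $\tau_2$ and $\tau_1$ themselves): any witness required by a repeated index lying inside a suffix/prefix occurs strictly between the two occurrences, hence inside that same suffix/prefix. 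Writing $c:=c_{-m}(\tau)$ and $c':=c_{-m}(\tau,\tau_2)$, I would view $(e^T_{c+1}\otimes I_n)M_{\tau_2}(Y)$ as moving a \emph{token} row $e^T_k\otimes I_n$, starting at $k=c+1$, by applying the factors $M_{t}(Y_t)$ with $t\in\tau_2$ from left to right. By (\ref{eqnrecogfprp0329sept172046n_PCh2}) a factor indexed by $-j$ fixes $e^T_k\otimes I_n$ unless $-j=-(m-k)$, in which case it advances the token cleanly to $e^T_{k+1}\otimes I_n$, or $-j=-(m-k+1)$, which is the only \emph{bad} case and the only one that introduces a factor of $Y$.

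The heart of the matter, and the step I expect to be the main obstacle, is to show the bad case never fires, so that the token never branches, never acquires a $Y$-term, and climbs monotonically. Suppose the token sits at position $k$ and the bad index $-(m-k+1)$ is met. If $k>c+1$, the token reached $k$ only by consuming the factor $-(m-(k-1))=-(m-k+1)$ earlier, so $-(m-k+1)$ occurs twice in $\tau_2$; by the SIP of $\tau_2$ its witness $-(m-k)$ occurs strictly between the two occurrences, and $-(m-k)$ is precisely the advancing index at $k$, so the token would already have climbed to $k+1$ before reaching the second occurrence --- a contradiction. For the initial position $k=c+1$ the bad index is $-(m-c)\in\tau$; if it also lay in $\tau_2$, then pairing the unique occurrence in $\tau$ with the first one in $\tau_2$ forces, by the SIP of $(\tau,\tau_2)$, a witness $-(m-c-1)$ between them, and this witness cannot lie inside $\tau$ (that would make $(-m,\dots,-(m-c-1))$ a subtuple of $\tau$, contradicting $c_{-m}(\tau)=c$). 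Hence it lies in $\tau_2$ ahead of that first $-(m-c)$; since $-(m-c-1)$ is the advancing index at $c+1$, the token leaves $c+1$ before any bad collision.

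With the bad case excluded, every factor either fixes the token or advances it by one, so $(e^T_{c+1}\otimes I_n)M_{\tau_2}(Y)=e^T_{k_\ast}\otimes I_n$ for some $k_\ast$, independently of $Y$. It then remains to identify $k_\ast=c'+1$. The advancing indices consumed are exactly $-(m-c-1),\dots,-(m-k_\ast+1)$, which together with the run $-m,\dots,-(m-c)$ of $\tau$ exhibit $(-m,\dots,-(m-(k_\ast-1)))$ as a subtuple of $(\tau,\tau_2)$, giving $k_\ast-1\le c'$; conversely the token cannot advance past $c'+1$, as that would require $-(m-c'-1)$ to appear after the run $-m,\dots,-(m-c')$ already assembled inside $(\tau,\tau_2)$, extending the consecution at $-m$ beyond $c'$. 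Hence $k_\ast-1=c_{-m}(\tau,\tau_2)=c'$, which is the first identity.

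Finally, the second identity is the mirror image: using (\ref{eqnrecogfprgsettingp0329sept172045n_PCh2}) I would propagate the block column $e_{i_{-m}(\tau)+1}\otimes I_n$ through $M_{\tau_1}(X)$ from the innermost factor outward, replacing consecutions by inversions and the suffix $(\tau,\tau_2)$ by the prefix $(\tau_1,\tau)$; the clean/bad dichotomy and the SIP-witness argument are identical (now invoking Proposition~\ref{lemeqnconrsfcsf17a18d10}(b)), and yield $M_{\tau_1}(X)(e_{i_{-m}(\tau)+1}\otimes I_n)=e_{i_{-m}(\tau_1,\tau)+1}\otimes I_n$. I note that, as remarked before the statement, this whole computation is exactly the one carried out in (3.9) and (3.14) of \cite{rafiran3}, so an alternative is simply to cite that proof; the self-contained route above makes the dependence on the SIP and on the consecution/inversion counts transparent.
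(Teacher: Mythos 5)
The paper offers no proof of this lemma at all --- it is imported from the proof of Theorem~3.21 of \cite{rafiran3} (displays (3.9) and (3.14) there) --- so your self-contained argument is a genuinely different route. Its skeleton is sound and matches the technique the paper uses for the analogous statements it does prove (Proposition~\ref{sp1rowblock}, Proposition~\ref{ms_start}, Lemma~\ref{blk_rows_n_col}): propagate a single block row through the factors via (\ref{eqnrecogfprp0329sept172046n_PCh2}), and use the SIP to rule out the one case that would introduce a matrix from the assignment. Your exclusion of the bad index $-(m-k+1)$, both at the initial position $k=c+1$ (via the SIP of $(\tau,\tau_2)$ together with the maximality of $c_{-m}(\tau)$) and at later positions (via the SIP of the contiguous suffix $\tau_2$), is correct and is indeed the essential content.

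There is, however, a gap in the final step, where you identify the landing position $k_\ast$ with $c'+1$, $c':=c_{-m}(\tau,\tau_2)$. The two inequalities you offer are the same inequality: exhibiting $(-m,\dots,-(m-(k_\ast-1)))$ as a subtuple of $(\tau,\tau_2)$ gives $c'\ge k_\ast-1$, and ``the token cannot advance past $c'+1$'' is again $k_\ast\le c'+1$. What is never established is $k_\ast\ge c'+1$, i.e.\ that when $c'>c$ the token really does climb all the way up; as written you have only $k_\ast\le c'+1$. The missing half is true and follows from your own bookkeeping: set $c_s:=c_{-m}(\tau,\tau_2[1{:}s])$ and let $k_s$ be the token position after $s$ factors of $M_{\tau_2}(Y)$ have been applied. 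Appending a single index to a tuple raises $c_{-m}$ by at most one, and raises it exactly when that index equals $-(m-c_{s-1}-1)$; since this is precisely the advancing index for a token sitting at $c_{s-1}+1$, the invariant $k_s=c_s+1$ propagates by induction from $k_0=c+1$, and both directions of the identification follow simultaneously. The same repair is needed in the mirrored column computation for $i_{-m}(\tau_1,\tau)$.
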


			
Similar to Lemma~\ref{inf_reco_brbc}, we have the following result for EGFPs which will play a crucial  role in the recovery of eigenvectors of $P(\lam)$ corresponding to  an  eigenvalue $\infty$ from those of the  EGFPs.

\begin{lemma} \label{inf_brbc_efpr} Let $ L(\lambda):=  M_{\tau_1} (Y_1)  M_{\sigma_1} (X_1)  (\lambda M^P_{\tau} -  M^P_{\sigma}) M_{\sigma_2} (X_2) M_{\tau_2} (Y_2)  $ be an EGFP  of $P(\lambda)$ such that $-m \in \tau$.
				\begin{enumerate}
					\item [(a)] Suppose that $c_{-m} (\tau) <m$ and $i_{-m} (\tau) < m.$ Then 
					$$ (e^T_{c_{-m}(\tau) +1} \otimes I_n) M_{\sigma_2} (X_2)M_{\tau_2} (Y_2)= e^T_{c_{-m}(\tau, \tau_2)+1} \otimes I_n $$ and  $$ M_{\tau_1} (Y_1)  M_{\sigma_1} (X_1) (e_{i_{-m}(\tau) +1} \otimes I_n)= e_{i_{-m}(\tau_1, \tau)+1} \otimes I_n. $$
					
					\item [(b)] If  $c_{-m} (\tau) =m$ then $(e^T_{m} \otimes I_n) \, M_{\sigma_2} (X_2)M_{\tau_2} (Y_2)=  e^T_{m} \otimes I_n$, and if $i_{-m} (\tau) =m$ then $M_{\tau_1} (Y_1)  M_{\sigma_1} (X_1) \,  (e_{m} \otimes I_n)= e_{m} \otimes I_n. $
				\end{enumerate}
\end{lemma}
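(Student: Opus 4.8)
The plan is to mirror the proof of Lemma~\ref{blk_rows_n_col}, with the centre index $0$ replaced by $-m$, and to reduce everything to the localized identity of Lemma~\ref{inf_reco_brbc}. Since $-m\in\tau$ and (in part (a)) $c_{-m}(\tau)<m$ while $i_{-m}(\tau)<m$, I would first isolate the run of negative indices issuing from $-m$: let $h$ be the integer with $\{-m:-(h+1)\}\subseteq\tau$ and $-h\notin\tau$. Two structural facts follow from this choice. Because $\{h+1:m\}\subseteq\omega$, we get $\sigma\subseteq\{0:h\}$ with $h\in\sigma$; and since $(\sigma_1,\sigma,\sigma_2)$ satisfies the SIP while every index of $\sigma_1,\sigma,\sigma_2$ lies in $\{0:h\}$, a repeated $h$ could never be separated by an index $h+1$, forcing $h\notin\sigma_1\cup\sigma_2$, i.e.\ $\sigma_1,\sigma_2\subseteq\{0:h-1\}$. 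Moreover the consecution (resp.\ inversion) run of $\tau$ at $-m$ cannot reach the missing index $-h$, so $c_{-m}(\tau),\,i_{-m}(\tau)\le m-h-1$, and the same bound survives after appending $\tau_2$ or prepending $\tau_1$, since $-h\notin(\tau_1,\tau,\tau_2)$. Consequently the rows and columns $e^T_{c_{-m}(\tau)+1}$ and $e_{i_{-m}(\tau)+1}$ carry index at most $m-h$.

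For the first identity of part (a) I would argue in two stages. As $\sigma_2\subseteq\{0:h-1\}$, each factor $M_j(X_2)$ has $j\le h-1$ and therefore fixes every row $e^T_{m-i}\otimes I_n$ with $m-i\le m-h$ by (\ref{eqnreco2_PCh4}); in particular $(e^T_{c_{-m}(\tau)+1}\otimes I_n)M_{\sigma_2}(X_2)=e^T_{c_{-m}(\tau)+1}\otimes I_n$. It then remains to handle $M_{\tau_2}(Y_2)$. I would split $\tau_2$ into its local part $\widehat{\tau}_2$ (indices in $\{-m:-(h+1)\}$) and its far part $\widehat{\widehat{\tau}}_2$ (indices in $\{-(h-1):-2\}$); the gap at $-h$ gives $||j|-|k||\ge 2$ across the two parts, so they commute and $\tau_2\sim(\widehat{\tau}_2,\widehat{\widehat{\tau}}_2)$. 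Applying Lemma~\ref{inf_reco_brbc} to the permutation $\widehat{\tau}$ of $\{-m:-(h+1)\}$ and to $\widehat{\tau}_2$, and using $c_{-m}(\tau)=c_{-m}(\widehat{\tau})$, yields $(e^T_{c_{-m}(\tau)+1}\otimes I_n)M_{\widehat{\tau}_2}=e^T_{c_{-m}(\widehat{\tau},\widehat{\tau}_2)+1}\otimes I_n$; the remaining far factors have indices $\ge -(h-1)$ and fix this row (of index $\le m-h$) by (\ref{eqnrecogfprp0329sept172046n_PCh2}). Finally $c_{-m}(\widehat{\tau},\widehat{\tau}_2)=c_{-m}(\tau,\tau_2)$, because the far indices cannot extend the run at $-m$ past the gap. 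The second identity is entirely dual: $\sigma_1\subseteq\{0:h-1\}$ fixes the column $e_{i_{-m}(\tau)+1}\otimes I_n$ via (\ref{eqnreco1_PCh4}), and the inversion half of Lemma~\ref{inf_reco_brbc} together with (\ref{eqnrecogfprgsettingp0329sept172045n_PCh2}) upgrades $i_{-m}(\tau)$ to $i_{-m}(\tau_1,\tau)$.

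Part (b) is the boundary case. If $c_{-m}(\tau)=m$ then $(-m,-(m-1),\dots,-0)$ is a subtuple of $\tau$, whence $\tau=\{-m:-0\}$, $\omega=\{0:m\}$, $\sigma=\emptyset$ and $\sigma_1=\sigma_2=\emptyset$; since $\tau_2\subseteq\tau\setminus\{-1,-0\}\subseteq\{-m:-2\}$, every factor $M_{-j}(Y_2)$ has $j\ge 2$ and fixes the bottom row $e^T_m\otimes I_n$ by (\ref{eqnrecogfprp0329sept172046n_PCh2}), so $(e^T_m\otimes I_n)M_{\sigma_2}(X_2)M_{\tau_2}(Y_2)=e^T_m\otimes I_n$. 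The case $i_{-m}(\tau)=m$ is the transpose statement, handled the same way through (\ref{eqnrecogfprgsettingp0329sept172045n_PCh2}). I expect the main obstacle to be the bookkeeping of the middle paragraph: verifying rigorously that the SIP really excludes $h$ from $\sigma_1,\sigma_2$ (so that those factors act trivially on the relevant row/column), that peeling the far parts of $\tau_1,\tau_2$ leaves the consecution/inversion counts at $-m$ intact, and—most delicately—handling the degenerate configuration $\sigma=\emptyset$ in part (a), where no strict gap $-h\notin\tau$ exists; there one must instead exploit that $\tau_2$ avoids $-1,-0$ (so the natural boundary sits at those indices) and reduce to the pure-$\tau$ computation of Lemma~\ref{inf_reco_brbc} directly, after checking via Proposition~\ref{lemeqnconrsfcsf17a18d10} that the consecution run at $-m$ governs the relevant row exactly as in the generic case.
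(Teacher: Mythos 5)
Your proposal is correct and follows essentially the same route as the paper's proof: isolate the run of $\tau$ issuing from $-m$ via the gap index, use the SIP to confine $\sigma_1,\sigma_2$ below the gap, split $\tau_1,\tau_2$ into commuting local and far parts, dispose of the $\sigma_j$ and far factors with the row/column propagation identities, and invoke Lemma~\ref{inf_reco_brbc} on the local part, with part (b) and the degenerate $\sigma=\emptyset$ configuration handled exactly as the paper does (the paper's Case-II re-parametrizes with $\widetilde\tau:=\tau\setminus\{-0\}$, $\widetilde\sigma:=(0)$, which is the same reduction you sketch). No gaps.
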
	
			
			\begin{proof} (a) For an arbitrary $Z \in \mathbb{C}^{ n\times n}$, from (\ref{eqnrecogfprp0329sept172046n_PCh2}) and (\ref{eqnrecogfprgsettingp0329sept172045n_PCh2}) we have
				\begin{eqnarray}
					(e^T_{j} \otimes I_n) M_{-k} (Z)=  e^T_{j} \otimes I_n \mbox{ for } ~k \notin \{m-j,m-j+1\},~ j=1:m, \label{eqnrecogfprp0329sept172046n}\\
					M_{-k} (Z)	(e_{j} \otimes I_n)=  e_{j} \otimes I_n \mbox{ for } ~k \notin \{m-j,m-j+1\},~ j=1:m.\label{eqnrecogfprgsettingp0329sept172045n}
				\end{eqnarray}

				Case-I: Suppose that $\sig \neq \emptyset$.  Let $0\leq  h \leq m-1$ be the integer such that $-m, -(m-1),\ldots, -(m-h) \in \tau$  and $-(m-h-1) \notin \tau$.  This implies that $-(m-h), -(m-h-1) \notin \tau_1 \cup \tau_2$ since  $-(m-h-1) \notin \tau$ and $(\tau_1, \tau, \tau_2)$ satisfies the SIP. Further, we have $c_{-m}(\tau) \leq h$ and $ i_{-m}(\tau)  \leq h$.

				Let $\widehat{\tau} $ and $\widehat{\tau}_j$, $j=1,2$, respectively, be the subtuples of $\tau$ and $\tau_j$ with indices $\{-m:-(m-h)\}$. Similarly, let $\widehat{\widehat{\tau}} $ and $\widehat{\widehat{\tau}}_j$, $j=1,2$, respectively,  be the subtuples of $\tau$ and $\tau_j$ with indices $\{-(m-h-2):-0\}$, where $\{-a:-0\} := \{-a, -(a-1), \ldots, -1,-0\} $ for any integer $a \geq 0$.	Then $\widehat{\tau} $ and $\widehat{\widehat{\tau}} $ commutes since for any indices  $k \in\widehat{\tau} $ and $ \ell \in \widehat{\widehat{\tau}} $, we have $||k| - |\ell|| >1$. Thus $\tau \sim (\widehat{\tau}, \widehat{\widehat{\tau}}) \sim ( \widehat{\widehat{\tau}},\widehat{\tau})$. Similarly, $\tau_j \sim  (\widehat{\tau}_j, \widehat{\widehat{\tau}}_j) \sim ( \widehat{\widehat{\tau}}_j,\widehat{\tau}_j)$ for $j =1,2,$ as $\widehat{\tau_j} $ and $\widehat{\widehat{\tau_j}} $ commutes. Also we have $(\widehat{\tau}, \widehat{\widehat{\tau}}_j) \sim  (\widehat{\widehat{\tau}}_j, \widehat{\tau}) $ for $j =1,2.$

				Since $-(m-h-1) \notin \tau,$ we have $i_{-m}(\tau) = i_{-m}(\widehat{\tau})$ and $c_{-m}(\tau) = c_{-m}(\widehat{\tau})$. As $(\widehat{\tau}, \widehat{\tau}_2)$ contains indices from $\{-m:-(m-h)\}$, we have $c_{-m}(\widehat{\tau}, \widehat{\tau}_2) \leq h$. This shows that $c_{-m}(\widehat{\tau}, \widehat{\tau}_2, \widehat{\widehat{\tau}}_2) = c_{-m}(\widehat{\tau}, \widehat{\tau}_2)$ as $\widehat{\widehat{\tau}}_2 $ contains indices from $\{-(m-h-2): -0\} $. Consequently, we have $c_{-m}(\tau, \tau_2) = c_{-m}(\widehat{\widehat{\tau}},\widehat{\tau}, \widehat{\tau}_2, \widehat{\widehat{\tau}}_2) = c_{-m}(\widehat{\tau}, \widehat{\tau}_2, \widehat{\widehat{\tau}}_2) = c_{-m}(\widehat{\tau}, \widehat{\tau}_2)$. Similarly, we have $i_{-m}(\tau_1, \tau) =  i_{-m}(\widehat{\tau}_1, \widehat{\tau}).$


				Note that $X_2$ and $Y_2$ are arbitrary matrix assignments. We denote by $(*)$ for any arbitrary matrix assignment. Now we have $(e_{c_{-m}(\tau)+1}^{T}\otimes I_n) M_{\sigma_{2}} (*)M_{\tau_{2}} (*)= (e^T_{c_{-m}(\widehat{\tau})+1} \otimes I_n) \, M_{\sig_{2}} (*) \, M_{\widehat{\widehat{\tau}}_{2}} (*) \, M_{\widehat{\tau}_{2}}  (*) =$
				\begin{align*}
					& (e^T_{c_{-m}(\widehat{\tau})+1} \otimes I_n) \, M_{\widehat{\widehat{\tau}}_{2}} (*) \, M_{\widehat{\tau}_{2}}  (*)  \text{ by } (\ref{eqnreco2_PCh4}) \text{ since } \left\{\begin{array}{l}
						c_{-m}(\widehat{\tau})  \leq h \text{ and } \sigma_2 \text{ contains}\\ 
						\text{indices from } \{0:m-h-2 \}  \end{array} \right. \\ 
					& = (e^T_{c_{-m}(\widehat{\tau})+1}\otimes I_n)   \, M_{\widehat{\tau}_{2}}  (*) \text{ by } (\ref{eqnrecogfprp0329sept172046n}) \text{ since } \left\{\begin{array}{l}
						c_{-m}(\widehat{\tau})  \leq h \text{ and } \widehat{\widehat{\tau}}_2 \text{ contains}\\ 
						\text{indices from } \{-(m-h-2): -0\} \end{array} \right. \\ 
					& =  e^T_{c_{-m}(\widehat{\tau}, \widehat{\tau}_2)+1} \otimes I_n  =  e^T_{c_{-m}(\tau, \tau_2)+1} \otimes I_n \text{ by Lemma } \ref{inf_reco_brbc}.
				\end{align*}
				Hence  $(e_{c_{-m}(\tau)+1}^{T}\otimes I_n) M_{\sigma_{2}} (X_2)M_{\tau_{2}} (Y_2) = e^T_{c_{-m}(\tau, \tau_2)+1} \otimes I_n.$
				
				Similarly, we have $M_{\tau_{1}} (*) \,  M_{\sigma_{1}} (*) \, (e_{i_{-m}(\tau)+1} \otimes I_n) =$
				\begin{align*}
					&     M_{\widehat{\tau}_{1}}  (*) \, M_{\widehat{\widehat{\tau}}_1} (*) \,  M_{\sig_{1}} (*) \, (e_{i_{-m}(\widehat{\tau})+1} \otimes I_n)  \\
					& = M_{\widehat{\tau}_{1}} (*) \, M_{\widehat{\widehat{\tau}}_{1}}  (*) \, (e_{i_{-m}(\widehat{\tau})+1} \otimes I_n)    \text{ by } (\ref{eqnreco1_PCh4}) \text{ since } \left\{\begin{array}{l}
						i_{-m}(\widehat{\tau})  \leq h \text{ and } \sigma_1 \text{ contains}\\ 
						\text{indices from } \{0: m-h-2\}  \end{array} \right. \\ 
					& = M_{\widehat{\tau}_{1}} (*) \, (e_{i_{-m}(\widehat{\tau}) +1}\otimes I_n)    \text{ by } (\ref{eqnrecogfprgsettingp0329sept172045n}) \text{ since } \left\{\begin{array}{l}
						i_{-m}(\widehat{\tau})  \leq h \text{ and } \widehat{\widehat{\tau}}_1 \text{ contains}\\ 
						\text{indices from } \{-(m-h-2): -0\} \end{array} \right. \\ 
					& =  e_{i_{-m}(\widehat{\tau}_1, \widehat{\tau}) + 1} \otimes I_n =  e_{i_{-m}(\tau_1, \tau) + 1} \otimes I_n \text{ by Lemma } \ref{inf_reco_brbc}.
				\end{align*}
				Thus, we have  $M_{\tau_{1}} (Y_1) \, M_{\sigma_{1}} (X_1) \, (e_{i_{-m}(\tau)+1} \otimes I_n) = e_{i_{-m}(\tau_1, \tau) + 1} \otimes I_n.$

				Case-II: Suppose that $\sig =  \emptyset$, i.e., $\tau$ is a permutation of $\{-m:-0\}$. This implies  $\sigma_1 = \emptyset = \sig_2$ and hence $M_{\sigma_1} (X_1) = I_{mn} = M_{\sigma_2} (X_2)$. Given that $c_{-m}(\tau) \leq m-1$  and $i_{-m}(\tau) \leq m-1$.  By defining  $\widetilde{\tau} := \tau \setminus \{-0\}$ and $\widetilde{\sig} :=(0)$, and by considering $h=m-1$, a verbatim proof of Case-I  gives the desired result.
				
				%
				%
				%
				
				(b) If  $c_{-m} (\tau) =m$ or  $i_{-m} (\tau) =m$, then $\sig = \emptyset$. So there are no choices for $\sig_1$ and $\sig_2$, i.e., $\sig_1 = \emptyset = \sig_2$. Hence $M_{\sig_1} (X_1) = I_{mn} = M_{\sig_2} (X_2) $. Further, from the definition of EGFPs of $P(\lam)$ we have $\tau_j$, $j=1,2,$ contains indices from  $\{ -m:-2\}$. Consequently, from (\ref{eqnrecogfprp0329sept172046n}) and (\ref{eqnrecogfprgsettingp0329sept172045n}), 
				 we have  $(e^T_{m} \otimes I_n) \, M_{\tau_2} (Y_2)=  e^T_{m} \otimes I_n$ and $M_{\tau_1} (Y_1)  \,  (e_{m} \otimes I_n)= e_{m} \otimes I_n $ which gives the desired result. 
			\end{proof}

			We are now ready to describe the recovery of eigenvectors of $P(\lam)$ corresponding to the eigenvalue at $\infty$ from those of the EGFPs of $P(\lam)$.

			\begin{theorem} \label{inf_reco_egfpr} Let   $ L(\lambda):=  M_{\tau_1} (Y_1)  M_{\sigma_1} (X_1)  (\lambda M^P_{\tau} -  M^P_{\sigma}) M_{\sigma_2} (X_2)  M_{\tau_2} (Y_2)  $ be an EGFP  of $P(\lambda)$.  Suppose that $P(\lambda)$ is regular and $\infty$ is an eigenvalue of $P(\lambda)$.  Let $Z$ be an $mn\times k$ matrix such that $\rank(Z)=k$. 
				
				{\bf Right eigenvectors at $\infty$.} If $Z$ is a basis of the right eigenspace of $L(\lambda)$ at $\infty$, then we have the following.
				\begin{enumerate}
					\item [(a)] If $c_{-m}(\tau) < m$, then $\Bigs[ (e^T_{c_{-m} (\tau, \tau_2)+1} \otimes I_n) Z \Bigs]$ is a basis of the right eigenspace of $P(\lambda)$ at $\infty.$
					\item [(b)]  If $c_{-m} (\tau)  = m$, then $\big[ A_0^{-1} (e^T_{m} \otimes I_n) Z \big]$  is a basis of the right eigenspace of $P(\lambda)$ at $\infty.$
				\end{enumerate}
				
				{\bf Left eigenvectors at $\infty$.} If $Z$ is a basis of the left eigenspace of $L(\lambda)$ at $\infty$, then we have the following.		
				\begin{enumerate}	
					\item [(c)]  If $i_{-m}(\tau) < m$, then   $\Bigs[ (e^T_{i_{-m} (\tau_1, \tau)+1} \otimes I_n) Z \Bigs]$ is a basis of the left eigenspace of $P(\lambda)$ at $\infty$.
					
					\item [(d)] If $i_{-m} (\tau)  = m$, then  $\Bigs [ A_0^{-T}(e^T_{m} \otimes I_n) Z \Bigs]$is a basis of the left eigenspace of $P(\lambda)$ at $\infty$.
				\end{enumerate}

			\end{theorem}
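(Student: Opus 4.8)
The plan is to reduce everything to the already-established recovery for GF pencils (Theorem~\ref{recoGFinfinityevn_PCh2}) by exploiting the factorization $L(\lambda) = \mathcal{A}\,T(\lambda)\,\mathcal{B}$, where $\mathcal{A} := M_{\tau_1}(Y_1)\,M_{\sigma_1}(X_1)$ and $\mathcal{B} := M_{\sigma_2}(X_2)\,M_{\tau_2}(Y_2)$ are $\lambda$-independent and nonsingular (the matrix assignments being nonsingular), and $T(\lambda) := \lambda M^P_\tau - M^P_\sigma$ is the underlying GF pencil. Recall, as noted before the theorem, that $\infty$ being an eigenvalue of $P(\lambda)$ forces $A_m$, and hence $M^P_{-m}$, to be singular, so that $-m \in \tau$; this is exactly the standing hypothesis of Lemma~\ref{inf_brbc_efpr}. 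Since $\mathcal{A}$ and $\mathcal{B}$ are constant in $\lambda$, reversal passes through them, giving $\mathrm{rev}\,L(\lambda) = \mathcal{A}\,(\mathrm{rev}\,T(\lambda))\,\mathcal{B}$, and in particular $\mathrm{rev}\,L(0) = \mathcal{A}\,(\mathrm{rev}\,T(0))\,\mathcal{B}$. First I would use invertibility of $\mathcal{A}$ and $\mathcal{B}$ to transport the eigenspaces at $\infty$: the map $x \mapsto \mathcal{B}x$ is a linear isomorphism from the right eigenspace of $L(\lambda)$ at $\infty$ onto that of $T(\lambda)$ at $\infty$, and $y \mapsto \mathcal{A}^T y$ is a linear isomorphism from the left eigenspace of $L(\lambda)$ at $\infty$ onto that of $T(\lambda)$ at $\infty$.

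Next I would compose these isomorphisms with the GF-pencil recovery maps of Theorem~\ref{recoGFinfinityevn_PCh2} and simplify. For the right eigenvectors with $c_{-m}(\tau) < m$, the recovery map for $T$ is $e^T_{c_{-m}(\tau)+1}\otimes I_n$, so the recovery map for $L$ is the composite $(e^T_{c_{-m}(\tau)+1}\otimes I_n)\,\mathcal{B}$; the first identity of Lemma~\ref{inf_brbc_efpr}(a) collapses this to $e^T_{c_{-m}(\tau,\tau_2)+1}\otimes I_n$, which is the map asserted in part~(a). For $c_{-m}(\tau)=m$, the recovery map for $T$ is $A_0^{-1}(e^T_m\otimes I_n)$, and the identity $(e^T_m\otimes I_n)\mathcal{B} = e^T_m\otimes I_n$ from Lemma~\ref{inf_brbc_efpr}(b) leaves $A_0^{-1}(e^T_m\otimes I_n)$, giving part~(b). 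The left cases are handled symmetrically: the composite recovery map is $(e^T_{i_{-m}(\tau)+1}\otimes I_n)\,\mathcal{A}^T$ (resp.\ $A_0^{-T}(e^T_m\otimes I_n)\,\mathcal{A}^T$), which I would rewrite via $(e^T_j\otimes I_n)\mathcal{A}^T = \big(\mathcal{A}(e_j\otimes I_n)\big)^T$ and then simplify by the second identities of Lemma~\ref{inf_brbc_efpr}(a),(b); this produces $e^T_{i_{-m}(\tau_1,\tau)+1}\otimes I_n$ in part~(c) and $A_0^{-T}(e^T_m\otimes I_n)$ in part~(d).

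Because each recovery map is a composition of linear isomorphisms, it is itself a linear isomorphism between the relevant eigenspaces, and therefore carries a basis of the right (resp.\ left) eigenspace of $L(\lambda)$ at $\infty$ to a basis of the right (resp.\ left) eigenspace of $P(\lambda)$ at $\infty$, as claimed. The main technical point to watch is the left-eigenvector bookkeeping: one must transpose correctly, rewriting $(e^T_j\otimes I_n)\mathcal{A}^T$ as $\big(\mathcal{A}(e_j\otimes I_n)\big)^T$ before invoking Lemma~\ref{inf_brbc_efpr}, and one must track the degenerate regime $\sigma=\emptyset$, in which $c_{-m}(\tau)$ or $i_{-m}(\tau)$ may equal $m$, $M_{\sigma_1}(X_1)=M_{\sigma_2}(X_2)=I_{mn}$, and the $A_0^{-1}$, $A_0^{-T}$ factors appear; there one must invoke part~(b) rather than part~(a) of Lemma~\ref{inf_brbc_efpr}. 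Since the bulk of the index computations has already been absorbed into Lemma~\ref{inf_brbc_efpr}, the remaining argument is essentially the assembly of these isomorphisms.
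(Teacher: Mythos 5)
Your proposal is correct and follows essentially the same route as the paper: factor $L(\lambda)=\mathcal{A}\,T(\lambda)\,\mathcal{B}$, identify the eigenspaces of $L$ at $\infty$ with those of $\mathrm{rev}\,T(0)=M^P_\tau$ via the constant invertible factors, apply the GF recovery of Theorem~\ref{recoGFinfinityevn_PCh2}, and collapse the composite maps using Lemma~\ref{inf_brbc_efpr} (with the same transposition step for the left eigenvectors and the same appeal to part~(b) in the degenerate case). No gaps.
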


			\begin{proof} Set $ L_1 :=  M_{(\tau_1,\sigma_1)} (Y_1,X_1) ~ M^P_{\tau} ~ M_{(\sigma_2, \tau_2)} (X_2,Y_2)$ and $L_0 :=   M_{(\tau_1,\sigma_1)} (Y_1,X_1) ~ M^P_{\sig}  ~ $ $M_{(\sigma_2, \tau_2)} (X_2,Y_2) $. Then $ L(\lambda)= \lam L_1 - L_0$. Recall that $\infty$ is an eigenvalue of $L(\lambda) \Leftrightarrow 0$ is an eigenvalue of $rev(L(\lambda)) \Leftrightarrow 0$ is an eigenvalue of $L_1$. Since $M_{(\tau_1,\sigma_1)}(Y_1,X_1) $ is invertible, we have $\mathcal{N}_r( L_1) = \mathcal{N}_r(M^P_{\tau}M_{(\sigma_2, \tau_2)} (X_2,Y_2))$. Note that the map 
				$$
				\mathcal{N}_r(M^P_{\tau}M_{(\sigma_2, \tau_2)} (X_2,Y_2)) \rightarrow  \mathcal{N}_r(M^P_\tau),~ z \mapsto (M_{(\sigma_2, \tau_2)} (X_2,Y_2)) z,
				$$ is an isomorphism. Define $T(\lambda):= \lambda M^P_\tau - M^P_\sigma$. Then $\mathcal{N}_r(M^P_\tau)=\mathcal{N}_r \Bigs(rev(T(0))\Bigs)$.

				(a)  Now, by Theorem \ref{recoGFinfinityevn_PCh2}, the map
				$$  \mathcal{N}_r \Bigs (rev(T(0)) \Bigs) \longrightarrow  \mathcal{N}_r \Bigs (rev(P(0))\Bigs),~u \mapsto (e^T_{c_{-m} (\tau)+1} \otimes I_n) u ,$$  is an isomorphism. Hence the map
				\begin{equation*} \label{eqnrightnullatinfn}
					\mathcal{N}_r \Bigs(rev(L(0))\Bigs) \longrightarrow \mathcal{N}_r \Bigs(rev(P(0)) \Bigs), ~x \mapsto \Bigs( (e^T_{c_{-m}(\tau)+1} \otimes I_n)M_{(\sigma_2,\tau_2)} (X_2,Y_2)\Bigs) x ,
				\end{equation*}
				is an isomorphism. Now by Lemma~\ref{inf_brbc_efpr}, we have $(e^T_{c_{-m}(\tau)+1} \otimes I_n)M_{(\sigma_2,\tau_2)} (X_2,Y_2)= e^T_{c_{-m}(\tau,\tau_2)+1} \otimes I_n$.  Hence the result for the right eigenspace of $P(\lambda)$ at $\infty$ follows.
				
				The proof is similar for part (b).

				(c)	Next, we prove the results for left eigenspace of $P(\lambda)$ at $\infty$.  Since $M_{(\sigma_2, \tau_2)} (X_2,Y_2)$ is invertible, we have $\mathcal{N}_l \Bigs(rev(L(0))\Bigs) = \mathcal{N}_l(L_1) = \mathcal{N}_l \Bigs ( M_{(\tau_1,\sigma_1)} (Y_1,X_1) M^P_{\tau} \Bigs )$. Hence it follows that the map $\mathcal{N}_l \Bigs (M_{(\tau_1,\sigma_1)} (Y_1,X_1) M^P_{ \tau} \Bigs ) \longrightarrow \mathcal{N}_l(M^P_\tau),~ z \mapsto (M_{\tau_1, \sigma_1} (Y_1,X_1))^T z,$ is an isomorphism.  We have $\mathcal{N}_l(M^P_{\tau})= \mathcal{N}_l \Bigs(rev(T(0))\Bigs)$. Now, by Theorem \ref{recoGFinfinityevn_PCh2}, the map
				$$  \mathcal{N}_l \Bigs(rev(T(0)) \Bigs) \longrightarrow \mathcal{N}_l \Bigs(rev(P(0)) \Bigs),~v \mapsto (e^T_{i_{-m} (\tau)+1} \otimes I_n) v ,$$ is an isomorphism. Hence the map
				\begin{equation*}\label{eqnleftnullatinfn}
					\mathcal{N}_l \Bigs(rev(L(0)) \Bigs) \longrightarrow \mathcal{N}_l \Bigs(rev(P(0)) \Bigs),~y \mapsto \Bigs( (e^T_{i_{-m} (\tau)+1} \otimes I_n)(M_{(\tau_1,\sigma_1)} (Y_1,X_1))^T \Bigs) y ,
				\end{equation*}
				is an isomorphism. Now by Lemma~\ref{inf_brbc_efpr}, we have $ M_{(\tau_1,\sig_1)} (Y_1,X_1) (e_{i_{-m}(\tau)+1} \otimes I_n) = e_{i_{-m}(\tau_1,\tau)+1} \otimes I_n$.  Hence the result for the left eigenspaces of $P(\lambda)$ at $\infty$ follows.
				
				The proof is similar for part (d).	
			\end{proof}

			%
			
The following example illustrates our recovery rule for eigenvectors of $P(\lam)$ corresponding to an eigenvalue
			at $\infty$ from those of the EGFPs of $P(\lam)$.

			\begin{example}  Let $P(\lambda) := \sum_{i=0}^5 \lambda^i A_i$. Suppose that $P(\lam)$ is regular and $\infty$ is an eigenvalue of $P(\lam)$. Let $\sigma:=(0,2)$, $\sigma_1:=\emptyset= \sigma_2,\tau:=(-4,-5,-3, -1)$, $ \tau_2= (-4)$ and  $\tau_1 =\emptyset$. Let $X$ be any arbitrary matrix assignment for $\tau_2$. Then the  EGFP
				$L(\lam) =   \big ( \lambda M^P_{(-4,-5,-3,-1)} - M^P_{(0,2)} \big ) M_{-4} (X) =: \lam L_1 - L_0$ of $P(\lam)$ is given by
				$$L(\lam) =  \lam \left[
				\begin{array}{ccccc}
					0 &       0 &     I_n &   0 &      0\\
					0 &  A_5 &   A_4 &   0 &      0 \\
					I_n  &   X &   A_3  &  0 &      0\\
					0&      0 &      0&    0&     I_n\\
					0 &       0 &       0 &  I_n  &  A_1
				\end{array} \right] - \left[
				\begin{array}{ccccc}
					0 &       I_n &    0 &   0 &      0\\
					I_n & X &  0 &   0 &      0 \\
					0  & 0 &  -A_2  &  I_n &      0\\
					0&      0 &      I_n&    0&    0\\
					0 &       0 &       0 & 0  &  -A_0
				\end{array} \right].
				$$

				Let $x $ and $y$, respectively, be  right and  left  eigenvectors of $L(\lam)$ corresponding to the eigenvalue $\infty$. Define $ x_i := (e_i^T \otimes I_n) x$ and $ y_i := (e_i^T \otimes I_n) y$,  $i=1:5.$ We have $c_{-m}(\tau) = c_{-5}(-4,-5,-3,-1) =0  < 5$ and $c_{-5} (\tau, \tau_2) = c_{-5} (-4,-5,-3,-1,-4) =1 .$ Hence by Theorem~\ref{inf_reco_egfpr}, $ (e^T_{c_{-m}(\tau,\tau_2) +1 } \otimes I_n ) x = (e^T_{2} \otimes I_n) x = x_2 $ is a right eigenvector of $P(\lam)$ corresponding to the eigenvalue $\infty$. Similarly, $i_{-5} (\tau_1, \tau) = i_{-5} (-4,-5,-3,-1) =1 .$ Hence by Theorem~\ref{inf_reco_egfpr}, $ (e^T_{i_{-m}(\tau_1,\tau) +1 } \otimes I_n ) y = (e^T_{2} \otimes I_n) y = y_2 $ is a left eigenvector of $P(\lam)$ corresponding to the eigenvalue $\infty$.

				To verify the recovery rule, consider  $L_1 x =0.$ This gives $A_5 x_2 = 0$ and $x_i = 0 $ for $i = 3,4,5.$ Now if $x_2 =0$ then  $x_1 =0$. Thus $x_2 =0 \Rightarrow x=0$. Hence $x_2 \neq 0$ and is a right eigenvector of $P(\lam)$ corresponding to the eigenvalue  $\infty$.
				
				Similarly, $y^T L_1 =0$ implies $y_2^T A_5 =0$ and   $y^T_i = 0 $ for $i = 3,4,5.$ Now if $y^T_2 =0$ then  $y^T_1 =0$. Thus  $y^T_2 =0 \Rightarrow y=0$.  Hence $y_2 \neq 0$ and is a left eigenvector of $P(\lam)$ corresponding to the eigenvalue  $\infty$. $\blacksquare$	
			\end{example}

\subsection{Recovery of eigenvectors, minimal bases and minimal indices of $G(\lam)$} We now describe the recovery of eigenvectors, minimal bases and minimal indices of $G(\lam)$ from those of the EGFPs of $G(\lam)$ construed in Section~\ref{EGFPofG}. We proceed as follows. Throughout of this section, we consider $G(\lambda)= P(\lambda)+C(\lambda E-A)^{-1}B$ and $\mathcal{S}(\lam)$ as given in (\ref{minrel2_RCh41}) and (\ref{slamsystemmatrix_RCh41}), respectfully.

\begin{theorem} \cite{rafiran1, Verghese_VK_79} \label{StoG}  	
	(I) Suppose that  $G(\lam)$ is singular. Let $ Z(\lam) :=\left[\begin{array}{c} Z_n(\lam) \\ Z_r(\lam) \end{array}\right]  $ be a matrix polynomial, where $Z_n(\lam)$ has $n$ rows and $ Z_r(\lam)$ has $r $ rows. If $Z(\lam)$  is a right  (resp., left) minimal  basis of $ \mathcal{S}(\lam)$  then $ Z_n(\lam)$ is a right (resp., left) minimal basis of $G(\lam).$  Further, the right (resp., left) minimal indices of $G(\lambda)$ and $\mathcal{S}(\lambda)$ are the same.

	(II) Suppose that $G(\lam)$ is regular and $ \mu \in \C$ is an eigenvalue of $G(\lam)$. Let $ Z := \left[\begin{array}{c} Z_{n} \\  Z_r \end{array}\right]$ be an $(n+r)\times p $ matrix such that $\rank(Z) = p,$ where $ Z_{n}$ has $ n$ rows and $Z_r$ has $r$ rows. If $Z$ is a  basis of $\mathcal{N}_r(\mathcal{S} (\mu))$ (resp., $\mathcal{N}_l(\mathcal{S} (\mu))$) then $Z_n$  is a  basis of $\mathcal{N}_r(G(\mu))$ (resp., $\mathcal{N}_l(G(\mu))$).
\end{theorem}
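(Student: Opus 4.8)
The plan is to reduce everything to the Schur-complement factorization of the system matrix. Since $E$ is nonsingular, $\det(A-\lam E)$ is a nonzero polynomial, so $A-\lam E$ is invertible over $\C(\lam)$, and a direct multiplication shows
\[ \mathcal{S}(\lam)=\left[\begin{array}{cc} I_n & C(A-\lam E)^{-1} \\ 0 & I_r \end{array}\right]\left[\begin{array}{cc} G(\lam) & 0 \\ 0 & A-\lam E \end{array}\right]\left[\begin{array}{cc} I_n & 0 \\ (A-\lam E)^{-1}B & I_r \end{array}\right]=:U(\lam)\,\mathcal{D}(\lam)\,V(\lam), \]
where I use $G(\lam)=P(\lam)-C(A-\lam E)^{-1}B$ (which equals the realization since $(\lam E-A)^{-1}=-(A-\lam E)^{-1}$). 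Both $U(\lam)$ and $V(\lam)$ are unipotent block-triangular, hence invertible over $\C(\lam)$ with inverse obtained by negating the off-diagonal block. First I would use this to identify the null spaces: for $z=[z_n^T\ z_r^T]^T$, cancelling the invertible $U(\lam)$ turns $\mathcal{S}(\lam)z=0$ into $\mathcal{D}(\lam)V(\lam)z=0$, and since $A-\lam E$ is invertible the second block forces $z_r=-(A-\lam E)^{-1}Bz_n$, leaving $G(\lam)z_n=0$. Thus the projection $\pi:z\mapsto z_n$ is a $\C(\lam)$-linear isomorphism $\mathcal{N}_r(\mathcal{S})\to\mathcal{N}_r(G)$, and the transposed computation gives the analogous isomorphism $\mathcal{N}_l(\mathcal{S})\to\mathcal{N}_l(G)$, $y\mapsto y_n$. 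In particular $\nrk(\mathcal{S})=\nrk(G)+r$, so $\mathcal{S}$ and $G$ are singular or regular together.

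For Part~(I) I would upgrade this $\C(\lam)$-isomorphism to a statement about minimal polynomial bases using Forney's criterion: a polynomial basis is minimal if and only if it has full column rank at every finite $\lam_0\in\C$ and its highest-column-degree coefficient matrix has full column rank. Starting from a right minimal basis $Z=[Z_n^T\ Z_r^T]^T$ of $\mathcal{S}$, the columns of $Z_n$ form a polynomial basis of $\mathcal{N}_r(G)$ because $\pi$ is a bijection that manifestly does not increase degrees. Two conditions remain. The behaviour at infinity is easy: because $E$ is nonsingular, $(A-\lam E)^{-1}=O(1/\lam)$, so the identity $Z_r=-(A-\lam E)^{-1}BZ_n$ forces each column of $Z_r$ to have strictly smaller degree than the corresponding column of $Z_n$; hence $\pi$ preserves column degrees and carries the highest-degree coefficient matrix of $Z$ to that of $Z_n$, so $Z_n$ is column reduced. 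The finite-rank condition is the delicate point: if $Z_n(\lam_0)v=0$, then $PZ_n+CZ_r=0$ and $BZ_n+(A-\lam E)Z_r=0$ give $\left[\begin{array}{c} C \\ A-\lam_0 E \end{array}\right]Z_r(\lam_0)v=0$, and here I would invoke the irreducibility (observability) hypothesis $\rank\left[\begin{array}{c} C \\ A-\lam E \end{array}\right]=r$ recalled in Section~\ref{sec2} to conclude $Z_r(\lam_0)v=0$, whence $Z(\lam_0)v=0$, contradicting minimality of $Z$. Thus $Z_n$ is a right minimal basis with the same column degrees as $Z$, which yields both the basis claim and the equality of right minimal indices; the left statements follow by the dual argument using the controllability condition $\rank[B\ A-\lam E]=r$.

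For Part~(II) I would specialise the factorization to $\lam=\mu$. Since $G$ is regular and $\mu$ is an eigenvalue, $G(\mu)$ is finite, so $\mu$ is not a pole of $G$; by irreducibility the poles of $G$ coincide with the eigenvalues of the pencil $A-\lam E$, so $A-\mu E$ is nonsingular and $U(\mu),V(\mu)$ are constant invertible matrices. Then $\mathcal{S}(\mu)=U(\mu)\,\mathcal{D}(\mu)\,V(\mu)$ with $\mathcal{D}(\mu)=\diag(G(\mu),A-\mu E)$ having its second diagonal block nonsingular, so $V(\mu)$ maps $\mathcal{N}_r(\mathcal{S}(\mu))$ isomorphically onto $\mathcal{N}_r(G(\mu))\times\{0\}$. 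Chasing this through shows precisely that $z\mapsto z_n$ is a bijection $\mathcal{N}_r(\mathcal{S}(\mu))\to\mathcal{N}_r(G(\mu))$ (if $z_n=0$ then $z_r=-(A-\mu E)^{-1}B\,z_n=0$, so injectivity and independence are preserved). Hence a basis $Z=[Z_n^T\ Z_r^T]^T$ of $\mathcal{N}_r(\mathcal{S}(\mu))$ yields a basis $Z_n$ of $\mathcal{N}_r(G(\mu))$, and the left eigenspace statement is obtained identically from $y^T\mathcal{S}(\mu)=0$.

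The main obstacle is the finite-rank part of Part~(I): a minimal basis of $\mathcal{S}$ could a priori project to a basis of $G$ that loses full column rank exactly at the finitely many $\lam_0$ where $A-\lam_0 E$ is singular, because there $(A-\lam_0 E)Z_r(\lam_0)v=0$ does not force $Z_r(\lam_0)v=0$. It is precisely the irreducibility (minimality) of the realization, through the observability and controllability rank conditions, that rules this out and constitutes the substantive hypothesis behind the theorem; everything else is the essentially formal Schur-complement bookkeeping above.
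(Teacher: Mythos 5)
This theorem is not proved in the paper at all: it is imported by citation from \cite{rafiran1, Verghese_VK_79}, so there is no in-paper argument to compare against. Your blind reconstruction is, as far as I can check, a correct and essentially complete proof of the cited result, and it follows the same underlying route as those references: the Schur-complement factorization $\mathcal{S}(\lam)=U(\lam)\,\diag(G(\lam),\,A-\lam E)\,V(\lam)$ with unipotent $U,V$ over $\C(\lam)$, the resulting isomorphism $z\mapsto z_n$ between null spaces, Forney's two-part minimality criterion for Part~(I), and the evaluation of the factorization at $\mu$ for Part~(II). The two genuinely delicate points are exactly the ones you isolate and handle correctly: (i) the degree bookkeeping at infinity, where nonsingularity of $E$ gives $(A-\lam E)^{-1}=O(1/\lam)$ and hence $\deg Z_r^{(j)}<\deg Z_n^{(j)}$, so column degrees and the highest-degree coefficient matrix survive the projection; and (ii) the full-column-rank condition at finite $\lam_0$, which genuinely requires the observability/controllability rank conditions $\rank\big[\,C^T\ \ (A-\lam E)^T\,\big]^T=r=\rank\big[\,B\ \ A-\lam E\,\big]$. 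One caveat worth recording: the theorem as transcribed in the paper does not state irreducibility of $\mathcal{S}(\lam)$ as a hypothesis, yet without it the statement is false (e.g.\ $n=r=1$, $P=0$, $C=0$, $B=1$, $A-\lam E=1-\lam$ gives $G=0$ with right minimal index $0$ while $\mathcal{S}$ has right minimal index $1$; similarly Part~(II) fails when $A-\mu E$ is singular). You correctly identify minimality of the realization as the substantive hidden hypothesis — it is assumed in the cited sources — so your proof is sound under the reading the paper intends.
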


Thus, in view of Theorem~\ref{StoG}, we only need to describe the recovery of eigenvectors, minimal bases and minimal indices of $\mathcal{S}(\lam)$ from those of the EGFPs of $G(\lam).$ To that end, we need the following result.

\begin{theorem} \cite{rafiran2} \label{eigpgf_RCh4} ~Let $\alpha$ be a permutation of $\{0:m-1\}$ and $\mathbb{T}(\lam) :=\left[
	\begin{array}{@{}c|c@{}}
		T(\lam) &  e_{m-i_0 (\alpha)} \otimes C \\[.1em] \hline \\[-1em]
		e^T_{m-c_0(\alpha)} \otimes B  & A-\lam E
	\end{array}
	\right]$ 
	be the Fiedler pencil of $G(\lam)$ associated with $\alpha$, where $T(\lam) := \lam M^P_{-m} -M^P_{\alpha} $ is the Fiedler pencil of $P(\lam)$. Then we have the following:
	
	{\bf (I)~Minimal bases}.  Suppose that $\mathcal{S}(\lam)$ is singular.   Then the maps 
	\begin{equation*}
		\begin{array}{l}
			\mathcal{F}: \mathcal{N}_r(\mathbb{T}) \rightarrow \mathcal{N}_r(\mathcal{S}),~ \left[ \begin{array}{@{}c@{}}
				u(\lambda)\\ 
				v(\lambda) \\
			\end{array}
			\right] \mapsto \left[ \begin{array}{@{}c@{}}
				(e^T_{m-c_0(\alpha)} \otimes I_n )u(\lambda) \\[.2em] 
				v(\lambda) \end{array} \right], \\ \\[-1em]
			\mathcal{H}: \mathcal{N}_l(\mathbb{T}) \rightarrow \mathcal{N}_l(\mathcal{S}),~ \left[ \begin{array}{@{}c@{}}
				u(\lambda)\\ 
				v(\lambda) \\
			\end{array}
			\right] \mapsto \left[ \begin{array}{@{}c@{}}
				(e^T_{m-i_0(\alpha)} \otimes I_n )u(\lambda) \\[.2em] 
				v(\lambda) \end{array} \right],
		\end{array}
	\end{equation*}
	are linear isomorphisms, where $ u(\lambda)\in\mathbb{C}(\lambda)^{mn}$ and $ v(\lambda)\in\mathbb{C}(\lambda)^{r}.$ Further, $	\mathcal{F}$ (resp., $	\mathcal{H}$) maps a minimal basis of $\mathcal{N}_r(\mathbb{T})$ (resp., $\mathcal{N}_l(\mathbb{T})$) to a minimal basis of $\mathcal{N}_r(\mathcal{S})$ (resp., $\mathcal{N}_l(\mathcal{S})$).
	
	Further, if $\varepsilon_1\leq   \cdots \leq\varepsilon_p$ are the right (resp., left) minimal indices of $\mathbb{T}(\lambda)$ then $\varepsilon_1 -i(\alpha)  \leq \cdots \leq \varepsilon_p - i(\alpha) $ (resp., $\varepsilon_1 -c(\alpha) \leq \cdots \leq \varepsilon_p - c(\alpha) $) are the right (resp., left) minimal indices of  $\mathcal{S}(\lambda)$, where $ c (\alpha)$ and $ i(\alpha)$ be the total number of consecutions and inversions of the permutation $\alpha$, respectively.

	{\bf (II)~Eigenvectors.} Suppose that $\mathcal{S}(\lam)$ is regular and $\mu \in \mathbb{C}$ is an eigenvalue of $\mathcal{S}(\lam)$. Let $ Z:= \left[\begin{array}{c} Z_{mn} \\  Z_r \end{array}\right]$ be an $(mn+r)\times p $ matrix such that $\rank(Z) = p,$ where $ Z_{mn}$ has $ mn$ rows and $Z_r$ has $r$ rows. If $Z$ is a basis of $\mathcal{N}_r(\mathbb{T}(\mu))$ (resp., $\mathcal{N}_l(\mathbb{T}(\mu))$) then $\left[\begin{array}{@{}c@{}}
		(e^T_{m-c_0(\alpha)} \otimes I_n ) Z_{mn}  \\[.3em]   Z_r \end{array} \right] $ (resp., $\left[\begin{array}{@{}c@{}} 	(e^T_{m-i_0(\alpha)} \otimes I_n ) Z_{mn}  \\[.3em]   Z_r \end{array} \right] $)  is a basis of $\mathcal{N}_r(\mathcal{S}(\mu))$ (resp., $\mathcal{N}_l(\mathcal{S}(\mu))$).
\end{theorem}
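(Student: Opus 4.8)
The plan is to reduce the system-matrix recovery to the purely polynomial recovery results already available for the Fiedler pencil $T(\lam)=\lam M^P_{-m}-M^P_{\alpha}$, namely Theorem~\ref{recoGF} (minimal bases and indices) and Theorem~\ref{reco_GFP_PCh1} (eigenvectors), by exploiting the $2\times 2$ block structure of $\mathbb{T}(\lam)$ and $\mathcal{S}(\lam)$. First I would write out the defining block equations: a vector $\left[\begin{smallmatrix} u \\ v\end{smallmatrix}\right]$, with $u$ of length $mn$ and $v$ of length $r$, lies in $\mathcal{N}_r(\mathbb{T})$ exactly when
\[
T(\lam)\,u+(e_{m-i_0(\alpha)}\otimes C)\,v=0 \quad\text{and}\quad (e^T_{m-c_0(\alpha)}\otimes B)\,u+(A-\lam E)\,v=0.
\]
Since $(e^T_{m-c_0(\alpha)}\otimes B)\,u=B\,(e^T_{m-c_0(\alpha)}\otimes I_n)\,u$, the second equation is already the lower block of the system $\mathcal{S}(\lam)\left[\begin{smallmatrix} (e^T_{m-c_0(\alpha)}\otimes I_n)u \\ v\end{smallmatrix}\right]=0$. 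Thus the entire task is to show that the first block equation forces $P(\lam)\,(e^T_{m-c_0(\alpha)}\otimes I_n)\,u+C\,v=0$, which is precisely the upper block of the same system; the transposed statement with the roles of $c_0(\alpha)$ and $i_0(\alpha)$ interchanged handles the left null space through the map $\mathcal{H}$.

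The crux is therefore an inhomogeneous recovery property of the Fiedler pencil: $T(\lam)\,u=-(e_{m-i_0(\alpha)}\otimes I_n)\,g$ holds for some $g$ \emph{if and only if} $x:=(e^T_{m-c_0(\alpha)}\otimes I_n)\,u$ satisfies $P(\lam)\,x=-g$ (with $u$ reconstructible from $x$ and $g$). For the minimal-basis part (Part~I), where everything is computed over $\C(\lam)$ and $T(\lam)$, $P(\lam)$ are generically invertible, I would obtain this at once from the realization identity $(e^T_{m-c_0(\alpha)}\otimes I_n)\,T(\lam)^{-1}(e_{m-i_0(\alpha)}\otimes I_n)=P(\lam)^{-1}$, which is the rational-function form of the homogeneous isomorphisms of Theorem~\ref{recoGF} and is consistent with the block positions ($B$ at row $m-c_0(\alpha)$, $C$ at column $m-i_0(\alpha)$): applying $T(\lam)^{-1}$ to the first block equation and then the projection yields $x=-P(\lam)^{-1}Cv$, i.e.\ the upper block above. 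Setting $Z_n:=(e^T_{m-c_0(\alpha)}\otimes I_n)u$ and keeping $v$ defines $\mathcal{F}$; it is injective because $\mathcal{F}\left[\begin{smallmatrix}u\\v\end{smallmatrix}\right]=0$ forces $v=0$, whence $T(\lam)u=0$ and $(e^T_{m-c_0(\alpha)}\otimes I_n)u=0$, so $u=0$ by the homogeneous isomorphism of Theorem~\ref{recoGF}. Surjectivity follows by running the reconstruction backwards. Because the $v$-component is left untouched, the change of degrees passing from $\mathbb{T}$ to $\mathcal{S}$ comes only from the $u\mapsto x$ recovery, so the minimal-index shifts by $i(\alpha)$ and $c(\alpha)$ are inherited verbatim from the polynomial statement of Theorem~\ref{recoGF}.

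For the eigenvector part (Part~II) I cannot invert $T(\mu)$ at an eigenvalue $\mu$, so I would argue homogeneously instead. I would prove the inhomogeneous recovery lemma at a fixed $\mu$ directly from the explicit form of $M^P_{-m}$ and $M^P_{\alpha}$: the Horner-type recursions built into the Fiedler companion matrices express every block of $u$ in terms of $x=(e^T_{m-c_0(\alpha)}\otimes I_n)u$ and $g=Cv$, and the first block equation then collapses to $P(\mu)x=-g$. This is the same block elimination that underlies Theorem~\ref{reco_GFP_PCh1}, carried out now with the nonzero right-hand side $-(e_{m-i_0(\alpha)}\otimes I_n)g$ in place of $0$. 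Combined with the already-transferred second equation this shows $\mathcal{F}$ maps $\mathcal{N}_r(\mathbb{T}(\mu))$ into $\mathcal{N}_r(\mathcal{S}(\mu))$; equality of nullities, guaranteed by $\mathbb{T}(\lam)$ being a Rosenbrock strong linearization (Proposition~\ref{prop:gfprstlin}), together with the injectivity argument above upgrades this to a bijection of bases, and the analogous computation with inversions gives $\mathcal{H}$. Finally Theorem~\ref{StoG} passes the recovered data from $\mathcal{S}(\lam)$ down to $G(\lam)$. The main obstacle is exactly this inhomogeneous structural lemma: verifying that the Fiedler block elimination still reduces to $P(\lam)$ when the right-hand side is supported on the single block row $m-i_0(\alpha)$, and checking that the two indices $c_0(\alpha)$ (for $B$) and $i_0(\alpha)$ (for $C$) are matched so that no spurious coupling to the remaining blocks of $u$ survives. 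Once this lemma is in place, both parts follow from the bookkeeping sketched above.
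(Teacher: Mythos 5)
First, note that the paper does not prove this statement at all: Theorem~\ref{eigpgf_RCh4} is imported verbatim from \cite{rafiran2} and used as a black box, so there is no in-paper argument to compare yours against. Judged on its own, your overall strategy (peel off the lower block equation, which transfers to $\mathcal{S}$ for free, and reduce the upper block equation to an inhomogeneous recovery lemma for the Fiedler pencil $T(\lam)$) is the right shape and is essentially how such results are proved in the cited literature. But there is a concrete gap in Part~I. Your central tool there is the realization identity $(e^T_{m-c_0(\alpha)}\otimes I_n)\,T(\lam)^{-1}(e_{m-i_0(\alpha)}\otimes I_n)=P(\lam)^{-1}$, which presupposes that $T(\lam)$ and $P(\lam)$ are invertible over $\C(\lam)$, i.e.\ that $P$ is regular. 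Part~I, however, is exactly the case where $\mathcal{S}(\lam)$ (equivalently $G(\lam)$) is singular, and nothing then prevents $P(\lam)$ from being singular as well --- indeed a singular $P$ is the typical source of the minimal indices you are trying to recover. In that case $T(\lam)^{-1}$ does not exist and the argument collapses precisely where it is needed. You must instead run the homogeneous/inhomogeneous block elimination directly (the route you correctly propose for Part~II), or work with the unimodular matrices $U(\lam),V(\lam)$ realizing $U\mathbb{T}V=\diag(I_{(m-1)n},\mathcal{S})$ and identify the relevant block row of $V(\lam)$ as $e^T_{m-c_0(\alpha)}\otimes I_n$.

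Two further points are asserted rather than proved. The minimal-index shift is not inherited ``verbatim'' from Theorem~\ref{recoGF}: for a right null vector $\left[\begin{smallmatrix}u(\lam)\\ v(\lam)\end{smallmatrix}\right]$ of $\mathbb{T}$ the degree is $\max(\deg u,\deg v)$, so you must show both that $\deg u=\deg x+i(\alpha)$ (which requires the explicit Horner-type expressions of all blocks of $u$ in terms of $x=(e^T_{m-c_0(\alpha)}\otimes I_n)u$ and $v$) and that the $u$-block always dominates, before the shift by $i(\alpha)$ follows; the presence of the $v$-component makes this a genuinely new degree computation, not a quotation. Similarly, in Part~II you lean on Theorem~\ref{reco_GFP_PCh1}, which assumes $P$ regular, yet $\mathcal{S}$ regular does not imply $P$ regular (e.g.\ a singular polynomial part can be completed to a regular $G$ by the strictly proper part), so the injectivity step ``$T(\mu)u=0$ and $(e^T_{m-c_0(\alpha)}\otimes I_n)u=0$ imply $u=0$'' also needs to be established by the direct block elimination rather than by citation. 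None of these is fatal to the plan, but each is precisely the technical content the theorem encapsulates, and as written the proposal defers all of it.
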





We now describe the recovery of eigenvectors, minimal bases and minimal indices of $\mathcal{S}(\lam)$ from those of the EGFPs of $\mathcal{S}(\lam)$. 

\begin{theorem}\label{gfprrecoslame_RCh4} 
	Let  $ \mathbb{L}(\lam) := 
	\left[
	\begin{array}{@{}c|c@{}}
		L(\lam) &  e_{m-i_0 ( \sigma_1, \sigma)} \otimes C \\[.1em] \hline \\[-1em]
		e^T_{m-c_0(\sigma, \sigma_2)} \otimes B  & A-\lam E
	\end{array}
	\right]$ 
	be an EGFP $G(\lam)$ as given in (\ref{DefEGFP}). Let $ Z(\lam) := \left[\begin{array}{@{}c@{}} Z_{mn}(\lam) \\  Z_r(\lam) \end{array} \right]$ be an $(mn+r)\times p $ matrix polynomial, where $ Z_{mn}(\lam)$ has $ mn$ rows and $Z_r(\lam)$ has $r$ rows. Then we have the following. 
	
	\noin
	~(a) If $Z(\lam)$ is a right (resp., left) minimal basis of $\mathbb{L}(\lambda)$ then $\left[\begin{array}{@{}c@{}}
		(e^T_{m-c_0(\sigma, \sigma_2)} \otimes I_n ) Z_{mn}(\lambda) \\[.3em]   Z_r(\lam) \end{array} \right] $ (resp., $\left[\begin{array}{@{}c@{}} 	(e^T_{m-i_0(\sigma_1, \sigma)} \otimes I_n ) Z_{mn}(\lambda) \\[.3em]   Z_r(\lam) \end{array} \right] $)  is a right (resp., left) minimal basis of $\mathcal{S}(\lambda).$

	\noin
	~(b) Let $ \tau$ be given by $ \tau :=( -\beta, -m, -\gamma).$ Set $\alpha := (rev(\beta), \sig,rev(\gamma))$. Let $c(\alpha) $ and $ i(\alpha)$ be the total number of consecutions and inversions of the permutation $\alpha$. If $\varepsilon_1\leq   \cdots \leq\varepsilon_p$ are the right (resp., left) minimal indices of $\mathbb{L}(\lambda)$ then $\varepsilon_1 -i(\alpha)  \leq \cdots \leq \varepsilon_p - i(\alpha) $ (resp., $\varepsilon_1 -c(\alpha) \leq \cdots \leq \varepsilon_p - c(\alpha) $) are the right (resp., left) minimal indices of  $\mathcal{S}(\lambda).$
	
\end{theorem}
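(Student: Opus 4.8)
The plan is to reduce the statement to the already-established recovery rules for the Fiedler pencil $\mathbb{T}(\lam)$ of $G(\lam)$ (Theorem~\ref{eigpgf_RCh4}) by reusing the factorization of $\mathbb{L}(\lam)$ obtained in the proof of Theorem~\ref{EGFPofGLin}. Writing $\tau = (-\beta, -m, -\gamma)$ and $\alpha := (rev(\beta), \sigma, rev(\gamma))$ as in part~(b), I would first recall from that proof that
\[
\mathbb{L}(\lam) = \diag(\mathcal{A}, I_r)\, \mathbb{T}(\lam)\, \diag(\mathcal{B}, I_r),
\]
where $\mathcal{A} := M_{(\tau_1,\sigma_1)}(Y_1,X_1)\, M^P_{-\beta}$ and $\mathcal{B} := M^P_{-\gamma}\, M_{(\sigma_2,\tau_2)}(X_2,Y_2)$ are constant nonsingular matrices and $\mathbb{T}(\lam)$ is the Fiedler pencil of $G(\lam)$ associated with the permutation $\alpha$ of $\{0:m-1\}$. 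From the same proof, combined with Lemma~\ref{blk_rows_n_col}, I would also reuse the two block-row identities
\[
(e^T_{m-c_0(\alpha)} \otimes I_n)\, \mathcal{B} = e^T_{m-c_0(\sigma,\sigma_2)} \otimes I_n
\quad\text{and}\quad
\mathcal{A}\, (e_{m-i_0(\alpha)} \otimes I_n) = e_{m-i_0(\sigma_1,\sigma)} \otimes I_n.
\]

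For the right minimal basis recovery in part~(a), I would use that the map $x(\lam) \mapsto \diag(\mathcal{B}, I_r)\, x(\lam)$ gives an isomorphism $\mathcal{N}_r(\mathbb{L}) \to \mathcal{N}_r(\mathbb{T})$ which, being a strict equivalence by a constant matrix, carries a minimal basis to a minimal basis. Composing with the isomorphism $\mathcal{F}$ of Theorem~\ref{eigpgf_RCh4}(I) and writing $Z(\lam) = \left[\begin{array}{@{}c@{}} Z_{mn}(\lam) \\ Z_r(\lam)\end{array}\right]$, the image of a right minimal basis is
\[
\mathcal{F}\big(\diag(\mathcal{B}, I_r)\, Z(\lam)\big) = \left[\begin{array}{@{}c@{}} (e^T_{m-c_0(\alpha)} \otimes I_n)\, \mathcal{B}\, Z_{mn}(\lam) \\ Z_r(\lam)\end{array}\right] = \left[\begin{array}{@{}c@{}} (e^T_{m-c_0(\sigma,\sigma_2)} \otimes I_n)\, Z_{mn}(\lam) \\ Z_r(\lam)\end{array}\right],
\]
where the last equality is the first block-row identity above; this is a minimal basis of $\mathcal{N}_r(\mathcal{S})$, as claimed. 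For the left case I would argue symmetrically: the map $y(\lam) \mapsto \diag(\mathcal{A}^T, I_r)\, y(\lam)$ is an isomorphism $\mathcal{N}_l(\mathbb{L}) \to \mathcal{N}_l(\mathbb{T})$ preserving minimal bases, and composing with $\mathcal{H}$ of Theorem~\ref{eigpgf_RCh4}(I) together with the transpose of the second identity, namely $(e^T_{m-i_0(\alpha)} \otimes I_n)\, \mathcal{A}^T = e^T_{m-i_0(\sigma_1,\sigma)} \otimes I_n$, produces exactly the stated left recovery matrix $e^T_{m-i_0(\sigma_1,\sigma)} \otimes I_n$ acting on $Z_{mn}(\lam)$.

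For part~(b), the same factorization shows that $\mathbb{L}(\lam)$ and $\mathbb{T}(\lam)$ are strictly equivalent pencils and hence share the same right and left minimal indices. Theorem~\ref{eigpgf_RCh4}(I) expresses the minimal indices of $\mathcal{S}(\lam)$ as those of $\mathbb{T}(\lam)$ shifted by $-i(\alpha)$ on the right and by $-c(\alpha)$ on the left, so substituting the minimal indices of $\mathbb{L}(\lam)$ for those of $\mathbb{T}(\lam)$ yields the asserted relation. I do not expect a substantive obstacle here, since the whole argument is a composition of isomorphisms that are already available; the only steps requiring care are verifying that multiplication by a constant nonsingular matrix preserves the minimal-basis property and the minimal indices (immediate from strict equivalence, which leaves degrees unchanged) and checking that the two block-row identities recalled from the proof of Theorem~\ref{EGFPofGLin} are precisely what is needed to collapse the composite maps $\mathcal{F}\circ\diag(\mathcal{B},I_r)$ and $\mathcal{H}\circ\diag(\mathcal{A}^T,I_r)$ into the recovery matrices in the statement.
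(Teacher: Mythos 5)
Your proposal is correct and follows essentially the same route as the paper's own proof: both factor $\mathbb{L}(\lam) = \diag(\mathcal{A},I_r)\,\mathbb{T}(\lam)\,\diag(\mathcal{B},I_r)$ using the construction from the proof of Theorem~\ref{EGFPofGLin}, compose the resulting constant-matrix isomorphisms with the maps $\mathcal{F}$ and $\mathcal{H}$ of Theorem~\ref{eigpgf_RCh4}, collapse them via the two block-row identities, and invoke strict equivalence for the minimal indices. No gaps.
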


\begin{proof}	It is given that $L(\lam)$ is the EGFP of $P(\lam)$ associated with $\sig, \tau, \sig_i,~i=1,2$, and $\tau_i, ~i=1,2$. Let $\tau = (-\beta, -m, -\gamma )$ for some permutations $\beta$ and $\gamma$. Define $\alpha := (rev(\beta), \sig,rev(\gamma))$. Then $\alpha$ is a permutation of $\{0:m-1\}$ and
	$\mathbb{T}(\lam) :=\left[
	\begin{array}{@{}c|c@{}}
		T(\lam) &  e_{m-i_0 (\alpha)} \otimes C \\[.1em] \hline \\[-1em]
		e^T_{m-c_0(\alpha)} \otimes B  & A-\lam E
	\end{array}
	\right]$ is an FP of $G(\lam)$, where $T(\lam) := \lam M^P_{-m} -M^P_{\alpha} $ is a FP of $P(\lam)$. From the proof of Theorem~\ref{EGFPofGLin}, recall that $\diag(\mathcal{A},I_r) \mathbb{T}(\lam) \diag(\mathcal{B},I_r) =\mathbb{L}(\lam),$ where $\mathcal{A} := M_{(\tau_1,\sig_1)}(Y_1, X_1)  M^P_{-\beta}$ and  $\mathcal{B} :=
	M^P_{-\gamma} M_{(\sig_2,\tau_2)}(X_2, Y_2)$. Since $\mathbb{V}:=  \diag(\mathcal{B},I_r)$ is a nonsingular matrix, it is easily seen that the map $ \mathbb{V} : \mathcal{N}_r(\mathbb{L}) \rightarrow \mathcal{N}_r(\mathbb{T}),~ z(\lambda) \mapsto \mathbb{V} z(\lambda) ,$ is an isomorphism and maps a minimal basis of $\mathcal{N}_r(\mathbb{L}) $ to a minimal basis of $\mathcal{N}_r(\mathbb{T})$. On the other hand, by Theorem \ref{eigpgf_RCh4},
	$ \mathcal{F} : \mathcal{N}_r(\mathbb{T}) \rightarrow  \mathcal{N}_r( \mathcal{S}), \left[
	\begin{array}{@{}c@{}}
		x(\lam)\\ 
		y(\lambda) \\
	\end{array}
	\right]  \mapsto \left[
	\begin{array}{@{}c@{}}
		(e^T_{m-c_0(\alpha)}\otimes I_n)x(\lam) \\ 
		y(\lambda) \\
	\end{array}
	\right]  ,$  is an isomorphism and maps a minimal basis of $\mathcal{N}_r(\mathbb{T})$ to a minimal basis of $\mathcal{N}_r(\mathcal{S})$, where $x(\lambda) \in \mathbb{C}(\lambda)^{mn}$ and $y(\lambda) \in \mathbb{C}(\lambda)^{r}$. Consequently, $\mathcal{F}\,\mathbb{V}: \mathcal{N}_r(\mathbb{L}) \rightarrow \mathcal{N}_r( \mathcal{S}),$ $ z(\lam) \mapsto \mathcal{F}\,\mathbb{V} z(\lam),$  is an isomorphism and maps a minimal basis of $\mathcal{N}_r(\mathbb{L})$ to a minimal basis of $\mathcal{N}_r(\mathcal{S})$. Note that in the proof of Theorem~\ref{EGFPofGLin} it is shown that $(e^T_{m-c_0(\alpha)} \otimes I_n)\,\mathcal{B}= e^T_{m-c_0(\sigma, \sigma_2)} \otimes I_n$. Consequently, we have $\mathcal{F}\, \mathbb{V}  = \left[
	\begin{array}{@{}c@{\,\,\,}|@{\,}c@{}}
		e^T_{m-c_0(\sigma, \sigma_2)} \otimes I_n &  \\ \hline
		& I_r \\
	\end{array}
	\right], $ and hence the desired result for the recovery of right minimal bases follows.

	An analogous proof holds for describing the recovery of left minimal bases.

	Finally, let $\varepsilon_1\leq \cdots \leq\varepsilon_p$ be the right (resp., left) minimal indices of $\mathbb{L}(\lambda)$. Since the pencil $\mathbb{T}(\lambda)$ is strictly equivalent to $ \mathbb{L}(\lambda)$, $\varepsilon_1\leq \cdots \leq\varepsilon_p$ are also the right (resp., left) minimal indices of $\mathbb{T}(\lambda)$. Hence by Theorem \ref{eigpgf_RCh4},  $\varepsilon_1 -i(\alpha)  \leq \cdots \leq \varepsilon_p - i(\alpha) $ (resp., $\varepsilon_1 -c(\alpha) \leq \cdots \leq \varepsilon_p - c(\alpha) $) are the right (resp., left) minimal indices of  $\mathcal{S}(\lambda).$ This completes the proof. 
\end{proof}

The next result describes the recovery of  eigenvectors of $\mathcal{S}(\lam)$ from those of the EGFPs of $ \mathcal{S} (\lam)$ when $\mathcal{S}(\lam)$ is regular.

\begin{theorem} \label{gfprrecoslamregular_RCh4} Let  $ \mathbb{L}(\lam) := 
	\left[
	\begin{array}{@{}c|c@{}}
		L(\lam) &  e_{m-i_0 ( \sigma_1, \sigma)} \otimes C \\[.1em] \hline \\[-1em]
		e^T_{m-c_0(\sigma, \sigma_2)} \otimes B  & A-\lam E
	\end{array}
	\right]$ 
	be an EGFP $\mathcal{S}(\lam)$ given in (\ref{DefEGFP}). Suppose that $\mathcal{S}(\lam)$ is regular and $ \mu \in \C$ is an eigenvalue of $\mathcal{S}(\lam)$. Let $ Z := \left[\begin{array}{c} Z_{mn} \\  Z_r \end{array}\right]$ be an $(mn+r)\times p $ matrix such that $\rank(Z) = p,$ where $ Z_{mn}$ has $ mn$ rows and $Z_r$ has $r$ rows. If $Z$ is a  basis of $\mathcal{N}_r(\mathbb{L} (\mu))$ (resp., $\mathcal{N}_l(\mathbb{L} (\mu))$) then $\left[\begin{array}{@{}c@{}}
		(e^T_{m-c_0(\sigma, \sigma_2)} \otimes I_n ) Z_{mn} \\[.3em]   Z_r \end{array} \right] $ (resp., $\left[\begin{array}{@{}c@{}} 	(e^T_{m-i_0(\sigma_1, \sigma)} \otimes I_n ) Z_{mn}  \\[.3em]   Z_r \end{array} \right] $)  is a  basis of $\mathcal{N}_r(\mathcal{S}(\mu))$ (resp., $\mathcal{N}_l(\mathcal{S}(\mu))$).
	
\end{theorem}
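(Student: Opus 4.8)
The plan is to reproduce, in spirit, the argument already used for Theorem~\ref{gfprrecoslame_RCh4}, replacing the minimal-basis recovery for Fiedler pencils by the eigenvector recovery furnished by Theorem~\ref{eigpgf_RCh4}(II). The whole strategy rests on the factorization extracted in the proof of Theorem~\ref{EGFPofGLin}: writing $\tau = (-\beta, -m, -\gamma)$ and setting $\alpha := (rev(\beta), \sigma, rev(\gamma))$, one has $\mathbb{L}(\lam) = \diag(\mathcal{A}, I_r)\, \mathbb{T}(\lam)\, \diag(\mathcal{B}, I_r)$, where $\mathbb{T}(\lam)$ is the Fiedler pencil of $G(\lam)$ associated with the permutation $\alpha$ of $\{0:m-1\}$, and $\mathcal{A} := M_{(\tau_1,\sigma_1)}(Y_1, X_1)\, M^P_{-\beta}$ and $\mathcal{B} := M^P_{-\gamma}\, M_{(\sigma_2,\tau_2)}(X_2, Y_2)$ are nonsingular. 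I would also reuse, without reproving, the two block-row identities established there, namely $(e^T_{m-c_0(\alpha)} \otimes I_n)\,\mathcal{B} = e^T_{m-c_0(\sigma,\sigma_2)} \otimes I_n$ and $\mathcal{A}\,(e_{m-i_0(\alpha)} \otimes I_n) = e_{m-i_0(\sigma_1,\sigma)} \otimes I_n$.

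For the right eigenvectors I would argue as follows. Evaluating the factorization at $\lam = \mu$ gives $\mathbb{L}(\mu) = \diag(\mathcal{A}, I_r)\, \mathbb{T}(\mu)\, \mathbb{V}$ with $\mathbb{V} := \diag(\mathcal{B}, I_r)$ nonsingular, so the map $z \mapsto \mathbb{V} z$ is a linear isomorphism from $\mathcal{N}_r(\mathbb{L}(\mu))$ onto $\mathcal{N}_r(\mathbb{T}(\mu))$ and carries the basis $Z = [Z_{mn}; Z_r]$ to the basis $\mathbb{V} Z = [\mathcal{B} Z_{mn}; Z_r]$. Applying Theorem~\ref{eigpgf_RCh4}(II) to $\mathbb{T}$ then shows that $[(e^T_{m-c_0(\alpha)} \otimes I_n)\,\mathcal{B} Z_{mn}; Z_r]$ is a basis of $\mathcal{N}_r(\mathcal{S}(\mu))$, and substituting the identity $(e^T_{m-c_0(\alpha)} \otimes I_n)\,\mathcal{B} = e^T_{m-c_0(\sigma,\sigma_2)} \otimes I_n$ collapses this to the claimed $[(e^T_{m-c_0(\sigma,\sigma_2)} \otimes I_n) Z_{mn}; Z_r]$.

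The left-eigenvector case is symmetric: from $z^{T} \mathbb{L}(\mu) = 0$ and the nonsingularity of $\diag(\mathcal{B}, I_r)$ one gets that $\mathbb{W} := \diag(\mathcal{A}^{T}, I_r)$ maps $\mathcal{N}_l(\mathbb{L}(\mu))$ isomorphically onto $\mathcal{N}_l(\mathbb{T}(\mu))$, sending $Z$ to $[\mathcal{A}^{T} Z_{mn}; Z_r]$; Theorem~\ref{eigpgf_RCh4}(II) then produces the basis $[(e^T_{m-i_0(\alpha)} \otimes I_n)\,\mathcal{A}^{T} Z_{mn}; Z_r]$ of $\mathcal{N}_l(\mathcal{S}(\mu))$, and the transpose of the second identity, i.e. $(e^T_{m-i_0(\alpha)} \otimes I_n)\,\mathcal{A}^{T} = e^T_{m-i_0(\sigma_1,\sigma)} \otimes I_n$, finishes the recovery. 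I do not anticipate a genuine obstacle, since every nontrivial computation---the factorization and the two row identities---was already carried out for Theorem~\ref{EGFPofGLin}; the only point requiring a line of care is that these identities, originally invoked for minimal bases, apply unchanged after evaluation at the fixed eigenvalue $\mu$, precisely because they are purely algebraic statements about the constant matrices $\mathcal{A}$ and $\mathcal{B}$ and are independent of $\lam$.
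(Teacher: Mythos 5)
Your proposal is correct and coincides with the paper's own argument: the paper's proof of this theorem is literally a one-line remark that a verbatim repetition of the proof of Theorem~\ref{gfprrecoslame_RCh4}, with part (II) of Theorem~\ref{eigpgf_RCh4} in place of part (I), yields the result. You have simply written out that verbatim repetition explicitly, using the same factorization $\mathbb{L}(\lam) = \diag(\mathcal{A}, I_r)\, \mathbb{T}(\lam)\, \diag(\mathcal{B}, I_r)$ and the same block-row identities from the proof of Theorem~\ref{EGFPofGLin}.
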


\begin{proof} A verbatim proof of Theorem~\ref{gfprrecoslame_RCh4} together with part (II) of Theorem~\ref{eigpgf_RCh4} yields the desired results.
\end{proof}

\renewcommand{\thesection}{\Alph{section}.\arabic{section}}
\setcounter{section}{0}

\begin{appendix} ~
	
\section{Proofs of Lemmas~\ref{egfpr_right_reco1}, \ref{egfpr_right_reco} and \ref{egfpr_left_reco}} \label{appendix_reco}~ The following result will be used frequently in this section. 
	\begin{proposition} \label{sp1rowblock} Let  $ \alpha$ be an index tuple containing indices from $ \{0:m-1\}$ such that $\alpha$ satisfies the  SIP. Let  $ Z$ be an arbitrary matrix assignment for $\alpha$. Let $ 0 \leq s \leq m-1$. Suppose that $ s+1 \in \alpha$. Then we have the following.
		\begin{itemize}
			\item[(a)] 	If the subtuple of $\alpha$ with indices $ \{ s , s+1\}$ starts with $ s+1$, then $ ( e^T_{ m-s} \otimes I_n ) M_{\alpha} (Z) = e^T_{ m-( s+1+c_{ s+1} (\alpha))} \otimes I_n. $  
			
			\item[(b)] If the subtuple of $\alpha$ with indices $ \{ s , s+1\}$ ends with $ s+1$, then   $  M_{\alpha} (Z) ( e_{ m-s} \otimes I_n ) = e_{ m-( s+1+i_{ s+1} (\alpha))} \otimes I_n. $
		\end{itemize}
	\end{proposition}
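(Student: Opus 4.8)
The plan is to prove (a) by pushing the block row $e_{m-s}^{T}\otimes I_n$ through the product $M_\alpha(Z)=M_{j_1}(Z_1)\cdots M_{j_r}(Z_r)$, where $\alpha=(j_1,\dots,j_r)$, one factor at a time from the left, and to prove (b) by the mirror argument pushing $e_{m-s}\otimes I_n$ through from the right. For (a) the workhorse is identity (\ref{eqnreco2_PCh4}): for a single clean block row $e_{m-i}^{T}\otimes I_n$ with $0\le i\le m-1$, right multiplication by $M_j(Z)$ returns $e_{m-i}^{T}\otimes I_n$ when $j\notin\{i,i+1\}$ and $e_{m-(i+1)}^{T}\otimes I_n$ when $j=i+1$. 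The only way to leave the family of single clean blocks is the diagonal hit $j=i$, which produces the two--term row $(e_{m-i}^{T}\otimes Z)+(e_{m-(i-1)}^{T}\otimes I_n)$, read off directly from the block pattern of $M_i(Z)$. So I would maintain the invariant that after processing a prefix of $\alpha$ the accumulated row is still a single block $e_{m-i}^{T}\otimes I_n$, with an \emph{active index} $i$ that starts at $s$ and, by the above, can only stay put or increase by one (to $i+1$, precisely when the current factor carries index $i+1$).

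The crux is to show that the diagonal hit $j=i$ never occurs, so the invariant persists to the end. I would split this into start-up and steady state. At start-up the active index is $s$; by hypothesis the first index of $\alpha$ lying in $\{s,s+1\}$ equals $s+1$, so every factor encountered before the first $s+1$ carries an index outside $\{s,s+1\}$ and leaves the row untouched, and the first $s+1$ then advances the active index from $s$ to $s+1$ without ever meeting $j=i=s$. For the steady state, suppose the active index has reached some value $i\ge s+1$ at a step where a factor $M_i(Z_k)$ is about to be applied; this is a \emph{repeated} occurrence of the index $i$, since the active index became $i$ only through an earlier factor carrying $i$. By the SIP there is an occurrence of $i+1$ strictly between those two occurrences of $i$, and at that intermediate occurrence the active index, being at least $i$ and monotone nondecreasing, is advanced to at least $i+1$; hence by the time $M_i(Z_k)$ is reached the active index already exceeds $i$, so $j=i$ falls in the harmless range $j\notin\{\text{active},\text{active}+1\}$ and the row is unchanged. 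Thus no diagonal hit ever disturbs the single-block form.

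It then remains to identify the terminal active index. Starting from $s$, the active index rides the rising chain $s+1,s+2,\dots$ as far as the factors permit: it reaches a value $i$ exactly when $s+1,s+2,\dots,i$ occurs (in order) as a subtuple of $\alpha$. By the very definition of consecutions, the longest such rising run from $s+1$ terminates at $s+1+c_{s+1}(\alpha)$, since $(s+1,\dots,s+1+c_{s+1}(\alpha))$ is a subtuple while $(s+1,\dots,s+2+c_{s+1}(\alpha))$ is not; consequently the active index stops exactly at $s+1+c_{s+1}(\alpha)$, yielding $(e_{m-s}^{T}\otimes I_n)M_\alpha(Z)=e_{m-(s+1+c_{s+1}(\alpha))}^{T}\otimes I_n$ as claimed (the target index $m-(s+1+c_{s+1}(\alpha))$ lies in $\{1,\dots,m\}$ because the chain stays within $\{0:m-1\}$). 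Part (b) then follows verbatim from the transposed identity (\ref{eqnreco1_PCh4}), reading $\alpha$ from right to left: the hypothesis that the $\{s,s+1\}$-subtuple \emph{ends} with $s+1$ plays the role of the start-up condition, the SIP again rules out diagonal hits, and the terminal index is now governed by inversions, giving $s+1+i_{s+1}(\alpha)$. The main obstacle is the steady-state argument of the second paragraph, where the SIP must be invoked to guarantee that every repeated index has already been overtaken by the active index before it is encountered.
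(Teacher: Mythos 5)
Your proof is correct, but it takes a genuinely different route from the paper's. The paper first invokes Proposition~\ref{lemeqnconrsfcsf16a18n730_PCh3} to rewrite $\alpha$ up to equivalence as $(\alpha^L,\, s+1, s+2, \ldots, s+1+c_{s+1}(\alpha),\, \alpha^R)$ with $s, s+1 \notin \alpha^L$ and $s+1+c_{s+1}(\alpha),\, s+2+c_{s+1}(\alpha) \notin \alpha^R$, and then applies the one-step identity (\ref{eqnreco2_PCh4}) to the three blocks in turn: $\alpha^L$ leaves the row fixed, the collected rising chain advances it step by step to $e^T_{m-(s+1+c_{s+1}(\alpha))}\otimes I_n$, and $\alpha^R$ leaves it fixed again; part (b) is the mirror image using (\ref{eqnreco1_PCh4}). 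You instead sweep through $\alpha$ in its original order, maintaining the single-block invariant and invoking the SIP directly (via what is in effect a first-bad-event induction) to show that the diagonal case $j=i$ never fires. The two arguments encode the same combinatorics --- the row index climbs exactly the consecution chain at $s+1$ --- but the paper front-loads the work into the equivalence lemma, which reduces its proof to a three-line computation, whereas yours is self-contained, does not need to argue that the commutations underlying Proposition~\ref{lemeqnconrsfcsf16a18n730_PCh3} are insensitive to the (arbitrary) matrix assignment, and makes the role of the SIP explicit, at the cost of the more delicate steady-state bookkeeping that you correctly identify as the crux.
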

	
	\begin{proof} (a) Set $ p:= c_{s+1} (\alpha)$, i.e., $\alpha$ has $p$ consecutions at $s+1$. Since $ \alpha$ satisfies the SIP, by Proposition~\ref{lemeqnconrsfcsf16a18n730_PCh3},
		$$ \alpha \sim \big(  \alpha^L, s+1, s+2, \ldots , s+p+1 , \alpha^R\big),$$
		where $ s+1 \notin \alpha^L$ and $ s+p+1, s+p+2 \notin \alpha^R$. Further, since the subtuple of $ \alpha$ with indices $\{ s, s+1 \}$ starts with $ s+1$, we  have $ s \notin \alpha^L$. We denote by $(*)$ any arbitrary matrix assignment. Then we have
		\begin{align*}
			&  (e^T_{m-s} \otimes I_n) \,  M_{\alpha^L}(*) \, M_{(s+1, s+2, \ldots, s+p+1)} (*) \, M_{\alpha^R}  (*) \\
			& =  (e^T_{m-s} \otimes I_n) \,  M_{(s+1, s+2, \ldots, s+p+1)} (*) \,  M_{\alpha^R}  (*)~ \text{by}~ (\ref{eqnreco2_PCh4}) ~\text{since} ~ s , s+1 \notin \alpha^L\\
			& =  (e^T_{m-(s+p+1)} \otimes I_n) \, M_{\alpha^R}  (*)~ \text{by repeatedly applying}~ (\ref{eqnreco2_PCh4})\\
			& =  e^T_{m-(s+p+1)} \otimes I_n ~\text{by}~ (\ref{eqnreco2_PCh4}) ~\text{since}~ s+p+1, s+p+2 \notin \alpha^R.
		\end{align*}
		Thus, in particular, we have $ (e^T_{m-s} \otimes I_n) M_{\alpha} (Z) = e^T_{m-(s+p+1)} \otimes I_n. $	Since $ p = c_{s+1} (\alpha) $, the desired result follows.

		(b) Set $ q:= i_{s+1} (\alpha)$, i.e., $\alpha$ has $q$ inversions at $s+1$. Since $ \alpha$ satisfies the SIP, by Proposition~\ref{lemeqnconrsfcsf16a18n730_PCh3},
		$$ \alpha \sim \big(  \alpha^L, s+q+1,  \ldots  ,s+2, s+1, \alpha^R \big),$$
		where $ s+1 \notin \alpha^R$ and $ s+q+1, s+q+2 \notin \alpha^L$. Further, since the subtuple of $ \alpha$ with indices $\{ s, s+1 \}$ ends with $ s+1$, we  have $ s \notin \alpha^R$. Now  we have
		\begin{align*}
			&   M_{\alpha^L}(*) \, M_{(s+1+q, \ldots, s+2, s+1)} (*) \, M_{\alpha^R}  (*)  \, (e_{m-s} \otimes I_n) \\
			& =  M_{\alpha^L}(*) \, M_{(s+1+q, \ldots, s+2, s+1)} (*) \, (e_{m-s} \otimes I_n) ~ \text{by}~ (\ref{eqnreco1_PCh4}) \text{ since }  s , s+1 \notin \alpha^R\\
			& =   M_{\alpha^L}  (*) \, (e_{m-(s+q+1)} \otimes I_n) ~ \text{by repeatedly applying}~ (\ref{eqnreco1_PCh4})\\
			& =  e_{m-(s+q+1)} \otimes I_n ~\text{by}~ (\ref{eqnreco1_PCh4}) \text{ as } s+q+1, s+q+2 \notin \alpha^L.
		\end{align*}
		Thus, in particular, we have $  M_{\alpha} (Z) \, (e_{m-s} \otimes I_n) = e_{m-(s+q+1)} \otimes I_n. $	Since $ q= i_{s+1} (\alpha) $, the desired result follows. 
	\end{proof}

Analogous to Proposition~\ref{sp1rowblock}, we have the following results for negative index tuples.
	\begin{proposition} \label{ms_start} Let  $ \alpha$ be an index tuple containing indices from $ \{-m:-1\}$ such that $\alpha$ satisfies the  SIP. Let  $ Z$ be an arbitrary matrix assignment for $\alpha$. Let $ 1 \leq s \leq m-1$. Suppose that $ -s \in \alpha$. If the subtuple of $\alpha$ with indices $ \{ -s ,-( s+1) \}$ starts with $ -s$, then $ ( e^T_{ m-s} \otimes I_n ) M_{\alpha} (Z) = e^T_{ m-( s-c_{ -s} (\alpha)-1)} \otimes I_n. $
		
		Similarly, if  the subtuple of $\alpha$ with indices $ \{ -s ,-( s+1) \}$ ends with $ -s$, then  $  M_{\alpha} (Z) \, ( e_{ m-s} \otimes I_n ) = e_{ m-( s-i_{ -s} (\alpha)-1)} \otimes I_n. $
	\end{proposition}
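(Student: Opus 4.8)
The plan is to mirror the proof of Proposition~\ref{sp1rowblock} for negative index tuples, replacing the positive-index recovery identities (\ref{eqnreco2_PCh4}) and (\ref{eqnreco1_PCh4}) by their negative-index counterparts (\ref{eqnrecogfprp0329sept172046n_PCh2}) and (\ref{eqnrecogfprgsettingp0329sept172045n_PCh2}), and replacing Proposition~\ref{lemeqnconrsfcsf16a18n730_PCh3} by the negative-index reordering result Proposition~\ref{lemeqnconrsfcsf17a18d10}. The key enabling fact is that, since $\alpha$ satisfies the SIP, the block-row computation $(e^T_{m-s}\otimes I_n)M_\alpha(Z)$ depends only on $\alpha$ and not on the particular assignment $Z$; hence I may replace $\alpha$ by any equivalent tuple and compute with an arbitrary assignment, denoted $(*)$, exactly as in Proposition~\ref{sp1rowblock}.

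For the first claim, I would set $p:=c_{-s}(\alpha)$ and invoke Proposition~\ref{lemeqnconrsfcsf17a18d10}(a) to write
$$\alpha\sim\big(\alpha^L,\,-s,\,-(s-1),\,\ldots,\,-(s-p),\,\alpha^R\big),$$
with $-s\notin\alpha^L$ and $-(s-p),-(s-p-1)\notin\alpha^R$. Next I use the hypothesis that the subtuple of $\alpha$ on the indices $\{-s,-(s+1)\}$ starts with $-s$: since $\alpha^L$ already contains no $-s$, no $-(s+1)$ can precede the leading $-s$ of the chain, and therefore $-(s+1)\notin\alpha^L$ as well. With $-s,-(s+1)\notin\alpha^L$, identity (\ref{eqnrecogfprp0329sept172046n_PCh2}) leaves $e^T_{m-s}\otimes I_n$ unchanged under $M_{\alpha^L}(*)$; applying (\ref{eqnrecogfprp0329sept172046n_PCh2}) once for each index of the chain $-s,-(s-1),\ldots,-(s-p)$ advances the block-row index by one each time, yielding $e^T_{m-(s-p-1)}\otimes I_n$; and finally, because $-(s-p-1),-(s-p)\notin\alpha^R$, the tail $M_{\alpha^R}(*)$ fixes this row. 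Substituting $p=c_{-s}(\alpha)$ gives $(e^T_{m-s}\otimes I_n)M_\alpha(Z)=e^T_{m-(s-c_{-s}(\alpha)-1)}\otimes I_n$, as asserted.

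The second claim is entirely symmetric: set $q:=i_{-s}(\alpha)$, invoke Proposition~\ref{lemeqnconrsfcsf17a18d10}(b) to write $\alpha\sim(\alpha^L,-(s-q),\ldots,-(s-1),-s,\alpha^R)$ with $-s\notin\alpha^R$ and $-(s-q),-(s-q-1)\notin\alpha^L$, deduce $-(s+1)\notin\alpha^R$ from the hypothesis that the $\{-s,-(s+1)\}$-subtuple ends with $-s$, and then multiply $e_{m-s}\otimes I_n$ on the left: using (\ref{eqnrecogfprgsettingp0329sept172045n_PCh2}) the tail $M_{\alpha^R}(*)$ acts trivially, the chain climbs the block index to $e_{m-(s-q-1)}\otimes I_n$, and $M_{\alpha^L}(*)$ fixes it, giving $M_\alpha(Z)(e_{m-s}\otimes I_n)=e_{m-(s-i_{-s}(\alpha)-1)}\otimes I_n$.

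The only genuinely delicate step I anticipate is the deduction that $-(s+1)\notin\alpha^L$ (respectively $-(s+1)\notin\alpha^R$) from the ``starts/ends with $-s$'' hypothesis; everything else is a mechanical transcription of the positive-index argument. The care needed is to note that, since $M_{-s}$ and $M_{-(s+1)}$ do not commute, the relative order of the $-s$ and $-(s+1)$ entries is invariant under the equivalence $\sim$, so the hypothesis on the original $\alpha$ transfers to the reordered representative and pins the missing $-(s+1)$ to the correct side of the chain. I would also keep a light eye on the index ranges, observing that $1\le s-p$ and $1\le s-q$ hold because $-(s-p)$ and $-(s-q)$ are genuine entries of $\alpha$, whose indices lie in $\{-m:-1\}$, so every application of (\ref{eqnrecogfprp0329sept172046n_PCh2}) and (\ref{eqnrecogfprgsettingp0329sept172045n_PCh2}) stays within its stated range.
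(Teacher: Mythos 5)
Your proof is correct and is precisely the argument the paper intends: the paper omits an explicit proof of this proposition, declaring it "analogous to Proposition~\ref{sp1rowblock}", and your write-up carries out that analogy faithfully, substituting Proposition~\ref{lemeqnconrsfcsf17a18d10} for Proposition~\ref{lemeqnconrsfcsf16a18n730_PCh3} and the negative-index identities (\ref{eqnrecogfprp0329sept172046n_PCh2})--(\ref{eqnrecogfprgsettingp0329sept172045n_PCh2}) for the positive-index ones. Your extra care about the invariance of the relative order of $-s$ and $-(s+1)$ under $\sim$, and about the range bound $s-p\geq 1$, addresses points the paper's template proof leaves implicit.
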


\subsection{The proof of Lemma~\ref{egfpr_right_reco1}}~

	\begin{proof} (a) First we prove that  $(e_{m-c_0(\sigma)}^{T}\otimes I_n) \, M_{\sigma_{2}} (X_2) \,  M_{\tau_{2}}  (Y_2) = e_{m-c_0(\sigma,\sigma_2)}^{T}\otimes I_n $. Since $\sig$ has $c_0(\sig)$ $(\leq m-1)$ consecutions at $0$, we have $\sig \sim (\sig^L, 0,1,\ldots , c_0(\sig))$, where either $m\in \sigma^L$ or  $m \in -\tau$. If $-m \in \tau$ then the desired result follows from Lemma~\ref{blk_rows_n_col}.  But there is nothing special about $m \in \sig$. If $m \in \sig$ then by defining $\widetilde{\sig} := \sig \setminus \{m\}$ and $\widetilde{\tau} :=(\tau,-m)$, we have  $ c_0( \sig) = c_0(\widetilde{\sig})$ and  $ c_0( \sig, \sig_2) = c_0(\widetilde{\sig}, \sig_2)$. Now by Lemma~\ref{gfprptogArbitraryCoeff_PCh3}, $ (e_{m-c_0(\widetilde{\sigma})}^{T}\otimes I_n) \,  M_{\sigma_{2}} (X_2) M_{\tau_{2}} (Y_2) = e^T_{m-c_0(\widetilde{\sigma}, \sig_2)} \otimes I_n$. Consequently, $(e_{m-c_0(\sigma)}^{T}\otimes I_n) \,  M_{\sigma_{2}} (X_2)M_{\tau_{2}}  (Y_2) = (e_{m-c_0(\widetilde{\sigma})}^{T}\otimes I_n) \,  M_{\sigma_{2}} (X_2) M_{\tau_{2}}  (Y_2) = e^T_{m-c_0(\widetilde{\sigma}, \sig_2)} \otimes I_n =  e^T_{m-c_0(\sigma, \sig_2)} \otimes I_n$.
		
		Further, by similar arguments as above, we have $M_{\tau_{1}} (Y_1)\, M_{\sigma_{1}} (X_1) \, (e_{m-i_0(\sigma)}\otimes I_n) = e_{m-i_0(\sigma_1, \sigma)}\otimes I_n$. This completes the proof of (a).
		
		%
		%
		%
		%

		(b) Note that if $ k \notin \{ \pm (m-1), \pm m\}$, then $(e_1^T \otimes I_n) M_k (Z) = e_1^T \otimes I_n$ for any matrix $Z \in \mathbb{C}^{n \times n}$. Hence  $(e_1^T \otimes I_n) M_{\sig_2} (X_2) = e_1^T \otimes I_n$ and   $ M_{\sig_1} (X_1) (e_1 \otimes I_n) = e_1 \otimes I_n$ as $\sig_j$, $j=1,2,$ contains indices from $\{ 0:m-2\}$. Further, since $\tau_1 = \emptyset = \tau_2$, we have $ M_{\tau_1} (Y_1)= I_{mn} = M_{\tau_2} (Y_2)$. Consequently, we have  $ (e_{1}^{T}\otimes I_n) M_{\sigma_{2}} (X_2) M_{\tau_{2}}  (Y_2)=  e_{1}^{T}\otimes I_n$ and $M_{\tau_{1}} (Y_1) M_{\sigma_{1}} (X_1) (e_{1}\otimes I_n) =e_{1}\otimes I_n$. This completes the proof of (b).
	\end{proof}

\subsection{The proof of Lemma~\ref{egfpr_right_reco}}~		
	\begin{proof} (a) Given that $i_0(\omega) +1 \in \sig$ and $s:= i_0(\omega) + c_{i_0(\omega)+1}(\sig) +1 < m$. Set $q:=c_{i_0(\omega)+1}(\sig)$. Then $s = i_0(\omega) +1+q$. Note that $0,1, \ldots, i_0(\omega) \notin \sig$. Since $\sig$ is a sub-permutation and has $q$ consecutions at $i_0(\omega) +1$, we have $\sig \sim \Bigs (\widehat{\sig} , i_0(\omega) +1, i_0(\omega) +2, \ldots, i_0(\omega) +1+q \Bigs )$, that is,
		\begin{equation} \label{sig_formati0w}
			\sig \sim \Bigs (\widehat{\sig} , i_0(\omega) +1, i_0(\omega) +2, \ldots, s \Bigs ).
		\end{equation}
		Suppose that $s+1 \notin \sig_2$. Then $s \notin \sig_2$, since otherwise $(\sig, \sig_2)$ would not satisfy the SIP which would contradict the condition that $(\sig_1, \sig, \sig_2)$ satisfies the SIP. Thus $s , s+1 \notin \sig_2$. Hence by (\ref{eqnreco2_PCh4}), we have $(e^T_{m-s} \otimes I_n) M_{\sigma_{2}} (X_2) M_{\tau_{2}}  (Y_2) =  (e^T_{m-s} \otimes I_n)  M_{\tau_{2}}  (Y_2) = e^T_{m-s} \otimes I_n$, where the last equality follows from (\ref{eqnrecogfprp0329sept172046n_PCh2}) since $s \in \sig$ implies that $-s,-(s+1) \notin \tau_2.$	Now, since $ s+1 \notin \sig_2$, it is clear from (\ref{sig_formati0w}) that $c_{i_0(\omega)+1}(\sig, \sig_2) = c_{i_0(\omega)+1}(\sig)$. This shows that $p=s$ which yields the desired result.
		
		Next, suppose that $s+1 \in \sig_2$. Set $r := c_{s+1}(\sig_2)$, i.e., $\sig_2$ has $r$ consecutions at $s+1$. Then by Proposition~\ref{lemeqnconrsfcsf16a18n730_PCh3}, we have
		\begin{equation} \label{sig2form}
			\sig_2 \sim \left( \sigma_2^{L}, s+1, s+2, \ldots , s+1+r , \sigma_2^{R} \right)
		\end{equation}
		for some index tuples $ \sigma_2^{L} $  and $\sigma_2^{R}$, where $s+1 \notin \sigma_2^{L}$ and $ s+1+r, s+2+r \notin \sigma_2^{R}$.  Since $(\sig, \sig_2)$ satisfies the SIP and $s+1 \notin \sig_2^L$, it is clear from (\ref{sig_formati0w}) and (\ref{sig2form}) that $s \notin \sig_2^{L}.$ So the subtuple of $\sig_2$ with indices $\{ s, s+1 \}$ starts with $s+1$. Hence by Proposition~\ref{sp1rowblock},  $(e^T_{m-s} \otimes I_n) M_{\sigma_{2}} (X_2) =  e^T_{m-(s+1+r)} \otimes I_n.  $ Now since $ s+1+r \in \sig_2$, we have $s+1+r, s+2+r \in \sig$. This implies  $s+1+r,s+2+r \notin \tau_2$. Thus by (\ref{eqnrecogfprp0329sept172046n_PCh2}),  $ (e^T_{m-(s+1+r)} \otimes I_n ) M_{\tau_2} (Y_2) = e^T_{m-(s+1+r)} \otimes I_n.$ Now, by (\ref{sig_formati0w}) and (\ref{sig2form}),  $ c_{i_0(\omega)+1} (\sig,\sig_2) =  s+r - i_0(\omega) . $ Consequently, $ s+r +1 =p$ which proves (a).

		\noin ~(b)  Since $\sig_2$ contains indices from $\{0:m-2\}$, by (\ref{eqnreco2_PCh4}), we have $(e_1^T \otimes I_n)  M_{\sigma_{2}} (X_2) M_{\tau_{2}}  (Y_2) $ $= (e_1^T \otimes I_n) M_{\tau_{2}}  (Y_2) .$ As $i_0(\omega) +1 \in \sig$ and $s=m$, we have $m \in \sig$. Thus $-m \notin \tau$ and hence $-m \notin \tau_2$.
		
		Suppose that $-(m-1) \notin \tau_2$. As $-m,-(m-1) \notin \tau_2$, by (\ref{eqnrecogfprp0329sept172046n_PCh2}), we have $(e_1^T \otimes I_n) M_{\tau_{2}}  (Y_2) = e_1^T \otimes I_n.$ Hence the desired result follows from the fact that $-(m-1) \notin \tau_2$ implies $c_{-(m-1)} (\tau_2) = -1$.  Next, suppose that $-(m-1) \in \tau_2$. Set $q:=c_{-(m-1)} (\tau_2).$ As $-m \notin \tau_2$, the subtuple of $\tau_2$ with indices $\{-m, -(m-1) \}$ starts with $-(m-1).$ Hence by Proposition~\ref{ms_start}, $(e^T_{1} \otimes I_n) M_{\tau_2} (Y_2)= (e^T_{m-(m-1) } \otimes I_n) M_{\tau_2} (Y_2) = e^T_{m-(m-q-2) } \otimes I_n = e^T_{q+2 } \otimes I_n $  which proves (b).
		

		(c) Let   $i_0(\omega) <m$ and $i_0(\omega) +1 \notin \sig$, i.e., $-i_0(\omega), -(i_0(\omega) +1 )\in  \tau.$ This implies that $i_0(\omega), i_0(\omega) +1 \notin  \sig_2$. Thus by (\ref{eqnreco2_PCh4}), we have
		$$(e^T_{m-i_0(\omega)} \otimes I_n)  M_{\sigma_{2}} (X_2) M_{\tau_{2}}  (Y_2) = (e^T_{m-i_0(\omega)} \otimes I_n) M_{\tau_{2}}  (Y_2) . $$
		Since $\omega$ is a sub-permutation and  has $i_0(\omega)$ inversions at $0$, we have $\omega \sim (  i_0(\omega),\ldots,1,0, \widehat{\omega} ),$ where $i_0(\omega) +1 \in \widehat{\omega}$. Consequently, we have (recall that $\tau = -\omega$)
		\begin{equation} \label{tauform}
			\tau \sim \Big (  -i_0(\omega),\ldots,-1,-0, \widehat{\tau} \Big ) , \text{ where } -(i_0(\omega) +1)  \in \widehat{\tau}.
		\end{equation}
		Suppose that  $-i_0(\omega) \notin \tau_2$. Then $-(i_0(\omega)+1) \notin \tau_2$, since otherwise $(\tau, \tau_2)$ would not satisfy the SIP  which would contradict the condition that $(\tau_1, \tau, \tau_2)$ satisfies the SIP. As $-i_0(\omega) , -(i_0(\omega)+1) \notin \tau_2 $, by (\ref{eqnrecogfprp0329sept172046n_PCh2}), we have $(e^T_{m-i_0(\omega)} \otimes I_n) M_{\tau_{2}}  (Y_2) = e^T_{m-i_0(\omega)} \otimes I_n.$ Again, since $-i_0(\omega) \notin \tau_2 $, we have   $c_{-i_0(\omega)}(\tau_2) = -1$ which gives (\ref{eqn:recoev2CH3}).
		
		Next, suppose that  $-i_0(\omega) \in \tau_2$. Set $q := c_{-i_0(\omega)} (\tau_2).$ Then by Proposition~\ref{lemeqnconrsfcsf17a18d10},
		\begin{equation} \label{tau2form}
			\tau_2 \sim \Big (\tau_2^L , -i_0(\omega), -(i_0(\omega)-1) , \ldots, -(i_0(\omega)-q) , \tau_2^R \Big )
		\end{equation}
		for some index  tuples $\tau_2^L$ and $\tau_2^R$ such that $-i_0(\omega) \notin \tau_2^L$ and $-(i_0(\omega)-q), -(i_0(\omega)-q-1) \notin \tau_2^R$. As $(\tau,\tau_2)$ satisfies the SIP and $-i_0(\omega) \notin \tau_2^L$, it is clear from (\ref{tauform}) and (\ref{tau2form}) that $-(i_0(\omega)+1) \notin \tau_2^L$. Thus the  subtuple of $\tau_2$ with indices from $\{-i_0(\omega), -(i_0(\omega)+1) \}$ starts with $-i_0(\omega). $ Hence by Proposition~\ref{ms_start}, we have
		$$(e^T_{m-i_0(\omega)} \otimes I_n) M_{\tau_{2}}  (Y_2) = e^T_{m-(i_0(\omega)-q-1)} \otimes I_n, $$ which gives (\ref{eqn:recoev2CH3}) as $q = c_{-i_0(\omega)} (\tau_2).$ This completes the proof of (c).



		(d) Let $0 \in \omega$ and $i_0(\omega) =m$. Then  $\sig = \emptyset = \sig_2$. Hence $M_{\sig_2} (X_2) = I_{mn}$. Further, since $\tau_2$ contains indices from $\{-m:-2\}$, we have $-0,-1 \notin \tau_2$. Thus $(e^T_m \otimes I_n) M_{\tau_{2}}  (Y_2) =e^T_m \otimes I_n $ which proves (d).
	\end{proof}

\subsection{The proof of Lemma~\ref{egfpr_left_reco}}~

The proof of Lemma~\ref{egfpr_left_reco} is analogous to the proof of Lemma~\ref{egfpr_right_reco}.

\section{Proofs of Propositions~\ref{prop1:Bpenta}, \ref{Prop:2:Blkpenta}, \ref{not_blkpentaPve} and \ref{not_blkpentaNve}} \label{appendixBB}

\subsection{The proof of Proposition~\ref{prop1:Bpenta}}\label{appendix1}~
\begin{proof} It is easily seen that the elementary matrices $M_{j} (W)$, $j =0:m-1$, satisfies 
	$(e^T_{m-i} \otimes I_n) M_{j} (W) = 	(e^T_{m-i} \otimes W) + (e^T_{m-(i-1)} \otimes I_n)  \text{ for } j=i  \text{ and } i=1:m-1,$ and $(e^T_{m} \otimes I_n) M_{0} (W) = 	e^T_{m} \otimes W.$ Consequently, from (\ref{eqnreco2_PCh4}) and (\ref{eqnreco1_PCh4}) we have the following
	\begin{equation}\label{Pve_Brow}
		(e^T_{m-i} \otimes I_n) M_{j} (W)= 
		\left\{ 
		\begin{array}{ll}
			e^T_{m-(i+1)} \otimes I_n & \text{for }~ j=i+1 \text{ and } i=0:m-2,\\
			(e^T_{m-i} \otimes W) + (e^T_{m-(i-1)} \otimes I_n)  & \text{for } j=i  \text{ and } i=1:m-1,\\
			e^T_{m} \otimes W & \text{for } j=i = 0 ,\\
			e^T_{m-i} \otimes I_n & \left \{ \begin{array}{l}
				\mbox{otherwise, i.e., when}\\
				j \notin \{i,i+1\} \text{ for } i=0:m-1.  \end{array} \right.
		\end{array}
		\right. 
	\end{equation}

	It is given that $c_k(\alpha)  \leq 1$ and $i_k(\alpha)  \leq 1$ for $1 \leq k \leq m-1$. This implies $c_0(\alpha) \leq 2$ and  $i_0(\alpha) \leq 2$.  Recall from the definition of consecutions and inversions of any index tuple $\beta$ at any index $t$ that if $t \notin \beta$ then $c_t(\beta) = -1 = i_t(\beta)$. Consequently, we have 
	\begin{eqnarray} \label{ckik}
		\left \{ \begin{array}{l}
			-1 \leq c_k(\alpha) \leq 1 \text{ and } -1 \leq i_k(\alpha)   \leq 1 \text{ for } k = 1 : m-1,\\
			-1 \leq c_0(\alpha) \leq 2  \text{ and } -1 \leq i_0(\alpha)   \leq 2.
		\end{array}\right.
	\end{eqnarray}
	
	Consider $0 \leq k \leq m-1.$ We now prove (\ref{aim:to:prove}). There are two cases.
	
	Case-I: Suppose that the subtuple of $\alpha$ with indices $\{k,k+1\}$ starts with $k+1$. Then by Proposition~\ref{sp1rowblock}, we have $(e^T_{m-k} \otimes I_n)  M_\alpha (\mathcal{X}) = e^T_{m-(k+1+c_{k+1}(\alpha) )} \otimes I_n.$ Since $ -1 \leq c_{k+1} (\alpha)  \leq 1$, we have    (\ref{aim:to:prove}).

	Case-II: Suppose that the subtuple of $\alpha$ with indices $\{k,k+1\}$ starts with $k$. Let the row standard form of $\alpha$ be given by $ rsf(\alpha) : = (\beta , rev(a_k:k), \gamma) =$
	\begin{align} 
		\big (  rev(a_0:0), rev(a_1:1), \ldots, rev(a_k:k), rev(a_{k+1}:k+1),\ldots ,  rev(a_{m-1}:m-1) \big). \label{rsfalp}
	\end{align}
	Then we must have $rev(a_k:k) \neq \emptyset$, since otherwise it is clear from (\ref{rsfalp}) that the subtuple of $\alpha$ with indices $\{k,k+1\}$ would start with $k+1$ which would contradict our assumption of Case-II. Since $\alpha$ is a tuple containing nonnegative indices, by using (\ref{ckik})  it is clear from (\ref{rsfalp}) that  $a_j \geq j-1$ for all $ j =\{ 0:m-1\} \setminus \{ 2\}$ and $a_2 \geq 0$.  Further, since $rev (a_k:k) \neq \emptyset$, we have $ a_k \in \{k, k-1\}$ if $k \neq 2$, and  $ a_k \in \{k, k-1, k-2\}$ if $k = 2$.

	Since $\alpha \sim rsf(\alpha)$, without loss of generality, we assume that $\alpha = rsf(\alpha)$. That is, $\alpha : = (\beta , rev (a_k:k), \gamma) $. Let $\mathcal{Y}$ and $\mathcal{Z}$, respectively, be the corresponding matrix assignments for $\beta$ and $\gamma$ associated with $\mathcal{X}$. We denote by $X^p$, $p \in \{a_k :k\}$, the matrices associated with $\mathcal{X}$ and assigned to the index $p \in \{a_k:k \} $. That is, $\mathcal{X} =  \big (\mathcal{Y} , rev(X^{a_k} \cdots X^k), \mathcal{Z} \big )$.  Now we have
	\begin{align}
		& (e^T_{m-k} \otimes I_n)  M_\alpha (\mathcal{X}) =  (e^T_{m-k} \otimes I_n)  M_\beta (\mathcal{Y})  M_{rev(a_k:k)} (rev(X^{a_k} \cdots X^k))  M_{\gamma} (\mathcal{Z}) \nonumber\\
		& =  (e^T_{m-k} \otimes I_n)  M_{rev(a_k:k)} (rev(X^{a_k} \cdots  X^k))  M_{\gamma} (\mathcal{Z}) \text{ by } (\ref{Pve_Brow}) \text{ as } k,k+1 \notin \beta \nonumber\\
		& = \left\{ \begin{array}{ll} 
			\Big ((e^T_{m-(a_k-1)} \otimes I_n) + \sum\nolimits_{j= a_k}^{k} (e^T_{m-j} \otimes X^j) \Big ) M_{\gamma} (\mathcal{Z}) & \text{if } a_k > 0,   \\ \\[-1em] 
			\Big (\sum\nolimits_{j= a_k}^{k} (e^T_{m-j} \otimes X^j) \Big ) M_{\gamma} (\mathcal{Z})   & \text{if } a_k = 0 \end{array} \right. \label{eqn:rsf1}  \\
		&  \hspace{6cm} \text{ by applying }   (\ref{Pve_Brow})  \text{ repeatedly}. \nonumber 
	\end{align}
	Define $S := \{ a_k-1 :k \}$ if $a_k > 0$ and $S := \{a_k:k\}$ if $a_k =0$. It is clear from (\ref{eqn:rsf1}) that for evaluating $ (e^T_{m-k} \otimes I_n)  M_\alpha (X) $ we need to calculate  $(e^T_{m-\ell} \otimes I_n)  M_{\gamma} (\mathcal{Z}) $ for all $\ell \in S$.  Since $\gamma : = \big ( rev(a_{k+1}:k+1), \ldots,  rev(a_{m-1}:m-1) \big)$, it is clear that, for $\ell \in S$, we have either $\ell \notin \gamma $ or the subtuple of $\gamma$ with indices $\{ \ell, \ell +1\}$ starts with $\ell +1$. By Proposition~\ref{sp1rowblock}, we have $(e^T_{m-\ell} \otimes I_n)  M_{\gamma}  (\mathcal{Z}) = e^T_{m- (\ell+1 + c_{\ell+1}  (\gamma))} \otimes I_n.$  Hence from (\ref{eqn:rsf1}), we have $(e^T_{m-k} \otimes I_n)  M_\alpha (\mathcal{X})=$
	\begin{align}  \label{eqn:achieve}
		\left\{ \begin{array}{ll}  (e^T_{m- (a_k + c_{a_k} (\gamma)) } \otimes I_n) + \sum\nolimits_{j= a_k}^{k} (e^T_{m-( j+1 + c_{j+1}(\gamma))} \otimes X^j)  & \text{if } a_k >0, \\
			\sum\nolimits_{j= a_k}^{k} (e^T_{m-( j+1 + c_{j+1}(\gamma))} \otimes X^j)  & \text{if } a_k =0.
		\end{array} \right. 
	\end{align}
	Since $\gamma $ is a subtuple of $\alpha$, we have $ c_{t} (\gamma) \leq c_{t} (\alpha) $ for any index $t$. Hence $ -1 \leq  c_{\ell+1} (\gamma) \leq 1  $ for all $\ell \in S$. Further, since $ a_k \in \{k, k-1\}$ if $k \neq 2$, and  $ a_k \in \{k, k-1, k-2\}$ if $k = 2$, it is clear from (\ref{eqn:achieve}) that (\ref{aim:to:prove}) holds.  Hence $M_{\alpha} (\mathcal{X})$  is a block penta-diagonal matrix. This completes the proof.
\end{proof}

\subsection{The proof of  Proposition~\ref{Prop:2:Blkpenta}}\label{appendix2}~

\begin{proof} It is easily seen that the elementary matrices $M_{- j} (W)$, $j =0:m$, satisfies
	$$ (e^T_{m-(m-1)} \otimes I_n) M_{-m} (W) = e^T_{1} \otimes W  \text{ and}$$
	$$ (e^T_{m-i} \otimes I_n) M_{-j} (W) = (e^T_{m-i} \otimes W ) +  (e^T_{m-(i+1)} \otimes I_n)  \text{ for } j= i+1 , \text{ and } i=0:m-2.$$  
	Consequently, from (\ref{eqnrecogfprp0329sept172046n_PCh2}) and (\ref{eqnrecogfprgsettingp0329sept172045n_PCh2}) we have the following.	
	\begin{equation}\label{Nve_Brow}
		(e^T_{m-i} \otimes I_n) M_{-j} (W)= 
		\left\{ 
		\begin{array}{ll}
			(e^T_{m-i} \otimes W ) +  (e^T_{m-(i+1)} \otimes I_n) & \text{for } j= i+1 , \text{ and } i=0:m-2 \\
			e^T_{1} \otimes W & \text{for } j= i+1 ,~  i=m -1\\
			e^T_{m-(i-1)} \otimes I_n & \text{for } j = i, \text{ and } i=1:m-1 \\
			e^T_{m-i} \otimes I_n & \left \{  \begin{array}{l}
				\mbox{otherwise, i.e., when}\\
				j \notin \{i,i+1\} \text{ for } i=0:m-1. \end{array} \right. 
		\end{array}
		\right. 
	\end{equation} 
	The rest of the proof is similar to Proposition~\ref{prop1:Bpenta}.
\end{proof}

\subsection{The proof of Proposition~\ref{not_blkpentaPve}}\label{appendix3}~
\begin{proof} Suppose that $c_j(\alpha) \geq 2$ for some $j \geq 1$. Let $1 \leq  k \leq m-1$ be the smallest integer belongs to  $\alpha$ such that $ c_k(\alpha) \geq  c_\ell (\alpha)  $ for any index $\ell \in \alpha$ (i.e., $c_k(\alpha) \geq 2$). Then  by Proposition~\ref{lemeqnconrsfcsf16a18n730_PCh3}, we have $\alpha \sim ( \alpha^L , k,k+1, \ldots, k+c_k(\alpha), \alpha^R)$, where $ k \notin \alpha^L$. Now we have the following cases.
	
	(a) Assume that $k >1 $. Then $1 \notin \alpha^L$, since otherwise $k-1 \geq 1$ and $c_{k-1}(\alpha) > c_k(\alpha)$ which is a contradiction. This implies that the subtuple of $\alpha$ with indices $\{k-1,k\}$ starts with $k$. Hence by Proposition~\ref{sp1rowblock}, we have $ (e_{m-(k-1)}^T \otimes I_n) M_{\alpha} (\mathcal{X}) =  e_{m-(k+c_k(\alpha))}^T \otimes I_n$. That is, $\big (m-(k + c_k(\alpha)) \big )$-th block entry of $(m-(k-1))$-th block row is $I_n$. Since $c_k(\alpha) \geq 2$, we have $k+c_k(\alpha) > k+1 = (k-1)+2 $.  Hence $M_\alpha (\mathcal{X})$ is not a block penta-diagonal matrix. 
	
	(b) Assume that $k =1$. Then $( \alpha^L , (1: 1+c_1(\alpha)), \alpha^R)$, where $c_1(\alpha) \geq 2$. If $0 \notin \alpha^L$ then the subtuple of $ \alpha$ with indices $ \{0,1\}$ starts with $1$. Hence by Proposition~\ref{sp1rowblock}, we have $ (e_{m}^T \otimes I_n) M_{\alpha} (\mathcal{X}) =  e_{m-(1+c_1(\alpha))}^T \otimes I_n$. Since $c_1(\alpha) \geq 2$, we have $1+c_1(\alpha) \geq 3 $. Hence $M_\alpha (\mathcal{X})$ is not a block penta-diagonal matrix.

	Let $ X^0$ be the matrix assigned to the index $0 \in \alpha^L$ associated with $\mathcal{X}$, and let $\mathcal{X} = (\mathcal{X}^L, \mathcal{X}^M, \mathcal{X}^R)$, where $ \mathcal{X}^L, \mathcal{X}^M,$ and $ \mathcal{X}^R$ are the matrix assignments associated with $\alpha^L, (1:1+c_{1}(\alpha) )$ and $\alpha^R$, respectively. Then $ (e_{m}^T \otimes I_n) M_{\alpha^L} (\mathcal{X}^L)  =  e_{m}^T \otimes X^0$ by (\ref{Pve_Brow}) as $0 \in \alpha^L$ and $1 \notin \alpha^L$. Now the subtuple of $ \beta : =(1:1+c_{1}(\alpha) ,\alpha^R)$ with indices  $ \{0,1\}$ starts with $1$. Hence by Proposition~\ref{sp1rowblock}, we have $ (e_{m}^T \otimes ) M_{\beta} (*) =  e_{m-(1+c_1(\alpha))}^T \otimes I_n$, where $(*)$ denotes any arbitrary matrix assignment. Hence $ (e_{m}^T \otimes I_n) M_{\alpha} (\mathcal{X}) =  e_{m-(1+c_1(\alpha))}^T \otimes X^0$. Since $c_1(\alpha) \geq 2$, we have $1+c_1(\alpha) \geq 3 $. This shows that $M_\alpha (\mathcal{X})$ is not a block penta-diagonal matrix.	
	
	Similarly, if $i_j(\alpha) \geq 2$, for some $j \geq 1$, then it can be shown that $M_\alpha (\mathcal{X})$ is not a block penta-diagonal matrix.
\end{proof}

\subsection{The proof of the Proposition~\ref{not_blkpentaNve}}~ \label{appendix4}~

The proof of Proposition~\ref{not_blkpentaNve} is similar to the proof of  Proposition~\ref{not_blkpentaPve}.

\section{Proof of Proposition~\ref{OF_EGFPR}} \label{OpeFreeAppen}

First, we present some technical results which will be used to prove that EGFPs are operation free pencils.  Recall from (\ref{Pve_Brow}) and (\ref{Nve_Brow}) that the Fiedler matrices $M_j^P$, $j \in \{-m:m-1\}$, satisfies the following: 
\begin{equation}\label{M_potri}
	(e^T_{m-i} \otimes I_n) M^P_{j}=
	\left\{
	\begin{array}{ll}
		e^T_{m-(i+1)} \otimes I_n & \mbox{if } j=i+1, ~ i=0:m-2\\
		(e^T_{m-i} \otimes (-A_i) ) + (e^T_{m-(i-1)} \otimes I_n)  & \mbox{if}~j=i,~ i=1:m-1 \\
		e^T_{m} \otimes (-A_0) & \mbox{if}~j=i = 0\\
		e^T_{m-i} \otimes I_n & \begin{array}{l}
			\mbox{otherwise, i.e., when}\\
			~j \notin \{i,i+1\},~ i=0:m-1,\end{array}
	\end{array}
	\right.
\end{equation}
\begin{equation}\label{M_netri}
	(e^T_{m-i} \otimes I_n) M^P_{-j}=
	\left\{
	\begin{array}{ll}
		(e^T_{m-i} \otimes A_{i+1} ) +  (e^T_{m-(i+1)} \otimes I_n) & \mbox{if }~ j= i+1 ,~ i=0:m-2 \\
		e^T_{1} \otimes A_m & \mbox{if}~j= i+1 ,~ i=m -1\\
		e^T_{m-(i-1)} \otimes I_n & \mbox{if}~j = i~ \mbox{and}~ i=1:m-1 \\
		e^T_{m-i} \otimes I_n & \begin{array}{l}
			\mbox{otherwise, i.e., when}\\
			~j \notin \{i,i+1\},~ i=0:m-1.\end{array}
	\end{array}
	\right.
\end{equation}

 The following result follows from Proposition~\ref{sp1rowblock}.  We denote by $(*)$ any arbitrary matrix assignment.

\begin{remark} \label{lem_ip1_starts}  Let  $\alpha$ be an index tuple containing indices from $ \{0:m-1\}$ such that $\alpha$ satisfies the  SIP.  Let $ 0 \leq s \leq m-1$. Suppose that $s, s+1 \notin \alpha$ or the  subtuple of $\alpha$ with indices $ \{ s , s+1\}$ starts with $ s+1$. Then it follows from Proposition~\ref{sp1rowblock} that $ ( e^T_{ m-s} \otimes I_n ) M_{\alpha} (*) = e^T_{ m-k} \otimes I_n $, where $k = s+1+c_{ s+1} (\alpha)$.
\end{remark}

The following result follows from Proposition~\ref{ms_start}.

\begin{remark} \label{lem_mi_starts} Let  $ \beta$ be an index tuple containing indices from $ \{-m:-1\}$ such that $\beta$ satisfies the  SIP. Let $ 1 \leq s \leq m-1$.  Suppose that $-s, -(s+1) \notin \beta$ or the subtuple of $\beta$ with indices $ \{ -s ,-( s+1) \}$ starts with $ -s$. Then it follows from Proposition~\ref{ms_start} that $ ( e^T_{ m-s} \otimes I_n ) M_{\alpha}  = e^T_{ m-t} \otimes I_n $, where $t =  s-1-c_{ -s} (\alpha)$.
\end{remark}


\begin{proposition} \label{lem_dpos} Let  $\alpha$ be an index tuple containing indices from $ \{0:m-1\}$ such that $\alpha$ satisfies the  SIP. Let $ 0 \leq s \leq m-1$. Suppose that the subtuple of $\alpha$ with indices $\{s+1,s+2\}$ starts with $s+2$. Let $ \alpha = (\beta, s+2,  \gamma) $, where $s+1,s+2 \notin \beta$. Further, suppose that the subtuple of $\gamma$ with indices $\{s,s+1\}$ starts with $s+1$. Then $( e^T_{ m-(s+1)} \otimes I_n ) M_{\alpha} (*)  \neq ( e^T_{ m-s} \otimes I_n ) M_{\gamma} (*)$. Moreover, $( e^T_{ m-(s+1)} \otimes I_n ) M_{\alpha} (*) = e^T_{ m-k} \otimes I_n$ and $( e^T_{ m-s} \otimes I_n ) M_{\gamma}  (*)= e^T_{ m-\ell} \otimes I_n$ with $k > \ell $, where $k = s+2+c_{ s+2} (\alpha)$ and $\ell = s+1+c_{ s+1} (\gamma)$.
\end{proposition}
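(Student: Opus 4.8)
The plan is to read off each of the two row vectors directly from Proposition~\ref{sp1rowblock} and then compare the two resulting indices, the whole difficulty being concentrated in the final comparison.

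First I would establish the two equalities. For the left-hand side I apply Proposition~\ref{sp1rowblock}(a) to $\alpha$ with the role of its index ``$s$'' played by $s+1$: the hypothesis that the subtuple of $\alpha$ with indices $\{s+1,s+2\}$ starts with $s+2$ is exactly the ``starts with $(s+1)+1$'' condition required there, and $s+2\in\alpha$ holds since $s+2$ is the split index in $\alpha=(\beta,s+2,\gamma)$. This gives $(e^T_{m-(s+1)}\otimes I_n)M_\alpha(*)=e^T_{m-k}\otimes I_n$ with $k=s+2+c_{s+2}(\alpha)$. For the right-hand side I first note that $\gamma$, being a contiguous tail of $\alpha$, inherits the SIP (any repeated pair of $\gamma$, together with the intermediate successor guaranteed by the SIP of $\alpha$, all lie inside $\gamma$). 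I then apply Proposition~\ref{sp1rowblock}(a) to $\gamma$ with its index ``$s$'' equal to $s$, using the hypothesis that the subtuple of $\gamma$ with indices $\{s,s+1\}$ starts with $s+1$ (which in particular forces $s+1\in\gamma$); this yields $(e^T_{m-s}\otimes I_n)M_\gamma(*)=e^T_{m-\ell}\otimes I_n$ with $\ell=s+1+c_{s+1}(\gamma)$.

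It then remains to prove $k>\ell$, which after cancelling $s$ is equivalent to $c_{s+2}(\alpha)\ge c_{s+1}(\gamma)$; this is the heart of the matter. Writing $q:=c_{s+1}(\gamma)$, the case $q=0$ is immediate, since $s+2\in\alpha$ already gives $c_{s+2}(\alpha)\ge 0$ and hence $k\ge s+2>s+1=\ell$. For $q\ge 1$ I would realize the consecution at $s+1$ in $\gamma$ by concrete positions $R_1<R_2<\cdots<R_{q+1}$ inside $\gamma$ carrying the values $s+1,s+2,\dots,s+1+q$, and let $P_0$ be the position of the split index $s+2$, the first occurrence of $s+2$ in $\alpha$; since $\gamma$ lies to the right of $P_0$ we have $P_0<R_2$. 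I then build, by induction on $j$, positions $P_0<p_1<\cdots<p_q$ carrying the values $s+2,s+3,\dots,s+2+q$: given $p_{j-1}$ of value $s+1+j$ with $p_{j-1}<R_{j+1}$, the two equal values $s+1+j$ occurring at $p_{j-1}$ and at the later position $R_{j+1}$ force, by the SIP of $\alpha$, a position $p_j$ of value $s+2+j$ with $p_{j-1}<p_j<R_{j+1}$; as the $R_i$ increase, $p_j<R_{j+1}<R_{j+2}$, which maintains the hypothesis for the next step. This exhibits $(s+2,s+3,\dots,s+2+q)$ as a subtuple of $\alpha$, so $c_{s+2}(\alpha)\ge q$ and therefore $k=s+2+c_{s+2}(\alpha)\ge s+2+q>s+1+q=\ell$. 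Finally, both $k$ and $\ell$ lie in $\{1,\dots,m-1\}$ (a consecution at $s+2$, resp.\ $s+1$, reaches at most the index $m-1$), so $k>\ell$ gives $e^T_{m-k}\ne e^T_{m-\ell}$ and hence $(e^T_{m-(s+1)}\otimes I_n)M_\alpha(*)\ne(e^T_{m-s}\otimes I_n)M_\gamma(*)$, proving both assertions. The main obstacle is precisely this inequality $c_{s+2}(\alpha)\ge c_{s+1}(\gamma)$: the consecution of $\gamma$ alone only yields $c_{s+2}(\gamma)\ge q-1$, and the extra unit needed to strictly beat $\ell$ must be extracted from the first (split) occurrence of $s+2$ through the repeated use of the SIP described above.
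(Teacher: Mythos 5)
Your proposal is correct and follows essentially the same route as the paper: both rows are evaluated via Proposition~\ref{sp1rowblock} (equivalently Remark~\ref{lem_ip1_starts}), and the inequality $k>\ell$ is reduced to showing that the split occurrence of $s+2$ together with the consecution $(s+1,\dots,s+1+q)$ inside $\gamma$ forces, via the SIP, a subtuple $(s+2,\dots,s+2+q)$ of $\alpha$, whence $c_{s+2}(\alpha)\ge c_{s+1}(\gamma)$. The only difference is that you spell out the position-by-position induction that the paper compresses into ``it follows that $(s+2,\ldots,s+p+1,s+p+2)$ must be a subtuple of $\alpha$,'' which is a welcome elaboration rather than a deviation.
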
 

\begin{proof} Since the subtuple  of $\alpha$ with indices $\{s+1,s+2\}$ starts with $s+2$, by Remark~\ref{lem_ip1_starts}, we have $( e^T_{ m-(s+1)} \otimes I_n ) M_{\alpha}(*)  = e^T_{ m-k} \otimes I_n$, where $k = s+2+c_{ s+2} (\alpha)$. Similarly, since the subtuple of $\gamma$ with indices $\{s,s+1\}$ starts with $s+1$, by Remark~\ref{lem_ip1_starts}, we have $( e^T_{ m-s} \otimes I_n ) M_{\gamma} (*)  = e^T_{ m-\ell} \otimes I_n$, where $\ell = s+1+c_{ s+1} (\gamma)$. Now let $c_{s+1}(\gamma) = p$, i.e., $(s+1,s+2,\ldots,s+p+1)$ is a subtuple of $\gamma$. Since $\alpha  = (\beta, s+2,  \gamma)$ satisfies the SIP, it follows that $(s+2,\ldots,s+p+1,s+p+2)$ must be a subtuple of $\alpha$. Hence $c_{s+2} (\alpha) \geq p$ and $ k = s+2+c_{ s+2} (\alpha) \geq s+p+2 > s+p+1 = \ell$ which gives the desired result.
\end{proof}

The following result is analogous to Proposition~\ref{lem_dpos}.

\begin{proposition}  \label{lem_dneg} Let  $\alpha$ be an index tuple containing indices from $ \{-m:-1\}$ such that $\alpha$ satisfies the SIP. Suppose that the subtuple of $\alpha$ with indices $\{-s,-(s+1)\}$ starts with $-s$. Let $ \alpha = (\beta, -s,  \gamma) $, where $-s,-(s+1) \notin \beta$. Further, suppose that the subtuple of $\gamma$ with indices $\{-(s+1),-(s+2)\}$ starts with $-(s+1)$. Then $ (e^T_{m-s} \otimes I_n) M_{\alpha} (*) \neq  (e^T_{m-(s+1)} \otimes I_n) M_{\gamma} (*)$. Moreover, $ (e^T_{m-s} \otimes I_n) M_{\alpha} (*) = e^T_{m-k} \otimes I_n  $ and $(e^T_{m-(s+1)} \otimes I_n) M_{\gamma}(*) = e^T_{m-\ell} \otimes I_n  $ with $k< \ell $, where $k = s-1- c_{-s}(\alpha) $ and $\ell  = s- c_{-(s+1)}(\gamma)$.
\end{proposition}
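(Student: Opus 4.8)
The plan is to follow the proof of Proposition~\ref{lem_dpos} line by line, trading each nonnegative-index row identity for its negative-index counterpart. First I would compute the two row vectors separately. Because $\alpha=(\beta,-s,\gamma)$ with $-s,-(s+1)\notin\beta$, the subtuple of $\alpha$ with indices $\{-s,-(s+1)\}$ starts with $-s$, so Remark~\ref{lem_mi_starts} (which rests on Proposition~\ref{ms_start}) gives $(e^T_{m-s}\otimes I_n)M_\alpha(*)=e^T_{m-k}\otimes I_n$ with $k=s-1-c_{-s}(\alpha)$. Applying the same remark to the contiguous suffix $\gamma$ — which inherits the SIP from $\alpha$ — with $s$ replaced by $s+1$, and using that the subtuple of $\gamma$ with indices $\{-(s+1),-(s+2)\}$ starts with $-(s+1)$, yields $(e^T_{m-(s+1)}\otimes I_n)M_\gamma(*)=e^T_{m-\ell}\otimes I_n$ with $\ell=s-c_{-(s+1)}(\gamma)$. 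This already produces the explicit forms of $k$ and $\ell$ asserted in the statement, so the whole proposition reduces to the single inequality $k<\ell$.

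The crux is therefore to establish $c_{-s}(\alpha)\ge c_{-(s+1)}(\gamma)$. Writing $p:=c_{-(s+1)}(\gamma)$, Definition~\ref{coninvoftuple} tells us that $(-(s+1),-s,-(s-1),\ldots,-(s+1-p))$ is a subtuple of $\gamma$, hence of $\alpha$, lying entirely to the right of the explicit $-s$. I would then invoke the SIP, whose concrete meaning for a tuple of negative indices is identical to the nonnegative case: between any two occurrences of an index $t$ there must lie an occurrence of $t+1$. Pairing the explicit $-s$ with the $-s$ inside this consecution forces an occurrence of $-(s-1)$ strictly between them; pairing that freshly produced $-(s-1)$ with the consecution's $-(s-1)$ forces an occurrence of $-(s-2)$ between them; iterating up the consecution produces, in strictly increasing positions, the indices $-s,-(s-1),\ldots,-(s-p)$. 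Thus $(-s,-s+1,\ldots,-s+p)$ is a subtuple of $\alpha$, i.e. $c_{-s}(\alpha)\ge p$, exactly mirroring the step that $(s+2,\ldots,s+p+2)$ is a subtuple of $\alpha$ in the proof of Proposition~\ref{lem_dpos}.

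Combining the two parts gives $k=s-1-c_{-s}(\alpha)\le s-1-p<s-p=\ell$, so $k<\ell$ and in particular $e^T_{m-k}\otimes I_n\neq e^T_{m-\ell}\otimes I_n$, which is the claimed inequality. I expect the only genuinely delicate point to be the inductive SIP chaining in the second paragraph: one must check that each newly forced index is inserted strictly to the left of the corresponding consecution entry, so that the successive pairs of equal indices remain correctly ordered, and that the hypothesis that $\alpha$ satisfies the SIP keeps every forced index within the admissible range $\{-m:-1\}$ (it rules out, for instance, $p=s$, which would illegitimately force a $0$). Once this bookkeeping is arranged as in Proposition~\ref{lem_dpos}, everything else is the routine substitution of negative-index identities, which is why the statement is recorded merely as the analogue of the positive-index result.
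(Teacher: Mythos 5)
Your argument is correct and is precisely what the paper intends: the paper offers no separate proof of Proposition~\ref{lem_dneg}, recording it only as the analogue of Proposition~\ref{lem_dpos}, and your proof is a faithful negative-index transcription of that one — Remark~\ref{lem_mi_starts}/Proposition~\ref{ms_start} in place of Remark~\ref{lem_ip1_starts}/Proposition~\ref{sp1rowblock} to get $k$ and $\ell$, followed by the SIP chaining that yields $c_{-s}(\alpha)\ge c_{-(s+1)}(\gamma)$ and hence $k<\ell$. Your explicit unwinding of the SIP induction (including the observation that the index range $\{-m:-1\}$ together with the SIP excludes the degenerate case $p=s$) fills in a step the paper leaves implicit even in the positive-index proof, and it checks out.
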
 

For $a,b,q \in \mathbb{Z}$, we denote $\{a :_q b \} : = \{ a, a+q, a+2q, \ldots, b\}$. The following facts will be used to prove that EGFPs are operation free pencils.

\begin{remark} \label{rem_pn} Let $1\leq h_1 \leq h_2 < h_2+1 \leq h_3 \leq m-1$. Let $\alpha$ be an index tuple containing indices from $\{-h_1:_{-1} -h_2\} \cup \{ h_2+1 : h_3\}$. Let $\beta$ (resp., $\gamma$) be the subtuple of $\alpha$ with indices $\{-h_1:_{-1} -h_2\}$ (resp., $\{ h_2+1 : h_3\}$). Suppose that $\beta$ and $\gamma$ satisfy the SIP. Then from (\ref{M_potri}) and (\ref{M_netri}) we have the following. 
	\begin{itemize}
		\item  Let $h_1 -1 \leq  j  \leq h_2-1$. If the subtuple of $\alpha$ with indices $\{-j,-(j+1)\}$ starts with $-j$ then the indices $ h_2+1 : h_3$ in $\alpha$ are redundant for evaluating $(e^T_{m-j} \otimes I_n) M^P_{\alpha}$, i.e., $(e^T_{m-j} \otimes I_n) M^P_\alpha  = (e^T_{m-j} \otimes I_n) M^P_\beta $.
		
		\item If the the subtuple of $\alpha$ with indices $\{-h_2, h_2+1\}$ starts with $-h_2$ then similarly as above the indices $ h_2+1 : h_3$ in $\alpha$ are redundant for evaluating $(e^T_{m-h_2} \otimes I_n) M^P_{\alpha}$. On the other hand, if the subtuple of $\alpha$ with indices $\{-h_2, h_2+1\}$ starts with $h_2+1$ then the indices $- h_1 :_{-1} -h_2$ are redundant for evaluating $(e^T_{m- h_2} \otimes I_n) M^P_{\alpha} $, i.e.,  $ (e^T_{m- h_2} \otimes I_n) M^P_{\alpha} = (e^T_{m- h_2} \otimes I_n) M^P_{\gamma} $.
		
		\item Let $h_2 +1 \leq  j  \leq h_3$. If the subtuple of $\alpha$ with indices $\{j,j+1\}$ starts with $j+1$ then the indices $- h_1 :_{-1} -h_2$ are redundant for evaluating $(e^T_{m- j} \otimes I_n) M^P_{\alpha} $, i.e,   $ (e^T_{m- j} \otimes I_n) M^P_{\alpha} = (e^T_{m-j} \otimes I_n) M^P_{\gamma} $.
		
	\end{itemize}
\end{remark}

The following result will be useful for proving that EGFPs are operation free.

\begin{lemma} \label{OF_lem} Let $1\leq h_1 \leq h_2 < h_2+1 \leq h_3 \leq m-1$. Let $\alpha$ be an index tuple containing indices from $\{-h_1:_{-1} -h_2\} \cup \{ h_2+1 : h_3\}$. Define $\beta :=$ the subtuple of $\alpha$ with indices $\{-h_1:_{-1} -h_2\}$ and $\gamma$ be the subtuple of $\alpha$ with indices $\{ h_2+1 : h_3\}$. Suppose that $\beta$ and $\gamma$ satisfies the SIP. Then $ M_{\alpha} (*)$ is operation free.
\end{lemma}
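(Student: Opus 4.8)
The plan is to show that every block entry of $M_{\alpha}(*)$ is one of $0$, $\pm I_n$, or a single matrix from the assignment, with no block being a product or a genuine sum of two non-identity matrices; this is exactly the operation-free property. I would prove it by evaluating the block rows $(e^T_{m-i}\otimes I_n)M_{\alpha}(*)$ for $i=0:m-1$ one at a time, using the row-action formulas (\ref{Pve_Brow}) and (\ref{Nve_Brow}). The structural fact that drives the argument is commutation: an elementary matrix $M_{-j}(*)$ with $j\in\{h_1:h_2\}$ and an elementary matrix $M_{k}(*)$ with $k\in\{h_2+1:h_3\}$ satisfy $||-j|-|k||=|j-k|\geq 1$, and this difference is $\geq 2$ in every case except the single boundary pair $\{-h_2,\,h_2+1\}$. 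Hence every index of $\beta$ other than $-h_2$ commutes with every index of $\gamma$ other than $h_2+1$.

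First I would dispose of the rows away from the boundary. An index $-j$ of $\beta$ (resp. $k$ of $\gamma$) can alter the row $(e^T_{m-i}\otimes I_n)$ only when $j\in\{i,i+1\}$ (resp. $k\in\{i,i+1\}$); consequently $\beta$ affects only the rows $i\in\{h_1-1:h_2\}$, $\gamma$ affects only the rows $i\in\{h_2:h_3\}$, and these ranges meet solely at $i=h_2$. For each $i\neq h_2$ the indices of the irrelevant part pass through untouched via the ``$j\notin\{i,i+1\}$'' case of (\ref{Pve_Brow})--(\ref{Nve_Brow}); this is the content of Remark~\ref{rem_pn}, extended from the trivial to an arbitrary matrix assignment, the extension being immediate since the redundancy is caused only by commutation and pass-through and not by the particular matrices assigned. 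Thus for $i\leq h_2-1$ one gets $(e^T_{m-i}\otimes I_n)M_{\alpha}(*)=(e^T_{m-i}\otimes I_n)M_{\beta}(*)$, for $i\geq h_2+1$ one gets $(e^T_{m-i}\otimes I_n)M_{\alpha}(*)=(e^T_{m-i}\otimes I_n)M_{\gamma}(*)$, and for $i$ outside both ranges the row is simply $e^T_{m-i}\otimes I_n$.

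It then remains to show that the single-sign products $M_{\beta}(*)$ and $M_{\gamma}(*)$ are themselves operation free, and this I would read off from the computation already performed in the proof of Proposition~\ref{prop1:Bpenta} (and its negative analogue, Proposition~\ref{Prop:2:Blkpenta}). For the positive SIP tuple $\gamma$, either the subtuple at $\{i,i+1\}$ starts with $i+1$, in which case Proposition~\ref{sp1rowblock} (Remark~\ref{lem_ip1_starts}) delivers a single block $e^T_{m-k}\otimes I_n$; or it starts with $i$, in which case the row-standard-form expansion yields $(e^T_{m-\ast}\otimes I_n)+\sum_{j}(e^T_{m-\ast}\otimes X^{j})$ with each $X^{j}$ a single assigned matrix occupying its own block column. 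Either way every block is $0$, $\pm I_n$, or a single assigned matrix; crucially this conclusion uses only the SIP and not the penta-diagonal bound $c_k,i_k\leq 1$, so it applies verbatim here. The negative tuple $\beta$ is handled identically through Proposition~\ref{ms_start} and Remark~\ref{lem_mi_starts}.

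The genuinely delicate case, and the main obstacle, is the boundary row $i=h_2$, where both $M_{-h_2}(*)$ and $M_{h_2+1}(*)$ act and do not commute, so one must rule out a spurious product of two non-identity matrices. I would resolve this by the dichotomy on the first appearance among the indices $\{-h_2,\,h_2+1\}$ in $\alpha$: if it starts with $-h_2$, then by Proposition~\ref{lem_dneg} (equivalently the middle bullet of Remark~\ref{rem_pn}) the entire block $\{h_2+1:h_3\}$ is redundant for this row, so $(e^T_{m-h_2}\otimes I_n)M_{\alpha}(*)=(e^T_{m-h_2}\otimes I_n)M_{\beta}(*)$; if it starts with $h_2+1$, then by Proposition~\ref{lem_dpos} the block $\{-h_1:_{-1}-h_2\}$ is redundant and the row reduces to $M_{\gamma}(*)$. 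In both sub-cases the row again collapses to a single-sign product already shown to be operation free, so no product of two assigned matrices can form. Collecting the three regimes shows that every block entry of $M_{\alpha}(*)$ is operation free, which completes the proof; the reason the boundary analysis succeeds is precisely that the SIP hypotheses on $\beta$ and $\gamma$, through Propositions~\ref{lem_dpos} and~\ref{lem_dneg}, force one of the two non-commuting sides to be inert on the single row where a collision could otherwise occur.
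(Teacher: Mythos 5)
There is a genuine gap: your reduction of the non-boundary rows to a single sign is false. You claim that for $i\leq h_2-1$ one has $(e^T_{m-i}\otimes I_n)M_{\alpha}(*)=(e^T_{m-i}\otimes I_n)M_{\beta}(*)$, on the grounds that $\beta$ "affects only the rows $i\in\{h_1-1:h_2\}$" and $\gamma$ "only the rows $i\in\{h_2:h_3\}$," meeting solely at $i=h_2$. But this confuses the rows touched by a single elementary factor with the rows touched by the whole product: by (\ref{Nve_Brow}), applying $M_{-(i+1)}(W)$ to the row indexed by $i$ produces a new component at row index $i+1$, so a descending chain $-(i+1),-(i+2),\ldots,-h_2$ occurring in that order in $\alpha$ carries a component of the row all the way up to index $h_2$, where a subsequent $h_2+1\in\gamma$ (and then $h_2+2,\ldots$) acts on it. Concretely, take $m=3$, $h_1=h_2=1$, $h_3=2$, $\alpha=(-1,2)$: then $(e^T_{3}\otimes I_n)M_{(-1,2)}(W,Z)=(e^T_{3}\otimes W)+(e^T_{1}\otimes I_n)$, whereas $(e^T_{3}\otimes I_n)M_{(-1)}(W)=(e^T_{3}\otimes W)+(e^T_{2}\otimes I_n)$; the index $2\in\gamma$ is not redundant for the row $i=0\leq h_2-1$. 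The symmetric failure occurs on the positive side (e.g.\ $\alpha=(2,-1)$, row $i=2$). Note that Remark~\ref{rem_pn}, which you cite, asserts the redundancy only under the hypothesis that the subtuple of $\alpha$ with indices $\{-j,-(j+1)\}$ starts with $-j$ (so that the row index moves \emph{away} from the boundary); you have silently dropped that hypothesis. This is exactly why the paper cannot argue row by row against $M_\beta$ or $M_\gamma$ alone and instead runs the iterative Step-1/Step-2 peeling inside each row of the range $h_1-1\leq j\leq h_3$, invoking Propositions~\ref{lem_dpos} and~\ref{lem_dneg} at every stage of the iteration (not only at the row $j=h_2$) to guarantee that the successively produced blocks $e^T_{m-k_j}\otimes A_{j+1}$ land in pairwise distinct block columns.

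A secondary weakness: even for the pure tuples, your assertion that in the expansion $(e^T_{m-\ast}\otimes I_n)+\sum_j(e^T_{m-\ast}\otimes X^j)$ each $X^j$ occupies "its own block column" is precisely the nontrivial content of Proposition~\ref{lem_dpos} (the strict inequality $k>\ell$ between consecutive landing positions), not something that can be read off from the SIP alone; as stated it is an unproven claim, and it is the same claim whose cross-boundary version your three-regime decomposition was supposed to make unnecessary.
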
 

\begin{proof} We will prove that $M^P_\alpha$ is operation free. The proof depends only on the indices of $\alpha$ and does not depend on the matrix assignments for $\alpha$.  Hence $M_\alpha (*)$ is operation free. 
	
	For proving $M^P_\alpha$ is operation free it is equivalent to show that $(e^T_{m-j} \otimes I_n) M^P_\alpha$ is operation free for all $j =0:m-1$. It follows from (\ref{M_potri}) and (\ref{M_netri}) that $ (e^T_{m-j} \otimes I_n) M^P_\alpha = e^T_{m-j} \otimes I_n$ for
	all $j \in \{ 0: h_1-2 \} \cup \{h_3+1 : m-1\}$. Hence it remains to show that  $(e^T_{m-j} \otimes I_n) M^P_\alpha$ is operation free for all $j = h_1 -1 : h_3$. We proceed as follows.
	
	(a) Let $h_1 -1 \leq  j  \leq h_2-1$. We prove that $(e^T_{m-j} \otimes I_n) M^P_\alpha$ is operation free. 
	
	Case-I: Suppose that $-j,-(j+1) \notin \alpha$ or the subtuple of $\alpha$ with indices $\{-j,-(j+1)\}$ starts with $-j$. Then by Remark~\ref{lem_mi_starts} and Remark~\ref{rem_pn}, we have $ (e^T_{m-j} \otimes I_n) M^P_\alpha  = e^T_{m-k} \otimes I_n$, where $k = j-1- c_{-j}(\alpha) $. Hence $(e^T_{m-j} \otimes I_n) M^P_\alpha$ is operation free.
	
	Case-II: Suppose that the subtuple of $\alpha$ with indices $\{-j,-(j+1)\}$ starts with $-(j+1)$. The following steps show that $ (e^T_{m-j} \otimes I_n) M^P_\alpha$ is operation free.
	
	Step-1: Let $\alpha = ( \delta^j, -(j+1), \alpha^j)$, where $ -j,-(j+1) \notin \delta^j$. Then we have  
	\begin{eqnarray}
		(e^T_{m-j} \otimes I_n) M^P_\alpha & =  &(e^T_{m-j} \otimes I_n) M^P_{(-(j+1), \alpha^j)} \text{ by } (\ref{M_potri}) \text{ and } (\ref{M_netri}) \text{ since }  -j,-(j+1) \notin \delta^j \nonumber \\
		&= & \Big ( (e^T_{m-j} \otimes A_{j+1}) + (e^T_{m-(j+1)} \otimes I_n) \Big ) M^P_{\alpha^j}  \text{ by } (\ref{M_netri})   \nonumber \\
		&= &(e^T_{m-j} \otimes A_{j+1})  M^P_{\alpha^j}  + (e^T_{m-(j+1)} \otimes I_n)  M^P_{\alpha^j}  \label{eqn_OF_lem1}.
	\end{eqnarray}
	Now, since $\alpha =( \delta^j, -(j+1), \alpha^j)$ satisfies the SIP, we have either $-j,-(j+1)  \notin \alpha^j$ or the subtuple of $\alpha^j$ with indices $\{ -j,-(j+1)  \}$ starts with $ -j$. Then 
	by Remark~\ref{lem_mi_starts} and Remark~\ref{rem_pn}, we have $ (e^T_{m-j} \otimes I_n) M^P_{\alpha^j} =  e^T_{m-k_j} \otimes I_n$, where $k_j := j-1- c_{-j}(\alpha^j)$. Hence from (\ref{eqn_OF_lem1}) we have 
	\begin{eqnarray}
		(e^T_{m-j} \otimes I_n) M^P_\alpha  = (e^T_{m-k_j} \otimes A_{j+1})   + (e^T_{m-(j+1)} \otimes I_n)  M^P_{\alpha^j}  \label{eqn2_OF_lem}.
	\end{eqnarray}
	The evaluation of $(e^T_{m-j} \otimes I_n) M^P_\alpha$ is completed if any one of the following cases hold. Otherwise, we move to Step-2.
	\begin{itemize} 
		\item Suppose that $-(j+1), -(j+2) \notin \alpha^j$ or the subtuple of $\alpha^j$ with indices $\{ -(j+1), -(j+2)  \}$ starts with $ -(j+1)$. Then by Remark~\ref{lem_mi_starts} and Remark~\ref{rem_pn} we have $ (e^T_{m-(j+1)} \otimes I_n) M^P_{\alpha^j} =  e^T_{m-k_{j+1}} \otimes I_n$, where $k_{j+1} := j- c_{-(j+1)}(\alpha^j)$. It follows by Proposition~\ref{lem_dneg} that $ k_{j} < k_{j+1}$. Hence it follows from (\ref{eqn2_OF_lem})  that $ (e^T_{m-j} \otimes I_n) M^P_\alpha$ is operation free.
		
		\item Suppose that $j+1 = h_2$.  If the subtuple of $\alpha^j$ with indices $\{ -h_2, h_2+1 \}$ starts with $ - h_2$ then by similar argument as above it follows that $ (e^T_{m-j} \otimes I_n) M^P_\alpha$ is operation free. On the other hand, if the subtuple of subtuple of $\alpha^j$ with indices $\{ -h_2, h_2+1 \}$ starts with $ h_2+1 $ then by Remark~\ref{lem_ip1_starts} and Remark~\ref{rem_pn}, we have  $ (e^T_{m- h_2} \otimes I_n) M^P_{\alpha^j} =  e^T_{m-\ell} \otimes I_n$, where $\ell :=  h_2+1+c_{h_2+1}(\alpha^j)$. Hence it follows from (\ref{eqn2_OF_lem})  that $ (e^T_{m-j} \otimes I_n) M^P_\alpha$ is operation free.
	\end{itemize}
	
	Step-2: Suppose that the subtuple of $\alpha^j$ with indices $\{ -(j+1), -(j+2)  \}$ starts with $ -(j+2)$. Then repeat Step-1 with $\alpha$ replaced by $\alpha^j$  and  $j$ replaced by $j+1$.
	
	Since $ \{ h_1-1 : h_2-1 \}$ contains $ s:=h_2 -h_1+1$ number of indices, we must stop before $s$ numbers of steps. This completes the proof of $ (e^T_{m-j} \otimes I_n) M^P_\alpha$ is operation free.

	(b) Let $ j = h_2$. If the subtuple of $\alpha$ with indices $\{-h_2, h_2+1\}$ starts with $h_2+1$ then  by Remark~\ref{lem_ip1_starts} and Remark~\ref{rem_pn}, we have  $ (e^T_{m- h_2} \otimes I_n) M^P_{\alpha} =  e^T_{m-\ell} \otimes I_n$, where $\ell :=  h_2+1+c_{h_2+1}(\alpha)$. Hence  $ (e^T_{m-{h_2}} \otimes I_n) M^P_\alpha$ is operation free. On the other hand if the the subtuple of $\alpha$ with indices $\{-h_2, h_2+1\}$ starts with $-h_2$ then by Remark~\ref{lem_mi_starts} and Remark~\ref{rem_pn}, we have $ (e^T_{m-{h_2}} \otimes I_n) M^P_{\alpha} =  e^T_{m- \ell} \otimes I_n$, where $\ell := h_2-1- c_{-h_2}(\alpha)$. Hence  $ (e^T_{m-{h_2}} \otimes I_n) M^P_\alpha$ is operation free.

	(c) Let $h_2+1 \leq  j  \leq h_3$. We prove that $(e^T_{m-j} \otimes I_n) M^P_\alpha$ is operation free.
	
	Case-I: Suppose that $j,j+1 \notin \alpha$ or the subtuple of $\alpha$ with indices $\{j,j+1\}$ starts with $j+1$. Then by Remark~\ref{lem_ip1_starts} and Remark~\ref{rem_pn}, we have $ (e^T_{m-j} \otimes I_n) M_\alpha = (e^T_{m-j} \otimes I_n) M_\gamma = e^T_{m-k} \otimes I_n$ which is operation free, where $k = j+1+c_{j+1}(\gamma) $.
	
	Case-II: Suppose that the subtuple of $\alpha$ with indices $\{j,j+1\}$ starts with $j$. Then the following steps show that $ (e^T_{m-j} \otimes I_n) M_\alpha$ is operation free.
	
	Step-1: Let $\alpha = ( \xi^j, j, \alpha^j)$, where $ j,j+1 \notin \xi^j$. Then we have
	\begin{eqnarray}
		(e^T_{m-j} \otimes I_n) M^P_\alpha & = & (e^T_{m-j} \otimes I_n) M^P_{(j, \alpha^j)}  \text{ by } (\ref{M_potri}) \text{ and } (\ref{M_netri}) \text{ since }  j,j+1 \notin \xi^j \nonumber \\
		& =& \Big ( (e^T_{m-j} \otimes (-A_{j})) + (e^T_{m-(j-1)} \otimes I_n) \Big ) M^P_{\alpha^j}  \text{ by } (\ref{M_potri}) \nonumber \\
		& = &  (e^T_{m-j} \otimes (-A_{j})) M^P_{\alpha^j} + (e^T_{m-(j-1)} \otimes I_n) M^P_{\alpha^j}  \label{eqn3_OF_lem}
	\end{eqnarray}
	Since $\alpha = ( \xi^j, j, \alpha^j)$ satisfies the SIP, we have either $j, j+1 \notin \alpha^j$ or the subtuple of $\alpha^j$ with indices $\{ j,j+1  \}$ starts with $ j+1$. Hence by Remark~\ref{lem_ip1_starts} and Remark~\ref{rem_pn}, we have $ (e^T_{m-j} \otimes I_n) M^P_{\alpha^j} =  e^T_{m-k_j} \otimes I_n$, where $k_j := j+1+ c_{j+1}(\alpha^j)$. Hence from (\ref{eqn3_OF_lem}) we have 
	\begin{equation}\label{eqn3_OF_lem2}
		(e^T_{m-j} \otimes I_n) M^P_\alpha =  (e^T_{m-k_j} \otimes (-A_{j})) + (e^T_{m-(j-1)} \otimes I_n) M^P_{\alpha^j}. 
	\end{equation}
	The evaluation of $(e^T_{m-j} \otimes I_n) M^P_\alpha$ is completed if any one of the following cases hold. Otherwise, we move to Step-2.
	\begin{itemize} 
		\item Suppose that $j, j-1 \notin \alpha^j$ or the subtuple of $\alpha^j$ with indices $\{ j, j-1  \}$ starts with $ j$. Then by Remark~\ref{lem_ip1_starts} and Remark~\ref{rem_pn}, we have $ (e^T_{m-(j-1)} \otimes I_n) M^P_{\alpha^j} =  e^T_{m-k_{j-1}} \otimes I_n$, where $k_{j-1} := j+ c_{j}(\alpha^j)$. By Proposition~\ref{lem_dpos} we have  $ k_{j} > k_{j-1}$. Hence  it follows from (\ref{eqn3_OF_lem2}) that  $ (e^T_{m-j} \otimes I_n) M^P_\alpha$ is operation free.
		
		\item Suppose that $j-1 = h_2$. If the subtuple of $\alpha^j$ with indices $\{ -h_2, h_2+1 \}$ starts with $ h_2+1$ then by similar arguments as above it follows that it follows that  $ (e^T_{m-j} \otimes I_n) M^P_\alpha$ is operation free. On the other hand, if the subtuple of subtuple of $\alpha^j$ with indices $\{ -h_2, h_2+1 \}$ starts with $- h_2 $ then by Remark~\ref{lem_mi_starts} and Remark~\ref{rem_pn}, we have $ (e^T_{m- h_2} \otimes I_n) M^P_{\alpha^j} =  e^T_{m-\ell} \otimes I_n$, where $\ell :=  h_2-1+c_{-h_2}(\alpha^j)$. Hence  it follows from (\ref{eqn3_OF_lem2}) that $ (e^T_{m-j} \otimes I_n) M^P_\alpha$ is operation free.
	\end{itemize}
	
	Step-2: Suppose that the subtuple of $\alpha^j$ with indices $\{ j, j-1  \}$ starts with $ j-1$. Then repeat Step-1 with $\alpha$ replaced by $\alpha^j$  and  $j$ replaced by $j-1$.
	
	Since $ \{ h_1+1 : h_3 \}$ contains $ s:=h_3 -h_2$ number of indices, we must stop before $ s$ numbers of steps. This prove that $ (e^T_{m-j} \otimes I_n) M^P_\alpha$ is operation free.  	 
\end{proof}

\subsection{Proof of Proposition~\ref{OF_EGFPR}}~
\begin{proof} Recall that $ (\sig, \omega)$ is a permutation of $ \{0:m\}$ and $\tau = - \omega$. Since  $ M_{-0}(Z)$ is a block diagonal matrix and  $ -0,-1 \notin \tau_1 \cup \tau_2$ (i.e., $-0$ and $-1$ do not repeat),  without loss of generality we assume that $0 \in \sig$. Similarly, since  $ M_{m}(Z)$ is a block diagonal matrix and  $ m-1,m \notin \sig_j$, $j=1,2$, (i.e., $m-1$ and $m$ do not repeat),  without loss of generality we assume that $m \in - \tau$.

	Now, since $ (\sig, -\tau)$ is a permutation of $ \{0:m\}$ with $0\in \sig$ and $m \in -\tau$, there exist $ 0 \leq h_1 < h_2 < \cdots < h_{k-1}< h_k \leq m-1$ (with odd $k$) such that $\sig$ is a permutation of $ \{ 0:h_1 \} \cup \{ h_2+1 : h_3\} \cup  \cdots \cup \{h_{k-1} +1:h_k \}$ and $-\tau$ is a permutation of $ \{h_1+1:h_2 \} \cup \{h_3+1 : h_4\} \cup \cdots \cup \{h_k +1:m \}$. This implies that  $\sig_1$ and $\sig_2$ contain indices from $ \{ 0:h_1-1 \} \cup \{ h_2+1 : h_3-1\} \cup  \cdots \cup \{h_{k-1} +1:h_k-1 \}$ since $(\sig_1,\sig,\sig_2)$ satisfies the SIP, and  $-\tau_1$ and $-\tau_2$ contain indices from $ \{h_1+2:h_2 \} \cup \{h_3+2 : h_4\} \cup \cdots \cup \{h_k +2:m \}$ since $(\tau_1,\tau,\tau_2)$ satisfies the SIP.

	Set $\alpha : = (\tau_1,\sig_1,\sig,\sig_2,\tau_2 )$. We now show that $ M_{\alpha} (*)$ is operation free (i.e., $L_0$, given in Proposition~\ref{OF_EGFPR}, is operation free). From the above paragraph, it is clear that $\alpha$ contains indices from $ \{0:h_1\} \cup \{ -(h_1+2):_{-1} -h_2, h_2+1 : h_3 \} \cup \{ -(h_3+2) :_{-1} -h_4, h_4+1 :h_5 \} \cup \cdots \cup  \{-(h_{k-2}+2) :_{-1} -h_{k-1}, h_{k-1} +1:h_k\} \cup \{ h_k +2:m \}$ $=: H_1 \cup H_3 \cup H_5 \cup \cdots \cup H_k \cup H_{k+1}$. Note that, for indices $i \in H_s$ and $j  \in H_t$, we have $ ||i| - |j|| \geq 2$ if $s \neq t$.  Hence $M_{\alpha} (*)$ is operation free if $M_{\alpha^{H_j}} (*)$ is operation free for all $j = 1,3,\ldots, k+1$, where $ \alpha^{H^j}$ is the subtuple of $\alpha$ with indices from $H_j$. Since $\tau_1 \cup \tau_2$ (resp., $(\sig_1,\sig,\sig_2)$) does not contains any index of $H_1$ (resp., $H_{k+1}$), we have $ M_{\alpha^{H_1}} (*)$ (resp., $ M_{\alpha^{H_{k+1}}} (*)$) is operation free. Further, by Lemma~\ref{OF_lem}, we have $M_{\alpha^{H_j}} (*)$ is operation free for all $j = 3,5,\ldots, k$. Hence $M_{\alpha} (*)$ is operation free, i.e., $L_0$ is operation free. 
	
	Similar proof for $M_{\beta} (*)$ is operation free, where $\beta : = (\tau_1,\sig_1,\tau,\sig_2,\tau_2 )$, i.e., $L_1$, given in Proposition~\ref{OF_EGFPR}, is operation free.  Hence the EGFP $ L(\lam)$ is operation free. 
\end{proof}

\end{appendix}

\end{document}